\newtheoremstyle{tplain}{3pt}{3pt}{\rmfamily}{}{\bfseries}{.}{0.5em}{}
\theoremstyle{tplain}
\newtheorem{Theorem}{Theorem}[section]
\newtheorem{Corollary}[Theorem]{Corollary}
\newtheorem{Proposition}[Theorem]{Proposition}
\newtheorem{Definition}[Theorem]{Definition}
\newtheorem{Lemma}[Theorem]{Lemma}
\newtheorem{Remark}[Theorem]{Remark} 
\newtheorem{Example}[Theorem]{Example} 
\numberwithin{equation}{section}
\newcommand{\N}{{\mathbb N}}
\newcommand{\R}{{\mathbb R}}
\newcommand{\C}{{\mathbb C}}
\newcommand{\Z}{{\mathbb Z}}
\newcommand{\red}{\color{red}}
\newcommand{\blue}{\color{blue}}
\newcommand{\green}{\color{newgreen}}
\newcommand{\magenta}{\color{magenta}}
\definecolor{newgreen}{rgb}{0.05625,0.5625,0}
\definecolor{brown}{cmyk}{0,0.3,1,0.3}
\definecolor{darkgreen}{cmyk}{0.5,0,0.5,0}
\newcommand{\black}{\bf\textcolor{black}{black}}
\newcommand{\brown}{\textcolor{brown}{brown}}
\newcommand{\darkgreen}{\textcolor{darkgreen}{green}}
\definecolor{newpurple}{rgb}{0.6,0,1.2}   
\newcommand{\purple}{\color{newpurple}}
\newcommand{\rootthree}{1.7320508}
\newcommand{\vYTd}[4]{
\vcenter{\hbox{
\begin{tikzpicture}[x={(0in,-#1)},y={(#1,0in)}] 
\foreach \rowi [count=\i] in {#3} {
 \foreach \e [count=\j] in \rowi {
  \draw (\i,\j) rectangle +(-1,-1);
  \draw (\i-0.5,\j-0.5) node {$#2\e$};
 }
}
{#4}
\end{tikzpicture}
}}
}
\newcommand{\YT}[3]{
\vcenter{\hbox{
\begin{tikzpicture}[x={(0in,-#1)},y={(#1,0in)}] 
\foreach \rowi [count=\i] in {#3} {
 \foreach \e [count=\j] in \rowi {
  \draw (\i,\j) rectangle +(-1,-1);
  \draw (\i-0.5,\j-0.5) node {$#2\e$};
 }
}
\end{tikzpicture}
}}
}
\def\ov{\overline}
\newcommand{\tov}[1]{$\ov{\text{#1}}$}
\def\ds{\displaystyle}
\def\sc{\scriptstyle}
\def\rev{{\rm rev}}
\def\H{{\hat{H}}}   
\def\K{{\hat{K}}}   
\newcommand{\hiveH}[2]{
\foreach\i in{0,...,#1}\draw(\i+#2,\i+#2)--(\i+#2,#1+#2); 
\foreach\i in{0,...,#1}\draw(0+#2,\i+#2)--(\i+#2,\i+#2); 
\draw(0+#2,0+#2)--(#1+#2,#1+#2) 
}
\newcommand{\hiveK}[3]{
\foreach\i in{0,...,#1}\draw(\i+#3,#1+#3)--(\i+#3,#2+#3); 
\foreach\i in{#1,...,#2}\draw(\i+#3,\i+#3)--(\i+#3,#2+#3); 
\foreach\i in{#1,...,#2}\draw(0+#3,\i+#3)--(\i+#3,\i+#3); 
\draw(#1+#3,#1+#3)--(#2+#3,#2+#3); 
}
\def\qed{\hfill$\Box$\vskip3pt}
\def\leqq{\leq}
\newcommand\cell[2]{\vcenter{\hbox{\tikz[x={(0in,-0.25in)},y={(0.25in,0in)}]\draw[#1,thick] (0,0) rectangle (1,1) (0.5,0.5) node[black]{$\mathstrut#2$};}}}
\newcommand{\numero}[1]{$\vcenter{\hbox{\tikz\node[circle,draw,inner sep=1pt]{\small#1};}}$}
\begin{document}

\title{The involutive nature of the Littlewood--Richardson commutativity bijection}

\author{O. Azenhas,\thanks{e-mail: oazenhas@mat.uc.pt} \\
CMUC, Department of Mathematics, University of Coimbra\\
3000-501 Coimbra, Portugal\\
\\
R.C. King\thanks{e-mail: r.c.king@soton.ac.uk} \\
Mathematical Sciences, University of Southampton, \\
Southampton, SO17 1BJ, England \\
and \\
\\
I. Terada\thanks{e-mail: terada@ms.u-tokyo.ac.jp} \\
Graduate School of Mathematical Sciences, University of Tokyo, \\
Komaba 3-8-1, 
Meguro-ku, Tokyo 153-8914,
Japan \\
}

\maketitle

\date

\begin{abstract}

Littlewood-Richardson (LR) coefficients $c_{\mu\nu}^\lambda$ may be evaluated by means of several combinatorial models, including the original LR tableaux of skew shape $\lambda/\mu$ and weight $\nu$ and the LR hives with boundary edge labels $\lambda$, $\mu$ and $\nu$. 
Unfortunately, neither of these reveal in any obvious way the well-known symmetry property $c_{\mu\nu}^\lambda=c_{\nu\mu}^\lambda$. 
Here we introduce two maps, $\rho^{(n)}$ on LR tableaux and $\sigma^{(n)}$ on LR hives, that each interchange contributions to $c_{\mu\nu}^\lambda$ and $c_{\nu\mu}^\lambda$ for any partitions $\lambda$, $\mu$, $\nu$ of lengths no greater than $n$, and then prove not only that each of them is a bijection, thereby making manifest the required symmetry property, but also that both maps are involutions.
The map $\rho^{(n)}$ involves the iterative action of deletion operators on a given LR tableau of skew shape $\lambda/\mu$ and weight $\nu$, that produce a sequence of successively smaller tableaux whose consecutive inner shapes define a  certain Gelfand-Tsetlin pattern and determine a partner LR tableau of skew shape $\lambda/\nu$ and weight $\mu$. 
Similarly, the map $\sigma^{(n)}$ involves repeated path removals from a given LR hive with boundary edge labels $(\lambda,\mu,\nu)$ that give rise to a sequence of hives whose left-hand boundary edge labels define the same Gelfand-Tsetlin pattern as before, which is sufficient to determine a partner LR hive with boundary edge labels $(\lambda,\nu,\mu)$. 
The deletions in tableaux are organised so as to preserve the semistandard and lattice permutation properties of LR tableaux, while the path removals in hives
are designed to preserve both the triangle condition on edge labels and the hive rhombus gradient positivity conditions. 
At all stages illustrative examples are provided.

\end{abstract}

\section{Introduction and statement of results}\label{Sec-intro}
Let $n$ be a fixed positive integer and let $x=(x_1,x_2,\ldots,x_n)$ be a sequence of
indeterminates. Then, for each partition $\lambda$ of length $\ell(\lambda)\leq n$ and weight $|\lambda|$,
there exists a Schur function $s_\lambda(x)$ which is a homogeneous symmetric polynomial in the $x_k$
of total degree $|\lambda|$. These Schur functions $s_\lambda(x)$ for all such $\lambda$ form a linear basis
of the ring $\Lambda_n$ of symmetric polynomials in the components of $x$. It follows
that
\begin{equation}\label{Eq-LRcoeffs}
    s_\mu(x)\ s_\nu(x)  = \sum_\lambda\ c_{\mu\nu}^\lambda\ s_\lambda(x)\,,
\end{equation}
where the coefficients $c_{\mu\nu}^\lambda$ are known as Littlewood--Richardson (LR) coefficients.
These coefficients are independent of $n$. They are non-negative integers that may be evaluated by means of the
Littlewood--Richardson rule~\cite{LR} as the number of Littlewood--Richardson tableaux
of skew shape $\lambda/\mu$ and of weight $\nu$, where $\lambda$ and $\mu$ specify the outer and the inner shape, respectively,
of the tableau, and the parts of $\nu$ specify the numbers of its entries $k$ for $k=1,2,\ldots,n$.
Equivalently, $c_{\mu\nu}^\lambda$ is the number of Littlewood--Richardson $n$-hives with boundary edge labels 
specified by the ordered triple $(\lambda,\mu,\nu)$~\cite{KT1,Bu},
where each of the three partitions has $n$ parts through the inclusion if necessary of trailing zeros.
These are the two combinatorial models to be considered in this paper; see Sections \ref{Subsec-tableaux} and \ref{Subsec-hive-model} 
for details.
Although logically distinct, they may be thought of as being equivalent thanks to the existence of a bijection between 
LR tableaux and LR hives described by Fulton in the Appendix to~\cite{Bu}. 
The bijective map $f^{(n)}$ from the set ${\cal LR}^{(n)}$ of all LR tableaux with no more than 
$n$ rows to the set ${\cal H}^{(n)}$ of all hives of side length $n$ is
explained in Section~\ref{Subsec-hives-tableaux}. (See also \cite{PV1}.)  

Within these two models, the set of LR tableaux of shape $\lambda/\mu$ and  
weight $\nu$ is denoted by $\mathcal{LR}(\lambda/\mu,\nu)$,
and the corresponding set of $n$-hives by ${\cal H}^{(n)}(\lambda,\mu,\nu)$
for any fixed $n\geq\ell(\lambda)$. We then have
\begin{equation}\label{Eq-LRcoeff-tab-hive}
   c_{\mu\nu}^\lambda = \#\{T\in{\cal LR}(\lambda/\mu,\nu)\} = \#\{H\in{\cal H}^{(n)}(\lambda,\mu,\nu)\}\,.
\end{equation}
Unfortunately, although the definition~(\ref{Eq-LRcoeffs}) makes it immediately clear that $c_{\mu\nu}^\lambda=c_{\nu\mu}^\lambda$,
the same cannot be said of either of the combinatorial formulae in (\ref{Eq-LRcoeff-tab-hive}). The aim here is to provide 
bijective maps $\rho^{(n)}:{\cal LR}^{(n)}\rightarrow{\cal LR}^{(n)}$ and $\sigma^{(n)}:{\cal H}^{(n)}\rightarrow{\cal H}^{(n)}$,
such that $\rho^{(n)}:T\in {\cal LR}(\lambda/\mu,\nu)\mapsto S\in{\cal LR}(\lambda/\nu,\mu)$ and
$\sigma^{(n)}:H\in{\cal H}^{(n)}(\lambda,\mu,\nu)\mapsto K\in{\cal H}^{(n)}(\lambda,\nu,\mu)$,
thereby proving the symmetry of the LR-coefficients. It is further shown that these maps are involutive.	

Each LR $n$-hive is an equilateral, triangular graph of side length $n$, 
which may be specified by means of vertex labels, edge labels or rhombus gradients, as described in Section~\ref{Sec-notation}.
It should be noted that in the edge labelling scheme an LR hive consists of three interlocking, symmetrically oriented 
Gelfand--Tsetlin (GT) patterns \cite{GT1}, with their types specified by the inner shape, weight and outer shape of the corresponding LR tableau,
as explained in Section~\ref{Subsec-hive-model}. 
We would remark, in particular, that as a result of the bijection between LR hives and LR tableaux, 
an LR tableau can be completely specified by means of any one of the three GT patterns associated with 
an LR hive and one additional sequence of boundary edge labels~\cite{BZphy,GZpolyed,KiBe,HenKam,PV2}.

The nature of our two maps $\rho^{(n)}$ on $T\in\mathcal{LR}(\lambda/\mu,\nu)$ and $\sigma^{(n)}$ on $H\in{\cal H}^{(n)}(\lambda,\mu,\nu)$ 
is that they proceed by providing sequences consisting of successively smaller LR tableaux and of successively smaller LR hives in which the sequences of inner shapes
and left-hand boundary edges, respectively, determine the corresponding images $S\in\mathcal{LR}(\lambda/\nu,\mu)$ and $K\in{\cal H}^{(n)}(\lambda,\nu,\mu)$.
They do so by specifying completely a GT pattern of type $\mu$ associated with $K$. 
Both maps have corresponding inverses $\rho^{(n)}{}^{-1}$ and $\sigma^{(n)}{}^{-1}$ acting in a reverse manner.
Their action on $T\in\mathcal{LR}(\lambda/\mu,\nu)$ and $H\in{\cal H}^{(n)}(\lambda,\mu,\nu)$, build up sequences 
consisting of successively larger LR tableaux and of successively larger LR hives, repectively, with their inner shapes and left-hand boundary edges
determined by the GT pattern of type $\nu$ associated with $H$, culminating, as will be shown, in the same images as before, namely 
$S\in\mathcal{LR}(\lambda/\nu,\mu)$ and $K\in {\cal H}^{(n)}(\lambda,\nu,\mu)$. Thus the maps $\rho^{(n)}$ and $\sigma^{(n)}$
are not only bijective, but also involutive.

\subsection{Littlewood--Richardson commutativity involutions and the $\mathfrak{gl}_n$-crystal commuter}
\label{Subsec-intro-LRcrystal}
Schur functions in $n$ variables may be interpreted as irreducible characters of the general linear group, $GL_n(\mathbb{C})$, 
and the Littlewood--Richardson coefficient $c_{\mu\nu}^\lambda$ 
then has a representation-theoretic significance 
as the number of times the irreducible representation $V_\lambda$ appears in the decomposition of 
the tensor product $V_\mu\otimes V_\nu$ of irreducible representations of $GL_n(\mathbb{C})$. 
Additionally, the Schur functions have an intersection-theoretic interpretation as representatives of Schubert classes and, 
in the context of multiplication in the cohomology of the Grassmannian, the Littlewood--Richardson coefficient $c_{\mu\nu}^\lambda$ is the multiplicity 
of the Schubert class $\sigma_\lambda$ in the product $\sigma_\mu \sigma_\nu$ of Schubert classes $\sigma_\mu$ and $\sigma_\nu$.
In both these contexts the products are manifestly commutative so that $c_{\mu\nu}^\lambda=c_{\nu\mu}^\lambda$.

So far as the equality of the numbers $c_{\mu\nu}^{\lambda}$ and $c_{\nu\mu}^{\lambda}$ is concerned just the existence of any bijective map $\rho$
between the sets they serve to enumerate may conclude the story. On the other hand, there are situations where each element 
of the two sets represents some distinguishable object, and then the action of the map $\rho$ taking an element of one set 
to an element of the other may carry a quite explicit piece of information that deserves further investigation.
The parametrisation of the connected components of the tensor product of crystal bases $B_\mu\otimes B_\nu$ is one such situation, 
particularly so in view of the fact that the flip map $a\otimes b\mapsto b\otimes a$ does not give an isomorphism of 
crystals $B_\mu\otimes B_\nu\to B_\nu\otimes B_\mu$. 

The $\mathfrak{gl}_n$-crystal version of the Littlewood--Richardson rule \cite{T1,T2,Nak,hoon} amounts to 
the decomposition of the tensor product of crystals $B_\mu\otimes B_\nu$ into a disjoint union of crystals $B_\lambda$ with 
multiplicities $c_{\mu\nu}^\lambda$ counted by the LR tableaux in $\mathcal{LR}(\lambda/\mu,\nu)$.  
In~\cite{HenKam}, Henriques and Kamnitzer realise 
their crystal commutor, which had 
been defined in \cite{HK2} in a more general setting, using 
the language of tableaux and hives in the case of $\mathfrak{gl}_n$,
where the key roles are played by the highest and the lowest weight elements of the crystal $B_\lambda$, and the Sch\"utzenberger 
involution $\xi$ which \qq{reverses} 
the crystal. These highest and lowest weight elements are characterised in part by two of 
the three interlocking GT patterns associated with a hive, as will be made clear in Section~\ref{2subSec-intro}.
The $\mathfrak{gl}_n$-crystal commutor $B_\mu\otimes B_\nu\to B_\nu\otimes B_\mu$, linked in [HK06b] with the Sch\"{u}t\-zen\-ber\-ger involution, 
induces a bijection from the set of connected components isomorphic to $B_\lambda$ in $B_\mu\otimes B_\nu$ to the analogous set in $B_\nu\otimes B_\mu$.
Through the connection between such connected components and hives, they obtain a bijective map 
$\mathcal{H}^{(n)}(\lambda,\mu,\nu)\to\mathcal{H}^{(n)}(\lambda,\nu,\mu)$, inheriting its
involutive nature from the crystal commutor.
Thus they prove not only that crystal tensor product multiplicities are symmetric, ensuring that $c_{\mu\nu}^\lambda=c_{\nu\mu}^\lambda$,
but also that this map is involutive. This map is also shown to coincide with another map constructed using a special case of octahedron recurrence.
In \cite{DK05}, instead of using Young tableaux, Danilov and Koshevoy introduce other combinatorial objects,  called arrays, and  endow them with a crystal structure to define a similar commutor.

The Littlewood--Richardson commutativity bijections in \cite{HenKam,PV2,DK05,DK08}, with exception of the bijection $\rho_3$ described in \cite{PV2} and originally defined 
by the first author in \cite{Az1,Az2}, have their involution property based on that of the above crystal commutor \cite{HK2, DK05}, which in turn is based on properties of the Sch\"utzenberger involution. 
In particular, the map $\rho_1$ defined by means of tableau switching operations~~\cite{bss}, is written as a composition of three Sch\"utzenberger involutions~(\cite{PV2}, Section 3.1), in order to conclude its involution property. In fact, Danilov and Koshevoy show in \cite{DK08} 
that their commutor, the Henriques--Kamnitzer hive and crystal commutors~\cite{HenKam}, and the maps $\rho_2$ and its inverse $\rho'_2$ in~\cite{PV2}, as well as the tableau switching map $\rho_1$~\cite{bss}, all coincide, and are therefore all involutions. The map $\rho_3$, was not considered by Danilov and Koshevoy. Our task here is to obtain an independent proof of the involutive nature of $\rho_3$, realised here
through the bijective action of $\rho^{(n)}$ on LR tableau and $\sigma^{(n)}$ on LR hives.

\subsection{Our Littlewood--Richardson commutativity involution} 
\label{Subsec-ourLR}

In Sections~\ref{Sec-tab-del-ops}--\ref{Sec-prf-delaying}, we discuss the map originally defined by the first author, which we denote by $\rho^{(n)}$, taking any LR tableau $T$ of shape $\lambda/\mu$ and weight $\nu$ to another such tableau $S$ of shape $\lambda/\nu$ and weight $\mu$, where $\lambda$, $\mu$ and $\nu$ are partitions of length $\le n$, and prove that the map $\rho^{(n)}$ has the involutive nature.

The map $\rho^{(n)}$ is introduced in Sections~\ref{Sec-tab-del-ops} and~\ref{Sec-tab-bijection}. 
Here we begin by outlining the content of Section~\ref{Sec-tab-bijection}, only referring back to the content of Section~\ref{Sec-tab-del-ops} as and when necessary.
The map is defined by way of transforming the tableau $T=T^{(n)}$ into gradually smaller tableaux $T^{(n-1)},T^{(n-2)},\dots,T^{(1)}$ and finally into the empty tableau $T^{(0)}$, in such a way that, upon each transition from $T^{(r)}$ to $T^{(r-1)}$, the outer shape simply loses its $r$th row, the inner shape shrinks by losing a horizontal strip, the entries $r$ are all removed but the entries $1,2,\dots,r-1$ are shifted in the tableau in a certain way without being lost; and while doing so the tableau $\rho^{(n)}(T)=S$ is constructed, gradually from bottom to top, in such a way that the $r$th row of $S$ is determined by the information on the change of the inner shape from $T^{(r)}$ to $T^{(r-1)}$ along with the number of entries $r$ removed from $T^{(r)}$ (See Definition~\ref{Def-TtoS}).
The transformation from $T^{(r)}$ to $T^{(r-1)}$ is achieved by applying a sequence of \textit{deletion operators} (Definitions \ref{Def-dp} and \ref{Def-full-r-del}).
The action of each such operator is a gentle modification of the inverse of Sagan--Stanley's \textit{internal insertion}.
To be more precise, the action starts from the rightmost corner of the bottommost row of the tableau, and that corner is deleted in terms of the tableau's shape.
As for the entry of that corner, it is simply removed if that entry matches the row number $r$, and in such a case that is the only change made in the tableau by the operator.
Otherwise a certain path is generated starting from that corner, and the entries get shifted upwards along the path.
The path extends until reaching one of the corners of the inner shape, and an entry bumped out of the path is placed in that corner.
Moreover, this operation preserves the LR property.
Such an operation is repeated as many times as there are cells in the $r$th row of $T^{(r)}$ to produce $T^{(r-1)}$.
It easily follows from the definitions that the constructed tableau $S$ has shape $\lambda/\nu$ and weight $\mu$, but in showing its LR property (Theorem~\ref{The-TtoS}), a key role is played by the comparison results (Lemmas \ref{Lem-ep}, \ref{Lem-hp} and \ref{Lem-vp}) of the paths taken by the deletion procedures when applied successively.
An illustration of the complete procedure of constructing $S$ from $T$ can be found in Example~\ref{Ex-tab}.

The map $\rho^{(n)}$ is injective, since one can recover $T^{(r)}$ from $T^{(r-1)}$ along with 
the inner and outer shapes of $T^{(r)}$, by applying internal insertions and augmenting by a suitable number of entries $r$ in row $r$.
This gives the inequality of the cardinalities $|{\cal LR}(\lambda/\mu,\nu)|\leq|{\cal LR}(\lambda/\nu,\mu)|$.
By applying the same argument with $\mu$ and $\nu$ switched, we see that the inequality is actually an equality, and the injection is forced to be bijective (see Theorem~\ref{The-bij}).

Then we prove that the map $\rho^{(n)}$ is an involution (Theorem~\ref{The-invol-tab-main}).
This proof is somewhat long and occupies both Sections~\ref{Sec-tab-inv} and~\ref{Sec-prf-delaying}. 
In Section~\ref{Sec-tab-inv} 
the proof is reduced to the analysis, in the case where the tableau $T$ has a single cell containing $r$ in its bottom $r$th row, of how the deletion from the bottom cell containing $r$ \qq{commutes} in a certain way with the deletions starting from the cells in the ($r-1$)th row, and how this commutation relation is reflected in the constructed tableau $S$.
This is spelt out in Proposition~\ref{Prop-invol-tab-erase-one-ind} and the whole of Section~\ref{Sec-prf-delaying} 
is devoted to its proof.

Let us add here that 
internal insertions, together with deletions, were originally used
in~\cite{Az2} by the first author in showing the bijectivity of $\rho^{(n)}$ and also in the statement to which the proof of the involutive nature was reduced. 
This statement is equivalent to Lemma~\ref{Lem-invol-tab-erase-one-gen}, and
Proposition~\ref{Prop-invol-tab-erase-one-ind} mentioned above is a refinement of Lemma~\ref{Lem-invol-tab-erase-one-gen}.

We turn now to our hive model and the problem of identifying a bijective map $\sigma^{(n)}: {\cal H}^{(n)} \rightarrow {\cal H}^{(n)}$ such that
for all $H\in{\cal H}^{(n)}(\lambda,\mu,\nu)$ we have $\sigma^{(n)}: H\mapsto K$ with $K\in{\cal H}^{(n)}(\lambda,\nu,\mu)$. 
By virtue of the bijection $f^{(n)}$ from LR $n$-tableaux to LR $n$-hives introduced in Section~\ref{Subsec-hives-tableaux},
deletions on tableaux correspond to path removals from a hive. These are defined in
Section~\ref{Sec-hive-path-removal} in terms of the action of three types of path removal operators, $\chi_r$, $\phi_r$ and $\omega_r$.
Each of these reduces or increases by $1$ the values of certain edge labels in a given hive. These
edges form a connected path, starting from the base of the hive and proceeding to its left or right hand
boundary. The route taken by each path is determined by the occurrences of upright rhombi of gradient $0$.
It is first shown that this action on an LR hive in ${\cal H}^{(r)}$ always yields another LR hive in ${\cal H}^{(r)}$.

The next step, described in Section~\ref{Sec-hive-bijection}, is to show that the systematic application of our path removal 
operators to $H\in{\cal H}^{(n)}(\lambda,\mu,\nu)$ reduces it to an $n$-hive involving an empty rightmost diagonal which may be
removed to give an $(n-1)$-hive. Iterating this procedure reduces $H$ to an an empty hive, and recording
the succession of left-hand boundary edge labels of the intervening hives enables us to build up a partner hive $K$. 
This procedure which involves a succession of pairs, consisting at each stage of a hive and what we call a truncated hive,
is illustrated in Example~\ref{Ex-hive}. The main result in this section is the proof of Theorem~\ref{The-HtoK} in which it is stated, as required, that 
$K\in {\cal H}^{(n)}(\lambda,\nu,\mu)$. This allows us to define our map $\sigma^{(n)}$ through the identification of $K$ with 
$\sigma^{(n)}H$. 

In the following Section~\ref{Sec-hive-path-addition} we introduce a hive path addition procedure that is used
to define a map $\ov{\sigma}^{(n)}:{\cal H}^{(n)} \rightarrow {\cal H}^{(n)}$ such that
for any $K\in{\cal H}^{(n)}(\lambda,\nu,\mu)$ we have $\ov{\sigma}^{(n)}: K\mapsto H\in{\cal H}^{(n)}(\lambda,\mu,\nu)$. 
By virtue of the bijection $f^{(n)}$ from LR $n$-tableaux to LR $n$-hives, it may be noted that the insertion path 
operation on LR tableaux introduced in~\cite{Az1,Az2} corresponds to our hive path addition procedure.
The hive $K$ determines the precise sequence of path additions, and we successfully show in the proof of Theorem~\ref{The-KtoH} 
that the result is as claimed. This involves the use of three Lemmas concerning detailed considerations of the relationship 
between successive path additions.

It is then not difficult to show in Theorem~\ref{The-hive-bijection} that the maps $\sigma^{(n)}$ amd 
$\ov{\sigma}^{(n)}$ are mutually inverse bijections on ${\cal H}^{(n)}$. However, it is considerably more
demanding to prove that these maps are involutions. This is done in Section~\ref{Sec-hive-inv} where the approach
makes use of two hives $H\in{\cal H}^{(n)}(\lambda,\mu,\nu)$ and $\hat{H}\in{\cal H}^{(n)}(\lambda-\epsilon_n,\mu,\nu-\epsilon_k)$
where $\hat{H}=\psi_n\,H$ is obtained from $H$ through the removal of single path reducing the uppermost non-zero 
upright rhombus gradient, say $U_{kn}$, in the rightmost diagonal of $H$ by $1$. With this notation
a crucial step is the proof of Lemma~\ref{Lem-KthetaK} which states that $\sigma^{(n)}\psi_n\,H = \phi_n\,\sigma^{(n)} H$.
This is conveniently illustrated in the commutative diagram of Lemma~\ref{Lem-KthetaK}.
Once again the proof involves an iterative path removal procedure in which $H$ and $\hat{H}$ are first reduced in size from
$n$-hives to $(n-1)$-hives. They differ by edge label differences of $\pm1$ along what we call a line
of difference, and the terminating level of this line on the left-hand boundary defines a difference in 
the upright rhombus gradients of their partner truncated hives that consist at this stage of a single diagonal.
Continuing this process results in succession of consecutive lines of difference in the
hives generated from $H$ and $\hat{H}$ whose terminating levels lead to differences of upright rhombus gradients 
in the consecutive diagonals, taken from right to left, of their final partner hives $K$ and $\hat{K}$.
This procedure is not, however, without its problems. The removal paths in $H$ and $\hat{H}$ may intersect,
bifurcate and rejoin, and the heart of the proof is taken up with proving that these possibilities, associated
with what we call critical rhombi, are no impediment to proving Lemma~\ref{Lem-KthetaK}. Armed with this Lemma
it is then straightforward to complete the proof of Theorem~\ref{The-inv} stating that $\sigma^{(n)}$
is an involution.

This completes the main part of our exercise, but in Section~\ref{Sec-obs-U} it is pointed out that 
all our hive path removal and addition procedures are driven and controlled by upright rhombus gradients.
This allows us to conclude that if $K=\sigma^{(n)}H$ and any other hive $H'$ shares the same set of upright 
rhombus gradients as $H$, then its partner $K'=\sigma^{(n)}H'$ shares the same set of
upright rhombus gradients as $K$, regardless of the boundary edge labels. If follows that one can define 
what we call a $U$-system of triangular arrays of non-negative upright rhombus gradients, on which
$\sigma^{(n)}$ acts in a uniform way regardless of the specification of the boundary labels necessary to turn 
such an array into a hive. It is shown in particular that for every set of non-negative upright rhombus gradients
not only does there always exists a corresponding dressing by means of boundary labels that is an $n$-hive, 
but also that there is an infinite set of such dressings.

Section~\ref{Sec-coincidence} consists of the discussion of the question of coincidence of all Littlewood--Richardson commutativity bijections,
$\rho_1$, $\rho_2$, $\rho_2'$ and $\rho_3$, as raised by Pak and Vallejo in~\cite{PV2}. These include the Benkart {\it et al.} tableaux switching map~\cite{bss},
referred to as $\rho_1$, and the Henriques-Kamnitzer crystal~\cite{HK2} and hive~\cite{HenKam} commutor maps, and those based on arrays due to Danilov
and Koshevoy~\cite{DK05,DK08} that constitute $\rho_2$ and $\rho_2'$. The maps $\rho^{(n)}$ and $\sigma^{(n)}$ discussed here 
both correspond to the map introduced by~\cite{Az1,Az2} 
in an attempt to provide a shortcut alternative to
the tableau switching map $\rho_1$ of~\cite{bss} and referred to as the fundamental symmetry map $\rho_3$ in~\cite{PV2}.
This section closes with an illustration in support of the coincidence of $\rho_1$ and $\rho_3$, an approach to whose proof was proposed in~\cite{fpsac08}.

\section{Notation and definitions}\label{Sec-notation}

\subsection{Partitions, Young diagrams and tableaux} 
\label{Subsec-tableaux}

A {\it partition\/} is a finite weakly decreasing sequence $\lambda=(\lambda_1,\lambda_2,\dots,\lambda_l)$ of strictly positive integers, each term of which is called a {\it part}. 
We use lower case Greek letters such as $\lambda$, $\mu$, $\nu$, \dots to represent partitions, and for a partition $\lambda$ we use $\lambda_i$ to denote its $i$th part. 
The number of parts of a partition $\lambda$ is called its {\it length\/} and is denoted by $\ell(\lambda)$.
The sum of its parts is called its {\it weight\/} and is denoted by $|\lambda|$.
Sometimes the sequence $\lambda$ is identified with a longer sequence obtained by adjoining an arbitrary number of  trailing zeros. 
In this situation the length $\ell(\lambda)$ still refers to the number of its strictly positive parts and we set $\lambda_i=0$ for all $i>\ell(\lambda)$. 
We regard an empty sequence, or a sequence whose terms are all zero, as a special case of a partition, namely the zero partition of length $0$ and weight $0$.

The {\it Young diagram\/} $D(\lambda)$ of a partition $\lambda$ is defined to be the subset $\{\,(i,j)\in\N\times\N\mid1\leqq i\leqq \ell(\lambda),\ 1\leqq j\leqq\lambda_i\,\}$ of $\N\times\N$.
We represent it as an aggregate of $|\lambda|$ {\it boxes\/} or {\it cells\/} arranged in $\ell(\lambda)$ rows left-adjusted to a vertical line
with its row $i$ (counted from the top) consisting of $\lambda_i$ cells for each $i$. 
We refer to the cell at the intersection of row $i$ and column $j$ (counted from the left) as the cell $(i,j)$.
A {\it corner\/} of $D(\lambda)$ is a cell $(i,j)$ of $D(\lambda)$ such that its removal from $D(\lambda)$ leaves the Young diagram of some partition of weight $|\lambda|-1$. 

If $\lambda$ and $\mu$ are any two partitions such that $\mu_i\leq \lambda_i$ for all $i$ then we write $\mu\subseteq\lambda$. 
In such a case $\ell(\mu)\leq\ell(\lambda)$, $|\mu|\leq|\lambda|$ and $D(\mu)\subseteq D(\lambda)$ and the pair $(\lambda,\mu)$
specifies a {\it skew} Young diagram $D(\lambda/\mu)$ of shape $\lambda/\mu$. 
This is defined to be $D(\lambda)\setminus D(\mu)=\{\,(i,j)\mid1\leqq i\leqq \ell(\lambda),\ \mu_i<j\leqq\lambda_i\,\}$,
so that $\lambda$ and $\mu$ specify the {\it outer} and {\it inner} shape respectively, and
the number of cells in $D(\lambda/\mu)$ is $|\lambda|\!-\!|\mu|$.
\medskip

\begin{Example}\label{Ex-D-skD}
In the case $\lambda=(6,5,5,1)$ and $\mu=(3,3,2)$ we have 
\begin{equation}\label{Eq-D-skD}
D(\lambda)\ = \ 
\vcenter{\hbox{
\begin{tikzpicture}[x={(0in,-0.2in)},y={(0.2in,0in)}] 
\foreach \i/\j in {1/1,1/2,1/3,1/4,1/5,1/6,  2/1,2/2,2/3,2/4,2/5,  3/1,3/2,3/3,3/4,3/5,  4/1}
  {\draw (\i,\j) rectangle +(-1,-1);}
\end{tikzpicture}
}}
\qquad\hbox{and}\qquad
D(\lambda/\mu)\ = \
\vcenter{\hbox{
\begin{tikzpicture}[x={(0in,-0.15in)},y={(0.15in,0in)}] 
\foreach \i/\j in {1/4,1/5,1/6,  2/4,2/5,  3/3,3/4,3/5,  4/1}
  {\draw (\i,\j) rectangle +(-1,-1);}
\end{tikzpicture}
}}
\end{equation}
\end{Example}

A {\it tableau} $T$  shape $\lambda$ (resp.\ $\lambda/\mu$) is any map $T\!\!:\!\!D(\lambda)\to\N$ 
(resp.\ $D(\lambda/\mu)\to\N$). 
Such a tableau $T$ is represented by a Young diagram with each cell $(i,j)$ containing 
the  number $s$ as an entry 
if $T(i,j)=s$. 
In this context, we sometimes call an entry $s\in\N$ a {\it letter}.
If $T$ is a tableau of shape $\lambda/\mu$, and $(i,j)\in D(\mu)$, we may
extend the definition of $T$ by saying
that the cell $(i,j)$ is empty in $T$ (even though strictly speaking
$(i,j)$ is not a cell of $T$), 
or that it contains $0$, and accordingly we may write $T(i,j)=0$ in this case.
The {\it weight\/} of a tableau $T$ is the sequence $\alpha=(\alpha_1,\alpha_2,\dots,\alpha_s, \ldots)$ where $\alpha_s$ is the number of entries $s$ occurring in $T$ (i.e.\ $\alpha_s=\#T^{-1}(s)$), and, just as in the case of partitions, trailing zeros may be omitted.

\begin{Definition}\label{Def-ss-lp}
Let $\lambda$ and $\mu$ 
be partitions such that $\mu\subseteq\lambda$. 
A tableau $T$ of shape $\lambda/\mu$ 
is said to be {\it semistandard\/} if
\begin{enumerate}
\item $T(i,j)\leqq T(i,j+1)$ if $(i,j)$ and $(i,j+1)$ both lie in $D(\lambda/\mu)$, that is to say the entries in each row are weakly increasing from left to right, and
\item $T(i,j)<T(i+1,j)$ if $(i,j)$ and $(i+1,j)$ both lie in $D(\lambda/\mu)$, that is to say the entries in each column are strictly increasing from top to bottom, 
\end{enumerate}
and is said to satisfy the {\it lattice permutation property\/} if
\begin{enumerate}
\item[3.] the number of entries $s$ occurring in the first $r$ rows of $T$ does not exceed the number of entries 
$s-1$ occurring in the first $r-1$ rows of $T$, for all $r\geq 1$ and $s\geq 2$.
\end{enumerate}
\end{Definition}
\medskip

It follows from the lattice permutation condition that the weight $\nu$ of $T$
must be a partition of length $\ell(\nu)\leq\ell(\lambda)$ such that $\nu\subseteq\lambda$ and $|\nu|=|\lambda|-|\mu|$.
\medskip

\begin{Definition}\label{Def-LRtab}
A tableau $T$ is said to be a {\it Littlewood--Richardson (LR) tableau\/} if it is semistandard and 
if it satisfies the lattice permutation property. 
We denote the set of all LR 
tableaux of shape $\lambda/\mu$ and weight $\nu$
by $\mathcal{LR}(\lambda/\mu,\nu)$. 
This set will be empty unless $\mu\subseteq\lambda$, in which case $\ell(\mu)\leq\ell(\lambda)$. 
For each positive integer $n$ we denote the union of those sets $\mathcal{LR}(\lambda/\mu,\nu)$ for which $\ell(\lambda)\leq n$ by 
$\mathcal{LR}^{(n)}$. We refer to elements of this set as LR $n$-tableaux. 
\end{Definition}
\medskip

\begin{Example} \label{Ex-LR-T}
In the case $\lambda=(9,9,6,4,1)$, $\mu=(7,5,3)$
and $\nu=(7,5,2)$ a typical tableau $T\in{\cal LR}(\lambda/\mu,\nu)\subset{\cal LR}^{(5)}\subset{\cal LR}^{(6)}\subset\cdots$
takes the form:
\begin{equation}\label{Ex-1}
\YT{0.2in}{}{
 {{},{},{},{},{},{},{},{1},{1}},
 {{},{},{},{},{},{1},{1},{2},{2}},
 {{},{},{},{1},{1},{2}},
 {{1},{2},{2},{3}},
 {{3}}
}
\end{equation}
where the empty cells occupy the shape specified by $\mu$, and the
cells with entries occupy the shape specified by $\lambda/\mu$, and the numbers
of these entries for each integer $k$ give the parts $\nu_k$ of $\nu$.
\end{Example}

\subsection{Hives, edge labels, gradients and interlocking Gelfand--Tsetlin patterns}
\label{Subsec-hive-model}

It is convenient to introduce a second combinatorial construct, namely, that of a {\it hive\/} 
first introduced by Knutson and Tao \cite{KT1} with properties described in more detail by Buch \cite{Bu}.

A hive expresses itself in three different forms.
In its {\it vertex representation}, as given in \cite{KT1} and \cite{Bu}, a hive is a labelling of the vertices of 
a planar, equilateral triangular graph, 
as shown below on the left of (\ref{Eq-hive-vertices}), satisfying the {\it rhombus inequalities\/}
indicated on the right of (\ref{Eq-hive-vertices}).
If the side length of the hive is $n$, then the hive is called an $n$-hive, and 
the edges connecting all neighbouring vertices divide the entire triangle into $n^2$ 
{\it elementary triangles\/} of side length $1$.  
An {\it elementary rhombus\/} is the union of any two elementary triangles sharing a common edge.
The rhombus inequalities require that for each such rhombus the sum of the labels of the vertices at the ends of 
the common edge must be greater than or equal to the sum of the labels of the other two vertices of the rhombus.
The hive is said to be an {\it integer hive\/} if all its vertex labels are integers.

For example in the case $n=4$ we have:

\begin{equation}\label{Eq-hive-vertices}
\vcenter{\hbox{\begin{tikzpicture}[x={(1cm*0.5,-\rootthree cm*0.5)},y={(1cm*0.5,\rootthree cm*0.5)}]
\foreach\i in{0,...,4}
 \foreach\j in{\i,...,4} \draw(\i,\j)node(a\i\j){$a_{\i\j}$};
\foreach\i/\ii in{0/-1,1/0,2/1,3/2,4/3}
 \foreach\j/\jj in{0/-1,1/0,2/1,3/2,4/3}{
  \ifnum\i<\j \draw(a\i\jj)--(a\i\j); \fi
  \ifnum\i>\j\else\ifnum\i>0 \draw(a\ii\j)--(a\i\j) (a\ii\jj)--(a\i\j); \fi\fi
 }
\end{tikzpicture}}}\qquad
\vcenter{\hbox{\tikz[x={(1cm*0.5,-\rootthree cm*0.5)},y={(1cm*0.5,\rootthree cm*0.5)},every node/.style={inner sep=0em}]
\draw(-1,-2)node[below left]{$-$}--(-1,-1)node[above left]{$+$}--
     (0,0)node[above right]{$-$} (-1,-2)--(0,-1)node[below right]{$+$}--(0,0)
     (-1,-1)--(0,-1);
}}\ge0
\end{equation}
The display on the left shows our indexing convention of the vertex labels, 
and the diagram on the right indicates the nature of the rhombus inequality, 
to be applied to all elementary rhombi of the three different dispositions via rotation.

It is convenient to anchor each hive by setting $a_{00}=0$, as we shall see below.

\begin{Example} \label{Ex-H-vertices}
In the case $n=5$, a typical hive with $a_{00}=0$ takes the form:
\begin{equation}\label{Eq-H-vertices-ex}
H\ = \ \vcenter{\hbox{
}}
\end{equation}

A naming convention for the edge labels has been chosen so that $\alpha_{ij}$, $\beta_{ij}$ and $\gamma_{ij}$ 
label the edges parallel to the left, right and lower boundaries, respectively, of the hive,  
with the vertex labelled $a_{ij}$ in (\ref{Eq-hive-vertices}) at their rightmost end.
As a matter of convention we sometimes refer to any edge parallel to the 
left, right or lower boundary as being an $\alpha$-edge, $\beta$-edge or $\gamma$-edge,
respectively.
The triangle conditions require that the edge labels $\alpha$, $\beta$ and $\gamma$ 
satisfy
\begin{equation}\label{Eq-triangle-condition}
\alpha+\beta=\gamma 
\end{equation}
in each elementary triangle of the hive of the two types   
\begin{equation}\label{Eq-elementary-triangles}
\vcenter{\hbox{\tikz[x={(1cm*0.5,-1.7320508cm*0.5)},y={(1cm*0.5,1.7320508cm*0.5)}]
 \foreach \p/\q/\anchor/\label in{{(0,0)}/{(0,1)}/left/\alpha,{(0,1)}/{(1,1)}/right/\beta,{(0,0)}/{(1,1)}/below/\gamma} \draw[-]\p--node[\anchor]{$\label$}\q;
}}
\quad\hbox{and}\quad
\vcenter{\hbox{\tikz[x={(1cm*0.5,-1.7320508cm*0.5)},y={(1cm*0.5,1.7320508cm*0.5)}]
 \foreach \p/\q/\anchor/\label in{{(0,0)}/{(1,0)}/left/\beta,{(1,0)}/{(1,1)}/right/\alpha,{(0,0)}/{(1,1)}/above/\gamma} \draw[-]\p--node[\anchor]{$\label$}\q;
}}
\end{equation}
In addition, the betweenness conditions require that 
\begin{equation}\label{Eq-betweenness-conditions}
\alpha_{i-1,j-1}\geq\alpha_{ij}\geq\alpha_{i-1,j};
\qquad
\beta_{ij}\geq\beta_{i,j-1}\geq\beta_{i+1,j};
\qquad
\gamma_{i,j-1}\geq\gamma_{ij}\geq\gamma_{i+1,j}
\end{equation}
hold for $1\leq i<j\leq n$, where as an {\it aide-m\'emoire} the arrangements of
these edge labels are illustrated by
\begin{equation}\label{Eq-trapezia}
\vcenter{\hbox{%
\begin{tikzpicture}
 [x={(1cm*0.4,-1.7320508cm*0.45)},y={(1cm*0.4,1.7320508cm*0.45)}]
 \foreach \p/\q/\anchor/\label/\pos/\tint/\width in {
  {(0,0)}/{(0,1)}/left/\alpha_{i-1,j-1}/0.57/blue/4pt,
  {(0,1)}/{(0,2)}/left/\alpha_{i-1,j}/0.6/red/1.5pt,
  {(1,1)}/{(1,2)}/right/\alpha_{ij}/0.35/magenta/2.75pt
 } \draw[color=\tint,line width=\width]\p--node[pos=\pos,\anchor,black]{$\label$}\q;
 \foreach \p/\q in {
  {(0,1)}/{(1,1)}, 
  {(0,2)}/{(1,2)}, 
  {(0,0)}/{(1,1)}, 
  {(0,1)}/{(1,2)}
 } \draw[-] \p--\q;
\end{tikzpicture}}}
\qquad 
\vcenter{\hbox{%
\begin{tikzpicture}
 [x={(1cm*0.45,-1.7320508cm*0.45)},y={(1cm*0.45,1.7320508cm*0.45)}]
 \foreach \p/\q/\anchor/\label/\pos/\tint/\width in{
  {(0,1)}/{(1,1)}/left/\beta_{i,j-1} /0.75/magenta/2.75pt,
  {(0,2)}/{(1,2)}/right/\beta_{ij}   /0.3/blue/4pt,
  {(1,2)}/{(2,2)}/right/\beta_{i+1,j}/0.4/red/1.5pt
 } \draw[color=\tint,line width=\width]\p--node[pos=\pos,\anchor,black]{$\label$}\q;
 \foreach \p/\q in {
  {(0,1)}/{(0,2)}, 
  {(1,1)}/{(1,2)}, 
  {(1,1)}/{(2,2)}, 
  {(0,1)}/{(1,2)}
 } \draw[-]\p--\q;
\end{tikzpicture}}}
\qquad
\vcenter{\hbox{%
\begin{tikzpicture}
 [x={(1cm*0.45,-1.7320508cm*0.45)},y={(1cm*0.45,1.7320508cm*0.45)}]
 \foreach \p/\q/\anchor/\label/\pos/\tint/\width in {
  {(0,0)}/{(1,1)}/below/\gamma_{i,j-1}/0.3/blue/4pt,
  {(1,1)}/{(2,2)}/below/\gamma_{i+1,j}/0.7/red/1.5pt,
  {(0,1)}/{(1,2)}/above/\gamma_{ij}   /0.5/magenta/2.75pt
 } \draw[color=\tint,line width=\width]\p--node[pos=\pos,\anchor,black]{$\label$}\q;
 \foreach \p/\q in {
  {(0,0)}/{(0,1)}, 
  {(1,1)}/{(1,2)}, 
  {(0,1)}/{(1,1)}, 
  {(1,2)}/{(2,2)}
 } \draw[-]\p--\q;
\end{tikzpicture}}}
\end{equation}

With this convention, the transformation from the vertex labelling to the edge labelling is done by
setting each edge label equal to the label of the edge's right hand vertex minus that of its left hand 
vertex, that is by setting:
\begin{equation}\label{Eq-edge-vertex-labels}
\alpha_{ij}=a_{ij}-a_{i,j-1}\,;
\qquad
\beta_{ij}=a_{ij}-a_{i-1,j}\,;
\qquad
\gamma_{ij}=a_{ij}-a_{i-1,j-1}\,.
\end{equation}
This ensures that the edge labels of (\ref{Eq-elementary-triangles}) automatically satisfy (\ref{Eq-triangle-condition}). 
In the other direction, (\ref{Eq-triangle-condition}) ensures 
that inductively finding the vertex labels from the edge labels starting with $a_{00}=0$ 
gives a consistent result.

\begin{Example} \label{Ex-H-edges}
Applying (\ref{Eq-edge-vertex-labels}) to the hive specified in Example~\ref{Ex-H-vertices} by means of its vertex representation (\ref{Eq-H-vertices-ex})
yields the edge representation:
\begin{equation}\label{Eq-H-edges-ex}
\vcenter{\hbox{
}}
\end{equation}
\end{Example}

Under this correspondence between vertex and edge representations of a hive, the rhombus inequalities of (\ref{Eq-hive-vertices}) 
are translated into the betweenness conditions (\ref{Eq-betweenness-conditions}) 
as follows. Within a hive there are three types of elementary rhombi: right-leaning, upright 
and left-leaning, as illustrated by
\begin{equation}\label{Eq-rhombi}
\vcenter{\hbox{
\tikz[x={(1cm*0.5,-1.7320508cm*0.5)},y={(1cm*0.5,1.7320508cm*0.5)},
      every node/.style={inner sep=0em}]{
 \draw[very thick,-]
  (-1,-2)node[below left]{$-$}
       --node[left,outer sep=5pt]{$\alpha_{i-1,j-1}$}
  (-1,-1)node[above left]{$+$};
 \draw[very thick,-]
  (-1,-2)                     
       --node[below,outer sep=5pt]{$\gamma_{i,j-1}$}
  (0,-1) node[below right]{$+$};
 \draw[-]
  (-1,-1)
       --node[above,outer sep=3pt]{$\gamma_{ij}$}
  (0,0)  node[above right]{$-$};
 \draw[-]
  (0,-1)
       --node[pos=0.4,right,outer sep=5pt]{$\alpha_{ij}$}
  (0,0);
 \path(-1,-1)--node{$R_{ij}$}(0,-1);
}
}}
\quad
\vcenter{\hbox{
\tikz[x={(1cm*0.5,-1.7320508cm*0.5)},y={(1cm*0.5,1.7320508cm*0.5)},
      every node/.style={inner sep=0.1em}]{
 \draw[-]
  (-1,-1)node[left]{$+$}
       --node[outer sep=5pt,left]{$\alpha_{i-1,j}$}
  (-1,0) node[above]{$-$};
 \draw[very thick,-]
  (-1,0)
       --node[pos=0.4,outer sep=5pt,right]{$\beta_{ij}$}
  (0,0)  node[right]{$+$};
 \draw[-]
  (-1,-1)
       --node[pos=0.6,outer sep=4pt,left]{$\beta_{i,j-1}$}
  (0,-1) node[below]{$-$};
 \draw[very thick,-]
  (0,-1)
       --node[pos=0.3,outer sep=5pt,right]{$\alpha_{ij}$}
  (0,0);
 \path(-1,-1)--node{$U_{ij}$}(0,0);
}
}}
\quad
\vcenter{\hbox{
\tikz[x={(1cm*0.5,-1.7320508cm*0.5)},y={(1cm*0.5,1.7320508cm*0.5)},
      every node/.style={inner sep=0em}]{
 \draw[very thick,-]
  (-1,-1)node[above left]{$-$}
       --node[above,outer sep=3pt]{$\gamma_{ij}$}
  (0,0)  node[above right]{$+$};
 \draw[-]
  (0,0)
       --node[right,outer sep=5pt]{$\beta_{i+1,j}$}
  (1,0)  node[below right]{$-$};
 \draw[very thick,-]
  (-1,-1)
       --node[left,outer sep=5pt]{$\beta_{i,j-1}$}
  (0,-1) node[below left]{$+$};
 \draw[-]
  (0,-1)
      --node[below,outer sep=5pt]{$\gamma_{i+1,j}$}
  (1,0);
 \path(0,-1)--node{$L_{ij}$}(0,0);
}
}}
\end{equation}
The parameters $R_{ij}$, $U_{ij}$ and $L_{ij}$ introduced in these diagrams are not edge labels.
They are referred to as the {\it gradients\/} of the corresponding right-leaning, upright and left-leaning rhombi,
respectively.
Each gradient is defined to be the difference between parallel edge labels in the relevant rhombus
or, equivalently, the four-term alternating sign sum of the vertex labels of that rhombus.
To be precise, in terms of the edge labels the gradients are given by:
\begin{equation}\label{Eq-rhombus-gradients}
\begin{alignedat}{3}
& R_{ij} & {}={} & \alpha_{i-1,j-1}-\alpha_{ij}  & {}={} & \gamma_{i,j-1}-\gamma_{ij}\,; \\
& U_{ij} & {}={} & \alpha_{ij}-\alpha_{i-1,j}    & {}={} & \beta_{ij}-\beta_{i,j-1}\,; \\
& L_{ij} & {}={} & \beta_{i,j-1}-\beta_{i+1,j} & {}={} & \gamma_{ij}-\gamma_{i+1,j}\,.
\end{alignedat}
\end{equation}
From the edge representation, the equalities on the right are a consequence of the  triangle conditions.
The rhombus inequalities of (\ref{Eq-hive-vertices}) then take the form:
\begin{equation}\label{Eq-rhombus-inequalities}
R_{ij}\geq0,\quad U_{ij}\geq0,\quad L_{ij}\geq0\quad\text{for all $i,j$ such that $1\leq i<j\leq n$.}
\end{equation}
They are equivalent to the requirement that in each of the above diagrams (\ref{Eq-rhombi})  
the label on a thick edge is greater than or equal to the label on its parallel thin edge.
Combining the rhombi of (\ref{Eq-rhombi}) in pairs produces the trapezia of (\ref{Eq-trapezia}) 
and leads immediately to the betweenness conditions (\ref{Eq-betweenness-conditions}). 

All this gives rise to a third way of specifying hives, namely {\it the gradient representation\/}, 
which involves labelling its boundary edges and giving the gradients of one or other of its three sets of right-leaning, upright or left-leaning elementary rhombi. This is illustrated in the case $n=5$ by
\begin{equation}\label{Eq-hive-gradients}
\vcenter{\hbox{
\begin{tikzpicture}[x={(1cm*0.4,-\rootthree cm*0.4)},
                    y={(1cm*0.4,\rootthree cm*0.4)}]
\draw(0,0)--(0,5)--(5,5)--cycle;
\foreach\i in{1,...,5}\path(0,\i-1)--node[pos=0.6,left]{$\alpha_{\i}$}(0,\i);
\foreach\i in{1,...,5}\path(\i-1,5)--node[pos=0.4,right]{$\beta_{\i}$}(\i,5);
\foreach\i in{1,...,5}\path(\i-1,\i-1)--node[below]{$\gamma_{\i}$}(\i,\i);
\foreach\i in{1,...,4}\draw(0,\i)--(5-\i,5); 
\foreach\i in{1,...,4}\draw(\i,\i)--(\i,5); 
\foreach\i in{1,...,4}
 \foreach\j in{\i,...,5} {
  \ifnum\i<\j\path(\i-1,\j-1)--node[pos=0.475]{$R_{\i\j}$}(\i,\j-1);\fi
 }
\end{tikzpicture}
}}
\!\!\!
\vcenter{\hbox{
\begin{tikzpicture}[x={(1cm*0.4,-\rootthree cm*0.4)},
                    y={(1cm*0.4,\rootthree cm*0.4)}]
\draw(0,0)--(0,5)--(5,5)--cycle;
\foreach\i in{1,...,5}\path(0,\i-1)--node[pos=0.6,left]{$\alpha_{\i}$}(0,\i);
\foreach\i in{1,...,5}\path(\i-1,5)--node[pos=0.4,right]{$\beta_{\i}$}(\i,5);
\foreach\i in{1,...,5}\path(\i-1,\i-1)--node[below]{$\gamma_{\i}$}(\i,\i);
\foreach\i in{1,...,4}\draw(0,\i)--(\i,\i); 
\foreach\i in{1,...,4}\draw(\i,\i)--(\i,5); 
\foreach\i in{1,...,4}
 \foreach\j in{\i,...,5} {
  \ifnum\i<\j\path(\i-1,\j-1)--node{$U_{\i\j}$}(\i,\j);\fi
 }
\end{tikzpicture}
}}
\!\!\!
\vcenter{\hbox{
\begin{tikzpicture}[x={(1cm*0.4,-\rootthree cm*0.4)},
                    y={(1cm*0.4,\rootthree cm*0.4)}]
\draw(0,0)--(0,5)--(5,5)--cycle;
\foreach\i in{1,...,5}\path(0,\i-1)--node[pos=0.6,left]{$\alpha_{\i}$}(0,\i);
\foreach\i in{1,...,5}\path(\i-1,5)--node[pos=0.4,right]{$\beta_{\i}$}(\i,5);
\foreach\i in{1,...,5}\path(\i-1,\i-1)--node[below]{$\gamma_{\i}$}(\i,\i);
\foreach\i in{1,...,4}\draw(0,\i)--(5-\i,5); 
\foreach\i in{1,...,4}\draw(0,\i)--(\i,\i); 
\foreach\i in{1,...,4}
 \foreach\j in{\i,...,5} {
  \ifnum\i<\j\path(\i,\j-1)--node[pos=0.545]{$L_{\i\j}$}(\i,\j);\fi
 }
\end{tikzpicture}
}}
\end{equation}
In each of these three labelling schemes there are constraints on one set
of boundary edge labels, namely the $\beta$, $\gamma$ and $\alpha$ labels 
in the case of right-leaning, upright and left-leaning rhombi. For example,
in the case of the upright rhombi which we mostly use in this paper,
we must have
\begin{equation}\label{Eq-gabU}
  \gamma_k = \left(\alpha_k+\sum_{i=1}^{k-1}U_{ik}\right)
            +\left(\beta_k-\sum_{j=k+1}^n U_{kj}\right)\,.
\end{equation}
When expressed in tems of the $\alpha$ and $\beta$ edge labels, and just the upright rhombus gradients $U_{ij}$,
the three sets of rhombus inequalities take the form:
\begin{equation}\label{Eq-aUb-inequalities}
\alpha_{j-1}+\textstyle\sum\limits_{k=1}^{i-1}U_{k,j-1}\geq\alpha_j+\textstyle\sum\limits_{k=1}^iU_{kj},
\quad
U_{ij}\geq0,
\quad
\beta_i-\textstyle\sum\limits_{k=j}^nU_{ik}\geq\beta_{i+1}-\textstyle\sum\limits_{k=j+1}^nU_{i+1,k}\,.
\end{equation}


\begin{Definition}~\label{Def-LRhive}
An integer hive with $a_{00}=0$ is said to be a {\it Littlewood--Richardson (LR) hive\/} if 
all of its edge labels are non-negative integers. By virtue of the
betweenness conditions (\ref{Eq-betweenness-conditions}) this is equivalent
to the requirement that its boundary edges are labelled by the parts of partitions. 
For a fixed positive integer $n$ let 
${\cal H}^{(n)}$ denote the set of all LR $n$-hives and let
${\cal H}^{(n)}(\lambda,\mu,\nu)$
be the set of LR $n$-hives whose lower, left and right boundary edge 
labels are specified by the parts of the partitions $\lambda$, $\mu$ and $\nu$ of lengths
$\ell(\lambda),\ell(\mu),\ell(\nu)\leq n$, arranged as exemplified in the case $n=5$ by  
\begin{equation}\label{Eq-LRhives}
\vcenter{\hbox{
 }}
\end{equation}  
where the edge labels on the left, right and lower boundary
are enumerated from bottom to top, from top to bottom and from left to right, respectively. 
\end{Definition}

Since all edge labels are $\geq0$ it also follows that in every elementary triangle labelled as
in (\ref{Eq-triangle-condition}) we have both $\gamma\geq\alpha$ and $\gamma\geq\beta$. 
Moreover, repeated use of the triangle condition (\ref{Eq-triangle-condition}) ensures that
\begin{equation}
          |\lambda| = |\mu| +|\nu|\,.
\end{equation}

It should be noted that the betweenness conditions~(\ref{Eq-betweenness-conditions})
imply that within an LR hive $H\in{\cal H}^{(n)}(\lambda,\mu,\nu)$ the 
non-negative integer $\alpha$, $\beta$ and $\gamma$ edge labels automatically specify 
three differently oriented interlocking Gelfand--Tsetlin (GT) patterns~\cite{GT1}
of types specified by the  boundary edge labels $\mu$, $\nu$ and $\lambda$, respectively.
 
For the relevant definitions of GT patterns, their type and weight and their one--to--one correspondence 
with semistandard Young tableaux, see, for example,~\cite{Louck}.
Suffice to say at this point, that a GT pattern, $G$, is a triangular array of 
non-negative integers $(\lambda_j^{(i)})_{1\leq j\leq i\leq n}$ that may be abbreviated to $(\boldsymbol\lambda)$ and displayed
as below:
\begin{equation}
\begin{array}{cccccccccccccccccc}
\lambda_1^{(n)}&&\lambda_2^{(n)}&&\cdots&&\lambda_{n-1}^{(n)}&&\lambda_n^{(n)}&\cr
&\lambda_1^{(n-1)}&&\lambda_2^{(n-1)}&&\cdots&&\lambda_{n-1}^{(n-1)}\cr
&&\cdots&&\cdots&&\cdots\cr
&&&\lambda_1^{(2)}&&\lambda_2^{(2)}\cr
&&&&\lambda_1^{(1)}\cr
\end{array}
\end{equation}
The entries satisfy the betweenness conditions $\lambda_j^{(i+1)}\geq \lambda_j^{(i)}\geq \lambda_{j+1}^{(i+1)}$
for $1\leq j\leq i<n$. The $i$th row, enumerated from bottom to top, necessarily constitutes a partition $\lambda^{(i)}$
of length $\leq i$. Such a GT pattern is said to be of type $\lambda^{(n)}$ and of weight $\gamma=(\gamma_1,\gamma_2,\ldots,\gamma_n)$
where $\gamma_i=|\lambda^{(i)}|-|\lambda^{(i-1)}|$ for $i=1,2,\ldots,n$, with $|\lambda^{(0)}|=0$.
The GT patterns of type $\lambda^{(n)}$ are in bijective correspondence with semistandard Young tableaux of shape $\lambda^{(n)}$.
The partitions $\lambda^{(i)}$ specify the shape of that part of the semistandard tableaux consisting of entries $\leq i$,
that is to say having $\lambda_j^{(i)}-\lambda_j^{(i-1)}$ entries $i$ in row $j$ for $1\leq j\leq i\leq n$.

\begin{Example} \label{Ex-gt}
Our previous Examples~\ref{Ex-H-vertices} and \ref{Ex-H-edges} illustrate an LR hive
$H\in{\cal H}^{(5)}(\lambda,\mu,\nu)$ with $\lambda=(9,9,6,4,1)$, $\mu=(7,5,3,0,0)$ and $\nu=(7,5,2,0,0)$
in its vertex and edge representations, (\ref{Eq-H-vertices-ex}) and (\ref{Eq-H-edges-ex}), respectively. 
From the latter, one can see that the three differently oriented interlocking Gelfand--Tsetlin patterns 
are of types $\mu$, $\nu$ and $\lambda$ and weights $\rev(\lambda-\nu)$, $\lambda-\mu$ and 
$\nu+\rev(\mu)$, respectively, where $\rev(\kappa)=(\kappa_n,\ldots,\kappa_2,\kappa_1)$ for any $\kappa=(\kappa_1,\kappa_2,\ldots,\kappa_n)$.
They are displayed below along with their corresponding semistandard tableaux. 
\begin{equation}\label{Eq-GTx3}

\end{equation}
The superscripts $(\alpha)$, $(\beta)$ and $(\gamma)$ are intended to indicate 
that the three GT patterns specify the $\alpha$, $\beta$ and $\gamma$-edge labels of a hive, in this case
the hive $H$ of (\ref{Eq-H-edges-ex}), and that the three semistandard tableaux correspond to these same GT patterns. 
In the case of each GT pattern the specification of the original hive $H$ may be completed by adding either set of missing boundary edge labels,
and then evaluating all the remaining edge labels by means of the hive elementary triangle condition~(\ref{Eq-triangle-condition}).
\end{Example}

Alternatively, to avoid the necessity of giving any internal edge labels we may complete the specification of a 
typical LR hive by giving in addition to its non-negative boundary edge labels, 
$\lambda_i$, $\mu_i$ and $\nu_i$ for $1\leq i\leq n$, one set of rhombus 
non-negative integer gradients, namely $U_{ij}$ for $1\leq i<j\leq n$, 
as illustrated by
\begin{equation}\label{Eq-LRhives-Ugrads}
\qquad\qquad

\end{equation}
It is convenient to refer to the portion of such a hive consisting of a sequence of upright rhombi 
with gradients $U_{ij}$ for fixed $j$ and $i=1,2,\ldots,j-1$,
together with a corresponding elementary triangle with bottom edge 
labelled $\lambda_j$, as the $j$th {\it diagonal\/} of the hive.

With the above labelling the application of the condition $\gamma\geq\alpha$ to the
elementary triangle at the foot of the hive with edge label $\lambda_k$ ensures that
\begin{equation}
 \lambda_k\geq \mu_k+\sum_{i=1}^{k-1} U_{ik}\geq \mu_k\quad \hbox{for $k=1,2,\ldots,n$} \,,
\end{equation}
so that $\mu\subseteq\lambda$. Similarly, if one consults the
leftmost diagram in (\ref{Eq-hive-gradients}), the same argument applied to the 
elementary triangle with edge label $\beta_k$, one finds in the case of our LR hive that
\begin{equation}
 \nu_k\leq \lambda_k-\sum_{j=k+1}^{n} R_{kj}\leq \lambda_k\quad \hbox{for $k=1,2,\ldots,n$} \,,
\end{equation}
so that $\nu\subseteq\lambda$.

\subsection{Connection between hives and tableaux}
\label{Subsec-hives-tableaux}

Let $n$ be a fixed positive integer. 
Then there is a one-to-one correspondence between LR $n$-tableaux and LR $n$-hives.
Let $f^{(n)}$ denote the map from ${\mathcal{LR}}^{(n)}$ to ${\mathcal{H}}^{(n)}$.
taking $T\in{\cal LR}(\lambda/\mu,\nu)$ to $H\in{\cal H}^{(n)}(\lambda,\mu,\nu)$
by setting the upright rhombus gradients of $H$ to
\begin{equation}\label{Eq-TtoU}
   U_{ij} = \#\hbox{entries $i$ in row $j$ of $T$ for $1\leq i<j\leq n$}\,,
\end{equation}
and adding the boundary edge labels $\lambda$, $\mu$ and $\nu$.

\begin{Example} 
In the case $\lambda=(9,9,6,4,1)$, $\mu=(7,5,3)$, $\nu=(7,5,2)$, $n=5$ and $T$
as shown on the left, the corresponding hive $H$ is displayed on the right:
\begin{equation}\label{Ex-tab-to-hive}
T\ =\ 
\YT{0.2in}{}{
 {{},{},{},{},{},{},{},{1},{1}},
 {{},{},{},{},{},{1},{1},{2},{2}},
 {{},{},{},{1},{1},{2}},
 {{1},{2},{2},{3}},
 {{3}}
}
\qquad H\ = \ 
\vcenter{\hbox{

}}
\end{equation}
\end{Example}

Before discussing the hive conditions, let us mention that it is also
easy to describe the inverse map $f^{(n)}{}^{-1}$. The boundary edge labels
$\lambda_i$ and $\mu_i$ for $i=1,2,\ldots,n$ specify the skew shape $\lambda/\mu$.
Each rhombus gradient $U_{ij}$ for $1\leq i<j\leq n$ specifies the number of 
entries $i$ in row $j$ of $T$. For $i$ from $1$ to $j-1$ these are arranged in 
weakly increasing order from left to right across the $j$th row of $T$ and the remaining
$\lambda_j-\mu_j-\sum_{i=1}^{j-1}U_{ij}$ cells of this $j$th row are then filled 
with entries $j$. The number of these cells is necessarily non-negative since it is nothing
other than the edge label $\beta_{jj}$, and within any LR hive all
edge labels are $\geq0$. This also ensures that the total number of entries $j$ in $T$ is
$\beta_{jj}+\sum_{j=i+1}^n U_{ij}$, but from the definition of the gradients this is just 
$\beta_{jn}=\nu_j$ so that $T$ has weight $\nu=(\nu_1,\nu_2,\ldots,\nu_n)$. 

Now we show that the map $f^{(n)}$ from tableaux to hives and its inverse $f^{(n)}{}^{-1}$ preserve 
the Littlewood--Richardson nature of both tableaux and hives. To see this it should 
be noted that 
\begin{eqnarray}
  U_{ij}&=& \#\hbox{~of entries $i$ in row $j$ of $T$}\,;\\ \label{Eq-U}\cr
  R_{ij}&=&(\mu_{j-1}+\sum_{k=1}^{i-1} U_{k,j-1})
           -(\mu_j+\sum_{k=1}^i U_{k,j})\cr
        &=&(\mu_{j-1}+ \#\hbox{~of entries $<i$ in row $j-1$ of $T$}) \cr  
        &&-(\mu_j + \#\hbox{~of entries $\leq i$ in row $j$ of $T$})\,;\\  \label{Eq-R}
        \cr
  L_{ij}&=&(\nu_i-\sum_{k=j}^n U_{ik})-(\nu_{i+1}-\sum_{k=j+1}^n U_{i+1,k})
        =\sum_{k=1}^{j-1} U_{i,k}-\sum_{k=1}^{j} U_{i+1,k}\cr
        &=&\#\hbox{~of entries $i$ in the topmost $j-1$ rows of $T$}\cr
        && -\#\hbox{~of entries $i+1$ in the topmost $j$ rows of $T$}\,, \label{Eq-L}
\end{eqnarray} 
for all $(i,j)$ such that $1\leq i<j\leq n$.
It will be recognised that the conditions $U_{ij}\geq0$ correspond 
to the filling of the rows of $T$ with non-negative numbers of integer entries, 
while the conditions $R_{ij}\geq0$ correspond to the fact that entries
of $T$ are strictly increasing down columns, while the conditions
$L_{ij}\geq0$ correspond to the lattice permutation rule of Definition~\ref{Def-ss-lp} for $2\leq s\leq r\leq n$, with all other cases trivial.

This suffices to show that the above map $f^{(n)}$ that takes 
$T\in{\cal LR}(\lambda/\mu,\nu)$ to $H\in{\cal H}^{(n)}(\lambda,\mu,\nu)$,
for any fixed $n\geq\ell(\lambda)$, is a bijection. 
Since this is true for all $\lambda,\mu,\nu$ with $\ell(\lambda)\leq n$, $f^{(n)}$ provides a bijection from ${\mathcal{LR}}^{(n)}$ to ${\mathcal{H}}^{(n)}$.

In order to interpret hive edge labels and the constituent Gelfand-Tsetlin patterns in terms of tableau parameters, 
it might be noted that under this bijection, the 
partition $(\lambda_1,\lambda_2,\ldots,\lambda_i,\alpha_{i,i+1},\alpha_{i,i+2},\ldots,\alpha_{in})$ specifies the 
shape of the subtableau of $T$ consisting of all entries $\leq i$ including $0$ or empty boxes, for $i\geq0$, 
with this partition coinciding with $\mu$ in the special case $i=0$. 
On the other hand the partitions $(\beta_{1j},\beta_{2j},\ldots,\beta_{jj})$ for $j=1,2,\ldots,n$
specify the weights of the first $j$ rows of $T$, with this partition coinciding with $\nu$ in the special case $j=n$. 
More explicitly, $\alpha_{ij}$ is the number of entries $\leq i$ in row $j$, $\beta_{ij}$ is the total number of entries $i$ in rows $1,2,\ldots,j$
and $\gamma_{ij}=\alpha_{i-1,j}+\beta_{ij}$.
It follows that:
\begin{itemize}
\item $G_\mu^{(\alpha)}$ is obtained from $T\in{\mathcal{LR}}^{(n)}$ by recording the sequence of partitions 
giving the shapes occupied by the entries $<r$ in rows $r,r+1,\dots,n$ of $T$, for $r=1,2,\ldots,n$. 
\item $G_\nu^{(\beta)}$ is obtained from $T\in{\mathcal{LR}}^{(n)}$ by recording the sequence of partitions
giving the weights of the subtableau consisting of the first $r$ rows of $T$ for $r=n,\ldots,2,1$.
\end{itemize}

\subsection{Combinatorial models for LR coefficients}
\label{Subsec-models}

The Littlewood--Richardson rule states that the coefficients appearing in the 
expansion (\ref{Eq-LRcoeffs}) of a product of Schur functions are given by
\begin{equation}
  c_{\mu\nu}^\lambda = \#\{T\in{\cal LR}(\lambda/\mu,\nu)\}.
\end{equation}
It follows from the bijection between Littlewood--Richardson $n$-tableaux and Littlewood--Richardson $n$-hives
for any fixed $n\geq\ell(\lambda)$ that
\begin{equation}\label{Eq-LRcoeff-Bu}
  c_{\mu\nu}^\lambda = \#\{H\in{\cal H}^{(n)}(\lambda,\mu,\nu)\}.
\end{equation}
This result is due to Fulton, as reported in the Appendix to ~\cite{Bu}.

Alternative combinatorial models for the LR coefficients have been provided, based not on the full specification of hives,
but rather on one or two
of their three interlocking Gelfand--Tsetlin patterns as exemplified in \eqref{Eq-GTx3}.
Let
${\cal GT}_\nu^\kappa$ denote the set of GT patterns of type $\nu$ and weight $\kappa$.
Then, for example, Theorem~1 of~\cite{GZpolyed} may be expressed in the form
\begin{equation}\label{Eq-LRcoeff-GZ}
   c_{\mu\nu}^\lambda = \#\{G\in{\cal GT}_\nu^{\lambda-\mu}\,|\, d_j^{(i)}(\boldsymbol\nu)\leq \mu_i-\mu_{i+1}~~\hbox{for}~~1\leq j\leq i<n\}\,,
\end{equation}
where $G=(\boldsymbol\nu)=(\nu^{(i)}_j)_{1\leq i\leq j\leq n}$.
If $G$ is reoriented so that its entries label the $\beta$-edges of a potential hive, $H$, then by virtue of the GT pattern betweenness conditions
it automatically satisfies the two sets of rhombus conditions, $L_{ij}\geq0$ and $U_{ij}\geq0$ for $1\leq i<j\leq n$. 
Moreover, it may then be seen that $d_j^{(i)}(\boldsymbol\nu)=\sum_{k=1}^j U_{k,i+1}-\sum_{k=1}^{j-1} U_{ki}$ for $1\leq j\leq i<n$. These constraints are 
precisely those appearing on the left of~(\ref{Eq-aUb-inequalities}) with $\alpha_i=\mu_i$ for $i=1,2,\ldots,n$, which correspond
to the conditions $R_{j,i+1}\geq0$. It follows that $G$ restricted by these conditions is precisely a GT
pattern of the form $G^{(\beta)}_\nu$, whose completion by means of the specification of the remaining boundary edges and the use of the 
hive elementary triangle condition~(\ref{Eq-triangle-condition})
gives some $H\in{\cal H}^{(n)}(\lambda,\mu,\nu)$. 

Similarly, Theorem~4.3 of~\cite{BZphy} may be expressed in the form
\begin{equation}\label{Eq-LRcoeff-BZ}
   c_{\mu\nu}^\lambda = \#\{G\in{\cal GT}_\mu^{\rev(\lambda-\nu)}\,|\, n_i^{(j)}(\boldsymbol\mu)\leq \nu_i-\nu_{i+1}~\hbox{for}~1\leq i<j\leq n\}\, 
\end{equation}
where $G=(\boldsymbol\mu)=(\mu^{(i)}_j)_{1\leq i\leq j\leq n}$. 
This time if $G$ is reoriented so that its entries label the $\alpha$-edges of a potential hive, then by virtue of the GT pattern betweenness conditions
it automatically satisfies the two sets of rhombus conditions, $R_{ij}\geq0$ and $U_{ij}\geq0$ for $1\leq i<j\leq n$. 
It may then be seen that $n_i^{(j)}(\boldsymbol\mu)=\sum_{k=j}^n U_{ik}-\sum_{k=j+1}^{n} U_{i+1,k}$ for $1\leq i<j\leq n$. These constraints are 
precisely those appearing on the right of~(\ref{Eq-aUb-inequalities}) with $\beta_i=\nu_i$ for $i=1,2,\ldots,n$, which correspond
to the conditions $L_{ij}\geq0$. It follows that $G$ restricted by these conditions is precisely a GT pattern of the form $G^{(\alpha)}_\mu$,
whose completion as before gives some $H\in{\cal H}^{(n)}(\lambda,\mu,\nu)$.
Alternatively, a complete specification of the hive may be accomplished by using the GT pattern $G^{(\alpha)}_\mu$ augmented this time through the 
addition of the boundary edge labels $\lambda_i$, 
as is done in~\cite{Louck}, for example, where in equation (11.135) the LR coefficients are expressed as the number of such modified GT patterns
satisfying lattice permutation conditions derived from the corresponding LR tableau. The conditions involve parameters $l_{ij}$,
where $l_{i,j}=U_{ji}$ for $j<i$, and $l_{i,i}=\lambda_i-\mu_i-\sum_{j=1}^{i-1} U_{ji}$, and the conditions themselves
once more correspond to the non-negativity of $L_{ij}$ for $1\leq i<j\leq n$.

Finally, from a consideration of the highest and lowest weight elements of the connected components isomorphic 
to $B_\lambda$ in $B_\mu\otimes B_\nu$~\cite{HenKam} it may be seen that  
\begin{equation}\label{Eq-LRcoeff-KH}
\begin{split}
   c_{\mu\nu}^\lambda = \#\{(G_1,G_2)  & \in \mathcal{GT}_{\mu}^{\rev(\lambda-\nu)}\times\mathcal{GT}_{\nu}^{\lambda-\mu}\,|\, \\ 
													             & \mu^{(n-i)}_{j-i}-\mu^{(n-i+1)}_{j-i+1}=\nu^{(j)}_i-\nu^{(j-1)}_i~~\hbox{for}~~1\leq i<j\leq n\,\}\,.
\end{split}
\end{equation}
where $G_1=(\mu^{(i)}_j)_{1\leq i\leq j\leq n}$ and $G_2=(\nu^{(i)}_j)_{1\leq i\leq j\leq n}$. In this case
$G_1$ and $G_2$ may be reoriented so as to form two interlocking GT patterns in the shape of a hive $H$
with their entries labelling the $\alpha$ and $\beta$ edges, respectively. Then the result, 
obtained through the application of the hive triangle conditions to specify the remaining $\gamma$-edge labels,
is a hive with all rhombus gradients non-negative if and only if the gradients $U_{ij}$ of the upright rhombi arising from 
$G_1$ and $G_2$ coincide. In such a case the upright rhombus gradients are given by
$U_{ij}=\mu^{(n-i)}_{j-i}-\mu^{(n-i+1)}_{j-i+1}=\nu^{(j)}_i-\nu^{(j-1)}_i$, thereby explaining the necessity of
these conditions appearing in (\ref{Eq-LRcoeff-KH}). So the pair $(G_1,G_2)$ is to be identified with
the pair of GT patterns $(G^{(\alpha)}_\mu,G^{(\beta)}_\nu)$ associated with some $H\in{\cal H}^{(n)}(\lambda,\mu,\nu)$.

Thus all three of the above combinatorial expressions (\ref{Eq-LRcoeff-GZ})-(\ref{Eq-LRcoeff-KH}) for $c_{\mu\nu}^\lambda$
coincide with that given in (\ref{Eq-LRcoeff-Bu}). In what follows, rather than explicitly using GT patterns, we use the hive model based on (\ref{Eq-LRcoeff-Bu}).
In doing so it is worth commenting here on the case $\ell(\lambda)=r$ with $r<n$ for which we have $\lambda_n=0$. 
The hive conditions then imply that $\mu_n=\nu_n=0$ and the upright
rhombus gradients $U_{in}$ in the $n$th diagonal of any $n$-hive $H\in{\cal H}^{(n)}(\lambda,\mu,\nu)$
with $\lambda_n=0$ are all $0$. We say in such a case that the $n$th diagonal of $H$ is empty. Let $\kappa_n$ be an operator 
whose action is to restrict any $n$-hive $H$ with empty $n$th diagonal to 
an $(n-1)$-hive $\tilde{H}$ consisting of the leftmost $(n-1)$ diagonals of $H$. This map from
${\cal H}^{(n)}(\lambda,\mu,\nu)$ with $\lambda_n=0$ to ${\cal H}^{(n-1)}(\lambda,\mu,\nu)$
generated by the application of $\kappa_n$ is necessarily a bijection since both
${\cal H}^{(n)}(\lambda,\mu,\nu)$ and ${\cal H}^{(n-1)}(\lambda,\mu,\nu)$
are in bijective correspondence with ${\cal LR}(\lambda/\mu,\nu)$, with the two
maps from $T$ to $H$ and from $T$ to $\tilde{H}$, governed by (\ref{Eq-TtoU}) for appropriate
values $n$ and $n-1$, respectively,
ensuring consistency with the map $\kappa_n:H\mapsto \tilde{H}$.
\medskip

\begin{Example}~~Typically for $n=4$, $\lambda=(8,6,5,0)$, $\mu=(5,4,0)$, $\nu=(5,4,1,0)$ and 
$T$ as shown below, we have
\[

\]
\end{Example}
The operations $\kappa_k$ for $k=1,2,\ldots,n$ are required in Sections~\ref{Sec-hive-bijection} and \ref{Sec-hive-inv} 
where we construct a bijective map from ${\cal H}^{(n)}(\lambda,\mu,\nu)$ to ${\cal H}^{(n)}(\lambda,\nu,\mu)$ and prove its 
involutive property when extended to map from ${\cal H}^{(n)}$ to ${\cal H}^{(n)}$. 


\section{The action of deletion operators on tableaux}\label{Sec-tab-del-ops}

Our task is to provide a bijective map from all $T\in{\cal LR}(\lambda/\mu,\nu)$
to all $S\in{\cal LR}(\lambda/\nu,\mu)$, and to show that this bijection is an involution, 
or, equivalently, to provide a map 
from all $H\in{\cal H}^{(n)}(\lambda,\mu,\nu)$ to all $K\in{\cal H}^{(n)}(\lambda,\nu,\mu)$, 
and to show it has the same properties. 
As a first step it is useful to define certain 
deletion operators whose action on $T$ or $H$ will enable us to build up $S$ or $K$. 

First we deal with tableaux, for which the deletion operators were first introduced
in~\cite{Az1,Az2}, and later described in~\cite{PV2}. Their action can be specified by 
modifying slightly a deletion procedure described by Sagan and Stanley~\cite{SS}.
\begin{Definition}~\label{Def-dp} 
For any given $T\in{\cal LR}(\lambda/\mu,\nu)$, augmented by the inclusion
of entries $0$ in each cell of $D(\mu)$, and any corner cell $(r,\lambda_r)$ of $D(\lambda)$
the {\it deletion operator\/} $\delta_{r,\lambda_r}$ is defined to be such that:\\
(i) if $T(r,\lambda_r)=r$ then the action of $\delta_{r,\lambda_r}$ on $T$ is just to delete the
cell $(r,\lambda_r)$. In this case a parameter called the {\it terminating row number\/} 
is defined to be $0$.\\
(ii) if $0<T(r,\lambda_r)<r$ then the action of $\delta_{r,\lambda_r}$ on $T$ is first to delete
the cell $(r,\lambda_r)$ and move its entry into row $r-1$ by replacing the rightmost entry 
strictly less than itself, with that replaced entry in turn being bumped into row $r-2$, again replacing 
the rightmost entry strictly less than itself, and 
continuing in this way until an entry $0$ is replaced by some non-zero entry $x$, say.
Note that this final replacement occurs at some corner cell $(k,\mu_k)$ of $D(\mu)$
since, if the original position of the entry $x$ is $(k+1,j)$, the cell $(k,j)$ must have been empty (otherwise there would have been an entry $T(k,j)<x$ in row $k$) so that we have $\mu_k\geq j>\mu_{k+1}$.
In this case the {\it terminating row number\/} is defined to be $k$. 
The set of the cells where the changes are made
defines a {\it deletion path\/} $P$ in the form of a vertical strip from the corner $(r,\lambda_r)$ 
of $D(\lambda)$ to a corner $(k,\mu_k)$ of $D(\mu)$ with one cell $P[s]$ in row $s$ for each
$s=r,r-1,\ldots,k$.
The reason why the path forms a vertical strip, in other words why the column numbers in the path weakly increase as the replacements go up, is reviewed in the proof of Lemma~\ref{Lem-ss} below.\\
(iii) if $T(r,\lambda_r)=0$ then the action of $\delta_{r,\lambda_r}$ on $T$ is just to remove
the corner cell $(r,\lambda_r)=(r,\mu_r)$ of $D(\lambda)$ and $D(\mu)$. This cell constitutes a 
deletion path consisting of a single cell with a corresponding terminating row number $r$.  
It follows that type (iii) could be included as a special case of type (ii), although this time the
removed entry $0$ is not replaced by any non-zero entry.  
\end{Definition}

While cases (i) and (iii) are rather trivial it is worth illustrating an example of case (ii):

\begin{Example} 
In the case $n=6$, $\lambda=(9,9,6,4,1)$ and $\mu=(7,5,3)$ the action of
$\delta_{4,4}$ on a typical tableau $T$ yields $\tilde{T}$ as shown:
\begin{equation}\label{Ex-2}
T = \vcenter{\hbox{

}}
\end{equation}
In this example the path $P$ has been marked in $T$ by blue entries, with of course the 
initial corner cell deleted in $\tilde{T}$.  
\end{Example}

It might be noted that it is the Littlewood--Richardson property together with the fact
that the contents of a deletion path are strictly decreasing as one works up the tableau that
leads in cases of type (ii) to the deletion path terminating in row $k$ with $k\geq1$. In contrast
to this in cases of type (i), if one performed the deletion as in cases of type (ii) by means of 
deletions in the sense of Sagan and Stanley \cite{SS} starting from a corner cell $(r,\lambda_r)$ of $T$ 
containing the row number $r$, then, due to the Littlewood--Richardson property, the process would 
continue until it bumped a non-zero entry, namely $1$,  out of row $1$, $i.e.$ it would be external. We avoid 
this as in~\cite{Az1,Az2,PV2} by restricting the deletion to be just that of the single 
corner cell, but it explains why, in this situation, the terminating row number is defined to be zero.

One consequence of this is that in cases of type (i), while the semistandard property is automatically 
preserved under deletion, the same is not always true of the lattice permutation property.
However, if $T$ has no entries greater than $r$, including the case $\ell(\lambda)=r$, then it is easy to see that the type (i) deletion of the corner cell $(r,\lambda_r)$ 
does automatically preserve both the semistandard and the lattice permutation properties. 
Similarly, it is easy to see rather trivially that both the semistandard
and the lattice permutation properties are preserved under deletions of type (iii).
Moreover the deletion procedure has been designed so that it also preserves both the semistandard
and the lattice permutation properties of tableaux under deletions of type (ii)
as shown in the following two lemmas.

\begin{Lemma}\label{Lem-ss}
Let the partition $\lambda$ have length $\ell(\lambda)\leq n$ and let $T$ be a semistandard tableau of shape $\lambda/\mu$ for some $\mu\subseteq\lambda$.
If $(r,\lambda_r)$ is a corner cell of $D(\lambda)$ for which the entry in $T$ is 
strictly less than its row number $r$, then the action of $\delta_{r,\lambda_r}$ on 
$T$ preserves the semistandard property.
\end{Lemma}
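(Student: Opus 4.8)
The plan is to track exactly which cells of $T$ change under $\delta_{r,\lambda_r}$ in case (ii) and then to verify the two defining conditions of semistandardness---weak increase along rows and strict increase down columns---at each such cell, using only the original semistandardness of $T$ together with the bumping rule. Write the deletion path as $P[s]=(s,c_s)$ for $s=r,r-1,\dots,k$, with $c_r=\lambda_r$ and $c_k=\mu_k$, and let $b_s$ denote the entry of $T$ at $P[s]$, so that $b_r=T(r,\lambda_r)$ and $b_k=0$. By construction the value carried into row $s$ is $b_{s+1}$, the cell $(s,c_s)$ is the \emph{rightmost} cell of row $s$ whose entry is strictly less than $b_{s+1}$, and the displaced entry $b_s$ satisfies $b_s<b_{s+1}$; hence $0=b_k<b_{k+1}<\cdots<b_{r-1}<b_r=T(r,\lambda_r)$. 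After the deletion the only changes are that $(r,\lambda_r)$ is removed and that, for $k\le s\le r-1$, the cell $(s,c_s)$ now carries the larger value $b_{s+1}$ in place of $b_s$.

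First I would record the vertical-strip property of $P$ promised in Definition~\ref{Def-dp}, namely $c_r\le c_{r-1}\le\cdots\le c_k$. This is an easy downward induction on $s$: since $(s-1,c_s)$ lies in $D(\lambda)$ (as $\lambda_{s-1}\ge\lambda_s\ge c_s$) and $T$ is semistandard, its entry satisfies $T(s-1,c_s)<T(s,c_s)=b_s$ (or equals $0$ if it lies in $D(\mu)$); thus row $s-1$ already contains an entry strictly smaller than $b_s$ at column $c_s$, so its rightmost such entry, which defines $c_{s-1}$, sits at a column $c_{s-1}\ge c_s$.

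The row condition is then routine. At a modified cell $(s,c_s)$, now holding $b_{s+1}$, the right neighbour $(s,c_s+1)$, if it is a cell of the skew shape, has entry $\ge b_{s+1}$ precisely because $c_s$ was chosen rightmost with entry $<b_{s+1}$; and the left neighbour $(s,c_s-1)$, which is never itself on the path, has entry at most $b_s<b_{s+1}$ by the original weak increase. Hence weak increase along rows survives.

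The column condition is where the only genuine case analysis, and the main obstacle, occurs, namely for the neighbour \emph{below} a modified cell. Fix a modified cell $(s,c_s)$ holding $b_{s+1}$. Its upper neighbour $(s-1,c_s)$ has entry at most $b_s$ in the new tableau---being either the displaced value $b_s$ itself (when $c_{s-1}=c_s$), an original skew entry $<b_s$, or a $0$---hence strictly below $b_{s+1}$, so strictness upward holds. For the cell below, $(s+1,c_s)$, the vertical-strip property gives $c_{s+1}\le c_s$, and there are three possibilities. If $s+1=r$ and $c_s=c_r=\lambda_r$, the cell below is the deleted corner and imposes no constraint, while if $c_{s+1}<c_s$ with $s+1=r$ then $c_s>\lambda_r$ lies outside row $r$ and again there is no cell below. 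If $c_{s+1}=c_s$ with $s+1\le r-1$, the cell below is the path cell now holding $b_{s+2}>b_{s+1}$. The essential case is $c_{s+1}<c_s$ with $s+1\le r-1$: here $(s+1,c_s)$ lies strictly to the right of $(s+1,c_{s+1})$, the rightmost cell of row $s+1$ whose entry is strictly less than the value $b_{s+2}$ inserted there, so its unchanged entry satisfies $T(s+1,c_s)\ge b_{s+2}>b_{s+1}$. In every case strictness down the column survives, completing the proof. The crux is thus this last observation: maximality of the bumping position one row below forces every entry to its right to be at least the value carried there, which strictly exceeds the value newly placed just above it.
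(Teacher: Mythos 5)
Your proof is correct and follows essentially the same route as the paper's: both establish the vertical-strip property of the deletion path from column strictness, use the rightmost-replacement rule to preserve weak increase along rows, and finish with a case analysis on whether consecutive path cells share a column (your cases $c_{s+1}=c_s$ versus $c_{s+1}<c_s$ correspond exactly to the paper's $l=j$ versus $l>j$). The only difference is organizational -- you check the two vertical neighbours of each modified cell, while the paper checks columns $j$ and $l$ at each bumping step -- which amounts to the same verification.
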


\noindent{\bf Proof}:
Let $P$ be the deletion path corresponding to the action of $\delta_{r,\lambda_r}$ on $T$.
Since $T(r,\lambda_r)<r$, then as we saw earlier $P$ terminates in some corner $(k,\mu_k)$
of $D(\mu)$. Let $c_r>c_{r-1}>\cdots>c_{k+1}> (c_k=) 0$ be the entries of $P$ in $T$. 
Removing the corner entry $c_r$ from row $r$ clearly does not affect the semistandard
property.  
For $i=r,r-1,\ldots,k+1$ the bumping process involves the rightmost entry $c_i$ from row $i$ 
and say column $j$ replacing the rightmost entry $c_{i-1}<c_i$ in row $i-1$ and say column $l$. This ensures that entries remain weakly increasing from left to right across the row $i-1$. Moreover the column strictness of entries in $T$ ensures that $l\geq j$
(this is why the deletion path forms a vertical strip). If $l=j$  
the column strictness between rows $i-1$ and $i$ is preserved since after deletion $c_i$ will lie immediately above
either an empty site (outside the outer shape) if $i=r$ or the larger entry $c_{i+1}$ that has replaced it in row $i$
if $i<r$. If $l>j$ then after deletion $c_i$ will lie in row $i-1$ immediately above either an empty site (outside the outer shape) or an entry that was originally to the right of the rightmost $c_i$ in row $i$ and is therefore greater than $c_i$. Thus 
the column strictness between rows $i-1$ and $i$ is preserved in column $l$.
That in column $j$ is easier since, while the entry of the cell $(i-1,j)$ remains unchanged, that of the cell $(i,j)$ is now larger than before (or empty outside the outer shape).
Thus semistandardness is preserved.
\qed

\begin{Lemma}\label{Lem-lp}
Let the partition $\lambda$ have length $\ell(\lambda)\leq n$ and let $T$ be a Littlewood--Richardson tableau of shape $\lambda/\mu$ for some $\mu\subseteq\lambda$.
If $(r,\lambda_r)$ is a corner cell of $D(\lambda)$ for which the entry in $T$ is 
strictly less than its row number $r$, then the action of $\delta_{r,\lambda_r}$ on 
$T$ preserves the lattice permutation property.
\end{Lemma}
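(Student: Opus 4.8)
The plan is to track the effect of $\delta_{r,\lambda_r}$ on the row-by-row letter counts, since the lattice permutation property of Definition~\ref{Def-ss-lp} depends only on how many copies of each letter lie in each initial segment of rows, not on their horizontal positions. Writing $c_r>c_{r-1}>\cdots>c_{k+1}>(c_k=)0$ for the entries of the deletion path $P$ in $T$ exactly as in the proof of Lemma~\ref{Lem-ss}, the first step is to record the net effect of the bumping: for each $j$ with $k+1\le j\le r$ the single copy of the letter $c_j$ sitting at $P[j]$ in row $j$ is relocated to $P[j-1]$ in row $j-1$ (with $c_r$ leaving the old corner of $\lambda$ and $c_{k+1}$ landing in the formerly empty corner $(k,\mu_k)$), while every other cell keeps its row. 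Since the $c_j$ are distinct, it follows that, if $N_p(v)$ and $\tilde N_p(v)$ denote the number of entries equal to $v$ in the first $p$ rows of $T$ and of $\tilde T=\delta_{r,\lambda_r}T$ respectively, then $\tilde N_p(v)=N_p(v)$ unless $v=c_j$ for some $j$ and $p=j-1$, in which case $\tilde N_{j-1}(c_j)=N_{j-1}(c_j)+1$.

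With this in hand I would verify the lattice inequality $\tilde N_p(s)\le\tilde N_{p-1}(s-1)$ for all $p\ge1$, $s\ge2$ by comparing increments against the inequality $N_p(s)\le N_{p-1}(s-1)$ already known for $T$. The left-hand side can exceed $N_p(s)$ only when $s=c_j$ and $p=j-1$, and the right-hand side picks up a compensating $+1$ exactly when $s-1=c_{j-1}$, the path letter one row higher. Hence the only case not immediate from the inequality for $T$ is $s=c_j$, $p=j-1$, with $c_{j-1}\neq s-1$; in every other case the inequality for $\tilde T$ follows at once.

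The crux is therefore this single residual case, and here I would exploit the defining feature of the bumping, namely that $c_{j-1}$ is the \emph{rightmost} entry of row $j-1$ strictly smaller than $c_j=s$. Because the row is weakly increasing, every entry of row $j-1$ smaller than $s$ is $\le c_{j-1}$; since $c_{j-1}\neq s-1$ forces $c_{j-1}\le s-2$ (and in the terminating subcase $j-1=k$ the rightmost entry below $s$ is the empty corner, so all genuine entries of row $k$ are $\ge s$), row $j-1$ contains no letter $s-1$ at all, whence $N_{j-1}(s-1)=N_{j-2}(s-1)$. On the other hand row $j$ does contain $s=c_j$ at $P[j]$, so $N_j(s)\ge N_{j-1}(s)+1$; combining this with the lattice inequality for $T$ at $(p,s)=(j,s)$ gives $N_{j-1}(s)+1\le N_j(s)\le N_{j-1}(s-1)=N_{j-2}(s-1)$. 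Reading off the increments yields $\tilde N_{j-1}(s)=N_{j-1}(s)+1\le N_{j-2}(s-1)=\tilde N_{j-2}(s-1)$, which is exactly the required inequality with $p=j-1$. This closes the residual case and completes the proof. I expect the main obstacle to be precisely the bookkeeping here: extracting from the bumping rule both the absence of $s-1$ in row $j-1$ and the presence of $s$ in row $j$, and handling the terminating subcase $j-1=k$ on the same footing.
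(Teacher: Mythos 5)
Your proof is correct and is essentially the paper's own argument in different notation: your $N_p(v)$ is the paper's $\beta_{vp}$, your identification of the only dangerous pairs $(p,s)=(j-1,c_j)$ with $c_{j-1}\neq s-1$ matches the paper's observation that $L_{ij}$ can only be lost when $\beta_{i+1,j}$ increases while $\beta_{i,j-1}$ does not, and your residual-case chain $N_{j-1}(s)+1\leq N_j(s)\leq N_{j-1}(s-1)=N_{j-2}(s-1)$ is precisely the paper's computation $L_{ij}=\beta_{i,j-1}-\beta_{i+1,j}=\beta_{ij}-\beta_{i+1,j}\geq 1+\beta_{ij}-\beta_{i+1,j+1}\geq 1$ (using $U_{ij}=0$, $U_{i+1,j+1}\geq 1$ and $L_{i,j+1}\geq 0$), differing only in that you conclude the inequality for $\tilde T$ directly rather than showing $L_{ij}\geq 1$ before deletion.
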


\noindent{\bf Proof}:
As usual for each $(i,j)$ such that $1\leq i\leq j\leq n$, we let $U_{ij}$ be 
the number of entries $i$ in row $j$ of $T$ and let $\beta_{ij}$ 
be the number of entries $i$ in rows $1$ through $j$ of $T$. 
The lattice permutation inequalities for $T$ take the form $L_{ij}=\beta_{i,j-1}-\beta_{i+1,j}\geq0$ for $1\leq i<j\leq n$.

Again let $P$ be the deletion path corresponding to the action of $\delta_{r,\lambda_r}$ 
on $T$, and note that since $T(r,\lambda_r)<r$, the path $P$ terminates in a corner of $D(\mu)$.
Let $k$ be the terminating row number and let $c_r>c_{r-1}>\cdots>c_{k+1}>0$ be the entries of $P$ in $T$. Then as a result of the deletion operation $\beta_{c_r,r-1},\beta_{c_{r-1},r-2},\dots,\beta_{c_{k+1},k}$ each increase by one, and all other $\beta_{ij}$ stay the same.
It follows that the inequality $L_{ij}=\beta_{i,j-1}-\beta_{i+1,j}\geq0$ in $T$ may only be lost if $\beta_{i+1,j}$ increases by $1$ and $\beta_{i,j-1}$ stays the same upon deletion.
This implies that along $P$ an entry $i+1$ is bumped from row $j+1$ to row $j$ where it 
either replaces an entry $0$ or bumps an entry $l$ from row $j$ to row $j-1$ with $l<i$.
In both cases this can only occur if $j+1\leq r\leq n$ and $U_{i+1,j+1}=\beta_{i+1,j+1}-\beta_{i+1,j}\geq1$ and $U_{ij}=\beta_{ij}-\beta_{i,j-1}=0$. It follows that 
$L_{ij}=\beta_{i,j-1}-\beta_{i+1,j}=\beta_{ij}-\beta_{i+1,j}\geq1+\beta_{ij}-\beta_{i+1,j+1}\geq1$,
where in the final step use has been made of the fact that $L_{i,j+1}=\beta_{ij}-\beta_{i+1,j+1}\geq0$.
Since $L_{ij}$ is therefore $\geq1$ before the deletion along $P$ and the deletion can only diminish it by at most $1$, the condition $L_{ij}\geq0$ is maintained after deletion.
Thus the lattice permutation property is preserved. 
\qed

These Lemmas~\ref{Lem-ss} and~\ref{Lem-lp} enable us to prove 
in the following Lemma~\ref{Lem-lr} that the deletion procedure
preserves the Littlewood--Richardson nature of a tableau.  
In order to specify changes of outer and inner shapes and weights we define 
$\epsilon_i\in \mathbb{Z}^n$ with $i$th entry equal to $1$ and 
all other entries $0$, for $1\le i\le n$.

\begin{Lemma}\label{Lem-lr} Let $\ell(\lambda)=n$. The deletion operator $\delta_{n,\lambda_n}$ on $T\in{\cal LR}({\lambda/\mu,\nu})$ maps $T$ to $\tilde{T}$, with 
$\tilde{T}\in{\cal LR}((\lambda-\epsilon_n)/\mu,(\nu-\epsilon_n))$ if $\nu_n>0$, and  
$\tilde{T}\in{\cal LR}((\lambda-\epsilon_n)/(\mu-\epsilon_k),\nu)$ 
if $\nu_n=0$ and the terminating row number is $k$ with $1\le k\le n$.
\end{Lemma}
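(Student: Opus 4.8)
The plan is to run a case analysis on the entry $T(n,\lambda_n)$ in the bottom-right corner, since this entry dictates which of the three types of deletion in Definition~\ref{Def-dp} is triggered, and to show that the relevant type is detected precisely by the value of $\nu_n$. The one genuinely combinatorial ingredient I would establish first is that in any LR tableau an entry $s$ can occur only in rows $i\ge s$: iterating the lattice permutation inequality, the number of entries $s$ in the first $r$ rows is at most the number of entries $s-1$ in the first $r-1$ rows, at most the number of entries $s-2$ in the first $r-2$ rows, and so on down to the number of entries $1$ in the first $r-s+1$ rows, which refers to no rows and is therefore $0$ when $r<s$. Taking $s=n$ and using $\ell(\lambda)=n$, every entry $n$ must sit in row $n$, and since the entries of row $n$ weakly increase with maximal possible value $n$, all such entries are gathered at the right-hand end of that row. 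Hence $T(n,\lambda_n)=n$ exactly when $\nu_n>0$, and $T(n,\lambda_n)<n$ exactly when $\nu_n=0$.

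In the case $\nu_n>0$ we have $T(n,\lambda_n)=n$, so $\delta_{n,\lambda_n}$ acts by type~(i): the corner cell is simply removed and the terminating row number is $0$. The outer shape becomes $\lambda-\epsilon_n$, the inner shape $\mu$ is untouched, and exactly one entry $n$ disappears so the weight becomes $\nu-\epsilon_n$. Because $\ell(\lambda)=n$ forces $T$ to have no entries exceeding $n$, the remark preceding Lemma~\ref{Lem-ss} shows this type~(i) deletion preserves both the semistandard and the lattice permutation properties, giving $\tilde T\in\mathcal{LR}((\lambda-\epsilon_n)/\mu,\nu-\epsilon_n)$.

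In the case $\nu_n=0$ we have $T(n,\lambda_n)<n$, and I would split according to whether the corner cell lies in $D(\mu)$. If $\mu_n=\lambda_n$ the corner is an empty cell and $\delta_{n,\lambda_n}$ is of type~(iii): it is removed from both $D(\lambda)$ and $D(\mu)$, the terminating row number is $k=n$, the shapes become $(\lambda-\epsilon_n)/(\mu-\epsilon_n)$, the weight is unchanged, and the LR property is preserved trivially. If instead $\mu_n<\lambda_n$ then $0<T(n,\lambda_n)<n$ and $\delta_{n,\lambda_n}$ is of type~(ii): a deletion path ascends to a corner $(k,\mu_k)$ of $D(\mu)$ with $1\le k\le n-1$, and Lemmas~\ref{Lem-ss} and~\ref{Lem-lp} guarantee that both defining properties survive. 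The weight is unchanged because the path realises a chain $c_n>c_{n-1}>\cdots>c_{k+1}>0$ in which each nonzero value $c_i$ leaves its cell only to reappear one row higher, while the sole quantity consumed, at the top of the path, is a $0$; thus every nonzero multiplicity is preserved and the weight stays $\nu$. Since the outer shape loses the corner $(n,\lambda_n)$ and the inner shape loses the corner $(k,\mu_k)$, the shape becomes $(\lambda-\epsilon_n)/(\mu-\epsilon_k)$. Both subcases deliver $\tilde T\in\mathcal{LR}((\lambda-\epsilon_n)/(\mu-\epsilon_k),\nu)$ with $1\le k\le n$.

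The only place where care is warranted is checking that the two shapes produced are honest partitions with $\mu-\epsilon_k\subseteq\lambda-\epsilon_n$. This is immediate: $(n,\lambda_n)$ is a corner of $D(\lambda)$ and $(k,\mu_k)$ is a corner of $D(\mu)$ (the latter guaranteed by Definition~\ref{Def-dp}), so both $\lambda-\epsilon_n$ and $\mu-\epsilon_k$ are partitions; containment is unaffected away from rows $k$ and $n$, holds in row $k$ since $\mu_k-1\le\lambda_k$, and holds in row $n$ because in type~(ii) the hypothesis $\mu_n<\lambda_n$ gives $\mu_n\le\lambda_n-1$ while in type~(iii) the $n$th parts of $\mu-\epsilon_n$ and $\lambda-\epsilon_n$ remain equal. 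Since the substantive LR-preservation work is already done in Lemmas~\ref{Lem-ss} and~\ref{Lem-lp}, the main effort here is simply the identification of the deletion type from $\nu_n$ together with the telescoping weight count, which I expect to be the only mildly delicate step.
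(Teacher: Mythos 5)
Your proposal is correct and follows essentially the same route as the paper's proof: the same three-way case split ($\nu_n>0$ giving a type (i) deletion, $\nu_n=0$ with $\mu_n<\lambda_n$ giving type (ii) handled by Lemmas~\ref{Lem-ss} and~\ref{Lem-lp}, and $\nu_n=0$ with $\mu_n=\lambda_n$ giving the trivial type (iii) case with $k=n$). The only difference is that you spell out details the paper treats as immediate — that lattice permutation forces entries $s$ into rows $\ge s$, the telescoping weight count, and the partition/containment checks — which is fine but not a different argument.
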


\noindent{\bf Proof:~~} If $\nu_n>0$ there is nothing to prove. The lattice permutation rule ensures that there is at least one entry $n$ in row $n$, and the operator $\delta_{n,\lambda_n}$ just removes the cell $(n,\lambda_n)$ with entry $n$ from $T$. The resulting tableau $\tilde{T}$ is clearly in ${\cal LR}((\lambda-\epsilon_n)/\mu,(\nu-\epsilon_n))$.

If $\nu_n=0$ and $\lambda_n>\mu_n$ so that $T(n,\lambda_n)$ is non-zero and less than $n$, then $\delta_{n,\lambda_n}$ 
performs the usual type (ii) deletion as described above along a deletion path $P$ in $T$ starting at the corner cell $(n,\lambda_n)$ of $D(\lambda)$ and terminating in a corner $(k,\mu_k)$ of $D(\mu)$ with $1\leq k<n$. It follows from Lemmas~\ref{Lem-ss} and~\ref{Lem-lp} in the case $r=n$ that the action of $\delta_{n,\lambda_n}$ 
preserves both the semistandard and the lattice permutation property. Having emptied the 
cell $(n,\lambda_n)$ and filled the cell $(k,\mu_k)$ with a non-zero entry we have
$\tilde{T}\in{\cal LR}((\lambda-\epsilon_n)/(\mu-\epsilon_k),\nu)$ as claimed. 

Finally, if $\nu_n=0$ and $\lambda_n=\mu_n$ so that $T(n,\lambda_n)=0$, then the cell $(n,\lambda_n)$ is removed both from the inner shape $\mu$ and the outer shape $\lambda$, so that $k=n$ and again there is nothing to prove. \qed

We also require
\begin{Definition}\label{Def-full-r-del}
For any given $T\in{\cal LR}(\lambda/\mu,\nu)$ with $\ell(\lambda)=r$ the {\it full $r$-deletion\/}
operator $\delta_r$ is defined by
\begin{equation}
\delta_r:=\delta_{r,1}\delta_{r,2}\cdots \delta_{r,\lambda_r}
\end{equation}
Its action on $T$ is realised by acting with $\delta_{r,\lambda_r}$ first and $\delta_{r,1}$ last. 
In general the action proceeds in three phases: 
in Phase 1 corner cells are deleted in turn as in case (i) 
until there remain no entries $r$ in row $r$, and then Phase 2 proceeds by means of deletion operations as in
case (ii) until there are no non-zero entries in row $r$. Finally, Phase 3 corresponds to the removal of the
empty cells in row $r$ as in case (iii). If $\ell(\lambda)<r$ 
the full $r$-deletion is understood to be a null-operator. 
\end{Definition}

\begin{Example} 
Given the tableau $T\in{\cal LR}(\lambda/\mu,\nu)$ as shown on the left below, the full $5$-deletion operator $\delta_{5}=\delta_{5,1}\delta_{5,2}$ acts on $T$ to yield $\delta_5\,T$ on the right. The cell to be deleted under 
the action of $\delta_{5,2}$ is outlined in green, and the deletion path defined by the action of $\delta_{5,1}$
is the set of cells outlined in red. The terminating row numbers of the paths associated with the action of $\delta_{5,2}$ and $\delta_{5,1}$ are ${\green0}$ and ${\red2}$, respectively.


\begin{equation}\label{Ex-tab-path5251}

\end{equation}
\end{Example}

In this example we took the liberty of indicating the first corner deletion and then its successor
path deletion on the same diagram. This is always possible since the cells involved are quite
distinct from one another. The same is true for any two successive path deletions brought about 
by the action of $\delta_{r,j}$ and $\delta_{r,j-1}$, 
as will be made clear in the Horizontal Path Comparison Lemma below.

Before stating it, we provide some preliminaries.
Let  $(r,\lambda_r)$ be a corner cell of $D(\lambda)$ and let $P$ be the deletion path of $\delta_{r,\lambda_r}$ applicable 
to $T\in\mathcal{LR}(\lambda/\mu,\nu)$ with $0<T(r,\lambda_r)<r$ and terminating row number $t$ ($<r$).
Let $\lambda^\flat$ and $\mu^\flat$ be defined by $D(\lambda^\flat)=D(\lambda)-\{(r,\lambda_r)\}$ and $D(\mu^\flat)=D(\mu)-\{(t,\mu_t)\}$ respectively, so that $\delta_{r,\lambda_r}T$ has shape $\lambda^\flat/\mu^\flat$, and set $P^\flat=P\cap D(\lambda^\flat/\mu^\flat)=P-\{(r,\lambda_r)\}$.
Then the bottom cell $P^\flat[r-1]$ (resp.\ the top cell $P^\flat[t]$) of $P^\flat$ 
is bounded at the bottom (resp.\ the top) by
a horizontal segment of the outer border (resp.\ the inner border) of $D(\lambda^\flat/\mu^\flat)$.
In general, for any skew LR tableau, a vertical strip $P^\flat$ ascending from a cell 
bounded at the bottom by
a horizontal segment of its outer border to a cell 
bounded at the top by 
a horizontal segment of its inner border, and containing
one cell of its shape from \textit{every} row in between, determines as illustrated below the following subsets of the shape of the tableau: that subset \textit{strictly to the left of $P^\flat$} which, by convention,
also includes everything below the row where $P^\flat$ starts, and that subset \textit{weakly to the right of $P^\flat$},
that is the union of $P^\flat$ itself and the subset \textit{strictly to the right of $P^\flat$} which, again by convention, also includes everything above the row where $P^\flat$ terminates.
If $P^\flat$ is obtained from a deletion path $P$ by removing its starting cell and if there would be no confusion, these subsets are also called \textit{strictly to the left of $P$} and so on.

In the example below on the left, the cell marked by \tikz[x={(0in,-0.08in)},y={(0.08in,0in)}]{\draw[thick,red](1,1) rectangle +(-1,-1);} is the starting cell of $P$ and those marked by \tikz[x={(0in,-0.08in)},y={(0.08in,0in)}]{\fill[red](1,1) rectangle +(-1,-1);} form $P^\flat$.
The plain white region shows the subset strictly to the left of $P$, the dots fill the subset strictly to the right of $P$, which is joined by the cells \tikz[x={(0in,-0.08in)},y={(0.08in,0in)}]{\fill[red](1,1) rectangle +(-1,-1);} to form the subset weakly to the right of $P$.
Below on the right is an example of part of some
shape $\lambda^\#/\mu^\#$ containing the same $P^\flat$ and whose bottom and top cells are the bottom and top nonempty cells in their respective columns.

\[
\vcenter{\hbox{
\begin{tikzpicture}[x={(0in,-0.08in)},y={(0.08in,0in)}]
\matrix{
\draw[semithick]
     (25,0)--++(0,5)--++(-2,0)--++(0,3)--++(-3,0)--++(0,2)--++(-2,0)--++(0,1)--++(-3,0)--++(0,5)
 --++(-3,0)--++(0,4)--++(-5,0)--++(0,3)--++(-3,0)--++(0,2)--++(-4,0)
 --++(0,-6)--++(2,0)--++(0,-2)--++(4,0)--++(0,-2)--++(1,0)--++(0,-2)--++(2,0)
 --++(0,-3)--++(3,0)--++(0,-4)--++(5,0)--++(0,-2)--++(3,0)--++(0,-4)--cycle;
\draw[thick,red] (21,9) rectangle +(-1,-1);
\path[fill=red]
     (20,8)--++(0,1)--++(-3,0)--++(0,1)--++(-1,0)--++(0,1)--++(-2,0)--++(0,2)
 --++(-2,0)--++(0,1)--++(-3,0)--++(0,2)--++(-3,0)
 --++(0,-1)--++(3,0)--++(0,-2)--++(3,0)--++(0,-1)--++(2,0)--++(0,-2)--++(2,0)
 --++(0,-1)--++(1,0)--++(0,-1)--cycle;
\path[pattern=dots]
     (20,9)--++(0,1)--++(-2,0)--++(0,1)--++(-3,0)--++(0,5)
 --++(-3,0)--++(0,4)--++(-5,0)--++(0,3)--++(-3,0)--++(0,2)--++(-4,0)
 --++(0,-6)--++(2,0)--++(0,-2)--++(4,0)
 --++(0,-1)--++((3,0)--++(0,-2)--++(3,0)--++(0,-1)--++(2,0)--++(0,-2)--++(2,0)
 --++(0,-1)--++(1,0)--++(0,-1)--cycle;
&[1in]
\draw[semithick]
     (21,5)--++(0,1)--++(-1,0)--++(0,4)--++(-2,0)--++(0,5)--++(-3,0)--++(0,2)
 --++(-5,0)--++(0,2);
\draw[semithick]
     (5,19)--++(0,-2)--++(1,0)--++(0,-4)--++(1,0)--++(0,-2)--++(2,0)--++(0,-1)
 --++(2,0)--++(0,-1)--++(1,0)--++(0,-3)--++(4,0)--++(0,-1);
\path[fill=red]
     (20,8)--++(0,1)--++(-3,0)--++(0,1)--++(-1,0)--++(0,1)--++(-2,0)--++(0,2)
 --++(-2,0)--++(0,1)--++(-3,0)--++(0,2)--++(-3,0)
 --++(0,-1)--++(3,0)--++(0,-2)--++(3,0)--++(0,-1)--++(2,0)--++(0,-2)--++(2,0)
 --++(0,-1)--++(1,0)--++(0,-1)--cycle;
\\
};
\end{tikzpicture}
}}
\]

After a deletion along $P$ starting from $(r,\lambda_r)$, another deletion path $P'$ could start either from any corner strictly to the left of $P$ or from one weakly to the right of $P$, according to which the entire new deletion path $P'$
lies either strictly to the left of $P$ or weakly to the right of $P$, due to the type of argument 
used repeatedly since~\cite[proof of Theorem 1]{Kn}(Knuth) in dealing with insertions rather than deletions,
see also~\cite[Remarque 2]{Schu} (Sch\"{u}tzenberger).
We now include a proof of this claim not only for the sake of maintaining a self-contained presentation 
but also to take this opportunity to generalise it in a manner to be used later.

\begin{Lemma}~[Elementary Path Comparison]\label{Lem-ep} 
Let $T\in\mathcal{LR}(\lambda/\mu,\nu)$, $(r,\lambda_r)$ a corner of $D(\lambda)$ such that $T(r,\lambda_r)<r$ and $P$ the deletion path of $\delta_{r,\lambda_r}$ applied to $T$ with terminating row number $t$.
Set $T{\red'}=\delta_{r,\lambda{\red{_r}}}T\in\mathcal{LR}(\lambda^\flat/\mu^\flat,\nu)$ where $\lambda^\flat$ and $\mu^\flat$ are defined as above. Moreover, 
let $T^\#$ be an LR tableau of some shape $\lambda^\#/\mu^\#$ containing the same vertical strip $P^\flat=P-\{(r,\lambda_r)\}$ having the same entries as in $T'$, with its bottom cell (resp.\ top cell) bounded below (resp.\ above) by
a horizontal segment of the outer (resp.\ inner) border of $D(\lambda^\#/\mu^\#)$.
Let $(r',\lambda^\#_{r'})$ be a corner of $D(\lambda^\#)$ 
such that $T^\#(r',\lambda^\#_{r'})<r'$ and $P^\#$ the deletion path of $\delta_{r',\lambda^\#_{r'}}$ applied to $T^\#$ with terminating row number $t'$.
\begin{enumerate}
\item[(1)]
Suppose that $(r',\lambda^\#_{r'})$ lies strictly to the left of $P$, and that the part of $T^\#$ to the left of $P$ coincides that part of $T'$ (and hence that part of $T$ as well).
Then the entire $P^\#$ runs strictly to the left of $P$, and we have $t'\geq t$.
\item[(2)]
Suppose that $(r',\lambda^\#_{r'})$ lies weakly to the right of $P$ (in this case the part of $T^\#$ strictly to the left of $P$ can be different from that part of $T'$ so long as the assumptions made before part (1) are satisfied).
Then the entire $P^\#$ runs weakly to the right of $P$, and we have $t'<t$.
\end{enumerate}
\end{Lemma}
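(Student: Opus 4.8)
The plan is to prove both parts by a single downward induction on the row index, tracking the two deletion paths at once and exploiting the fact that in $T^\#$ the cells of $P^\flat$ carry the \emph{post-deletion} entries of $T'$, which will serve as barriers confining $P^\#$ to one side of $P$. First I would fix notation. Write $P[s]=(s,p_s)$ for the cell of $P$ in row $s$ (for $t\le s\le r$), and recall from the proof of Lemma~\ref{Lem-ss} that the columns weakly increase as one ascends, $p_r\le p_{r-1}\le\cdots\le p_t=\mu_t$. Let $c_r>c_{r-1}>\cdots>c_{t+1}>0$ be the entries of $P$ in $T$, so that in $T'$, and hence in $T^\#$ on $P^\flat$, the cell $P[s]$ holds the \emph{larger} value $c_{s+1}$ for $t\le s\le r-1$. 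Similarly write $P^\#[s]=(s,q_s)$ and let $d_s$ be the entry bumped out of row $s$ by $\delta_{r',\lambda^\#_{r'}}$.

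For part (1) the invariant I would maintain, inducting upward from the starting row $r'$, is that $P^\#$ lies strictly to the left of $P$ in every row it meets, i.e.\ $q_s<p_s$ wherever $P$ is present (rows below $r-1$ being strictly to the left by convention). The inductive step is the crux. If $P^\#$ sits at $(s,q_s)$ with $q_s<p_s$, then the coincidence hypothesis makes $(s,q_s)$ an unaltered cell, so row-monotonicity of $T$ gives $d_s=T(s,q_s)\le T(s,p_s)=c_s$; but in $T^\#$ the barrier cell $P[s-1]=(s-1,p_{s-1})$ holds $c_s$, which is \emph{not} strictly less than $d_s$, so the rightmost entry below $d_s$ in row $s-1$ lies strictly to the left of column $p_{s-1}$, forcing $q_{s-1}<p_{s-1}$ (and, if $P^\#$ instead replaces a $0$ and terminates, that $0$ still lies left of $p_{s-1}$). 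Since the region strictly to the left of $P$ excludes every row above $t$, the confined path cannot rise past row $t$, which yields $t'\ge t$.

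For part (2) I would run the mirror-image induction with the invariant that $P^\#$ stays weakly to the right of $P$, i.e.\ $q_s\ge p_s$. The same barrier now works in the opposite direction: whether $q_s=p_s$, so $d_s=T^\#(s,p_s)=c_{s+1}$, or $q_s>p_s$, so $d_s\ge T^\#(s,p_s)=c_{s+1}$, in either case $d_s>c_s$, and the cell $P[s-1]$ of $T^\#$ holds $c_s<d_s$, guaranteeing a \emph{nonzero} entry below $d_s$ at column $p_{s-1}$; hence $P^\#$ lands at a column $q_{s-1}\ge p_{s-1}$ and, crucially, does not terminate at row $s-1$. Iterating down to row $t$, the path meets the nonzero barrier $c_{t+1}$ at $P[t]$ and is forced to bump past it into row $t-1$, so it terminates strictly above row $t$, giving $t'<t$.

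The main obstacle, and where I would spend most care, is verifying that the invariant is propagated with the correct strictness at the path cells themselves and that termination is treated correctly: I must check the base cases according to whether the corner $(r',\lambda^\#_{r'})$ lies inside $P$'s row range, below it, or, in part (2), above row $t$, and confirm that the general shape $\lambda^\#/\mu^\#$ of $T^\#$ never lets $P^\#$ leave the relevant region except through the anticipated barrier cells. This barrier argument is exactly the deletion analogue of the non-crossing phenomenon for row insertions used since Knuth and Sch\"utzenberger, the only novelty being the bookkeeping forced by the skew inner shape and by the freedom to let $T^\#$ differ from $T'$ away from $P^\flat$.
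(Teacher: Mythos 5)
Your proposal is correct and follows essentially the same route as the paper's proof: a decreasing induction on the row index in which the shifted-up entries $c_{s+1}$ occupying $P[s]$ in $T'$ (hence in $T^\#$) act as one-sided barriers, with exactly the paper's comparison inequalities in both parts ($d_s\le c_s=T^\#(P[s-1])$ forcing strict-left landing in part (1), and $d_s\ge c_{s+1}>c_s=T^\#(P[s-1])$ forcing weakly-right landing and non-termination in part (2)). The only spot where the paper is more precise is the conclusion of part (1): instead of your slightly circular remark that the strictly-left region excludes rows above $t$, it observes that by the coincidence hypothesis every cell of row $t$ strictly to the left of $P[t]$ is empty in $T^\#$ (since in $T'$ those cells lie inside $\mu^\flat$), so the incoming entry must replace a $0$ and the path terminates at row $t$ — a one-line tightening of your argument.
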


\begin{Remark}
Note that the assumptions for $T^\#$ in both cases (1) and (2)
are satisfied by the tableau $T'=\delta_{r,\lambda_r}T$, so that both statements (1) and (2) are applicable, 
in particular, to a deletion occurring immediately after $\delta_{r,\lambda_r}$.
In addition, we can (and will) apply (2) after performing several deletions from
$T'$ so long as they do not change the contents of $P$ or stack any cells above
the terminating cell of $P$ (see the proof of the Vertical Path Comparison Lemma~\ref{Lem-vp} below).
\end{Remark} 

\begin{proof}
As in Definition~\ref{Def-dp}, let $P[s]$ denote the cell of the path $P$ in row $s$ for any applicable $s$.
This notation will be used for other deletion paths as well.

The claim (1) is obviously true if $P^\#$ terminates in or below row $r$, that is if $t'\geq r$.
So we consider the case where $t'<r$.
Note that $P^\#$ lies strictly to the left of $P$ in rows $r$ and lower since everything in such rows is considered to be strictly to the left of $P^\flat$ by convention.
Now we proceed by decreasing induction on the row number $i$, with  
$t \leqq i<r$ and assume that $P^\#$, having run strictly to the left of $P$ in rows $>i$, enters row $i$.
Note that, since $i<r$, the cell $P[i+1]$ exists, at least in $T$.
Since $P^\#[i+1]$ lies strictly to the left of $P[i+1]$ by the induction hypothesis,
we have $T^\#(P^\#[i+1])=T(P^\#[i+1])\leq T(P[i+1])=T'(P[i])=T^\#(P[i])$.
This means that the candidate entries $T^\#(P^\#[i])$, which must be 
$< T^\#(P^\#[i+1])$, can only lie strictly to the 
left of $P[i]$ in row $i$ of $T^\#$, so that $P^\#$ proceeds strictly to the left of $P[i]$, including the possibility of terminating there with $t'=i\geq t$. If $i=t$, $P^\#$ must terminate there since $P[t]$ is the leftmost nonempty cell 
in row $t$ of $T^\#$.
This inductively shows that the entire path $P^\#$ runs strictly to the left of $P$, and also that $t'\geq t$.

On the other hand, the claim (2) is obviously true if $r'<t$, so we assume $r'\geq t$  
and again proceed by decreasing induction on the row number $i$, with $r'\geq i\geq t$.
The initial case $i=r'$ is covered by the assumption in (2) that $P^\#[r']$ is weakly to the right of $P[r']$.
If $r'>i\geq t$, we have $T^\#(P^\#[i+1])\geq T^\#(P[i+1])=T'(P[i+1])>T'(P[i])$ with the first inequality 
following from the induction hypothesis on the location of $P^\#$ in row $i+1$, 
and the next equality following from the equality of entries in $T^\#$ and $T'$ along $P$, while 
the second inequality is the preserved evidence of the deletion along $P$.
Thus the entry $T^\#(P^\#[i+1])$ goes to the cell $P[i]$ or somewhere to its right,
that is to say $P^\#[i]$ lies weakly to the right of $P[i]$.
Since this continues up to $i=t$, in particular the deletion $\delta_{r',\lambda^\#_{r'}}$ applied to $T^\#$
still bumps something from row $t$, we have $t'<t$.\qedhere
\end{proof}

In our setting where the starting cells of the deletions are taken first from right to left in row $n$, then from right to left in row $n-1$ and so on, these considerations can be usefully organized in the form of the following two lemmas, the first of which is already clear.

\begin{Lemma}~[Horizontal Path Comparison]\label{Lem-hp}
Let $T\in{\cal LR}(\lambda/\mu,\nu)$ with $\ell(\lambda)=r$.
Consider the full $r$-deletion operator $\delta_r=\delta_{r,1}\cdots\delta_{r,\lambda_r}$ applied to $T$.
Let $P$ and $P'$ denote the deletion paths of $\delta_{r,j}$ and $\delta_{r,j-1}$ in this sequence, 
where $2\leq j\leq\lambda_r-\nu_r$, and let $t$ and $t'$ be their terminating row numbers.
Then $P'$ runs strictly to the left of $P$, and we have $t'\geq t$.
\end{Lemma}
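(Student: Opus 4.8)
The plan is to deduce this lemma directly from the Elementary Path Comparison Lemma~\ref{Lem-ep}(1), exploiting the fact that $\delta_{r,j}$ and $\delta_{r,j-1}$ are applied consecutively inside the full $r$-deletion, so that the tableau acted on by $\delta_{r,j-1}$ is exactly the output of $\delta_{r,j}$. First I would fix the stage at which $\delta_{r,j}$ acts, writing $T'=\delta_{r,j}T''$, where $T''$ is the LR tableau produced by the preceding deletions $\delta_{r,j+1}\cdots\delta_{r,\lambda_r}$ in the sequence. Because $2\le j\le\lambda_r-\nu_r$, all $\nu_r$ cells of row $r$ carrying the entry $r$ have already been removed in Phase~1, and the Phase~2 deletions performed so far only delete the corner cell in row $r$ and push entries upward, leaving the cells $(r,\mu_r+1),\dots,(r,j)$ of row $r$ with their original entries. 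Hence the corner $(r,j)$ of $T''$ carries an entry $<r$, so $\delta_{r,j}$ is a genuine type~(ii) deletion whose path $P$ terminates in a corner $(t,\mu_t)$ of the inner shape with $t<r$, and $T'=\delta_{r,j}T''$ is again an LR tableau by Lemmas~\ref{Lem-ss} and~\ref{Lem-lp}.

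Next I would apply Lemma~\ref{Lem-ep}(1) with the choices $T^\#:=T'$ and starting corner $(r',\lambda^\#_{r'}):=(r,j-1)$, the new corner of row $r$ exposed by the deletion $\delta_{r,j}$. All hypotheses of that lemma hold trivially here, since $T^\#$ is literally equal to $T'$: it contains the vertical strip $P^\flat=P-\{(r,j)\}$ with the identical entries, its bottom and top cells are bounded by the appropriate horizontal border segments as recorded in the preliminaries preceding Lemma~\ref{Lem-ep}, and the part of $T^\#$ to the left of $P$ coincides with that of $T'$. The one geometric point that needs checking is that $(r,j-1)$ lies strictly to the left of $P$; this is immediate because $P$ is a vertical strip occupying column $j$ in its bottom row $r$ and columns $\ge j$ in the rows above, whereas $(r,j-1)$ sits in the bottom row $r$ in column $j-1<j$, and by the stated convention everything lying to the left of $P[r]$ in (or below) the starting row of $P$ counts as strictly to the left of $P$.

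With these verifications, Lemma~\ref{Lem-ep}(1) yields at once that the whole deletion path $P'$ of $\delta_{r,j-1}$ runs strictly to the left of $P$ and that $t'\ge t$, which is exactly the assertion. The same argument covers uniformly the boundary situation $j-1\le\mu_r$, in which $\delta_{r,j-1}$ is a type~(iii) deletion consisting of the single cell $(r,j-1)$: that cell is still strictly to the left of $P$ and one has $t'=r\ge t$, matching the trivial branch ($t'\ge r$) of the proof of Lemma~\ref{Lem-ep}(1). I do not anticipate any real obstacle, consistent with the remark in the text that this lemma is already clear; the only care needed is the bookkeeping identifying the tableau on which $\delta_{r,j-1}$ operates with the $T'$ fed into Lemma~\ref{Lem-ep}, together with the observation that the freshly exposed corner necessarily lies strictly to the left of the path just removed.
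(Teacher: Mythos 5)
Your proposal is correct and is exactly the paper's argument: the paper's entire proof reads ``This is immediate from part (1) of Lemma~\ref{Lem-ep},'' relying on the same observation (recorded in the Remark after Lemma~\ref{Lem-ep}) that the output of $\delta_{r,j}$ satisfies the hypotheses on $T^\#$, so your write-up simply makes that bookkeeping explicit. The only cosmetic point is that for $j\le\mu_r$ both deletions are of type (iii) and the claim is trivial, a case your argument covers implicitly anyway.
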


\noindent{\bf Proof:~~}
This is immediate from part (1) of Lemma~\ref{Lem-ep}.
\qed

The second one, which is helpful in considering the action of $\delta_r$ followed by that of $\delta_{r-1}$, is slightly less straightforward.

\begin{Lemma}~[Vertical Path Comparison]\label{Lem-vp}
Let $T\in{\cal LR}(\lambda/\mu,\nu)$ with $\ell(\lambda)=r$ and recall that
$U_{ir}$ is the number of entries $i$ in row $r$ of $T$. 
Phase 1 of the action of $\delta_r$ on $T$ removes all $U_{rr}$ cells containing entries $r$ in row $r$, and
we divide Phase 2 of $\delta_r$ into Phase 2A and Phase 2B:
Phase 2A consists of the $U_{r-1,r}$ type (ii) deletions starting from the cells containing $r-1$ in row $r$, 
and Phase 2B consists of the remaining type (ii) deletions starting from cells with non-zero entries $\leq r-2$.
We set $m=\lambda_r-\nu_r-U_{r-1,r}-\mu_r$, that is the number of deletions in Phase 2B of $\delta_r$.
Then it is to be noted that Phase 2 of $\delta_{r-1}$ applied to $T^\#:=\delta_rT$ has at least $m$ deletions, 
since $\delta_r$ has just delivered $m$ nonzero entries $\leq r-2$ to row $r-1$.
For each $j\in\mathbb{N}$ in the range $1\leq j\leq m$, let $P_j$ (resp.\ $P^\#_j$) denote the path of the $j$th deletion operation, in the order of occurrence, in Phase 2B of $\delta_r$ applied to $T$ (resp.\ Phase 2 of $\delta_{r-1}$ applied to $T^\#$).
Let $t_j$ (resp.\ $t^\#_j$) be the terminating row number of $P_j$ (resp.\ $P^\#_j$).
Then for each such $j$, $P^\#_j$ runs weakly to the right of $P_j$, and we have $t^\#_j<t_j$.
\end{Lemma}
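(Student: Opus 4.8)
The plan is to prove both assertions simultaneously by induction on $j$, invoking at each step part (2) of the Elementary Path Comparison Lemma (Lemma~\ref{Lem-ep}) with $P=P_j$ in the role of the reference path. The inductive hypothesis is that for every $i<j$ the path $P^\#_i$ runs weakly to the right of $P_i$ and $t^\#_i<t_i$. Write $S_{j-1}$ for the tableau obtained from $T^\#=\delta_rT$ after Phase 1 of $\delta_{r-1}$ and the first $j-1$ Phase 2 deletions $P^\#_1,\dots,P^\#_{j-1}$, so that $P^\#_j$ is by definition the next Phase 2 deletion of $\delta_{r-1}$, applied to $S_{j-1}$. All tableaux arising here are LR by Lemmas~\ref{Lem-ss}, \ref{Lem-lp} and \ref{Lem-lr}, so Lemma~\ref{Lem-ep} is applicable. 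I would take ``$T^\#$'' of that lemma to be $S_{j-1}$ and ``$T'$'' to be the result of $P_j$, so that the shared vertical strip is $P_j^\flat=P_j\setminus\{(r,\lambda_r)\}$. It then suffices to check the hypotheses of part (2): that $S_{j-1}$ contains $P_j^\flat$ with unchanged entries, its bottom (resp.\ top) cell bounded below (resp.\ above) by the outer (resp.\ inner) border, and that the starting cell of $P^\#_j$ lies weakly to the right of $P_j$. The base case $j=1$ is the same argument with the hypothesis vacuous.

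First I would dispose of the starting-cell condition. The cell $P_j[r-1]$ carries the entry $c_j\le r-2$ that $P_j$ delivers into row $r-1$. By the Horizontal Path Comparison Lemma (Lemma~\ref{Lem-hp}) the later Phase 2B paths $P_{j+1},\dots,P_m$ all run strictly to the left of $P_j$, and Phase 3 acts only in row $r$, so $c_j$ still occupies $P_j[r-1]$ in $T^\#$; the Phase 1 deletions of $\delta_{r-1}$ remove only entries $r-1$, which lie strictly to the right of $c_j$. By the inductive hypothesis each $P^\#_i$ with $i<j$ is weakly to the right of $P_i$, which by Lemma~\ref{Lem-hp} is strictly to the right of $P_j$; hence the $j-1$ entries that $P^\#_1,\dots,P^\#_{j-1}$ remove from row $r-1$ all lie strictly to the right of $P_j[r-1]$, leaving $c_j$ untouched. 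Consequently the rightmost surviving nonzero entry $\le r-2$ in row $r-1$ of $S_{j-1}$, namely the cell from which $P^\#_j$ starts, is either $P_j[r-1]$ itself or lies to its right, i.e.\ weakly to the right of $P_j$.

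The remaining, genuinely delicate, point is that none of the intervening deletions alters the entries along $P_j^\flat$ or stacks a cell above its terminating cell $P_j[t_j]$, which is exactly the proviso flagged in the Remark following Lemma~\ref{Lem-ep}. Here I would exploit that along any deletion path the column index weakly increases from bottom to top (the vertical-strip property established in the proof of Lemma~\ref{Lem-ss}). The paths $P_{j+1},\dots,P_m$ are strictly left of $P_j$ and, by part (1) of Lemma~\ref{Lem-ep}, terminate no higher than row $t_j$, so they neither meet $P_j^\flat$ nor reach the cell above $P_j[t_j]$; Phase 3 and Phase 1 act away from $P_j^\flat$. For the earlier $\delta_{r-1}$-paths the crux is a column comparison: since $t^\#_i<t_i\le t_j$, with the second inequality from Lemma~\ref{Lem-hp}, both $P_i$ and $P^\#_i$ reach row $t_j$, where $P^\#_i[t_j]$ lies weakly right of $P_i[t_j]$, which in turn lies strictly right of $P_j[t_j]$; hence $P^\#_i[t_j]$ occupies a column strictly exceeding that of $P_j[t_j]$. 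By column-monotonicity $P^\#_i$ then stays in columns exceeding that of $P_j[t_j]$ in every row above $t_j$, and strictly to the right of $P_j$ in every row down to $r-1$. Thus $P^\#_i$ never touches $P_j^\flat$ and never occupies the cell directly above $P_j[t_j]$, so that cell remains bounded above by the inner border in $S_{j-1}$, while $P_j[r-1]$ is bounded below because $\delta_r$ has removed all of row $r$. With every hypothesis of Lemma~\ref{Lem-ep}(2) in force, I would read off that $P^\#_j$ runs weakly to the right of $P_j$ and that $t^\#_j<t_j$, completing the induction. I expect this column-monotonicity argument, controlling the earlier paths $P^\#_i$ precisely in the rows above $t_j$ where the left/right convention is vacuous, to be the main obstacle.
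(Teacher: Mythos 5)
Your proposal is correct and takes essentially the same route as the paper's proof: induction on $j$, with the Horizontal Path Comparison Lemma supplying the ordering $\cdots<P_{j+1}<P_j<P_{j-1}\leq P^\#_{j-1}<\cdots<P^\#_1$ and part (2) of Lemma~\ref{Lem-ep} yielding both conclusions once the contents of $P_j^\flat$ and the boundary conditions at $P_j[r-1]$ and $P_j[t_j]$ are seen to survive all intervening deletions. The only differences are cosmetic: you derive the starting-cell condition from the induction hypothesis where the paper counts the cells $P_1[r-1],\dots,P_j[r-1]$ to the left of the entries $r-1$ directly, and your explicit column-monotonicity argument above row $t_j$ spells out a point that the paper's proof treats as immediate from the paths $P^\#_i$ lying strictly to the right of $P_j$.
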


\begin{Example} 
In the case $\lambda=(9,6,5,4)$, $\mu=(6,5,2)$ and the following LR tableau $T$, as shown on the left, 
we have $r=4$, $\lambda_r=4$, $\mu_r=0$, $\nu_r=0$ and $U_{r-1,r}=1$ so that $m=3$.

\[
T \ = \
\vcenter{\hbox{

}}
\]

The Phase 2A of $\delta_4=\delta_{41}\delta_{42}\delta_{43}\delta_{44}$ applied to $T$ consists only of the deletion $\delta_{44}$which produces the deletion blue path in $T$; the Phase 2B consists of deletions $\delta_{43}$, $\delta_{42}$ and $\delta_{41}$, in order of occurrence, producing the deletion  paths $P_1$ in red, $P_2$ in green, and $P_3$ in violet.  The Phase 2A of $\delta_3$ applied to $T^\#$ consists exactly of three deletions and produces the deletion paths   $P_1^\#$  in red,   $P_2^\#$  in green,
  and  $P_3^\#$  in violet.   $P_1^\#$ is weakly to the right of $P_1$, both sharing the cell $(2,5)$, $P_i^\#$ is strictly  to the right of $P_i$ for $i=2,3$; the corresponding terminating rows are $t_1=2>t_1^\#=1$, $t_2=3>t_2^\#=2$, $t_3=3>t_3^\#=2$.
\end{Example}
\medskip

\noindent{\bf Proof:~~}
Once again let $P_j[s]$ (resp.\ $P^\#_j[s]$) denote the unique cell where $P_j$ (resp.\ $P^\#_j$) intersects row $s$, for all rows $s$ over which $P_j$ (resp.\ $P^\#_j$) extends.  
With this notation $\delta_{P_j[r]}$ (resp.\ $\delta_{P^\#_j[r-1]}$) denotes the deletion along the path $P_j$ (resp.\ $P^\#_j$), which is part of $\delta_r$ (resp.\ $\delta_{r-1}$).

The deletions in Phase 2A of $\delta_r$ carry entries $r-1$ from row $r$ to row $r-1$, which form a contiguous segment immediately to the left of any original $r-1$ in row $r-1$.
The deletions in Phase 2B of $\delta_r$ then carry smaller entries into row $r-1$, which settle in cells further to the left.
Moreover, these cells $P_1[r-1],P_2[r-1],\ldots,P_m[r-1]$, which may or may not be contiguous, lie from right to left in this order, due to the Horizontal Path Comparison Lemma~\ref{Lem-hp}.
Hence, for each $1\leq j\leq m$, the starting cell $P^\#_j[r-1]$ of $P^\#_j$, being the $j$th rightmost cell to 
the left of the entries $r-1$ in row $r-1$ of $T^\#$, lies weakly to the right of $P_j[r-1]$.
Part (2) of Lemma~\ref{Lem-ep} then implies the required result, namely that 
$P^\#_j$ lies weakly to the right of $P_j$ and $t_j^\#<t_j$, provided only that immediately
before the deletion $\delta_{P^\#_j[r-1]}$ starts, the tableau, say $T^\#_j$, satisfies all
the assumptions regarding in this instance $P_j-\{P_j[r]\}$ implied by the hypotheses of Lemma~\ref{Lem-ep}.

To establish this we use induction on $j$. 
First assume that $j=1$.
By the Horizontal Path Comparison Lemma~\ref{Lem-hp}, 
no deletions in $\delta_r$ occurring after $\delta_{P_1[r]}$ alter the contents of $P_1$, 
nor do they put any entries above $P_1[t_1]$ since they terminate in rows $\geq t_1$.
Moreover, all operations between the end of $\delta_r$ and the beginning of $\delta_{P^\#_1[r-1]}$ are Phase 1 deletions 
which simply remove the cells in row $r-1$ lying strictly to the right of $P_1$, so after these operations the contents of $P_1-\{P_1[r]\}$ are still preserved, and the cell $P_1[r-1]$ (resp.\ $P_1[t_1]$) is still bounded below (resp.\ above) by
a horizontal segment of the outer (resp.\ inner) border of $T^\#_1$.

Now assume that $2\leq j\leq m$.
By the Horizontal Path Comparison Lemma~\ref{Lem-hp},
the deletions in $\delta_r$ occurring after $\delta_{P_j[r]}$ do not alter the contents of $P_j$, 
nor do they put any entries above $P_j[t_j]$ since they terminate in rows $\geq t_j$.
Now, by the Horizontal Path Comparison Lemma~\ref{Lem-hp}
applied to $P_{j-1}$ and $P_j$, $P_{j-1}$ lies strictly to the right of $P_j$, and $P^\#_{j-1}$ runs weakly to the right of $P_{j-1}$ by the induction hypothesis. 
Moreover, again by the Horizontal Path Comparison Lemma~\ref{Lem-hp},
this time applied to $P^\#_1,\dots,P^\#_{j-1}$, these paths all lie strictly to the right of $P_j$ (symbolically we have $\cdots<P_{j+1}<P_j<P_{j-1}\leq P^\#_{j-1}<\cdots<P^\#_1$).
Hence, including the Phase 1 deletions in $\delta_{r-1}$, all operations occurring between $\delta_{P_j[r]}$ and $\delta_{P^\#_j[r-1]}$ leave the contents of $P_j$ untouched, and the cell $P_j[r-1]$ (resp.\ $P_j[t_j]$)
is still bounded below (resp.\ above) by a horizontal segment of the outer (resp.\ inner) border of $T^\#_j$.

By the induction argument this shows, as required, that $T^\#_j$ does satisfy all the conditions of the 
preamble in Lemma~\ref{Lem-ep} for all $j=1,2\ldots,m$, thereby completing the proof of the current Lemma. \qed

\section{Bijective map from ${\cal LR}(\lambda/\mu,\nu)$ to ${\cal LR}(\lambda/\nu,\mu)$}\label{Sec-tab-bijection}

With our deletion operators we are able to construct
from any LR tableau $T\in{\cal LR}(\lambda/\mu,\nu)$ a partner LR tableau 
$S\in{\cal LR}(\lambda/\nu,\mu)$.
To do so we empty the initial LR tableau $T$ by performing
a sequence of deletion operations, from right to left across each row in turn from bottom to top.
This renders it empty and simultaneously allows us to build the final LR tableau 
$S$ from the data on the row number of the starting cell and the terminating row number of 
each deletion path.

To accomplish this, we use a 
formulation in which the initial $T\in\mathcal{LR}(\lambda/\mu,\nu)$ and the final $S\in\mathcal{LR}(\lambda/\nu,\mu)$ are associated with pairs $(T^{(n)},S^{(0)})$ and $(T^{(0)},S^{(n)})$, respectively, where $T^{(n)}=T$, $S^{(n)}=S$ and both $S^{(0)}$ and $T^{(0)}$ are empty tableaux, and we introduce operators that modify both components of pairs.

\begin{Definition}\label{Def-TtoS}
For an LR tableau $T$ with no more than $n$ rows, set $T^{(n)}=T$ and 
let $S^{(0)}$ be an empty tableau. Then  
let $\Delta^{(n)}$ denote the operation which transforms the pair $(T^{(n)},S^{(0)})$ into another pair 
$\Delta^{(n)}(T^{(n)},S^{(0)}):=(T^{(0)},S^{(n)})$ through the action of 
a succession of $n$ operators $\Delta_n$, $\Delta_{n-1}$, \dots, $\Delta_1$ that produce pairs 
$(T^{(r)},S^{(n-r)})$, with $r=n-1,n-2,\dots,0$, respectively, as indicated by
\begin{equation}
(T^{(n)},S^{(0)})\overset{\Delta_n}{\longmapsto}(T^{(n-1)},S^{(1)})\overset{\Delta_{n-1}}{\longmapsto}\cdots\overset{\Delta_2}{\longmapsto}(T^{(1)},S^{(n-1)})\overset{\Delta_1}{\longmapsto}(T^{(0)},S^{(n)}),
\end{equation}
where each $\Delta_r$ maps $(T^{(r)},S^{(n-r)})$ to 
$(T^{(r-1)},S^{(n-r+1)})$ by applying the operation $\delta_r$ defined in Definition~\ref{Def-dp} to the left-hand component $T^{(r)}$, so that $T^{(r-1)}=\delta_{r}T^{(r)}$, and by placing a new row on top of the right-hand component $S^{(n-r)}$ to produce $S^{(n-r+1)}$, with this row consisting of cells containing the terminating row numbers of this $\delta_{r}$ operation, including zeros, arranged from left to right across the new row in the order in which they are identified under the action of $\delta_r$.
We call the tableau $S^{(n)}$ the \textit{partner tableau} of $T^{(n)}$.
\end{Definition}

To see that the prescribed operations are applicable, note that, by 
repeated use of Lemma~\ref{Lem-lp}, $T^{(r)}$ is an LR tableau with outer shape $(\lambda_1,\dots,\lambda_{r})$.
$S^{(r)}$ will be a certain tableau of shape $(\lambda_{r+1},\dots,\lambda_n)$ with zero and positive entries,
or in fact a filling of the rows $r+1$ through $n$ of the Young diagram $D(\lambda)$ with such entries.
In particular, $T^{(0)}$ is an empty tableau, and $S^{(n)}$ is a filling of $D(\lambda)$ with zero and positive entries.
Furthermore, we have

\begin{Theorem}\label{The-TtoS}
Let $n$ be a positive integer and let $\lambda$, $\mu$ and $\nu$ be partitions such that $\ell(\lambda)\leq n$ and $\mu,\nu\subseteq\lambda$ with $|\lambda|=|\mu|+|\nu|$.
For each LR tableau $T\in\mathcal{LR}(\lambda/\mu,\nu)$ let $T^{(n)}=T$ and let $S^{(0)}$ be an empty tableau.
If we let $\Delta^{(n)}(T^{(n)},S^{(0)})=(T^{(0)},S^{(n)})$ as in Definition~\ref{Def-TtoS}, then $S=S^{(n)}$ is a LR tableau and $S\in\mathcal{LR}(\lambda/\nu,\mu)$.
\end{Theorem}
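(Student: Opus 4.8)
The plan is to verify in turn the four requirements that make $S=S^{(n)}$ an element of $\mathcal{LR}(\lambda/\nu,\mu)$: that it has outer shape $\lambda$ and inner shape $\nu$, that it has weight $\mu$, that it is semistandard, and that it satisfies the lattice permutation property. Throughout I will lean on the structural facts already available: by repeated application of Lemma~\ref{Lem-lp} (through Lemma~\ref{Lem-lr}) each $T^{(r)}$ is an LR tableau of outer shape $(\lambda_1,\dots,\lambda_r)$, and since each full deletion $\delta_s$ removes precisely the $\nu_s$ entries $s$ while merely relocating the smaller entries, $T^{(r)}$ has weight $(\nu_1,\dots,\nu_r)$ and an inner shape $\mu^{(r)}$, with $\mu^{(n)}=\mu$ and $\mu^{(0)}=\emptyset$.

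Shape and weight are the easy part. Since $T^{(r)}$ has only $r$ rows and is LR, the lattice condition forces every entry $r$ of $T^{(r)}$ into its bottom row, so Phase~1 of $\delta_r$ consists of exactly $\nu_r$ corner deletions, each recording a $0$; placed leftmost, these give the $\nu_r$ empty cells of row $r$ of $S$, whence the inner shape of $S$ is $\nu$ and its outer shape is $\lambda$. For the weight, a deletion contributes an entry $s$ to $S$ exactly when its path terminates in row $s$, that is when it removes a cell from row $s$ of the inner shape; summing the decrements of $\mu^{(\cdot)}_s$ over all of $\delta_n,\dots,\delta_1$ telescopes to $\mu_s$, so $S$ has weight $\mu$.

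For the semistandard property I treat rows and columns separately. That each row of $S$ is weakly increasing is immediate from the Horizontal Path Comparison Lemma~\ref{Lem-hp}: within $\delta_r$ the terminating row numbers recorded in Phase~2 are non-decreasing in order of deletion, and they are preceded by the Phase~1 zeros and followed by the Phase~3 entries $r$, which are the largest possible. Column strictness is the crux. Writing $S(r,c)$ as the terminating row number of the $c$-th deletion of $\delta_r$, I must show $S(r-1,c)<S(r,c)$ whenever both cells are present. The lattice property of $T^{(r)}$ gives $\nu_{r-1}\ge\nu_r+U_{r-1,r}$, which both places the overlapping columns beyond Phase~1 and Phase~2A and yields a \emph{non-negative} column offset between the Phase~2B deletions of $\delta_r$ and the Phase~2 deletions of $\delta_{r-1}$. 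The Vertical Path Comparison Lemma~\ref{Lem-vp} supplies the strict inequality $t^\#_j<t_j$ at equal indices, and combining it with the monotonicity of Lemma~\ref{Lem-hp} bridges this offset to give $S(r-1,c)<S(r,c)$ on the Phase~2 overlap; columns landing in Phase~3 of $\delta_r$ are automatic, since their value $r$ exceeds anything in row $r-1$.

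Finally, the lattice permutation property of $S$ reduces to a single clean inequality. The termination count used for the weight in fact shows that the number of entries $s$ in the first $\rho$ rows of $S$ equals $\mu^{(\rho)}_s$; hence the lattice condition, that the number of entries $s$ in the first $\rho$ rows is at most the number of entries $s-1$ in the first $\rho-1$ rows, is exactly $\mu^{(\rho)}_s\le\mu^{(\rho-1)}_{s-1}$. This is precisely the assertion that each $\mu^{(\rho)}/\mu^{(\rho-1)}$ is a horizontal strip, equivalently that the inner shapes $\mu^{(0)}\subseteq\cdots\subseteq\mu^{(n)}=\mu$ form a Gelfand--Tsetlin pattern of type $\mu$; this follows from the fact that each deletion path is a vertical strip and that, by Lemma~\ref{Lem-hp}, successive terminating cells advance monotonically and so occupy distinct columns. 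I expect the principal obstacle to be the column-strictness step: the bookkeeping that matches the correct deletions of $\delta_{r-1}$ and $\delta_r$ to a common column of $S$, and especially the treatment of the maximal entries, where one needs the additional length inequality $\lambda_{r-1}-\mu^{(r-1)}_{r-1}\ge\lambda_r-\mu^{(r)}_r$ — itself a consequence of the lattice property of $T^{(r)}$ together with the observation that every path passing through row $r-1$ bumps out an entry no larger than $r-2$.
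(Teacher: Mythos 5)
Your proposal is correct and follows essentially the same route as the paper's proof, which splits into exactly the three steps you describe: shape and weight bookkeeping via the inner shapes $\mu^{(r)}$ (Lemma~\ref{Lem-TSshape}), semistandardness with rows handled by Lemma~\ref{Lem-hp} and columns by Lemma~\ref{Lem-vp} together with the lattice inequality $\nu_{r-1}\geq\nu_r+U_{r-1,r}$ (Lemma~\ref{Lem-Sss}), and the lattice permutation property of $S$ reduced to $\mu^{(\rho-1)}_{k}\geq\mu^{(\rho)}_{k+1}$, proved from the monotonicity of terminating rows plus the fact that the intermediate inner shapes remain partitions (Lemma~\ref{Lem-Slp}). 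Even your anticipated obstacle at the Phase~3 boundary is resolved exactly as you predict, since the statement of Lemma~\ref{Lem-vp} already guarantees that $\delta_{r-1}$ has at least $m$ Phase~2 deletions because $\delta_r$ delivers $m$ entries $\leq r-2$ to row $r-1$.
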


As a first step towards proving this theorem we establish with the notation of Definition~\ref{Def-TtoS}
the following:
\begin{Lemma}\label{Lem-TSshape}
Let $\mu^{(r)}=(\mu^{(r)}_1,\dots,\mu^{(r)}_r)$ denote the inner shape of $T^{(r)}$ for $r=n,n-1,\ldots,0$.
Then $T^{(r)}$ is an LR tableau of shape $(\lambda_1,\dots,\lambda_r)/\mu^{(r)}$ and weight $(\nu_1,\dots,\nu_r)$ while $S^{(n-r)}$ is a tableau of shape $(\lambda_{r+1},\dots,\lambda_n)/(\nu_{r+1},\dots,\nu_n)$ and weight $\mu-\mu^{(r)}$.
\end{Lemma}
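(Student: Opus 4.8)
The plan is to prove both assertions together by downward induction on $r$, from $r=n$ to $r=0$, the entire argument reducing to an accounting of the effect of a single full $r$-deletion $\delta_r$ on outer shape, inner shape and weight, with the Littlewood--Richardson property supplied by the preservation results already in hand. For the base case $r=n$ one simply notes that $T^{(n)}=T\in\mathcal{LR}(\lambda/\mu,\nu)$ has outer shape $(\lambda_1,\dots,\lambda_n)$, inner shape $\mu^{(n)}=\mu$ and weight $\nu$, while $S^{(0)}$ is empty and $\mu-\mu^{(n)}=0$. For the inductive step I assume the statement for $T^{(r)}$ and $S^{(n-r)}$ and analyse $\Delta_r$, which replaces $T^{(r)}$ by $T^{(r-1)}=\delta_r T^{(r)}$ and stacks a new row on $S^{(n-r)}$.

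For the $T^{(r-1)}$ part, I would first observe that, since $T^{(r)}$ has only $r$ rows and weight $(\nu_1,\dots,\nu_r)$, all $\nu_r$ of its entries $r$ lie in row $r$ (every entry $k$ of an LR tableau occupies a row $\geq k$, a standard consequence of the lattice permutation property, and here there are only $r$ rows). Thus $\delta_r$ proceeds exactly through the three phases of Definition~\ref{Def-full-r-del}: Phase~1 deletes the $\nu_r$ cells containing $r$, Phase~2 performs $\lambda_r-\mu^{(r)}_r-\nu_r$ type~(ii) deletions, and Phase~3 removes the $\mu^{(r)}_r$ inner cells of row $r$. The LR property of $T^{(r-1)}$ follows from Lemmas~\ref{Lem-ss} and~\ref{Lem-lp} for the Phase~2 deletions, and from the observations recorded after Definition~\ref{Def-dp} that type~(i) deletions preserve the LR property when no entry exceeds its row number (Phase~1) and type~(iii) deletions preserve it trivially (Phase~3). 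The outer shape loses precisely row $r$, becoming $(\lambda_1,\dots,\lambda_{r-1})$; and since a type~(ii) deletion merely shifts the nonzero entries along its path one row up, deleting the starting corner and filling the terminal inner cell, the multiset of nonzero entries is unchanged and the weight preserved, so that only the Phase~1 deletions alter the weight, removing the $\nu_r$ entries $r$ and leaving $(\nu_1,\dots,\nu_{r-1})$.

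For the shape of $S^{(n-r)}$, the point is that each $\delta_s$ with $s=n,\dots,r+1$ performs exactly $\lambda_s$ deletions, hence contributes one row of length $\lambda_s$ to $S$; of these, the $\nu_s$ Phase~1 deletions carry terminating row number $0$ and, being identified first, fill the leftmost $\nu_s$ cells, while the remaining $\lambda_s-\nu_s$ deletions carry positive terminating numbers. Since $(\nu_{r+1},\dots,\nu_n)$ is a partition contained in $(\lambda_{r+1},\dots,\lambda_n)$, stacking these rows places $S^{(n-r)}$ on the skew shape $(\lambda_{r+1},\dots,\lambda_n)/(\nu_{r+1},\dots,\nu_n)$ with its zeros occupying exactly the inner cells.

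The weight of $S^{(n-r)}$ is the delicate part and the step I expect to be the main obstacle, since it rests on tracking inner-shape corners correctly. I would show that the multiset of positive entries contributed by $\delta_s$ to $S$ is exactly the difference vector $\mu^{(s)}-\mu^{(s-1)}$: each Phase~2 deletion terminates at a corner $(k,\mu^{(s)}_k)$ with $1\leq k\leq s-1$, records the number $k$ and removes a single cell from row $k$ of the inner shape, and these are the only changes to inner-shape rows $<s$, so the number of entries $k$ contributed is $\mu^{(s)}_k-\mu^{(s-1)}_k$ for $k<s$; meanwhile the $\mu^{(s)}_s$ Phase~3 deletions each record $s$ and together delete the whole of inner-shape row $s$, contributing $\mu^{(s)}_s=\mu^{(s)}_s-\mu^{(s-1)}_s$ entries $s$, since $\mu^{(s-1)}_s=0$. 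Summing over $s$ and telescoping then yields
\[
 \sum_{s=r+1}^{n}\bigl(\mu^{(s)}-\mu^{(s-1)}\bigr)=\mu^{(n)}-\mu^{(r)}=\mu-\mu^{(r)},
\]
which is the asserted weight. The crux is precisely the clean separation of roles --- Phase~2 altering only inner rows $<s$ and Phase~3 accounting for row $s$ --- that makes each per-step contribution equal to $\mu^{(s)}-\mu^{(s-1)}$ and the sum telescope.
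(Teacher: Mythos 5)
Your proposal is correct and follows essentially the same route as the paper: downward induction on $r$ with base case $r=n$, the observation that all $\nu_r$ entries $r$ of $T^{(r)}$ sit in row $r$, the three-phase analysis of $\delta_r$ (Phase 1 reducing the weight, Phases 2 and 3 only shrinking the inner shape while preserving the LR property via the earlier preservation lemmas), and the weight count for $S$ via the differences $\mu^{(s)}-\mu^{(s-1)}$. Your telescoping sum over $s$ is just the unrolled form of the paper's one-step inductive accumulation $(\mu-\mu^{(r)})+(\mu^{(r)}-\mu^{(r-1)})=\mu-\mu^{(r-1)}$, so the two arguments coincide.
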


\begin{proof}[Proof]
The result is obvious for $r=n$ in which case $\mu^{(n)}=\mu$, $T^{(n)}$ has 
shape $\lambda/\mu$ and $S^{(0)}$ is empty.
So let us assume that the result has been verified for the pair 
$(T^{(r)},S^{(n-r)})$ for some $1\leq r\leq n$, 
and show the result for the pair $(T^{(r-1)},S^{(n-r+1)})$.

By Definition~\ref{Def-TtoS}, we have $T^{(r-1)}=\delta_rT^{(r)}$.
Recall from Definition~\ref{Def-dp} the three phases of the action of $\delta_r$ 
which eventually shave off the whole row $r$ from the outer shape of $T^{(r)}$.
Since, by the induction hypothesis, $T^{(r)}$ has no more than $r$ rows and 
satisfies the lattice permutation property, the entries $r$ can only appear in row $r$.
Moreover, the number of such entries is $\nu_r$ since none can have been removed 
under the action of $\delta_s$ for $s>r$.
Hence, as described in Lemma~\ref{Lem-lr}, the operations in Phase 1 delete 
all $\nu_r$ entries $r$ from $T^{(r)}$, thereby reducing the weight from
$(\nu_1,\dots,\nu_{r})$ to $(\nu_1,\dots,\nu_{r-1})$.
The operations in Phases 2 and 3 do not remove any nonzero entry, so this weight remains unchanged. 
However, in general they remove some cells from the inner shape of $T^{(r)}$, 
including all inner cells in row $r$, making the new inner shape $\mu^{(r-1)}$ (by our choice of notation).
Since the LR property is maintained, again by Lemma~\ref{Lem-lr}, it follows that 
$T^{(r-1)}$ is an LR tableau of shape $(\lambda_1,\dots,\lambda_{r-1})/\mu^{(r-1)}$ and weight $(\nu_1,\dots,\nu_{r-1})$.

By Definition~\ref{Def-TtoS}, $S^{(n-r+1)}$ is constructed by placing on top of $S^{(n-r)}$ a 
new row containing the terminating row numbers of the deletions ocurring in the application of $\delta_r$ 
to $T^{(r)}$. This row has length $\lambda_r$ since $\delta_r$ consists of $\lambda_r$ deletion operations.
Phase 1 consists of $\nu_r$ type (i) deletions each producing a terminating row number $0$.  
Then Phases 2 and 3 produce $\lambda_r-\nu_r$ nonzero terminating row numbers 
of weight $\mu^{(r)}-\mu^{(r-1)}$. It follows by the induction argument that
the weight of $S^{(n-r+1)}$ is $(\mu-\mu^{(r)})+(\mu^{(r)}-\mu^{(r-1)})=\mu-\mu^{(r-1)}$,
as required.
\end{proof}

\begin{Example}\label{Ex-tab}
The situation is illustrated in the case $n=4$ and $r=4,3,2,1,0$ as follows.
On the left, at each stage, the red boxes represent the terminating cells of the deletion paths that 
occur during Phases 2 and 3 of the operation $\delta_r$, which form the skew shape $\mu^{(r)}/\mu^{(r-1)}$, 
while the green boxes represent the cells to be removed during Phase 1 of $\delta_r$.
The tableaux $T^{(r)}$ are LR tableaux by Lemma~\ref{Lem-lr}.
On the right, it may be noted that the tableaux $S^{(n-r)}$ are all 
semistandard, and the final one $S^{(n)}$ is an LR tableau. These properties will
be established in complete generality below.

\begin{gather*}
\left(
\vYTd{0.2in}{}{
 {{},{},{},{},{},{},1,1},
 {{},{},{},{},{},1},
 {{},{},1,2,2},
 {1,2,2,3}}
{
 \foreach \i/\j in {1/6,2/5,3/1,3/2}
  \draw[red,very thick] (\i,\j) rectangle +(-1,-1);
 \node[inner sep=0pt,outer sep=0pt,anchor=base west] at (4,8) {, };
 \begin{scope}[overlay]
  \node
   [pin={[pin edge=thick,align=left]left:{shape $\lambda/\mu$\\[-4pt]weight $\nu$}}]
   at (2.8,0) {};
 \end{scope}
}
\right.\left.
\vYTd{0.2in}{}{
 {}}
{
 \draw[transparent](0,0)--(0,8) (4,0)--(4,8);
 \node at (2,4) {$\varnothing$};
}
\right)
\\[-8pt]
\left.\tikz[x={(0in,0.2in)},y={(-0.2in,0in)}]\draw[transparent](0,0)--node[opaque]{$T^{(4)}$}(0,8);\right.
\left.\tikz[x={(0in,0.2in)},y={(-0.2in,0in)}]\draw[transparent](0,0)--node[opaque]{$S^{(0)}$}(0,8);\right.
\displaybreak[0]\\[-0.2in]
\tikz\draw[|->](0in,0in)-- node[right]{$\Delta_4$} (0in,-0.3in);
\displaybreak[0]\\
\left(
\vYTd{0.2in}{}{
 {{},{},{},{},{},1,1,1},
 {{},{},{},{},1,2},
 {1,2,2,2,3}}
{
 \foreach \i/\j in {3/5}
  \draw[green,very thick] (\i,\j) rectangle +(-1,-1);
 \foreach \i/\j in {1/5,2/2,2/3,2/4}
  \draw[red,very thick] (\i,\j) rectangle +(-1,-1);
 \draw[transparent](4,0)--(4,8)
  node[inner sep=0pt,outer sep=0pt,anchor=base west] {, };
 \begin{scope}[overlay]
  \node
   [pin={[pin edge=thick,align=left]left:{shape\\[-4pt]\ $(\lambda_1,\lambda_2,\lambda_3)/\mu^{(3)}$\\[-4pt]weight $(\nu_1,\nu_2,\nu_3)$}}]
   at (1.8,0) {};
 \end{scope}
}
\right.\left.
\vYTd{0.2in}{}{
 {{\red 1},{\red 2},{\red 3},{\red 3}}}
{
 \draw[transparent](-3,0)--(-3,8);
 \node[above,inner sep=0pt,outer sep=0pt] at (0,2)
  {$\overbrace{\hphantom{\hbox to0.8in{\hss}}}
     ^{\hbox to0.8in{weight $\mu-\mu^{(3)}$\hss}}$};
 \begin{scope}[overlay]
  \node[coordinate,pin={[pin edge={thick,black,latex-}]left:new row}] at (0.5,0) {};
  \node[pin={[pin edge=thick,pin distance=0.9in,align=left]right:{shape\\[-4pt]\ $(\lambda_4)/(\nu_4=0)$\\[-4pt]weight $\mu-\mu^{(3)}$}}] at (0.5,4.5) {};
 \end{scope}
}
\right)
\\[-8pt]
\left.\tikz[x={(0in,0.2in)},y={(-0.2in,0in)}]\draw[transparent](0,0)--node[opaque]{$T^{(3)}$}(0,8);\right.
\left.\tikz[x={(0in,0.2in)},y={(-0.2in,0in)}]\draw[transparent](0,0)--node[opaque]{$S^{(1)}$}(0,8);\right.
\displaybreak[0]\\[-0.2in]
\tikz\draw[|->](0in,0in)-- node[right]{$\Delta_3$} (0in,-0.3in);
\displaybreak[0]\\[10pt]
\left(
\vYTd{0.2in}{}{
 {{},{},{},{},1,1,1,1},
 {{},1,2,2,2,2}}
{
 \foreach \i/\j in {2/3,2/4,2/5,2/6}
  \draw[green,very thick] (\i,\j) rectangle +(-1,-1);
 \foreach \i/\j in {1/4,2/1}
  \draw[red,very thick] (\i,\j) rectangle +(-1,-1);
 \draw[transparent](4,0)--(4,8)
  node[inner sep=0pt,outer sep=0pt,anchor=base west] {, };
 \begin{scope}[overlay]
  \node
   [pin={[pin edge=thick,align=left]left:{shape\\[-4pt]\ $(\lambda_1,\lambda_2)/\mu^{(2)}$\\[-4pt]weight $(\nu_1,\nu_2)$}}]
   at (1.8,0) {};
 \end{scope}
}
\right.\left.
\vYTd{0.2in}{}{
 {{},{\red 1},{\red 2},{\red 2},{\red 2}},
 {1,2,3,3}}
{
 \draw[transparent](-2,0)--(-2,8);
 \foreach \i/\j in {1/1}
  \draw[green,very thick] (\i,\j) rectangle +(-1,-1);  
 \begin{scope}[overlay]
  \node[coordinate,pin={[pin edge={thick,black,latex-}]left:new row}] at (0.5,0) {};
  \node[left,inner sep=0pt,outer sep=0pt] at (1.5,0) {$S^{(1)}\left\{\vbox to8.5pt{}\right.$};
  \node[above,inner sep=0pt,outer sep=0pt] at (0,3)
   {$\overbrace{\hphantom{\hbox to0.8in{\hss}}}
      ^{\hbox to0.8in{weight $\mu^{(3)}-\mu^{(2)}$\hss}}$};
  \node[pin={[pin edge=thick,pin distance=4.5ex]north:
        \hbox to0.2in{$\nu_3$ empty cells\hss}}] at (0.5,0.5) {};
  \node[right,inner sep=0pt,outer sep=0pt,pin={[pin edge=thick,pin distance=0.7in,align=left]right:{shape\\[-4pt]\ $(\lambda_3,\lambda_4)/(\nu_3,\nu_4)$\\[-4pt]weight $\mu-\mu^{(2)}$}}] at (1,5.5) {};
 \end{scope}
}
\right)
\\[-8pt]
\left.\tikz[x={(0in,0.2in)},y={(-0.2in,0in)}]\draw[transparent](0,0)--node[opaque]{$T^{(2)}$}(0,8);\right.
\left.\tikz[x={(0in,0.2in)},y={(-0.2in,0in)}]\draw[transparent](0,0)--node[opaque]{$S^{(2)}$}(0,8);\right.
\displaybreak[0]\\[-0.2in]
\tikz\draw[|->](0in,0in)-- node[right]{$\Delta_2$} (0in,-0.3in);
\displaybreak[0]\\[20pt]
\left(
\vYTd{0.2in}{}{
 {{},{},{},1,1,1,1,1}}
{
 \foreach \i/\j in {1/4,1/5,1/6,1/7,1/8}
  \draw[green,very thick] (\i,\j) rectangle +(-1,-1);
 \foreach \i/\j in {1/1,1/2,1/3}
  \draw[red,very thick] (\i,\j) rectangle +(-1,-1);
 \draw[transparent](4,0)--(4,8)
  node[inner sep=0pt,outer sep=0pt,anchor=base west] {, };
 \begin{scope}[overlay]
  \node
   [pin={[pin edge=thick,align=left]left:{shape $(\lambda_1)/\mu^{(1)}$\\[-4pt]weight $(\nu_1)$}}]
   at (0.8,0) {};
 \end{scope}
}
\right.\left.
\vYTd{0.2in}{}{
 {{},{},{},{},{\red 1},{\red 2}},
 {{},1,2,2,2},
 {1,2,3,3}}
{
 \draw[transparent](-1,0)--(-1,8);
 \foreach \i/\j in {1/1,1/2,1/3,1/4}
  \draw[green,very thick] (\i,\j) rectangle +(-1,-1); 
 \begin{scope}[overlay]
  \node[coordinate,pin={[pin edge={thick,black,latex-}]left:new row}] at (0.5,0) {};
  \node[left,inner sep=0pt,outer sep=0pt] at (2,0) {$S^{(2)}\left\{\vbox to17pt{}\right.$};
  \node[above,inner sep=0pt,outer sep=0pt] at (0,5)
   {$\overbrace{\hphantom{\hbox to0.4in{\hss}}}
      ^{\textstyle\text{weight $\mu^{(2)}-\mu^{(1)}$}}$};
  \node[pin={[pin edge=thick,pin distance=4.5ex]north:
             $\nu_2$ empty cells}] at (0.5,1.5) {};
  \node[right,inner sep=0pt,outer sep=0pt,pin={[pin edge=thick,pin distance=0.7in,align=left]right:{shape\\[-4pt]\ $(\lambda_2,\lambda_3,\lambda_4)/(\nu_2,\nu_3,\nu_4)$\\[-4pt]weight $\mu-\mu^{(1)}$}}] at (1.5,5.5) {};
 \end{scope}
}
\right)
\\[-8pt]
\left.\tikz[x={(0in,0.2in)},y={(-0.2in,0in)}]\draw[transparent](0,0)--node[opaque]{$T^{(1)}$}(0,8);\right.
\left.\tikz[x={(0in,0.2in)},y={(-0.2in,0in)}]\draw[transparent](0,0)--node[opaque]{$S^{(3)}$}(0,8);\right.
\displaybreak[0]\\[-0.2in]
\tikz\draw[|->](0in,0in)-- node[right]{$\Delta_1$} (0in,-0.3in);
\displaybreak[0]\\[20pt]
\left(
\vYTd{0.2in}{}{
 {}}
{
 \node at (2,4) {$\varnothing$};
 \draw[transparent](0,0)--(0,8) (4,0)--(4,8)
  node[inner sep=0pt,outer sep=0pt,anchor=base west] {, };
}
\right.\left.
\vYTd{0.2in}{}{
 {{},{},{},{},{},{\red 1},{\red 1},{\red 1}},
 {{},{},{},{},1,2},
 {{},1,2,2,2},
 {1,2,3,3}}
{
 \foreach \i/\j in {1/1,1/2,1/3,1/4,1/5}
  \draw[green,very thick]  (\i,\j) rectangle +(-1,-1);  
 \begin{scope}[overlay]
  \node[coordinate,pin={[pin edge={thick,black,latex-}]left:new row}] at (0.5,0) {};
  \node[left,inner sep=0pt,outer sep=0pt] at (2.5,0) {$S^{(3)}\left\{\vbox to25.5pt{}\right.$};
  \node[above,inner sep=0pt,outer sep=0pt] at (0,6.5)
   {$\overbrace{\hphantom{\hbox to0.6in{\hss}}}
      ^{\textstyle\text{weight $\mu^{(1)}$}}$};
  \node[pin={[pin edge=thick,pin distance=1.5ex]north:
             $\nu_1$ empty cells}] at (0.5,1.5) {};
  \node[right,inner sep=0pt,outer sep=0pt,pin={[pin edge=thick,pin distance=0.5in]right:{shape $\lambda/\nu$, weight $\mu$}}] at (2,6.5) {};
 \end{scope}
}
\right)
\\[-8pt]
\left.\tikz[x={(0in,0.2in)},y={(-0.2in,0in)}]\draw[transparent](0,0)--node[opaque]{$T^{(0)}$}(0,8);\right.
\left.\tikz[x={(0in,0.2in)},y={(-0.2in,0in)}]\draw[transparent](0,0)--node[opaque]{$S^{(4)}$}(0,8);\right.
\end{gather*}
\end{Example}

To complete the proof of Theorem~\ref{The-TtoS} we require two additional Lemmas. First, using
the same notation as before:
 
\begin{Lemma}\label{Lem-Sss}
For $r=0,1,\ldots,n$ each tableau $S^{(r)}$ is semistandard.
\end{Lemma}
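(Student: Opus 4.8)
The plan is to argue by induction on $r$. Since $S^{(0)}$ is empty the base case is trivial, and by Definition~\ref{Def-TtoS} the tableau $S^{(r)}$ is obtained from $S^{(r-1)}$ by adjoining, as a new top row, the row of terminating row numbers produced by the action of $\delta_s$ on $T^{(s)}$, where $s=n-r+1$. By Lemma~\ref{Lem-TSshape} this new row occupies row $s$ of $D(\lambda)$, with its leftmost $\nu_s$ cells lying in the inner shape, and it sits immediately above row $s+1$, the top row of $S^{(r-1)}$. Assuming $S^{(r-1)}$ is semistandard, it then suffices to establish two facts: (a) the entries of the new row $s$ are weakly increasing from left to right; and (b) the entries strictly increase down every column shared by rows $s$ and $s+1$.

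For (a) I would invoke the phase structure of $\delta_s$ from Definition~\ref{Def-full-r-del}. Phase~1 produces $\nu_s$ terminating row numbers $0$, which by the recording convention fill the $\nu_s$ inner cells at the left of row $s$; Phase~2 produces nonzero terminating row numbers, each strictly less than $s$; and Phase~3 produces terminating row numbers all equal to $s$. Since the starting cells of successive Phase~2 and Phase~3 deletions move from right to left, the Horizontal Path Comparison Lemma~\ref{Lem-hp} shows that these terminating row numbers, recorded from left to right in their order of occurrence, are weakly increasing; as the Phase~3 value $s$ is maximal and recorded last, the full content of row $s$ is weakly increasing, giving (a).

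For (b) I would compare the deletions building rows $s$ and $s+1$ through the Vertical Path Comparison Lemma~\ref{Lem-vp}, applied with $T=T^{(s+1)}$ and $T^{\#}=\delta_{s+1}T^{(s+1)}=T^{(s)}$. Writing $t_j$ (resp.\ $t_j^{\#}$) for the terminating row number of the $j$th Phase~2B deletion of $\delta_{s+1}$ (resp.\ the $j$th Phase~2 deletion of $\delta_s$), the Lemma gives $t_j^{\#}<t_j$ for $1\le j\le m$, where $m$ is the number of Phase~2B deletions. To turn this into column strictness one must locate these numbers in $S$. The lattice inequality $L_{s,s+1}\ge0$, which reads $\nu_{s+1}+U_{s,s+1}\le\nu_s$, shows that the inner cells together with the Phase~2A entries of row $s+1$ all occupy columns $\le\nu_s$, hence lie beneath the inner cells of row $s$ and need no comparison. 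In each remaining column $\nu_s+j$ the entry of row $s$ is its $j$th content entry (value $t_j^{\#}$, or $s$ if it falls in Phase~3), while that of row $s+1$ is the content entry lying $\sigma+j$ places beyond its Phase~2A block, with $\sigma=\nu_s-\nu_{s+1}-U_{s,s+1}\ge0$ (value $t_{j+\sigma}$, or $s+1$ if it falls in Phase~3). Combining $t_j^{\#}<t_j\le t_{j+\sigma}$ (the last step being the monotonicity from (a)) with the crude bounds $t_j^{\#}<s<s+1$ then yields $S(s,\cdot)<S(s+1,\cdot)$ in every case except the potential clash of a Phase~3 value $s$ in row $s$ directly above a Phase~2B value $\le s$ in row $s+1$.

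The heart of the matter, and the step I expect to be the main obstacle, is excluding exactly this last configuration, which amounts to the shape inequality $\lambda_s-\mu_s^{(s)}\ge\lambda_{s+1}-\mu_{s+1}^{(s+1)}$, asserting that the Phase~3 block of row $s$ lies entirely to the right of the Phase~2B block of row $s+1$ within the overlap. I would again derive this from the lattice permutation property: the $m$ Phase~2B deletions of $\delta_{s+1}$ deposit $m$ entries, each $<s$, into row $s$, so that row $s$ of $T^{(s)}$ carries at least $m$ content entries $<s$; together with $\nu_s\ge\nu_{s+1}+U_{s,s+1}$ this gives $\lambda_s-\mu_s^{(s)}=\nu_s+\#\{\text{entries}<s\text{ in row }s\}\ge(\nu_{s+1}+U_{s,s+1})+m=\lambda_{s+1}-\mu_{s+1}^{(s+1)}$. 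The delicate part will be to verify that these deposited entries genuinely survive in row $s$ throughout the rest of the sweep $\delta_{s+1}$ and to carry the column bookkeeping uniformly across all the phases, so that the matching of columns above is rigorously justified.
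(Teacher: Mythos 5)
Your proof is correct and follows essentially the same route as the paper's: induction on the rows, the Horizontal Path Comparison Lemma~\ref{Lem-hp} for the weak increase along the new row, and the Vertical Path Comparison Lemma~\ref{Lem-vp} together with the lattice-permutation offset $\nu_{s+1}+U_{s,s+1}\le\nu_s$ to align columns and get strictness, with the only possible failure being a Phase~3 entry of row $s$ sitting above a Phase~2 entry of row $s+1$ --- exactly the configuration the paper also rules out. The ``delicate'' survival fact you flag (that the $m$ deposited entries persist in row $s$, so $\delta_s$ has at least $m$ Phase~2 deletions) is already asserted in the preamble of Lemma~\ref{Lem-vp} itself, and follows from Lemma~\ref{Lem-hp} since later deletion paths of $\delta_{s+1}$ lie strictly to the left and cannot displace entries already deposited, so no additional work is required there.
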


\begin{proof}
Again we can proceed by induction. Since it contains no cells, the empty tableau $S^{(0)}$ is
semistandard. Then by the induction hypothesis $S^{(n-r)}$ is semistandard.
However, as we have indicated in the proof of Lemma~\ref{Lem-TSshape},
$S^{(n-r+1)}$ is constructed by placing on top of $S^{(n-r)}$ a new row of length
$\lambda_r$ that contains on its left $\nu_r$ terminating row numbers $0$, followed from left to right by $\lambda_r-\nu_r$ non-zero terminating row numbers. As required for semistandardness these 
automatically appear in weakly increasing order thanks to the Horizontal Path Comparison Lemma~\ref{Lem-hp}.
 
Since $S^{(n-r)}$ also satisfies the strict vertical inequalities 
by the induction hypothesis, the only remaining condition required to establish the semistandardness of $S^{(n-r+1)}$ is the vertical condition between its newly added row and the top row of $S^{(n-r)}$ (assuming that $r<n$).
We verify this by applying the Vertical Path Comparison Lemma~\ref{Lem-vp} with its $r$ replaced 
here by $r+1$.
Phase 2 of $\delta_{r+1}$ is divided into Phases 2A and 2B with $U=U_{r,r+1}$ being the number of Phase 2A deletions (i.e.\ the number of entries $r$ in row $r+1$ of $T^{(r+1)}$).
By construction, the top row of $S^{(n-r)}$ and the new row of $S^{(n-r+1)}$ are as in the following picture 
(see the statement of Lemma~\ref{Lem-vp} for the notation $m$, $t_j$ and $t^\#_j$).
Note that the lattice permutation property of $T^{(r+1)}$ implies that entries $r$ can lie in rows $r$ and $r+1$ only, so that the number of entries $r$ in row $r$ of $T^{(r+1)}$ is $\nu_r-U$, and implies furthermore that $\nu_r-U\geq\nu_{r+1}$, or equivalently that $\nu_{r+1}+U\leq\nu_r$.

\[
\begin{tikzpicture}[x={(0in,-0.2in)},y={(0.2in,0in)}]
 \draw (0,0) rectangle (1,20);
 \node[left] at (0.5,0) {top row of $S^{(n-r)}$: };
 \node[above,inner sep=0pt,outer sep=0pt] at (0,2) {$\overbrace{\hbox to0.8in{}}^{\nu_{r+1}}$};
 \node at (0.5,2) {\small{empty cells}};
 \draw (0,4)--(1,4);
 \node[above,inner sep=0pt,outer sep=0pt] at (0,7.5) {$\overbrace{\hbox to1.4in{}}^{U}$};
 \node at (0.5,7.5) {$\begin{smallmatrix}\text{terminating row numbers} \\ \text{of Phase 2A deletions}\end{smallmatrix}$};
 \draw (0,11)--(1,11);
 \node at (0.5,11.5) {$\scriptstyle t_1$};
 \draw (0,12)--(1,12);
 \node at (0.5,12.5) {$\scriptstyle t_2$};
 \draw (0,13)--(1,13);
 \node at (0.5,14) {$\cdots$};
 \draw (0,15)--(1,15);
 \node at (0.5,15.5) {$\scriptstyle t_m$};
 \draw (0,16)--(1,16);
 \node at (0.5,16.5) {$\scriptstyle r+1$};
 \draw (0,17)--(1,17);
 \node at (0.5,18) {$\cdots$};
 \draw (0,19)--(1,19);
 \node at (0.5,19.5) {$\scriptstyle r+1$};
 \draw (-3,0) rectangle (-2,22);
 \node[left] at (-2.5,0) {new row of $S^{(n-r+1)}$: };
 \node[above,inner sep=0pt,outer sep=0pt] at (-3,6) {$\overbrace{\hbox to2.4in{}}^{\nu_{r}}$};
 \node at (-2.5,6) {\small{empty cells}};
 \draw (-3,12)--(-2,12);
 \node at (-2.5,12.5) {$\scriptstyle t^\#_1$};
 \draw (-3,13)--(-2,13);
 \node at (-2.5,13.5) {$\scriptstyle t^\#_2$};
 \draw (-3,14)--(-2,14);
 \node at (-2.5,15) {$\cdots$};
 \draw (-3,16)--(-2,16);
 \node at (-2.5,16.5) {$\scriptstyle t^\#_m$};
 \draw (-3,17)--(-2,17);
 \node at (-2.5,19.5) {$\cdots\cdots$};
\end{tikzpicture}
\]
By the Vertical Path Comparison Lemma~\ref{Lem-vp} we have $t_j>t^\#_j$ for $1\leq j\leq m$, which implies that the cell immediately above $t_j$ in $S^{(n-r+1)}$ is either empty or filled with something $\leq t^\#_j$, verifying the required strict vertical inequalities for these cells.
Moreover, since all entries in the new row are at most $r$, the inequalities for the cells containing $r+1$ and their respective immediate upper neighbours also hold.
It follows that $S^{(n-r+1)}$ is semistandard, and the induction argument then implies that
$S^{(r)}$ is semistandard for all $r=0,1,\ldots,n$, as required.
\qedhere
\end{proof}

Secondly, again using the same notation as above, we have:

\begin{Lemma}\label{Lem-Slp}
$S=S^{(n)}$ satisfies the lattice permutation property.
\end{Lemma}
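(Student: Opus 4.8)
The plan is to translate the lattice permutation property of $S$ entirely into a statement about the sequence of inner shapes $\mu^{(n)}=\mu,\mu^{(n-1)},\dots,\mu^{(0)}=\varnothing$, and then to read that statement off from the geometry of the deletion paths. First I would record how the entries of $S$ are distributed. By Definition~\ref{Def-TtoS}, row $r$ of $S=S^{(n)}$ is precisely the new row adjoined by $\Delta_r$, so its nonzero entries are the terminating row numbers of the Phase~2 and Phase~3 deletions of $\delta_r$ acting on $T^{(r)}$. Each such deletion removes exactly one cell from the inner shape, namely its terminating cell, lying in the row equal to its terminating row number; hence for every $s\geq1$ the number of entries $s$ in row $r$ of $S$ equals $\mu^{(r)}_s-\mu^{(r-1)}_s$, the number of inner cells lost from row $s$ in passing from $T^{(r)}$ to $T^{(r-1)}$. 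Summing over the first $r$ rows and telescoping, using $\mu^{(0)}=\varnothing$, gives
\begin{equation*}
\#\{\,s \text{ in rows } 1,\dots,r \text{ of } S\,\}=\sum_{p=1}^r\bigl(\mu^{(p)}_s-\mu^{(p-1)}_s\bigr)=\mu^{(r)}_s .
\end{equation*}
Consequently the lattice permutation inequality, that the number of entries $s$ in the first $r$ rows does not exceed the number of entries $s-1$ in the first $r-1$ rows, is exactly the assertion that $\mu^{(r)}_s\leq\mu^{(r-1)}_{s-1}$ for all $r$ and all $s\geq2$.

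Next I would observe that $\mu^{(r-1)}\subseteq\mu^{(r)}$, since $\delta_r$ only removes cells from the inner shape and, by Lemma~\ref{Lem-TSshape}, both $\mu^{(r-1)}$ and $\mu^{(r)}$ are genuine partitions with $\ell(\mu^{(r-1)})\leq r-1$. For such a pair of nested partitions, the family of inequalities $\mu^{(r)}_s\leq\mu^{(r-1)}_{s-1}$ (for all $s$) is precisely equivalent to the skew shape $\mu^{(r)}/\mu^{(r-1)}$ being a horizontal strip, that is, to the removed inner cells occupying pairwise distinct columns. Thus the whole lemma reduces to showing that the terminating cells of the Phase~2 and Phase~3 deletions making up $\delta_r$ lie in distinct columns.

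This last point is where the path comparison results do the work, and is the only genuinely geometric step. I would label the Phase~2 and Phase~3 deletions $d_1,\dots,d_M$ (with $M=\lambda_r-\nu_r$) in their order of occurrence, with paths $P_a$, terminating rows $t_a$, and terminating cells $(t_a,y_a)$; recall that each $P_a$ is a vertical strip ascending from its starting cell in row $r$ to its top cell $(t_a,y_a)$, so that $y_a$ is the largest column occurring on $P_a$. For consecutive deletions the Horizontal Path Comparison Lemma~\ref{Lem-hp} gives $t_{a+1}\geq t_a$ together with the fact that $P_{a+1}$ lies strictly to the left of $P_a$. Since $t_a\leq t_{a+1}\leq r$, the path $P_a$ possesses a cell in row $t_{a+1}$, whose column is at most $y_a$ by the vertical strip property, while the terminating cell $(t_{a+1},y_{a+1})$ of $P_{a+1}$ lies strictly to the left of that cell; hence $y_{a+1}<y_a$. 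The terminating columns therefore strictly decrease along $d_1,\dots,d_M$, so the removed cells occupy distinct columns, $\mu^{(r)}/\mu^{(r-1)}$ is a horizontal strip, and the required inequalities $\mu^{(r)}_s\leq\mu^{(r-1)}_{s-1}$ follow.

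I expect the only delicate book-keeping to be the uniform treatment of the case $t_{a+1}>t_a$, including the transition into the Phase~3 deletions, all of which sit in row $r$: here $P_a$ no longer meets row $t_{a+1}$ at its terminating cell, and one must pass through an intermediate cell of $P_a$, which is exactly what the vertical strip property of $P_a$ supplies. Everything else is the telescoping bookkeeping of the first paragraph, which is routine once the correspondence between terminating row numbers and losses of inner cells has been set up.
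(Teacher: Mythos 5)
Your proof is correct and shares its skeleton with the paper's: the same telescoping identity showing that the number of entries $s$ in rows $1,\dots,r$ of $S$ equals $\mu^{(r)}_s$, the same reformulation of the lattice permutation property as $\mu^{(r-1)}_{s-1}\geq\mu^{(r)}_s$, and the same key input, the Horizontal Path Comparison Lemma~\ref{Lem-hp}. You differ only in how that inequality is extracted. The paper uses nothing beyond the weak increase of terminating row numbers: the deletions terminating in row $k$ occur consecutively, before any cell of row $k+1$ of the inner shape has been removed, and since each terminating cell must be a corner of the current inner shape, at most $\mu^{(r)}_k-\mu^{(r)}_{k+1}$ of them can terminate in row $k$, whence $\mu^{(r-1)}_k\geq\mu^{(r)}_{k+1}$. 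You instead recast the target as the horizontal-strip condition on $\mu^{(r)}/\mu^{(r-1)}$ and deduce it from strictly decreasing terminating columns, which also requires the strictly-to-the-left assertion of Lemma~\ref{Lem-hp} together with the vertical-strip property of the deletion paths. Both routes are sound; yours yields the marginally sharper fact that the terminating cells move strictly leftward, while the paper's avoids all column bookkeeping by leaning on the corner (partition) property of the shrinking inner shape.

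One step of yours needs a small repair rather than a citation. When $d_{a+1}$ is of type (iii), so $t_{a+1}=r$, Lemma~\ref{Lem-hp} gives no column information in row $r$: by the paper's convention, everything below the row where $P_a^\flat$ starts (in particular all of row $r$) counts as strictly to the left of $P_a$, so in that row the strictly-to-the-left relation is vacuous. The strict comparison you need there follows instead from the definition of the full $r$-deletion: successive deletions start at the current corner cell of row $r$, whose column drops by exactly one at each step, so if $x_a$ denotes the starting column of $P_a$ then $y_{a+1}=x_a-1<x_a\leq y_a$, the last inequality being the vertical-strip property of $P_a$. You flag this case in your closing paragraph but credit its resolution entirely to the vertical-strip property, which supplies only $x_a\leq y_a$; the strict drop within row $r$ is the extra, trivial, observation. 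This is cosmetic and does not affect the validity of your approach.
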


\begin{proof}.
Note that the construction of $S$ implies that $\mu^{(r)}_k$, namely the length of row $k$ of the inner shape of $T^{(r)}$, equals the number of entries $k$ to be put in rows 1 through $r$ of $S$.
Hence the lattice permutation property of $S$ is equivalent to the inequalities $\mu^{(r-1)}_k\geq\mu^{(r)}_{k+1}$ for $1\leq r\leq n$ and $1\leq k\leq n-1$ and, moreover, it is sufficient to restrict to the range $1\leq k<r\leq n$ since otherwise the inequality trivially holds (since $\mu^{(r-1)}$ and $\mu^{(r)}$ have at most $r-1$ and $r$ parts respectively).
The required inequality again follows from Horizontal Path Comparison Lemma~\ref{Lem-hp}, namely from the fact that the terminating row numbers in Phases 2 and 3 in $\delta_r$ weakly increase in the order of occurrence, as follows.
Let $V_{kr}$ denote the number of the terminating row numbers $k$ generated during Phases 2 and 3 of $\delta_r$,
that is to say $V_{kr}=\mu^{(r)}_k-\mu^{(r-1)}_k$.
Then, among the $\lambda_r-\nu_r$ relevant deletions, the first $V_{1r}$ terminate in row 1, the next $V_{2r}$ terminate in row 2, and so on, continuing until the final $V_{rr}$ terminate in row $r$ (this last part is nothing but Phase 3).
Observe the change of inner shape, noting that after each deletion it always has to be the Young diagram of a partition.
This in particular implies that $V_{1r}$ cannot exceed $\mu^{(r)}_1-\mu^{(r)}_2$ since, if the inner shape $D(\mu^{(r)})$ loses this 
number of cells from row~1 with the length of row~2 intact, then it will not have a corner in row 1 any more; so we have $\mu^{(r-1)}_1=\mu^{(r)}_1-V_{1r}\geq\mu^{(r)}_2$.
Next, before the deletions terminating in row 2 start, the lengths of rows 2 and 3 of the inner shape are still $\mu^{(r)}_2$ and $\mu^{(r)}_3$ respectively, and by the same argument we have $\mu^{(r-1)}_2=\mu^{(r)}_2-V_{2r}\geq\mu^{(r)}_3$.
Proceeding in this manner, we obtain $\mu^{(r-1)}_k\geq\mu^{(r)}_{k+1}$ for all $1\leq k\leq r-1$,
and the lattice permutation property of $S$ is ensured.
\qedhere
\end{proof}

Finally, we have
\begin{proof}[Proof of Theorem~\ref{The-TtoS}]
Taken together Lemmas~\ref{Lem-TSshape},~\ref{Lem-Sss} and~\ref{Lem-Slp} ensure that the tableau $S=S^{(n)}$ 
is of shape $\lambda/\nu$ and weight $\mu$, and that it satisfies both the semistandard and lattice permutation
properties. It is therefore an LR tableau and, more precisely, $S\in{\cal LR}(\lambda/\nu,\mu)$, as required.
\qedhere
\end{proof}


Remarkably, the map we have constructed from $T=T^{(n)}$ to $S=S^{(n)}$ is bijective.
The following lemma clarifies the basis of the proof of injectivity.

\begin{Lemma}\label{Lem-TSrcvr}
Let $n$, $\lambda$, $\mu$, $\nu$, $T$ and $S$ be as in Theorem~\ref{The-TtoS}, and let $(T^{(n)},S^{(0)})$, $(T^{(n-1)},S^{(1)})$, \dots,\ $(T^{(0)},S^{(n)})$ be as in Definition~\ref{Def-TtoS}.
Then, for each $1\leq r\leq n$, the pair $(T^{(r)},S^{(n-r)})$ can be recovered from the sole knowledge of $(T^{(r-1)},S^{(n-r+1)})$ in the following manner.
\begin{itemize}
\item
$S^{(n-r)}$ consists of the lowermost $n-r$ rows of $S^{(n-r+1)}$.
\item
Let $V_{kr}$ denote the number of entries $k$ in the top row of $S^{(n-r+1)}$ for $0\leq k\leq r$, including entries $0$ (in fact $V_{0r}$ equals $\nu_r$, but we shall set aside this fact for the moment).
Then $T^{(r)}$ can be obtained from $T^{(r-1)}$ by applying Sagan and Stanley's internal insertion \cite{SS} from row $k$ precisely $V_{kr}$ times with $k$ running from 1 up to $r$ in this order, and then adjoining $V_{0r}$ entries $r$ to the right of the existing entries in row $r$.
\end{itemize}
\end{Lemma}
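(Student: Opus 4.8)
The plan is to prove the two bullet points separately, the first being essentially immediate and the second being the substance. For the $S$-component, recall from Definition~\ref{Def-TtoS} that $S^{(n-r+1)}$ is produced from $S^{(n-r)}$ simply by laying one new row of length $\lambda_r$ on top. Hence deleting that top row of $S^{(n-r+1)}$ returns $S^{(n-r)}$ verbatim, and there is nothing further to check.

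For the $T$-component we must invert $T^{(r-1)}=\delta_r T^{(r)}$. The guiding principle is that, by Sagan and Stanley~\cite{SS}, a single internal insertion started from row $k$ is the exact two-sided inverse of a single type (ii) (or degenerate type (iii)) deletion whose terminating row number is $k$. The first step is therefore to dissect $\delta_r$ into its constituent deletions using Definition~\ref{Def-full-r-del}: Phase 1 consists of $\nu_r=V_{0r}$ type (i) deletions that strip the rightmost entries $r$ from row $r$ (each recording terminating number $0$), while Phases 2 and 3 consist of the remaining $\lambda_r-\nu_r$ deletions, whose terminating row numbers are precisely the nonzero entries placed in the new top row of $S^{(n-r+1)}$. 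By the Horizontal Path Comparison Lemma~\ref{Lem-hp} these nonzero terminating numbers weakly increase in their order of occurrence, so that, grouped by value, exactly $V_{kr}$ of them equal $k$ and the blocks appear consecutively in the order $k=1,2,\dots,r$; this is what pins down the order $k=1,\dots,r$ prescribed for the internal insertions.

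The heart of the argument, and the step I expect to be the main obstacle, is to justify reversing the deletions in this same (forward) order rather than in strict last-in-first-out order. The key is the strict separation supplied by Lemma~\ref{Lem-hp} (together with Lemma~\ref{Lem-ep}): each deletion path lies strictly to the left of every earlier one, so the paths are pairwise disjoint as cell sets. Consequently no deletion disturbs the cells of any path performed before it, so in the final tableau $T^{(r-1)}$ the entries along each path, together with the inner corner it filled, still carry exactly the values assigned at the moment that deletion was executed. I would make this \emph{frozen path} observation precise and then argue by induction on the number of Phases 2 and 3 deletions already reversed: internal insertion started from the terminating row of the first deletion retraces that (rightmost) path exactly and undoes it, without touching the data of any later deletion, whose path lies strictly further left; and each subsequent internal insertion, taking the rightmost available inner corner in its row, peels off the next deletion in turn. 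Because the relevant paths are pairwise separated and each inverse insertion modifies only the cells of its own path and the shape changes at its two ends, the inverse insertions act on disjoint regions, so performing them in the forward order $k=1,\dots,r$ produces the same result as the genuine inverse composite.

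Finally, once all Phases 2 and 3 deletions have been inverted, the tableau coincides with the state of $T^{(r)}$ immediately after Phase 1, namely $T^{(r)}$ with its $\nu_r$ rightmost entries $r$ stripped from row $r$; adjoining $V_{0r}$ entries $r$ at the right of row $r$ reverses Phase 1 and restores $T^{(r)}$ exactly. The identity $V_{0r}=\nu_r$, flagged parenthetically in the statement, is not needed for the recovery itself and follows independently from the weight bookkeeping of Lemma~\ref{Lem-TSshape}. Assembling the two components recovers the pair $(T^{(r)},S^{(n-r)})$ from $(T^{(r-1)},S^{(n-r+1)})$, as required.
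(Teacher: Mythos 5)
Your handling of the $S$-component and your dissection of $\delta_r$ into its constituent deletions, with terminating row numbers weakly increasing by Lemma~\ref{Lem-hp}, both match the paper. The genuine gap is the central claim that the inverse insertions may be performed in the forward order $k=1,2,\dots,r$ because they ``act on disjoint regions.'' The frozen-path observation itself is correct, but the conclusion drawn from it is false: every one of these insertions terminates by appending a cell at the end of row $r$, and the column of that appended cell is determined by the current length of row $r$, which depends on which insertions have already been performed. The paper's own Example~\ref{Ex-tab} (the step $\Delta_4$) is a counterexample. There $T^{(3)}$ has row 1 equal to five empty cells followed by $1,1,1$, row 2 equal to four empty cells followed by $1,2$, row 3 equal to $1,2,2,2,3$, and recorded terminating rows $1,2,3,3$. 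Performing the insertion from row $1$ first, the entry $1$ at $(1,6)$ bumps the $2$ at $(2,6)$, which bumps the $3$ at $(3,5)$, and that $3$ then comes to rest at the end of the currently empty row $4$, i.e.\ in cell $(4,1)$ rather than in $(4,4)$ where the original deletion started; the next insertion, from row $2$, then pushes a $2$ into row $4$, where it bumps this $3$ into a nonexistent row $5$, so the outer shape is already wrong. A second, smaller slip: within a single terminating row $k$ the starting cell of an internal insertion is forced to be the leftmost skew cell of row $k$, so it undoes the \emph{last} deletion that terminated in row $k$, not the first, contrary to your ``peeling off the next deletion in turn'' picture.

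The paper's proof does precisely the opposite of what you try to justify: it inverts the composition in strict last-in-first-out order, performing the insertions from row $k$ with $k$ running from $r$ \emph{down} to $1$ (the row-$r$ insertions, inverting the type (iii) deletions, come first and merely recreate empty cells), and only then adjoins the $V_{0r}$ entries $r$. In that order no separation or commutation argument is needed at all: one is simply composing, in reverse order, the two-sided inverses of the individual deletions, and at each stage the forced (leftmost) starting cell in the recorded row is exactly the cell filled by the deletion being undone. You should also note that the Lemma as printed says ``from $1$ up to $r$,'' which contradicts the paper's own proof; the computation above shows the printed order is in fact wrong, so this is evidently a typo for ``from $r$ down to $1$.'' Your proposal takes the printed order at face value and supplies a commutation argument for it, but that argument cannot be repaired; the fix is to reverse the order and discard the disjointness claim.
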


\begin{proof}[Proof]
The assertion for $S^{(n-r)}$ is immediate from the construction.
Now, again by construction, the composition of the top row of $S^{(n-r+1)}$ tells that the full $r$-deletion applied $T^{(r)}$, yielding $T^{(r-1)}$, consists of $V_{0r}$ type (i) deletions producing entries $0$ in the top row of $S^{(n-r+1)}$, then $V_{kr}$ type (ii) deletions terminating in row $k$ and producing that many entries $k$ in the top row of $S^{(n-r+1)}$, with $k$ running from $1$ up to $r-1$ in this order, and finally $V_{rr}$ type (iii) deletions terminating in row $r$ and producing that many entries $r$ in the top row of $S^{(n-r+1)}$.
This ensures that it is possible to apply Sagan and Stanley's internal insertion to $T^{(r-1)}$ first from each row $k$ precisely $V_{kr}$ times with $k$ running from $r$ down to $1$ in this order (note that the first $V_{rr}$ insertions from row $r$ simply create that many empty cells in row $r$), all reaching row $r$, and finally to adjoin $V_{0r}$ entries $r$ to the right of whatever already exists in row $r$, and that the result of these operations coincides with $T^{(r)}$.
\end{proof}

Now the bijectivity is made explicit in the following:
\begin{Theorem}\label{The-bij}
Let $n$, $\lambda$, $\mu$ and $\nu$ be as in Theorem~\ref{The-TtoS},
and let $\rho_{\mu,\nu}^\lambda$ be the map from $\mathcal{LR}(\lambda/\mu,\nu)$ to 
$\mathcal{LR}(\lambda/\nu,\mu)$ that takes $T\in\mathcal{LR}(\lambda/\mu,\nu)$ 
to $S\in\mathcal{LR}(\lambda/\nu,\mu)$ by way of
\begin{equation}
T\longmapsto(T,\varnothing)\overset{\Delta^{(n)}}{\longmapsto}(\varnothing,S)\longmapsto S \,.
\end{equation}
Then $\rho_{\mu,\nu}^\lambda$ is a bijection from $\mathcal{LR}(\lambda/\mu,\nu)$ to $\mathcal{LR}(\lambda/\nu,\mu)$.
\end{Theorem}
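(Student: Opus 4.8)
The plan is to combine three ingredients: the well-definedness of $\rho_{\mu,\nu}^\lambda$ already established in Theorem~\ref{The-TtoS}, an injectivity argument resting on the reconstruction procedure of Lemma~\ref{Lem-TSrcvr}, and a finite cardinality argument exploiting the symmetry between $\mu$ and $\nu$ to upgrade injectivity to bijectivity. First I would simply record that Theorem~\ref{The-TtoS} guarantees that $\rho_{\mu,\nu}^\lambda$ maps $\mathcal{LR}(\lambda/\mu,\nu)$ into $\mathcal{LR}(\lambda/\nu,\mu)$, so the map is well-defined on the stated domain and codomain.

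For injectivity, the key observation is that the entire intermediate data $(T^{(r)},S^{(n-r)})$ for all $r$ is determined by the single tableau $S=S^{(n)}$. Indeed, by Lemma~\ref{Lem-TSrcvr} each $S^{(n-r)}$ is just the lowermost $n-r$ rows of $S^{(n-r+1)}$, so all the truncations $S^{(0)},S^{(1)},\dots,S^{(n)}$ are read off directly from $S$. Starting from the base pair $(T^{(0)},S^{(n)})=(\varnothing,S)$ and applying the reconstruction step of Lemma~\ref{Lem-TSrcvr}, which recovers $T^{(r)}$ from $T^{(r-1)}$ by reading the multiplicities $V_{kr}$ from the top row of $S^{(n-r+1)}$ and performing the corresponding Sagan--Stanley internal insertions followed by the adjunction of $V_{0r}$ entries $r$, one recovers in turn $T^{(1)},T^{(2)},\dots,T^{(n)}=T$. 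Thus $T$ is uniquely determined by $S$, so $\rho_{\mu,\nu}^\lambda$ is injective, yielding the inequality $|\mathcal{LR}(\lambda/\mu,\nu)|\leq|\mathcal{LR}(\lambda/\nu,\mu)|$.

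Finally, to conclude bijectivity I would invoke symmetry together with finiteness. Each set $\mathcal{LR}(\lambda/\mu,\nu)$ is finite, being a subset of the finitely many fillings of the cells of $D(\lambda/\mu)$ by integers in $\{1,\dots,n\}$. Applying the whole construction verbatim with the roles of $\mu$ and $\nu$ interchanged produces an injection $\rho_{\nu,\mu}^\lambda:\mathcal{LR}(\lambda/\nu,\mu)\to\mathcal{LR}(\lambda/\mu,\nu)$, whence $|\mathcal{LR}(\lambda/\nu,\mu)|\leq|\mathcal{LR}(\lambda/\mu,\nu)|$. Combining the two inequalities forces equality of these two finite cardinalities, so the injection $\rho_{\mu,\nu}^\lambda$ between equinumerous finite sets is necessarily a bijection.

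The main obstacle is already encapsulated in Lemma~\ref{Lem-TSrcvr}: the delicate point is that internal insertions, applied from row $r$ downward to row $1$ in exactly the multiplicities recorded in the top row of $S^{(n-r+1)}$, invert the full $r$-deletion $\delta_r$ cell by cell and in the correct order. At the level of the theorem itself the only genuine subtlety is that one cannot easily prove surjectivity by directly checking that the reconstructed $T$ is a bona fide LR tableau of shape $\lambda/\mu$ and weight $\nu$ with $\rho_{\mu,\nu}^\lambda(T)=S$, since that would require re-establishing that internal insertion preserves the semistandard and lattice permutation properties. The symmetry-plus-finiteness argument neatly circumvents this, which is why I would favour it over attempting to construct and verify the inverse map directly.
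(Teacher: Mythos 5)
Your proposal is correct and follows essentially the same route as the paper's own proof: injectivity by recovering the chain $(T^{(r)},S^{(n-r)})$ from $(\varnothing,S)$ via Lemma~\ref{Lem-TSrcvr}, then the two-way cardinality comparison obtained by swapping the roles of $\mu$ and $\nu$, with finiteness forcing the injection to be a bijection. Your closing remark about why one avoids verifying surjectivity directly is also precisely the point the paper makes in Remark~\ref{Rem-insertion}.
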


\begin{proof}
Let $T\in\mathcal{LR}(\lambda/\mu,\nu)$ and let $(T^{(r)},S^{(n-r)})$, $r=n$, $n-1$, \dots, $0$, 
be as in Definition~\ref{Def-TtoS}.
Now start with the pair $(\varnothing,S)=(T^{(0)},S^{(n)})$ and apply Lemma~\ref{Lem-TSrcvr} with $r=n$, $n-1$, \dots,\ $0$ in this order to successively recover $(T^{(1)},S^{(n-1)})$, \dots,\ $(T^{(n)},S^{(0)})=(T,\varnothing)$.
Hence one can tell what $T$ was by the sole knowledge of $\rho_{\mu,\nu}^{\lambda}T$.
This proves that the map $\rho_{\mu,\nu}^\lambda$ 
from $\mathcal{LR}(\lambda/\mu,\nu)$ to $\mathcal{LR}(\lambda/\nu,\mu)$ is injective, 
and hence 
we have the inequality $|\mathcal{LR}(\lambda/\mu,\nu)|\leq|\mathcal{LR}(\lambda/\nu,\mu)|$.
Now a special feature of our situation is that we also have a map $\rho_{\nu,\mu}^\lambda$ going from $\mathcal{LR}(\lambda/\nu,\mu)$ to $\mathcal{LR}(\lambda/\mu,\nu)$ defined in the same manner but with the roles of $\mu$ and $\nu$ interchanged, since the construction works for any shape and weight (the sizes have to match but that condition is symmetric in $\mu$ and $\nu$), and by applying the same argument to $\rho_{\nu,\mu}^\lambda$ 
we also deduce $|\mathcal{LR}(\lambda/\nu,\mu)|\leq|\mathcal{LR}(\lambda/\mu,\nu)|$.
So these two sets have the same cardinality, and it is finite.
So any injection from one to the other is also a surjection.
Hence the map $\rho_{\mu,\nu}^\lambda$ in the Theorem is a bijection.
\qedhere
\end{proof}

\begin{Remark}\label{Rem-insertion}
With more effort, it can be proved that the above recovering procedure based on the internal insertion and adjunction described in Lemma~\ref{Lem-TSrcvr} is applicable to any $S\in\mathcal{LR}(\lambda/\nu,\mu)$ and that it yields an element of $\mathcal{LR}(\lambda/\mu,\nu)$, without assuming that $S$ comes from a $T\in\mathcal{LR}(\lambda/\mu,\nu)$ via the map $\rho_{\mu,\nu}^\lambda$, for an illustrative example see~\cite{Az1,Az2}. 
This allows one to prove the bijectivity without resorting to the finiteness of cardinalities.
This strategy will be adopted in the language of LR hives in Section~\ref{Sec-hive-path-addition}.
\end{Remark}

In the next section it will be proved that the collection of maps $\rho_{\mu,\nu}^\lambda$ has an involutory nature
in the sense that the composition of maps $\rho_{\nu,\mu}^\lambda\,\rho_{\mu,\nu}^\lambda$ is the identity.
This means that the inverse map from $\mathcal{LR}(\lambda/\nu,\mu)$ to
$\mathcal{LR}(\lambda/\mu,\nu)$ mentioned in the previous paragraph, based on internal insertion and adjunction, actually coincides with our original map $\rho_{\nu,\mu}^\lambda$ based on deletions.
It follows that the insertion procedure of Lemma~\ref{Lem-TSrcvr} provides another method to obtain the map in the statement of Theorem~\ref{The-TtoS}.

\section{Tableau proof of involutive nature of bijection}\label{Sec-tab-inv}
The aim of this section is to give a tableau-based proof of the involutive property of the collection of bijections in the statement of Theorem~\ref{The-bij}. 
To be more precise, let $\rho^{(n)}$ denote the map
\[
      \rho^{(n)}: \mathcal{LR}^{(n)} \to \mathcal{LR}^{(n)}
\]
defined by way of
\[
     \rho^{(n)}: T\longmapsto(T,\varnothing)\overset{\Delta^{(n)}}{\longmapsto}(\varnothing,S)\longmapsto S.
\]
It follows that for $\lambda$, $\mu$ and $\nu$ satisfying \thetag{*} 
\[
   \rho^{(n)}(\mathcal{LR}(\lambda/\mu,\nu))=\mathcal{LR}(\lambda/\nu,\mu) \quad\hbox{with}\quad
		  \rho^{(n)}\big|_{\mathcal{LR}(\lambda/\mu,\nu)}  = \rho^\lambda_{\mu\nu}\,
\]
where $\rho^\lambda_{\mu\nu}$ is as defined above in Theorem~\ref{The-bij}.

\begin{Theorem}\label{The-invol-tab-main}
The bijection $\rho^{(n)}$ is an involution.
\end{Theorem}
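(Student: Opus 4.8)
The plan is to establish the equivalent assertion $\rho_{\nu,\mu}^{\lambda}\circ\rho_{\mu,\nu}^{\lambda}=\mathrm{id}$ on $\mathcal{LR}(\lambda/\mu,\nu)$ for every admissible triple, arguing by induction on the number of filled cells $|\nu|=|\lambda|-|\mu|$. The base case $\nu=\varnothing$ is immediate, since then $T$ is the empty skew tableau of shape $\lambda/\lambda$ and is fixed by $\rho$. For the inductive step I would introduce a box-erasure operation $\psi$ on the source side that removes a single, suitably chosen, maximal entry of $T$ so as to preserve the Littlewood--Richardson property, producing $\widehat T\in\mathcal{LR}^{(n)}$ with one fewer cell, and seek a matching box-erasure operation $\phi$ on the partner side. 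The target is an intertwining relation of the form $\rho\,\psi=\phi\,\rho$, exactly analogous to the hive identity advertised for the $\sigma^{(n)}$ model; granted this relation, the inductive hypothesis applied to $\widehat T$ forces the involution on $T$.

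The substance of the intertwining relation reduces, as is to be spelt out in this section, to a purely local statement. Because $S=\rho_{\mu,\nu}^{\lambda}(T)$ is built by the successive full deletions $\delta_n,\dots,\delta_1$ of Definition~\ref{Def-TtoS}, and because the mutual positions of consecutive deletion paths are already governed by the Horizontal and Vertical Path Comparison Lemmas~\ref{Lem-hp} and~\ref{Lem-vp}, the only new phenomenon to analyse is the case in which $T$ has been reduced so that its bottom row $r$ holds a single cell with entry $r$. Deleting that cell is a type~(i) operation contributing a terminating row number $0$, hence an empty cell of the inner shape $\nu$ of $S$; what must be understood is how this one deletion commutes with the Phase~2 deletions of $\delta_{r-1}$ that follow it, and how the commutation is recorded in the two topmost rows of $S$. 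This is the content of Lemma~\ref{Lem-invol-tab-erase-one-gen}, whose sharpened form Proposition~\ref{Prop-invol-tab-erase-one-ind} pins down the exact column positions of the affected paths.

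To close the induction I would combine this commutation with the insertion-based recovery of Lemma~\ref{Lem-TSrcvr}, which describes how each $T^{(r)}$ is rebuilt from $T^{(r-1)}$ by internal insertions and adjunctions read off from the top row of $S^{(n-r+1)}$. The Proposition says precisely that erasing the distinguished cell and then forming the partner tableau agrees, up to the reversible relocation of a single box, with forming the partner tableau first and then erasing the determined box from $S$; this is the identity $\rho\,\psi=\phi\,\rho$ sought above. I expect the genuine difficulty to lie entirely in the Proposition: the deletion path of the single bottom cell and the Phase~2 paths of $\delta_{r-1}$ may meet, bifurcate and rejoin, so verifying both the commutation and the precise bookkeeping of terminating row numbers demands the detailed case analysis deferred to Section~\ref{Sec-prf-delaying}. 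Everything else---the reduction to the single-cell configuration, the base case, and the assembly of the inductive step---is comparatively routine once that local result is in hand.
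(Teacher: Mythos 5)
Your inductive scheme does not close. You induct on $|\nu|$ and posit a single-cell intertwining $\rho\,\psi=\phi\,\rho$; granting it, the inductive hypothesis applied to $\widehat T=\psi T$ gives $\rho(\phi S)=\psi T$, where $S=\rho T$, but to conclude $\rho S=T$ you need the \emph{reverse} relation $\rho\,\phi=\psi\,\rho$ (equivalently, the same intertwining for $\rho^{-1}$), and this does not follow formally from the forward one: deriving it from $\rho\psi=\phi\rho$ would presuppose that $\rho$ is an involution, which is circular. Note that $\psi$ (erasing a cell, which changes the weight) and $\phi$ (a bumping deletion, which changes the inner shape) are different kinds of operations, so you cannot simply apply your relation a second time with the roles exchanged; proving the reverse relation means analysing the insertion-based inverse map, a separate and comparable piece of work that your proposal never supplies. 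The paper avoids this trap by inducting on $n$, the number of rows, not on the number of cells: its key statement is Proposition~\ref{Prop-invol-tab-erase-row}, a commutation of $\rho^{(n)}$ with removal of the \emph{entire} bottom row, $\delta_n\rho^{(n)}(S)=\rho^{(n-1)}(S^-)$, and the induction closes because $T$ and $T'=\rho^{(n)}(S)$ are then both carried by $\Delta_n$ to the same pair $(T'',R)$ --- the recorded row $R$ of terminating numbers being forced by the shapes and weights --- so that Lemma~\ref{Lem-TSrcvr} yields $T=T'$ with no reverse intertwining needed. (The hive analogue of this closure is Lemma~\ref{Lem-sigma2H} together with the fact that the truncated hive $K^{(1)}$ is determined by its boundary labels.)

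You have also identified the wrong local case as the hard one. You assert that the crucial configuration is a single bottom cell whose entry equals its row number, deleted by a type (i) operation with terminating row number $0$; that case is in fact trivial (Lemma~\ref{Lem-invol-tab-erase-one-sp}). The case that requires Proposition~\ref{Prop-invol-tab-erase-one-ind} and the whole of Section~\ref{Sec-prf-delaying} is $S(n,1)\leq n-1$, a bottom entry \emph{strictly smaller} than its row number, whose type (ii) deletion genuinely interacts with the subsequent deletions from row $n-1$; under your reading, Section~\ref{Sec-prf-delaying} would be unnecessary, which signals the misunderstanding. Two further slips: in your base case $\nu=\varnothing$ the tableau $T$ is not fixed by $\rho$ --- it is sent to the Yamanouchi tableau of shape and weight $\lambda$, and the base case holds only because both sets are singletons; and the lemmas you invoke (Lemma~\ref{Lem-TSrcvr}, Lemma~\ref{Lem-invol-tab-erase-one-gen}, Proposition~\ref{Prop-invol-tab-erase-one-ind}) all live inside the paper's row-removal induction on $n$, so they are not statements about single-cell erasures and cannot be grafted onto your cell-counting induction without substantial reworking.
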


The rest of this section constitutes a proof of Theorem~\ref{The-invol-tab-main} 
fulfilling in complete generality the strategy developed in~\cite{Az2}. 

The case $n=1$ is easy, since if $\ell(\lambda)\leq 1$, then both $\mathcal{LR}(\lambda/\mu,\nu)$ and $\mathcal{LR}(\lambda/\nu,\mu)$ consist of single elements.
Having this as the base case, we prove Theorem \ref{The-invol-tab-main} by induction on $n$.
Let $n\geq2$ and $T\in\mathcal{LR}(\lambda/\mu,\nu)\subseteq\mathcal{LR}^{(n)}$, and set $\Delta^{(n)}(T,\varnothing)=(\varnothing,S)$, namely $S=\rho^{(n)}T$. 
It is convenient to let $S^-$ denote the tableau obtained from $S$ by removing 
the $n$th row. The inductive nature of the construction of $\rho^{(n)}$ is such that  
$S^-$ can be regarded as the output of $\rho^{(n-1)}$ applied to $\delta_nT\in\mathcal{LR}^{(n-1)}$
since $S^-$ is just the rows $1,2,\ldots,n-1$ of $S$.
By induction, we can assume that $\rho^{(n-1)}(S^-)=\delta_nT$.
Our goal is to show that $\rho^{(n)}(S)=T$.
Since we do not do this yet, we write $T'$ for $\rho^{(n)}(S)$ for the moment.
Suppose we can prove the following

\begin{Proposition}\label{Prop-invol-tab-erase-row}
Let $n\geq2$ and $S\in\mathcal{LR}^{(n)}$, and set $T'=\rho^{(n)}(S)$.
Set $T''=\rho^{(n-1)}(S^-)$.
Then we have $\delta_{n}T'=T''$.
Namely the following diagram commutes:
\begin{equation}\label{Eq-comm-diagram}
\begin{tikzpicture}
\node(tl) at (0,3){$\mathstrut\mathcal{LR}^{(n)}$};
\node(tr) at (4,3){$\mathstrut\mathcal{LR}^{(n)}$};
\node(bl) at (0,0){$\mathstrut\mathcal{LR}^{(n-1)}$};
\node(br) at (4,0){$\mathstrut\mathcal{LR}^{(n-1)}$};
\draw[->](tl)--node[above]{$\rho^{(n)}$}(tr);
\draw[->](bl)--node[below]{$\rho^{(n-1)}$}(br);
\draw[->](tl)--node[sloped,below]{$\begin{matrix}\text{removing}\\\text{the $n$th row}\end{matrix}$}(bl);
\draw[->](tr)--node[right]{$\delta_n$}(br);
\node[rotate=-45] at (0.4,2.7){$\mathstrut\ni{}$};
\node(tl1) at (0.7,2.4){$\mathstrut S$};
\node[rotate=45] at (0.35,0.42){$\mathstrut\ni{}$};
\node(bl1) at (0.7,0.65){$\mathstrut S^-$};
\node[rotate=45] at (3.4,2.7){$\mathstrut\in{}$};
\node(tr1) at (3.1,2.4){$\mathstrut T'$};
\node[rotate=-45] at (2.95,0.55){$\mathstrut\in{}$};
\node(br1) at (3.1,1.4){$\mathstrut\delta_nT'$.};
\node(br2) at (2.0,0.65){$\mathstrut T''$};
\draw[|->](tl1)--(tr1); \draw[|->](tl1)--(bl1);
\draw[|->](bl1)--(br2); \draw[|->](tr1)--(br1);
\draw[double equal sign distance](br1)--(br2);
\end{tikzpicture}
\end{equation}
\end{Proposition}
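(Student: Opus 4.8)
My plan is to prove the identity $\delta_n T'=T''$ by stripping the bottom row of $S$ off one cell at a time, thereby reducing the global commutativity to a single local commutation between the deletion of one cell of row $n$ and the subsequent deletions in rows $1,\dots,n-1$.

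The first step is to make the recursive structure of Definition~\ref{Def-TtoS} explicit. When $\Delta^{(n)}$ is applied to $(S,\varnothing)$, the initial operator $\Delta_n$ replaces $S$ by $\delta_n S$ and writes the terminating row numbers of $\delta_n$ into the bottom ($n$th) row of the partner; the remaining operators $\Delta_{n-1}\cdots\Delta_1$ are then precisely $\Delta^{(n-1)}$ acting on $\delta_n S$. Consequently the rows $1,\dots,n-1$ of $T'=\rho^{(n)}(S)$ coincide with $\rho^{(n-1)}(\delta_n S)$, while the $n$th row of $T'$ encodes the action of $\delta_n$ on $S$. This reframes the claim as the assertion that the final deletion $\delta_n$, applied to $T'$, carries $\rho^{(n-1)}(\delta_n S)$ back to $\rho^{(n-1)}(S^-)$.

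Next I would induct on the number $p$ of cells in row $n$ of $S$. Let $S=S_0,S_1,\dots,S_p=S^-$ be obtained by deleting the cells of row $n$ from right to left; each $S_i$ is again an LR tableau, since removing the rightmost cell of the bottom row preserves both the semistandard and the lattice permutation properties. In the base case $i=p$ the row $n$ is empty, so $\delta_n$ is the null operator and $\rho^{(n)}(S^-)=\rho^{(n-1)}(S^-)$, whence $\delta_n\rho^{(n)}(S^-)=\rho^{(n-1)}(S^-)=T''$ at once. The inductive step is the statement that removing a single cell of row $n$ leaves $\delta_n\rho^{(n)}(\cdot)$ unchanged, that is $\delta_n\rho^{(n)}(S_i)=\delta_n\rho^{(n)}(S_{i+1})$; chaining these equalities from $i=p$ back to $i=0$ delivers $\delta_n T'=T''$.

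The inductive step carries all the weight, and I would isolate it as the single-cell commutation recorded below in Proposition~\ref{Prop-invol-tab-erase-one-ind}. Using the Horizontal and Vertical Path Comparison Lemmas~\ref{Lem-hp} and~\ref{Lem-vp}, the deletion paths generated by the cells of row $n$ common to $S_i$ and $S_{i+1}$ behave identically in the two tableaux, so the comparison localises to the one extra cell; by semistandardness it may be taken to be the rightmost cell, and after the entries $n$ have been stripped (these delete trivially, as in Lemma~\ref{Lem-lr}) it is a cell with entry $<n$. What must then be shown is that the deletion path emanating from this single cell, followed by the deletions $\delta_{n-1},\dots,\delta_1$, produces exactly the configuration obtained by omitting that cell, once the outer deletion $\delta_n$ has acted. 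The main obstacle is the geometric interaction of this path with the later deletion paths: they may meet, bifurcate and rejoin, so that the desired commutation fails at the level of individual deletions and survives only for the full sequence. Controlling these interactions is the whole content of Proposition~\ref{Prop-invol-tab-erase-one-ind} (a refinement of Lemma~\ref{Lem-invol-tab-erase-one-gen}), whose proof occupies Section~\ref{Sec-prf-delaying}, and it is here, rather than in the bookkeeping above, that I expect the difficulty to concentrate.
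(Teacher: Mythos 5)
Your opening reduction (the recursive structure of $\Delta^{(n)}$, so that rows $1,\dots,n-1$ of $T'$ equal $\rho^{(n-1)}(\delta_nS)$) is correct and matches the paper, and your base case is fine; but the inductive step, as you propose to prove it, has a genuine gap, and it comes from peeling row $n$ from the \emph{right}. Since $\delta_n$ processes row $n$ from right to left, the extra cell of $S_i$ is deleted \emph{first}; when that deletion is of type (ii) its path alters rows $1,\dots,n-1$, so every later deletion, including those starting from the cells common to $S_i$ and $S_{i+1}$, acts on a different tableau in the two computations. Hence your assertion that ``the deletion paths generated by the cells of row $n$ common to $S_i$ and $S_{i+1}$ behave identically in the two tableaux'' is false, and the comparison does not localise to one cell. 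Concretely, take $n=3$ and $S$ of shape $(4,4,2)/(2,0,0)$ with row $1$ entries $1,1$, row $2$ entries $1,1,2,2$, row $3$ entries $2,2$. In the computation of $\rho^{(3)}(S_0)$ the deletion from the common cell $(3,1)$ follows $\{(3,1),(2,1),(1,1)\}$, because the earlier deletion from $(3,2)$ has already replaced the $1$ at $(2,2)$ by a $2$; in the computation of $\rho^{(3)}(S_1)$ it follows $\{(3,1),(2,2),(1,2)\}$. The discrepancy is global: row $2$ of $\rho^{(3)}(S_0)$ is entirely empty, while row $2$ of $\rho^{(3)}(S_1)$ contains an entry $1$, so the two partner tableaux differ \emph{above} row $n$, and only the composite equality $\delta_3\rho^{(3)}(S_0)=\delta_3\rho^{(3)}(S_1)$ survives. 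For the same reason your inductive step is not an instance of Proposition~\ref{Prop-invol-tab-erase-one-ind}: that proposition assumes $\lambda_n=1$ and commutes the single bottom-cell deletion $\hat\delta_{n,1}$ past $\delta_{n-1}$ (with precise bookkeeping of the terminating row numbers $l',l''$); it says nothing about deleting the rightmost of several cells in row $n$, so your step is left unproved and is in fact essentially as hard as the proposition itself.

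The repair is to peel row $n$ from the \emph{left}, with a shift, which is the paper's route. Because the deletions run right to left, the leftmost cell is processed \emph{last}: the first $\lambda_n-1$ deletions applied to $S$ and all the deletions applied to $S_\leftarrow$ (erase $(n,1)$ and shift row $n$ left by one) coincide verbatim, so the two computations agree until $S$ has been reduced to a tableau $S'=\delta_n^{\lambda_n-1}\cdots\delta_n^1S$ with a single cell in row $n$; there the comparison genuinely localises, and the single-cell case (Lemma~\ref{Lem-invol-tab-erase-one}, whose proof is where Proposition~\ref{Prop-invol-tab-erase-one-ind} and the induction on $n-S(n,1)$ actually enter) finishes the job. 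This yields the sharper statement of Lemma~\ref{Lem-invol-tab-shift}, $\rho^{(n)}S_\leftarrow=\delta_{n,\lambda_n}\rho^{(n)}S$, i.e.\ each left-removal corresponds to exactly one deletion on the partner, and iterating it gives $\rho^{(n-1)}(S^-)=\delta_{n,1}\cdots\delta_{n,\lambda_n}\rho^{(n)}(S)=\delta_nT'$, which is the proposition. So the correct localisation uses the opposite end of the row from the one you chose, precisely because of the right-to-left order built into $\delta_n$.
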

Let $\lambda$, $\nu$ and $\mu$ be the outer shape, the inner shape and the weight of $S$ respectively, namely $S\in\mathcal{LR}(\lambda/\nu,\mu)$.
The validity of Proposition~\ref{Prop-invol-tab-erase-row} means that both $T$ and $T'$ are transformed into the same tableau $T''=\rho^{(n-1)}(S^-)$ by $\delta_n$. Moreover, $T$ and $T'$ both have shape $\lambda/\mu$ and weight $\nu$.
Hence, whether one starts with $(T,\varnothing)$ or $(T',\varnothing)$, the one-row semistandard tableau created by $\Delta_n$ on the right-hand side consists of $\nu_r$ empty cells created in Phase 1 and, if we let $\mu^{(n-1)}$ denote the inner shape of $T''$, exactly $\mu_k-\mu^{(n-1)}_k$ entries $k$ with $k$ running from $1$ up to $n-1$ in this order, all created in Phase 2, and finally $\mu_n$ entries $n$ created in Phase 3; namely, if one denotes this one-row tableau by $R$, we have $\Delta_n(T,\varnothing)=\Delta_n(T',\varnothing)=(T'',R)$. By Lemma~\ref{Lem-TSrcvr} we have $T=T'$.

So the proof of Theorem~\ref{The-invol-tab-main} is now reduced to that of Proposition~\ref{Prop-invol-tab-erase-row}.

We first deal with the special case where row $n$ of $S$ consists of exactly one cell, but afterwards we will argue that the general case can be reduced to this special case.
We start with some particularly easy special cases.

\begin{Lemma}\label{Lem-invol-tab-erase-one-emp}
If $\lambda_n=1$ and the cell $(n,1)$ is empty in $S$, then Proposition~\ref{Prop-invol-tab-erase-row} holds.
\end{Lemma}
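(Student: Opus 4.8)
The plan is to trace both sides of the claimed identity $\delta_n T'=T''$ through the explicit construction of $\rho^{(n)}$ and $\rho^{(n-1)}$ given in Definition~\ref{Def-TtoS}, exploiting the fact that under the present hypotheses every deletion occurring in row $n$ is of the trivial kind. First I would record what the hypotheses mean numerically. Writing $S\in\mathcal{LR}(\lambda/\nu,\mu)$, the assumption that $\lambda_n=1$ and that the cell $(n,1)$ is empty in $S$ means that $(n,1)\in D(\nu)$, so $\nu_n=\lambda_n=1$ and hence $\mu_n=\lambda_n-\nu_n=0$. Thus row $n$ of $S$ is nothing but the single empty cell $(n,1)$.

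Next I would apply the first operator $\Delta_n$ to the pair $(S,\varnothing)$. Since $\lambda_n=1$, the full $n$-deletion reduces to the single operation $\delta_{n,1}$, and because the corner $(n,1)$ is empty this is a type (iii) deletion in the sense of Definition~\ref{Def-dp}: it merely removes $(n,1)$ from both the outer and the inner shape and reports terminating row number $n$. Consequently $\delta_n S=S^-$, and $\Delta_n(S,\varnothing)=(S^-,R_n)$, where $R_n$ is the one-cell bottom row containing the single entry $n$.

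The key structural observation is then that the remaining operators $\Delta_{n-1},\dots,\Delta_1$ act on the left-hand component exactly as they do in the computation of $\rho^{(n-1)}(S^-)$, since each $\Delta_r$ affects the left-hand tableau only through $\delta_r$ and is otherwise independent of the ambient value of $n$. Because new rows are always stacked \emph{on top} of the right-hand component, the initial row $R_n$ remains at the very bottom throughout the process; hence $T'=\rho^{(n)}(S)$ is precisely $T''=\rho^{(n-1)}(S^-)$ with the extra bottom row $R_n$ adjoined. This is consistent with $T''$ having at most $n-1$ rows, as $\mu_n=0$, and with $T'$ having shape $\lambda/\mu$ and weight $\nu$ (the adjoined cell supplies the unique entry $n$ demanded by $\nu_n=1$).

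Finally I would compute $\delta_n T'$ directly. Row $n$ of $T'$ is the single cell $R_n$ whose entry equals its own row number $n$, so $\delta_n=\delta_{n,1}$ is a type (i) deletion that simply erases this cell; what remains is exactly $T''$, giving $\delta_n T'=T''$ as required. The only point demanding care---the ``main obstacle,'' though a mild one in this base case---is the bookkeeping of the notational roles (the input $S$ versus the output $T'$, and the bottom-to-top accumulation of the right-hand rows), which is precisely what guarantees that the lone entry $n$ lands in row $n$ of $T'$ and is removed cleanly by the type (i) deletion.
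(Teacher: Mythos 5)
Your proof is correct and follows essentially the same route as the paper's: identify the single deletion $\delta_{n,1}$ applied to $S$ as a type (iii) deletion yielding $\delta_n S=S^-$ with terminating row number $n$, observe that the remaining operators $\Delta_{n-1},\dots,\Delta_1$ act exactly as in the computation of $\rho^{(n-1)}(S^-)$ so that $T'$ is $T''$ with one cell $(n,1)$ containing $n$ adjoined, and conclude by noting that $\delta_n T'$ is a single type (i) deletion erasing that cell. The extra bookkeeping you include ($\nu_n=1$, $\mu_n=0$) is a harmless elaboration of the same argument.
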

\begin{proof}
Consider applying $\Delta^{(n)}$ to $S$ to produce $T'$, as in Definition~\ref{Def-TtoS}.
In applying $\Delta_n$, the only deletion $\delta_{n,1}$ is of type (iii), and produces a single cell containing $n$ on the right-hand component, where the word right (and left also, which will appear presently) 
is chosen in accordance with the computation of $\rho^{(n)}(S)$, i.e.\ the pairs $(S,\varnothing)$ and $(\varnothing,T')$.
Hence the left-hand component at this stage, $\delta_nS$, coincides with $S^-$, and applying $\delta_{n-1}$, \dots, $\delta_1$ to it produces the same rows on the right-hand component as does applying $\Delta^{(n-1)}$ to $(S^-,\varnothing)$.
This means that $T'$ consists of $T''=\rho^{(n-1)}(S^-)$ plus one cell $(n,1)$ containing $n$.
Applying $\delta_n$ to $T'$ incurs a Phase 1 deletion only, which removes the cell $(n,1)$ and $T''$ is what is left.
Hence Proposition~\ref{Prop-invol-tab-erase-row} holds in this case.
\end{proof}

\begin{Lemma}\label{Lem-invol-tab-erase-one-sp}

If $\lambda_n=1$ and $S(n,1)=n$, then Proposition~\ref{Prop-invol-tab-erase-row} holds.
\end{Lemma}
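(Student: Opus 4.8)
The plan is to imitate the proof of Lemma~\ref{Lem-invol-tab-erase-one-emp}, exploiting the duality between type~(i) and type~(iii) deletions. First I would record the constraints that the hypotheses place on the shapes and weights. Since $S\in\mathcal{LR}(\lambda/\nu,\mu)$ has a nonempty cell $(n,1)$ with $\lambda_n=1$, the inner shape cannot reach row $n$, so $\nu_n=0$; and since the lattice permutation property confines any entry $n$ to row $n$ (there being no row beyond $n$ as $\ell(\lambda)=n$), the single cell $S(n,1)=n$ is the unique entry $n$ in $S$, whence $\mu_n=1$.

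Next I would trace the computation of $T'=\rho^{(n)}(S)$ through Definition~\ref{Def-TtoS}. Because $\lambda_n=1$, the operator $\Delta_n$ consists of the single deletion $\delta_{n,1}$, and because $S(n,1)=n$ this is a deletion of type~(i) in Definition~\ref{Def-dp}: it merely removes the cell $(n,1)$, with terminating row number $0$. Consequently the bottom ($n$th) row placed on the right-hand component is a single cell containing $0$, that is, an empty cell, while the left-hand component becomes $\delta_nS=S^-$. The subsequent operators $\Delta_{n-1},\dots,\Delta_1$ then act on $S^-$ exactly as in the computation of $\rho^{(n-1)}(S^-)=T''$, so they reproduce rows $1,\dots,n-1$ of $T'$ as $T''$. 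Hence $T'$ is $T''$ together with the single empty cell $(n,1)$ adjoined in row $n$.

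Finally I would apply $\delta_n$ to $T'$. By Theorem~\ref{The-TtoS} we have $T'\in\mathcal{LR}(\lambda/\mu,\nu)$, and since $\mu_n=1=\lambda_n$ the corner cell $(n,\lambda_n)=(n,1)$ lies in $D(\mu)$, so $T'(n,1)=0$. Thus $\delta_n=\delta_{n,1}$ is now a deletion of type~(iii): it removes the empty corner cell $(n,1)$ from both $\lambda$ and $\mu$ and leaves everything in rows $1,\dots,n-1$ untouched. What remains is precisely $T''$, whose shape $(\lambda_1,\dots,\lambda_{n-1})/(\mu_1,\dots,\mu_{n-1})$ and weight $(\nu_1,\dots,\nu_{n-1})=\nu$ (using $\nu_n=0$) are consistent. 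Therefore $\delta_nT'=T''$, which is the assertion of Proposition~\ref{Prop-invol-tab-erase-row} in this case.

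Since everything reduces to the bookkeeping of a single corner cell, I expect no serious obstacle here; the only point requiring care is the observation --- dual to the one in Lemma~\ref{Lem-invol-tab-erase-one-emp} --- that a type~(i) deletion on $S$ (which produces the empty cell $(n,1)$ and terminating number $0$) is inverted by a type~(iii) deletion on $T'$ (which removes that same empty cell), so that the two vertical arrows of the square in~\eqref{Eq-comm-diagram} match exactly.
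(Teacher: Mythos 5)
Your proof is correct and takes essentially the same route as the paper: the paper's own proof of Lemma~\ref{Lem-invol-tab-erase-one-sp} simply observes that $\Delta_n$ now incurs a single type~(i) deletion producing an empty cell on the right-hand component and then defers to the argument of Lemma~\ref{Lem-invol-tab-erase-one-emp}, which is precisely the type~(i)/type~(iii) duality you spell out. Your write-up just makes the bookkeeping explicit ($\nu_n=0$, $\mu_n=1$, and the final type~(iii) deletion $\delta_{n,1}$ acting on the empty corner cell of $T'$), which is a harmless elaboration rather than a different approach.
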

\begin{proof}
Consider applying $\Delta^{(n)}$ to $S$ to produce $T'$, as in Definition~\ref{Def-TtoS}.
In applying $\Delta_n$, only Phase 1 arises, removing the cell $(n,1)$ and producing a single empty cell on the right-hand component.
Aside from this difference, the situation is very much similar to the preceding case, and the validity of Proposition~\ref{Prop-invol-tab-erase-row} in this case follows by the same argument.
\end{proof}

We treat one more case separately, which not only is a special case but also serves as the terminating step of an iteration occurring in a more general case to be discussed next.

\begin{Lemma}\label{Lem-invol-tab-erase-one-fin}
If $\lambda_n=1$ and $S(n,1)=n-1$, then Proposition~\ref{Prop-invol-tab-erase-row} holds.
\end{Lemma}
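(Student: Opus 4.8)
The plan is to make the construction of $T'=\rho^{(n)}(S)$ explicit enough to compare $\delta_nT'$ with $T''$, and then to show they agree. Write $\overline\lambda=(\lambda_1,\dots,\lambda_{n-1})$. Since $S\in\mathcal{LR}(\lambda/\nu,\mu)$ carries its unique entry $n-1$ in the single cell $(n,1)$, there is no entry $n$, so $\mu_n=\nu_n=0$; as $0<S(n,1)=n-1<n$ the full deletion $\delta_n=\delta_{n,1}$ is a single type~(ii) deletion whose path $P$ runs from $(n,1)$ to an inner corner $(t,\nu_t)$ with terminating row number $1\le t\le n-1$. Two immediate consequences of Definition~\ref{Def-TtoS} are that the bottom row of $T'$ is the single cell $(n,1)$ containing $t$, and that the upper rows of $T'$ form $T'^-=\rho^{(n-1)}(\delta_nS)$, where by Lemma~\ref{Lem-lr} one has $\delta_nS\in\mathcal{LR}(\overline\lambda/(\nu-\epsilon_t),\mu)$. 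On the other side $S^-\in\mathcal{LR}(\overline\lambda/\nu,\mu-\epsilon_{n-1})$, so Theorem~\ref{The-TtoS} gives $T''=\rho^{(n-1)}(S^-)\in\mathcal{LR}(\overline\lambda/(\mu-\epsilon_{n-1}),\nu)$.

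First I would determine $\delta_nT'=\delta_{n,1}T'$, whose starting entry is $T'(n,1)=t<n$. The key claim is that every nonzero entry in row $n-1$ of $T'$ is $\ge t$; granting this, the bumped $t$ meets no smaller entry in row $n-1$ and lodges in the inner corner $(n-1,\mu_{n-1})$, so $P'$ terminates in row $n-1$ and $\delta_nT'=T'^-\cup\{(n-1,\mu_{n-1})\mapsto t\}$, of inner shape $\mu-\epsilon_{n-1}$. Here the hypothesis $S(n,1)=n-1$ is decisive. In $\delta_nS$ the cell $P[n-1]$ now carries the \emph{maximal} value $n-1$, hence the entire block of entries $n-1$ weakly to the right of $P$ in row $n-1$ (including $P[n-1]$ itself) is swept away by Phase~1 of $\delta_{n-1}$, and every Phase~2/3 deletion of $\delta_{n-1}$ acting on $\delta_nS$ therefore starts strictly to the left of $P$. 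By the Elementary Path Comparison Lemma~\ref{Lem-ep}(1) each such deletion stays strictly left of $P$ and terminates in a row $\ge t$; since these terminating row numbers are exactly the nonzero entries of row $n-1$ of $T'^-=T'|_{\le n-1}$, the claim follows. (For a smaller starting entry the block to the right of $P[n-1]$ would survive Phase~1, some deletions would begin weakly to the right of $P$, and the conclusion could fail; this is precisely why the maximal value is the terminating step of the iteration used in the general single-cell case.)

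It then remains to identify $T''$ with $T'^-\cup\{(n-1,\mu_{n-1})\mapsto t\}$, i.e.\ to show that $\rho^{(n-1)}(S^-)$ and $\rho^{(n-1)}(\delta_nS)$ differ in exactly this one cell. The inputs $S^-$ and $\delta_nS$ differ only along the vertical strip $P-\{(n,1)\}$: passing from $S^-$ to $\delta_nS$ shifts the strip entries up by one step and fills the corner $(t,\nu_t)$. I would track this localized difference through the first full deletion $\delta_{n-1}$ common to both computations. At the cell $P[n-1]$, the tableau $\delta_nS$ carries the row number $n-1$ and suffers a trivial type~(i) deletion (terminating row $0$), whereas $S^-$ carries a smaller entry there and suffers a type~(ii) deletion whose path \emph{retraces} $P-\{(n,1)\}$ and terminates in row $t$. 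This retracing performs, during $\delta_{n-1}$ itself, exactly the upward shift that already distinguished $\delta_nS$ from $S^-$; the Vertical Path Comparison Lemma~\ref{Lem-vp} is used to interleave this retracing deletion with the remaining deletions of $\delta_{n-1}$ and to verify that, after $\delta_{n-1}$, the two residual tableaux coincide. Consequently rows $1,\dots,n-2$ produced by the subsequent $\delta_{n-2},\dots,\delta_1$ agree for $T'^-$ and $T''$, while in row $n-1$ the computation on $S^-$ records one extra terminating number $t$ (at column $\mu_{n-1}$) where the computation on $\delta_nS$ records a $0$. This yields $T''=T'^-\cup\{(n-1,\mu_{n-1})\mapsto t\}=\delta_nT'$, which is Proposition~\ref{Prop-invol-tab-erase-row} in the present case, completing the argument in parallel with Lemmas~\ref{Lem-invol-tab-erase-one-emp} and~\ref{Lem-invol-tab-erase-one-sp}.

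The main obstacle is the residual-coincidence step of the third paragraph when $t<n-1$, where the strip spans several rows: one must prove that the single type~(ii) retracing deletion inside $\delta_{n-1}$ exactly reproduces the input shift, and that it does so while all the other Phase~2/3 deletions of $\delta_{n-1}$ remain locked together across the two parallel computations, so that the two residual tableaux are literally equal. Establishing this locking—controlling how the removal paths on $S^-$ and on $\delta_nS$ may drift, yet confirming that the only surviving discrepancy is the recorded terminating number $t$ in row $n-1$—is the technical heart of the proof and the place where Lemmas~\ref{Lem-ep}, \ref{Lem-hp} and~\ref{Lem-vp} must be invoked with care.
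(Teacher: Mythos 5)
Your overall strategy is the one the paper follows: identify row $n$ of $T'$ as the single cell containing $t$, show every nonzero entry of row $n-1$ of $T'$ is $\geq t$ so that $\delta_{n,1}$ pushes $t$ into the rightmost empty cell $(n-1,\mu_{n-1})$, and then match the result with $T''$. But both load-bearing steps are left with genuine gaps. First, your appeal to Lemma~\ref{Lem-ep}(1) is out of scope: by the time any Phase 2/3 deletion of $\delta_{n-1}$ acts on $\delta_nS$, Phase 1 has already deleted the cell $P[n-1]$ (it holds the entry $n-1$), so the intermediate tableaux no longer contain the vertical strip $P-\{(n,1)\}$ with its bottom cell bounded below by the outer border, as the hypotheses of Lemma~\ref{Lem-ep} require; moreover the Remark after that lemma licenses re-use after intervening deletions only for part (2), and only when those deletions leave the contents of $P$ untouched, which is exactly what fails here. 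The inequality you want is true, but not by this citation. Second, the ``residual-coincidence'' step that you yourself flag as the main obstacle is never proved, and Lemma~\ref{Lem-vp} is the wrong tool for it: that lemma compares paths of $\delta_r$ against paths of $\delta_{r-1}$ across two consecutive full deletions, whereas your comparison is between two parallel runs of the single operation $\delta_{n-1}$; no ``interleaving'' is involved at all.

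Both gaps are closed by one observation, which is the actual content of the paper's proof. In $\delta_{n-1}$ applied to $S^-$, every cell of row $n-1$ strictly to the right of $P[n-1]$ holds an entry $n-1$ (since $S(P[n-1])$ is the rightmost entry $<n-1$ of that row), so Phase 1 removes exactly those cells and the \emph{first} Phase 2/3 deletion starts at $P[n-1]$; because rows $n-2$ and above of $S^-$ coincide with those of $S$, this deletion retraces $P-\{(n,1)\}$ and terminates in row $t$. After it, the tableau is \emph{identical} with Phase-1-applied $\delta_nS$, so all subsequent deletions in the two computations coincide literally --- there is no drift to control. This identity immediately gives your residual coincidence (row $n-1$ of $T''$ is row $n-1$ of $T'$ with its rightmost $0$ replaced by $t$, and all higher rows agree), and applying the Horizontal Path Comparison Lemma~\ref{Lem-hp} to this one full deletion $\delta_{n-1}$ on $S^-$ yields $t\leq k_1\leq\cdots\leq k_m$ for the later terminating row numbers, which is precisely your ``key claim''. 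So your proof becomes correct once both halves are re-based on the computation for $S^-$ in this way; as written, neither half is justified.
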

\begin{proof}
In once again applying $\Delta^{(n)}$ to $S$ to produce $T'$ as in Definition~\ref{Def-TtoS},
the operation $\delta_n$ applied to $S$ consists of a single operation $\delta_{n,1}$.
It brings the bottom entry $n-1$ to row $n-1$, where it replaces the rightmost entry $<n-1$ in that row, which we call $y$ and, unless $y=0$, incurs further bumping.
Let $P_0$ denote this deletion path, and $l$ its terminating row number.
In applying $\delta_{n-1}$ to $\delta_nS$, after the removal of the entries $n-1$ in Phase 1, the Phase 2 and 3 deletions start from the cells in row $n-1$ strictly to the left of $P_0[n-1]$ (see Definition~\ref{Def-dp} (ii) for this notation).
Let $P_1$, $P_2$, \dots, $P_m$ ($m=\lambda_{n-1}-\mu_{n-1}$) denote these deletion paths, and $k_1$, $k_2$, \dots, $k_m$ their terminating row numbers.
The bottom two rows of $T'$ produced by these operations are as follows.

\[
\vcenter{\hbox{

}}
\]

Now consider $S^-$ the tableau obtained fom $S$ be deleting its $n$th row, and apply $\Delta^{(n)}$ to produce $T''$, again as in Definition~\ref{Def-TtoS}. In applying $\delta_{n-1}$, after
the removal of entries $n-1$ in Phase 1, the first Phase 2 or 3 deletion starts from the cell $P_0[n-1]$ containing $y$, and since the rows $n-2$ and above, if any, are the same as those of $S$, this deletion proceeds along the path $P_0^-:=P_0-\{(n,1)\}$ and terminates in row $l$.
The remaining Phase 2 and 3 deletions start from the cells in row $n-1$ to the left of $P_0[n-1]$ containing entries $x_1$, $x_2$, \dots, $x_m$, and since the portion of the current tableau to the left of $P_0^-$ is the same as that portion of $\delta_nS$, these deletions proceed along the paths $P_1$, $P_2$, \dots, $P_m$, and terminate in rows $k_1$, $k_2$, \dots, $k_m$ respectively.
The Horizontal Path Comparison Lemma~\ref{Lem-hp} 
applied to the operation $\delta_{n-1}$ for $S^-$ assures that $l\leq k_1$ ($\leq\cdots\leq k_m$).
The bottom row of $T''$ produced by these operations is as follows.

\[
\begin{tikzpicture}[x={(0in,-0.22in)},y={(0.22in,0in)}]
 \begin{scope}[shift={(0,0)}] 
  \foreach \j in {1,4,5,6} \draw(1,\j)rectangle+(-1,-1);
  \draw(0,1)--(0,3) (1,1)--(1,3);
  \foreach \j in {7,10} \draw[green,very thick](1,\j)rectangle+(-1,-1);
  \draw[green](0,7)--(0,9) (1,7)--(1,9);
  \foreach \name/\j/\x in {xm/0.5/x_m,x2/3.5/x_2,x1/4.5/x_1,y/5.5/y}
   \node[inner sep=0.5pt,outer sep=0.5pt,font=\footnotesize] (\name) at (0.5,\j) {$\x$};
  \foreach \j in {2,8} \node at (0.5,\j) {$\cdots$};
  \foreach \j in {6.5,9.5} \node[font=\footnotesize] at (0.5,\j) {$\sc n-1$};
  \foreach \name/\k/\l/\row/\d/\o/\v/\p in {
   x1/-2.4/8/$k_1$/0.15in/right/0pt/$P_1$,
   x2/-1.5/5.5/$k_2$/0.15in/left/3pt/$P_2$,
   xm/-1/1.5/$k_m$/0.15in/left/1pt/$P_m$,
   y/-2.5/9.5/$l$/0.3in/right/6pt/$P_0^-$}
  {
   \draw[red,very thick](\name) decorate[decoration={zigzag,amplitude=1pt}]
    {--node[\o=\v]{\p} (\k,\l) node{$\bullet$}};
   \draw[red,very thick] (\k,\l) +(-0.5,-0.5)rectangle+(0.5,0.5)
    +(-0.5,0) node[coordinate,pin={[pin edge={thick,black,latex-},black,pin distance=\d,font=\footnotesize]
    above:row \row}] {};
  }
  \node[coordinate,pin={[pin edge={thick,black,latex-},pin distance=0.15in,font=\footnotesize]
   left:row $n-1$}] at (0.5,0) {};
  \node[left,inner sep=0pt,outer sep=0pt] at (-1,0)
   {$S^-\left\{\vbox to0.44in{}\right.$};
 \end{scope}
 \begin{scope}[shift={(2,11)}] 
  \foreach \j in {1,4}
   \draw(1,\j)rectangle+(-1,-1) node[green] at +(-0.5,-0.5){$\bullet$};
  \draw(0,1)--(0,3) (1,1)--(1,3) (0.5,2) node{$\cdots$};
  \foreach \j/\k in {5/$l$,6/$k_1$,7/$k_2$,10/$k_m$}
   \draw(1,\j)rectangle+(-1,-1) node[red,font=\footnotesize] at +(-0.5,-0.5){\k};
  \draw(0,7)--(0,9) (1,7)--(1,9) (0.5,8) node{$\cdots$};
  \node[left,inner sep=0pt,outer sep=0pt] at (0.5,0)
   {$T''{}^{(1)}\left\{\vbox to0.11in{}\right.$};
 \end{scope}
\end{tikzpicture}
\]

At this stage we have $\delta_{n-1}S^-=\delta_{n-1}\delta_nS$, so that the upper $n-2$ rows of $T''$ and $T'$, produced by applying $\delta_1\cdots\delta_{n-2}$ to them respectively, are the same.
The difference between $T'$ and $T''$ is exactly that will be incurred by applying $\delta_n$ to $T'$, which consists of a single operation $\delta_{n,1}$, pushing the bottom entry $l$ into the rightmost empty cell of row $n-1$ and terminating there.
Thus Proposition~\ref{Prop-invol-tab-erase-row} is verified in this case.\qedhere
\end{proof}

Using Lemma \ref{Lem-invol-tab-erase-one-fin} as a base case, we want to prove the following by induction on $n-S(n,1)$.

\begin{Lemma}\label{Lem-invol-tab-erase-one-gen}
If $\lambda_n=1$, the cell $(n,1)$ of $S$ is nonempty and $S(n,1)\leq n-1$, then Proposition~\ref{Prop-invol-tab-erase-row} holds.
\end{Lemma}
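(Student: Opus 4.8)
The plan is to prove the statement by induction on $d=n-S(n,1)$, exactly as announced, taking Lemma~\ref{Lem-invol-tab-erase-one-fin} (the case $S(n,1)=n-1$, i.e.\ $d=1$) as the base case; the degenerate possibilities in which $(n,1)$ is empty or contains $n$ have already been disposed of by Lemmas~\ref{Lem-invol-tab-erase-one-emp} and~\ref{Lem-invol-tab-erase-one-sp}. Throughout we keep $\lambda_n=1$, write $a=S(n,1)$ with $1\le a\le n-2$ in the inductive step, and recall that the goal is to establish the commutation $\delta_nT'=T''$ of Proposition~\ref{Prop-invol-tab-erase-row}, where $T'=\rho^{(n)}(S)$ and $T''=\rho^{(n-1)}(S^-)$. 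Since $\lambda_n=1$, the operator $\delta_n$ acting on $S$ is the single type~(ii) deletion $\delta_{n,1}$ of Definition~\ref{Def-dp}, carrying $a$ up a deletion path $P_0$ from $(n,1)$ to a corner of the inner shape in some row $l$, and recording $l$ as the one-cell bottom row of $T'$.

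First I would set up the two parallel computations to be compared: on one side, $\delta_{n-1}\,\delta_n$ applied to $S$; on the other, $\delta_{n-1}$ applied to $S^-$. The decisive new feature relative to the base case is that now $a\le n-2$, so the entry $a$ that $\delta_n$ deposits in row $n-1$ is \emph{not} removed during Phase~1 of the subsequent $\delta_{n-1}$ (which only erases entries $n-1$); instead it survives and is itself consumed by one of the Phase~2 deletions. I would track this entry together with the paths $P_1,P_2,\dots$ that the remaining row-$(n-1)$ deletions trace out, and compare them against the corresponding paths produced when $\delta_{n-1}$ acts directly on $S^-$. In the latter, the Phase~2 deletions first consume the entries in the range $[a,n-2]$ lying to the right of the cell $P_0[n-1]$, and only afterwards reach $P_0[n-1]$ itself (value $y<a$) to run along $P_0^-=P_0-\{(n,1)\}$ to row $l$; since $S^-$ and $\delta_nS$ agree away from $P_0^-$, these preliminary deletions match and the genuine discrepancy is concentrated near $P_0^-$.

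The relative positions and terminating rows of all these paths are to be controlled by the Elementary, Horizontal and Vertical Path Comparison Lemmas~\ref{Lem-ep},~\ref{Lem-hp} and~\ref{Lem-vp}, using the \emph{strictly/weakly to the left/right} dichotomy. The aim is to show that, once row $n-1$ has been processed, the two configurations agree except for a localized discrepancy that matches that of a computation with strictly larger bottom value, to which the inductive hypothesis (smaller $d$) then applies. Recording the terminating-row-number rows along the way, one checks that $T'$ and $T''$ can differ only in their bottom two rows, and only in the precise manner that the final deletion $\delta_n=\delta_{n,1}$ on $T'$ corrects---pushing the bottom entry $l$ into the rightmost available cell of row $n-1$---so that Lemma~\ref{Lem-TSrcvr} (internal insertion being the inverse of deletion) yields $\delta_nT'=T''$. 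It is convenient to isolate this inductive step as a single refined statement, namely Proposition~\ref{Prop-invol-tab-erase-one-ind}.

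The hard part will be that, unlike in the base case where the clean identity $\delta_{n-1}\delta_nS=\delta_{n-1}S^-$ holds, the lower path $P_0$ and the upper paths $P_1,P_2,\dots$ may intersect, bifurcate and rejoin. Showing that these crossings---the tableau analogues of the ``critical'' configurations met later for hives---do not disturb the equality of the recorded rows, and hence do not obstruct the reduction to the smaller-$d$ case, is the main obstacle, and is precisely what forces the argument into the detailed path bookkeeping of Section~\ref{Sec-prf-delaying}.
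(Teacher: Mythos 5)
Your plan is, in its architecture, exactly the paper's own proof: induction on $d=n-S(n,1)$ with Lemma~\ref{Lem-invol-tab-erase-one-fin} as base case, the Delaying Bottom Cell Deletion statement (Proposition~\ref{Prop-invol-tab-erase-one-ind}) isolated as the key step whose proof absorbs all the path bookkeeping, and a recursive application of the lemma itself to dispose of the rows above $n-1$. So the approach is the right one, and most of the proposal matches the paper step for step.

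However, one step as written would fail, and it conflates the base case with the general one. You assert that ``$T'$ and $T''$ can differ only in their bottom two rows,'' with $\delta_{n,1}$ on $T'$ terminating by ``pushing the bottom entry $l$ into the rightmost available cell of row $n-1$.'' That is precisely the behaviour of the base case $S(n,1)=n-1$ treated in Lemma~\ref{Lem-invol-tab-erase-one-fin}, where the bumped entry lands in an empty cell and the deletion stops in row $n-1$. In the general case $S(n,1)\le n-2$, part (2) of Proposition~\ref{Prop-invol-tab-erase-one-ind} guarantees $l''>0$: the entry $l'$ entering row $n-1$ of $T'$ displaces a \emph{nonzero} entry $l''$, whose bumping then continues through rows $n-2,n-3,\dots$, so that ${}^{n-2\{}T'$ and ${}^{n-2\{}T''$ genuinely differ and the deletion path of $\delta_{n,1}$ on $T'$ is not confined to the bottom two rows. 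This is exactly why the inductive hypothesis is needed: in the paper it is applied to the auxiliary tableau $S^*=(\delta_{n-1}S^-)\cup_{(n-1,1)}\boxed{e}$ with $e=S(n,1)$ --- note this has one fewer row and the \emph{same} bottom entry (not a ``strictly larger bottom value''), which is what makes $d$ drop by one --- and it identifies the continuation of the bumping, namely $\delta_{n-1,1}$ applied to $T^*={}^{n-2\{}T'\cup_{(n-1,1)}\boxed{l''}$, with the passage from ${}^{n-2\{}T'$ to ${}^{n-2\{}T''$. Finally, Lemma~\ref{Lem-TSrcvr} is not the tool that closes this argument: $\delta_nT'=T''$ is read off directly from Proposition~\ref{Prop-invol-tab-erase-one-ind}(2) together with this recursion, whereas Lemma~\ref{Lem-TSrcvr} enters only when deducing Theorem~\ref{The-invol-tab-main} from Proposition~\ref{Prop-invol-tab-erase-row}.
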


We take this opportunity to introduce some notation which helps compactify certain expressions in the proofs that follow.

\begin{Definition}\label{Def-compactifying-notation}
(1)
For any tableau $T$ and a natural number $k$, we let $^{k\{}T$ (resp.\ $_{k\{}T$) denote the tableau consisting of the topmost (resp. bottommost) $k$ rows of $T$.
Occasionally we apply the same notation for a deletion path $P$ as well.

(2)
If $T\in\mathcal{LR}(\lambda/\mu,\nu)$ with $\ell(\lambda)\leq r$, in which case we have defined $\delta_r=\delta_{r,1}\cdots\delta_{r,\lambda_r}$, we may also write $\delta_r^i$ (using $i$ as a superscript, not a subscript) for the $i$th piece of this sequence of operations in the order of application, namely \[
\delta_r^i=\delta_{r,\lambda_r+1-i},
\qquad\text{so that}\qquad\delta_r=\delta_r^{\lambda_r}\cdots\delta_r^1.
\]
It may be noted that, in the construction of the partner tableau, the terminating row number of the operation $\delta_r^i$ is to be placed in the cell $(r,i)$.

(3)
If $T$ is a tableau of shape $\lambda/\mu$, $s\in\mathbb{N}$ and if $(r,c)$ is not a cell of $D(\lambda/\mu)$ but if $D(\lambda/\mu)\cup\{(r,c)\}$ is another skew Young diagram, then we write $T\cup_{(r,c)}\boxed{s}$ for the tableau obtained from $T$ by adjoining a cell $(r,c)$ containing the entry $s$.

(4)
Suppose that a letter $e\in\mathbb{N}$ is given.
If $T$ is a LR tableau of shape $\lambda/\mu$ with $\ell(\lambda)\leq r-1$ such that $T\cup_{(r,1)}\boxed{e}$ is again an LR tableau, then we let $\hat\delta_{r,1}T=\hat\delta_{r,1}^{\tikz\node[draw,inner sep=2pt]{$\sc e$};}T$ denote the LR tableau 
$\delta_{r,1}(T\cup_{(r,1)}\boxed{e})$. 
The terminating row number of the operation $\hat\delta_{r,1}^{\tikz\node[draw,inner sep=2pt]{$\sc e$};}$ applied to $T$ refers to that of the deletion $\delta_{r,1}$ applied to $T\cup_{(r,1)}\boxed{e}$.
\end{Definition}
\medskip

The following provides an essential inductive step to prove Lemma~\ref{Lem-invol-tab-erase-one-gen} by way of showing that delaying, so to speak, the deletion from the bottom cell of $S$ (please refer to the statement of Proposition~\ref{Prop-invol-tab-erase-one-ind} for the exact meaning of this) retains the result of deletion on $S$ itself but corresponds to a single bumping step on the part of its partner tableau.

\begin{Proposition}\label{Prop-invol-tab-erase-one-ind}~[Delaying Bottom Cell Deletion]
Assume that $n\geq3$, $\lambda_n=1$, the cell $(n,1)$ of $S$ is nonempty and that $S(n,1)\leq n-2$.
Let $\hat\delta_{n,1}=\hat\delta_{n,1}^{\tikz\node[draw,inner sep=2pt]{$\sc e$};}$ and $\hat\delta_{n-1,1}=\hat\delta_{n-1,1}^{\tikz\node[draw,inner sep=2pt]{$\sc e$};}$ be as defined in part (4) of Definition~\ref{Def-compactifying-notation} with $e=S(n,1)$.
Note that we have $\delta_nS=\delta_{n,1}S=\hat\delta_{n,1}S^-$.
\begin{enumerate}
\item[(1)]
We have $\delta_{n-1}\hat\delta_{n,1}S^-=\hat\delta_{n-1,1}\delta_{n-1}S^-$.
\item[(2)]
Let $l'$ be the terminating row number of the operation $\hat\delta_{n,1}$ applied to $S^-$.
Moreover, let $R'=(k'_1,\dots,k'_{\lambda_{n-1}})$ and $R''=(k''_1,\dots,k''_{\lambda_{n-1}})$ be the sequences of terminating row numbers produced by the operation $\delta_{n-1}$ applied to $\hat\delta_{n,1}S^-$ ($=\delta_{n,1}S$) and $S^-$ respectively (which will constitute the rows $n-1$ of $T'$ and $T''$ respectively).
Furthermore, let $l''$ be the rightmost entry in $R'$ strictly smaller than $l'$.
Then we have $l''>0$, and $R''$ coincides with what is obtained from $R'$ by replacing this entry by $l'$.
Moreover, $l''$ equals the terminating row number of the operation $\hat\delta_{n-1,1}$ applied to $\delta_{n-1}S^-$.
\end{enumerate}
\end{Proposition}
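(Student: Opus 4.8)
The plan is to prove parts~(1) and~(2) together by comparing, deletion path by deletion path, the action of the full $(n-1)$-deletion $\delta_{n-1}$ on the two tableaux $\hat\delta_{n,1}S^-=\delta_{n,1}S$ and $S^-$. The organising observation is that these two tableaux are almost identical. Writing $P_0$ for the deletion path of $\hat\delta_{n,1}$ applied to $S^-$ (terminating in row $l'$) and $P_0^-=P_0-\{(n,1)\}$ for its portion in rows $n-1,\dots,l'$, with strictly decreasing entries $c_{n-1}>c_{n-2}>\cdots>c_{l'+1}>0$ found along $P_0^-$ in $S^-$, the tableau $\delta_{n,1}S$ differs from $S^-$ \emph{only along $P_0^-$}: each $c_i$ is shifted up one row, the entry $e=S(n,1)$ enters at the bottom cell $P_0[n-1]$, and the inner corner $(l',\nu_{l'})$ is filled. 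Everything strictly to the left of $P_0^-$ and everything strictly to its right coincides in the two tableaux. I would record this difference locus first, since it drives the entire comparison.

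First I would analyse $\delta_{n-1}$ applied to $S^-$. Because the two tableaux agree to the left of $P_0^-$ and the ``rightmost strictly smaller'' bumping rule that built $P_0$ is exactly the rule governing deletions, the distinguished deletion of $\delta_{n-1}$ that starts from the cell $P_0[n-1]$ (carrying $c_{n-1}<e$) retraces $P_0^-$ verbatim and terminates in row $l'$; this is the deletion that deposits the entry $l'$ into the sequence $R''$ forming row $n-1$ of $T''$. The deletions starting strictly to the right, and those strictly to the left, of $P_0[n-1]$ produce the remaining entries of $R''$, and by the Horizontal Path Comparison Lemma~\ref{Lem-hp} the terminating row numbers are weakly monotone and nest neatly around $P_0^-$.

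Next I would treat $\delta_{n-1}$ applied to $\delta_{n,1}S$, producing $R'$. The deletions starting strictly to the right of $P_0[n-1]$ remain strictly to the right of $P_0$ by iterated use of Lemma~\ref{Lem-hp}, hence see only cells on which the two tableaux agree and contribute identical terminating row numbers to $R'$ and $R''$. The decisive difference is localised at $P_0[n-1]$, which now carries $e$ rather than $c_{n-1}$. Controlling the $e$-carrying deletion relative to $P_0$ by the Elementary Path Comparison Lemma~\ref{Lem-ep}, in the manner templated by the Vertical Path Comparison Lemma~\ref{Lem-vp}, and comparing the neighbouring deletions to its left through Lemma~\ref{Lem-hp}, I would show that passing from $S^-$ to $\delta_{n,1}S$ changes the list of terminating row numbers in exactly one place: the value $l'$ occurring in $R''$ is replaced by a strictly smaller positive value $l''$, which is precisely the rightmost entry of $R'$ that is strictly smaller than $l'$. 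Equivalently $R''$ is recovered from $R'$ by restoring this entry to $l'$, which is the identity claimed in part~(2); positivity $l''>0$ holds because the displaced entry issues from a genuine type~(ii) deletion, not a type~(iii) removal. The identification of $l''$ with the terminating row number of the delayed deletion $\hat\delta_{n-1,1}$ applied to $\delta_{n-1}S^-$ is then read off from the insertion/deletion correspondence of Lemma~\ref{Lem-TSrcvr}: adjoining $e$ to row $n-1$ of $\delta_{n-1}S^-$ and deleting reproduces exactly the bumped value $l''$. Part~(1) follows once this is in hand, since above row $n-1$ the composites $\delta_{n-1}\hat\delta_{n,1}$ and $\hat\delta_{n-1,1}\delta_{n-1}$ are then forced to leave identical tableaux on rows $1,\dots,n-2$.

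The hard part will be the middle step. The deletion that carries $e$ through $\delta_{n,1}S$ need \emph{not} follow $P_0^-$: since entries immediately to the right of $P_0^-$ may lie strictly between $c_{n-1}$ and $e$, this path can bifurcate to the right of $P_0^-$, wander, and rejoin it higher up, and it can also interfere with the deletions lying to its left. Showing that all such bifurcations and rejoinings --- the \emph{critical} configurations --- disturb the list of terminating row numbers in exactly one entry, namely the single replacement of $l''$ by $l'$, is the genuine content of the proposition and is what the whole of Section~\ref{Sec-prf-delaying} must carry out in full detail. The base case $S(n,1)=n-1$ of Lemma~\ref{Lem-invol-tab-erase-one-fin}, in which no bifurcation can occur because $e=n-1$ is removed in Phase~1, is the degenerate end of this analysis and should guide the bookkeeping throughout.
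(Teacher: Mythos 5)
Your proposal contains two scaffolding claims that are false in general, and they are contradicted by the paper's own Example~\ref{Ex-delaying-subdiv}. You claim (i) that the deletion of $\delta_{n-1}$ applied to $S^-$ which starts from $P_0[n-1]$ retraces $P_0^-$ verbatim, terminates in row $l'$, and is the deletion depositing $l'$ into $R''$; and (ii) that the deletions starting strictly to the right of $P_0[n-1]$ stay strictly to the right of $P_0$ and contribute identical terminating row numbers to $R'$ and $R''$. In Example~\ref{Ex-delaying-subdiv} ($n=5$, $e=3$) the path of $\hat\delta_{5,1}$ is $P_0=\{(5,1),(4,1),(3,5),(2,5)\}$ with $l'=2$, and one reads off $R'=(1,1,2,3)$, $R''=(1,2,2,3)$, $l''=1$, with first contact index $i_*=2$ and contact cell $(3,5)$. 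So the discrepancy between $R'$ and $R''$ sits at the \emph{second} deletion, which starts at $(4,3)$, strictly to the right of $P_0[4]=(4,1)$: in $\hat\delta_{5,1}S^-$ it runs onto $P_0$ at $(3,5)$ (contradicting (ii), since $k'_2=1\neq k''_2=2$), while the deletion of $\delta_4$ on $S^-$ starting at $P_0[4]=(4,1)$ is the \emph{fourth} one and terminates in row $3$, not $l'=2$ (contradicting (i)): by the time it runs, the three earlier deletions have replaced every entry of row $3$ by $3$'s, so nothing of $P_0^-$ survives to be retraced. The root of the error is twofold. First, Lemma~\ref{Lem-hp} only orders the paths of one $\delta_{n-1}$ among themselves; it says nothing about their position relative to $P_0$, which belongs to a different operation. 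The correct tool, part (2) of Lemma~\ref{Lem-ep}, gives only \emph{weak} rightness (and its entry hypotheses fail outright for $S^-$ against $P_0$, since $S^-$ carries the pre-deletion entries along $P_0^-$); the gap between weak and strict is exactly where first contact and bifurcation live, and they can occur in any row, not only at $P_0[n-1]$. Second, your argument for (i) is precisely the argument of Lemma~\ref{Lem-invol-tab-erase-one-fin}, which is valid there only because $e=n-1$ forces everything to the right of $P_0[n-1]$ to be removed by Phase~1 deletions that never touch the upper rows; for $e\leq n-2$ that protection disappears.

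Because of this, your localisation of the whole difference at the cell $P_0[n-1]$ is structurally wrong, and the hard part you explicitly defer (``what the whole of Section~\ref{Sec-prf-delaying} must carry out'') is not a detail but the entire content, organised around a different pivot than the one you set up. The paper first proves $l''=k'_{i_*}$ at the first-contact index (Lemma~\ref{Lem-bumping-position}), and then, instead of comparing $\delta_{n-1}\hat\delta_{n,1}$ and $\hat\delta_{n-1,1}\delta_{n-1}$ head-on, interpolates between them through the operators $D^{(i)}$ of Lemma~\ref{Lem-invol-tab-one-exchange}, delaying $\hat\delta_{n,1}$ one position at a time and showing that each step fixes the resulting tableau while permuting the terminating row numbers in a controlled way. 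The crucial phenomenon, which a two-tableau comparison cannot see, is that at first contact the exchange of \emph{operators} in positions $(i_*,i_*+1)$ induces an exchange of \emph{terminating row numbers} in positions $(i_*+1,i_*+2)$ --- a three-operator analysis (Lemma~\ref{Lem-invol-tab-one-exchange-first-contact}, with its lower-part/key-row/upper-part case study) that then propagates past $i_*$ and is closed off by Lemma~\ref{Lem-del-seq-final-part}. Finally, two smaller points would also need repair: your one-line justification of $l''>0$ is not an argument (the paper derives it from Lemma~\ref{Lem-vp}), and the identification of $l''$ with the terminating row number of $\hat\delta_{n-1,1}$ does not simply ``read off'' from Lemma~\ref{Lem-TSrcvr}; it is exactly what the final interpolation step establishes.
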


In this section, we grant the truth of Proposition~\ref{Prop-invol-tab-erase-one-ind} and continue with the proof of Lemma~\ref{Lem-invol-tab-erase-one-gen}, then Proposition~\ref{Prop-invol-tab-erase-row}, which yields Theorem~\ref{The-invol-tab-main} as we saw after the statament of Proposition~\ref{Prop-invol-tab-erase-row}.
The proof of Proposition~\ref{Prop-invol-tab-erase-one-ind} will be given in Section~\ref{Sec-prf-delaying}.

\begin{proof}[Proof of Lemma~\ref{Lem-invol-tab-erase-one-gen}]
Set $e=S(n,1)$ as in the statement of Proposition~\ref{Prop-invol-tab-erase-one-ind}.
If $n-e=1$, then Lemma~\ref{Lem-invol-tab-erase-one-fin} readily shows the validity of Proposition~\ref{Prop-invol-tab-erase-row} in this case. So let us assume that $n-e\geq2$. In this case we can apply Proposition~\ref{Prop-invol-tab-erase-one-ind}.
What it says can be read as follows.
If one applies $\rho^{(n)}$ to $S$, then $\delta_{n,1}$ yields the terminating row number $l'$ and then $\delta_{n-1}$ yields the row $R'$ of terminating row numbers; these form ${}_{2\{}T'$, and the remaining operations yield 
$^{n-2\{}T'$, where the notation of part (1) of Definition~\ref{Def-compactifying-notation} has been used.
Note that this whole procedure can also be described as applying $\delta_1\cdots\delta_{n-2}\cdot\delta_{n-1}\cdot\hat\delta_{n,1}$ to $S^-$.
We compare this with applying the following operations to $S$:  first temporarily remove the entry $e$ at $(n,1)$, apply $\delta_{n-1}$, and then adjoin $e$ back to $(n-1,1)$ and apply $\delta_1\cdots\delta_{n-2}\delta_{n-1}$; in other words applying $\delta_1\cdots\delta_{n-2}\cdot\hat\delta_{n-1,1}\cdot\delta_{n-1}$ to $S^-$.
Then $\delta_{n-1}$ yields the row $R''$ of terminating row numbers, $\hat\delta_{n-1,1}$ yields $l''$ by part (2) of Proposition~\ref{Prop-invol-tab-erase-one-ind}, and since $\hat\delta_{n-1,1}\delta_{n-1}S^-=\delta_{n-1}\hat\delta_{n,1}S^-$ by part (1) of Proposition~\ref{Prop-invol-tab-erase-one-ind}, the operations $\delta_1\cdots\delta_{n-2}$ again yield $^{n-2\{}T'$.
This is illustrated in the middle picture of (\ref{Eq-terminating-row-tableaux}) below; note that this three part middle picture is not meant to be a tableau, although the adjunction of $l''$ to $^{n-2\{}T'$ at $(n-1,1)$ is in fact an LR tableau (this follows easily from the LR property of $T'$ and the fact that $l''$ was one of the entries in row $n-1$ of $T'$), which we temporarily denote by $T^*$.

\begin{equation}\label{Eq-terminating-row-tableaux}

\end{equation}


Moreover (2) also says that, if one applies $\delta_{n,1}$ to $T'$, then $l'$ changes row $n-1$ from $R'$ to $R''$, bumping $l''$.
By the definition of deletion operation, this bumped element $l''$ continues to change $^{n-2\{}T'$ exactly as it would do if one applied the operation $\delta_{n-1,1}$ to $T^*$; hence what we are trying to show is that $_{1\{}T''=R''$ and that $^{n-2\{}T''=\delta_{n-1,1}T^*$.

That $_{1\{}T''=R''$ is clear from the definitions.
On the other hand, if we set $S^*=(\delta_{n-1}S^-)\cup_{(n-1,1)}\boxed{e}$, then $S^*$ is also an LR tableau since $S^*(n-2,1)$ equals either $S(n-2,1)$ or $S(n-1,1)$ and the weight of $^{n-2\{}S^*$ is the same as that of $^{n-1\{}S$, and the description above shows that $\rho^{(n-1)}S^*=T^*$.
Since $S^*$ contains $e$, which is $\leq n-2$, in the cell $(n-1,1)$, we can apply the induction hypothesis to $S^*$, which shows that $\delta_{n-1,1}\rho^{(n-1)}S^*=\rho^{(n-2)}({}^{n-2\{}(S^*))$, and since $^{n-2\{}(S^*)=\delta_{n-1}S^-$, we have $\rho^{(n-2)}({}^{n-2\{}(S^*))=\rho^{(n-2)}\delta_{n-1}S^-={}^{n-2\{}(\rho^{(n-1)}S^-)={}^{n-2\{}T''$, as desired.
\qedhere

\end{proof}

\begin{Example}\label{Ex-delaying-to-erase-row}

Before proceeding with the proof of Proposition~\ref{Prop-invol-tab-erase-row}, we explain in the case where $n=5$, $\lambda=(9,9,6,4,1)$, $\mu=(7,5,3)$ and $S=\vcenter{\hbox{\begin{tikzpicture}[x={(0in,-0.15in)},y={(0.15in,0in)}]
 \foreach \j in {1,...,7} \draw (0,\j-1) rectangle (1,\j);
 \foreach \j/\x in {8/1,9/1} \node[font=\footnotesize] at (0.5,\j-0.5) {$\x$};
 \foreach \j in {1,...,5} \draw (1,\j-1) rectangle (2,\j);
 \foreach \j/\x in {6/1,7/1,8/2,9/2}
  \node[font=\footnotesize] at (1.5,\j-0.5) {$\x$};
 \foreach \j in {1,2,3} \draw (2,\j-1) rectangle (3,\j);
 \foreach \j/\x in {4/1,5/1,6/2}
  \node[font=\footnotesize] at (2.5,\j-0.5) {$\x$};
 \foreach \j/\x in {1/1,2/2,3/2,4/3}
  \node[font=\footnotesize] at (3.5,\j-0.5) {$\x$};
 \node[font=\footnotesize] at (4.5,0.5) {$3$};
\end{tikzpicture}}}$ how the conclusion of Lemma~\ref{Lem-invol-tab-erase-one-gen}, $\delta_5(\rho^{(5)}S)=\rho^{(4)}S^-$, where $S^-=\vcenter{\hbox{\begin{tikzpicture}[x={(0in,-0.15in)},y={(0.15in,0in)}]
 \foreach \j in {1,...,7} \draw (0,\j-1) rectangle (1,\j);
 \foreach \j/\x in {8/1,9/1} \node[font=\footnotesize] at (0.5,\j-0.5) {$\x$};
 \foreach \j in {1,...,5} \draw (1,\j-1) rectangle (2,\j);
 \foreach \j/\x in {6/1,7/1,8/2,9/2}
  \node[font=\footnotesize] at (1.5,\j-0.5) {$\x$};
 \foreach \j in {1,2,3} \draw (2,\j-1) rectangle (3,\j);
 \foreach \j/\x in {4/1,5/1,6/2}
  \node[font=\footnotesize] at (2.5,\j-0.5) {$\x$};
 \foreach \j/\x in {1/1,2/2,3/2,4/3}
  \node[font=\footnotesize] at (3.5,\j-0.5) {$\x$};
\end{tikzpicture}}}$ (so that $S=S^-\cup_{(5,1)}\boxed{3}$), is reached by first applying Proposition~\ref{Prop-invol-tab-erase-one-ind} $n-S(n,1)$ times and then finishing by Lemma~\ref{Lem-invol-tab-erase-one-fin}.

The computation of $\rho^{(5)}S
{}=T'
$ (resp.\ $\rho^{(4)}S^-
{}=T''
$) consists of transforming the tableau $S$ (resp.\ $S^-$) following the route 
\begin{align}
S {\blue\overset{\delta_{5,1}}{{}\to{}}} S^{(4)} {\red\overset{\delta_4}{{}\to{}}} S^{(3)}  {\red\overset{\delta_3}{{}\to{}}} S^{(2)}  {\red\overset{\delta_2}{{}\to{}}} S^{(1)}  {\red\overset{\delta_1}{{}\to{}}} S^{(0)}  & =\varnothing  \label{Eq-route-T-doubleprime} \\
                         (\text{resp.\ } S^-     {\red\overset{\delta_4}{{}\to{}}} S^{-(3)} {\red\overset{\delta_3}{{}\to{}}} S^{-(2)} {\red\overset{\delta_2}{{}\to{}}} S^{-(1)} {\red\overset{\delta_1}{{}\to{}}} S^{-(0)} & =\varnothing) \label{Eq-route-T-prime}
\end{align}
(see the diagram (\ref{Eq-invol-tab-erase-one-ex}) below), 
and piling up 
the terminating row numbers shown in the diagram in \textbf{\blue blue} letters accompanying the \tikz[baseline=-2.5pt,blue]\draw[-latex](0,0)--(1,0); arrows in the case of the bottom deletions, and those shown in \textbf{\red red} letters accompanying the \tikz[baseline=-2.5pt,red]\draw[-latex](0,0)--(1,0); arrows in the case of the other deletions,
into a tableau 
$T'$ (resp.\ $T''$) shown in (\ref{Eq-invol-tab-erase-one-partners}). 

The operator ${\blue\hat\delta_{5,1}}={\blue\hat\delta_{5,1}^{\tikz\node[draw,inner sep=2pt]{$\sc 3$};}}$ enables us to rewrite
  the initial portion $S {\blue\overset{\delta_{5,1}}{{}\to{}}} S^{(4)}$ of (\ref{Eq-route-T-doubleprime}) as $S^- {\blue\overset{\hat\delta_{5,1}}{{}\to{}}} S^{(4)}$ without affecting the terminating row numbers being produced, thereby making both routes to consider start from the same tableau $S^-$.

Since $S(n,1)\leq n-2$ we can apply Proposition~\ref{Prop-invol-tab-erase-one-ind} to $S$, whose purpose is twofold: 
one is
to show the validity of the 
conclusion of Proposition~\ref{Prop-invol-tab-erase-row}, $\delta_{5,1}(\rho^{(5)}S)=\rho^{(4)}S^-$,
in rows 5 and 
4; the other is 
to pass the task of showing 
its 
validity in the remaining upper rows to a recursive application of Lemma~\ref{Lem-invol-tab-erase-one-gen} to $S^*=\delta_4S^-\cup_{(4,1)}\boxed{3}
{}=S^{-(3)}\cup_{(4,1)}\boxed{3}
$ (where $3=S(5,1)$), 
namely to comparing the tableaux obtained from the routes 
\begin{align}
S^* {\blue\overset{\delta_{4,1}}{{}\to{}}}\text{ (or }S^{-(3)} {\blue\overset{\hat\delta_{4,1}}{{}\to{}}})\text{ }S^{(3)} {\red\overset{\delta_3}{{}\to{}}} S^{(2)} {\red\overset{\delta_2}{{}\to{}}} S^{(1)} {\red\overset{\delta_1}{{}\to{}}} S^{(0)} & =\varnothing\quad\text{and} \label{Eq-route-T^*}\\
S^{-(3)} {\red\overset{\delta_3}{{}\to{}}} S^{-(2)} {\red\overset{\delta_2}{{}\to{}}} S^{-(1)} {\red\overset{\delta_1}{{}\to{}}} S^{-(0)} & =\varnothing. \label{Eq-route-T'{}^{(3)}}
\end{align}
The claims in part (2) 
of Proposition~\ref{Prop-invol-tab-erase-one-ind} 
serve the first purpose: the diagram gives $l'=2$, $R'=(\foreach[count=\j]\x in{1,1,2,3}{\x\ifnum\j<4,\fi})$ and $R''=(\foreach[count=\j]\x in{1,2,2,3}{\x\ifnum\j<4,\fi})$, so that $l''=1$ (the rightmost entry $<{}l'$ in $R'$), which verifiably coincides with the terminating row number $1$ given by ${\blue\hat\delta_{4,1}}={\blue\hat\delta_{4,1}^{\tikz\node[draw,inner sep=2pt]{$\sc 3$};}}$ applied to $S^-$, and $R''$ coincides with the result of replacing the rightmost $1$ of $R'$ by $2$.
Due to the claim ${\red\delta_4}{\blue\hat\delta_{5,1}}S^-={\blue\hat\delta_{4,1}}{\red\delta_4}S^-$ in part 
(1) of Proposition~\ref{Prop-invol-tab-erase-one-ind}, 
namely the commutativity of the upper rectangle in 
(\ref{Eq-invol-tab-erase-one-ex}) in terms of the transformations of $S^-$, 
one verifies 
by chasing the route \thetag{\ref{Eq-route-T^*}} in the diagram 
that $\rho^{(4)}S^*$ piles up the upper three rows of $\rho^{(5)}S$ over the cell $(4,1)$ containing the element $l''=1$ bumped from $R'$, and since $\rho^{(3)}S^{*-}$ is just the upper three rows of $\rho^{(4)}S^-$, the conclusion of the recursively applied Lemma~\ref{Lem-invol-tab-erase-one-gen}, $\delta_{4,1}(\rho^{(4)}S^*)=\rho^{(3)}S^{*-}$, will take care of the equality $\delta_{5,1}(\rho^{(5)}S)=\rho^{(4)}S^-$ in rows 1, 2 and 3.

In general, applying Lemma~\ref{Lem-invol-tab-erase-one-gen} to $S^*$ may again
need invoking Proposition~\ref{Prop-invol-tab-erase-one-ind}, in which case it is worth noting that $S^*(n-1,1)=S(n,1)$ since it ensures that the operation $\hat\delta_{n-1,1}$ defined in the context of $S$ and that of $S^*$ are indeed the same operation.
In this example, however, $S^*$ already meets the requirements of Lemma~\ref{Lem-invol-tab-erase-one-fin} since $S^*(4,1)=3=4-1$, which directly gives $\delta_{4,1}(\rho^{(4)}S^*)=\rho^{(3)}S^{*-}$: the lower row of (\ref{Eq-invol-tab-erase-one-partners}) indeed verifies that $\delta_{4,1}$ places $l''=1$ into the empty cell $(3,2)$ of $\rho^{(4)}S^*$ and yields $\rho^{(3)}S^{*-}$.

\begin{equation}\label{Eq-invol-tab-erase-one-ex}
\vcenter{\hbox{

}}
\end{equation}
This completes our Example~\ref{Ex-delaying-to-erase-row}.

\end{Example}

Combining Lemmas~\ref{Lem-invol-tab-erase-one-emp}, \ref{Lem-invol-tab-erase-one-sp} and \ref{Lem-invol-tab-erase-one-gen}, we have

\begin{Lemma}\label{Lem-invol-tab-erase-one}
If $\lambda_n=1$, then Proposition~\ref{Prop-invol-tab-erase-row} holds.
\end{Lemma}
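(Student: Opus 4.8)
The plan is to treat Lemma~\ref{Lem-invol-tab-erase-one} as a routine case analysis that merely assembles the three preceding lemmas, the substantive content having already been carried by them. Since $\lambda_n=1$ and $S\in\mathcal{LR}(\lambda/\nu,\mu)$, row $n$ of $S$ consists of the single cell $(n,1)$. My first move is to distinguish according to whether this cell is empty in $S$, which happens precisely when $\nu_n=\lambda_n=1$, or nonempty, which happens when $\nu_n=0$. The empty case is immediately covered by Lemma~\ref{Lem-invol-tab-erase-one-emp}, so I would dispose of it first with a single sentence.

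For the nonempty case I would set $e=S(n,1)$ and split according to the value of $e$, using Lemma~\ref{Lem-invol-tab-erase-one-sp} when $e=n$ and Lemma~\ref{Lem-invol-tab-erase-one-gen} when $1\leq e\leq n-1$. To see that these two subcases are exhaustive, the one small point worth spelling out is that $e\leq n$. This is the standard row bound for LR tableaux: the lattice permutation property forces any entry $k$ occurring in row $r$ to be accompanied by an entry $k-1$ somewhere in rows $1,\dots,r-1$, and iterating this down to the value $1$ yields $k\leq r$. Applied to row $n$ it gives $e\leq n$, so that $e\in\{1,2,\dots,n\}$ and the split into $e=n$ versus $e\leq n-1$ genuinely covers all possibilities.

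I do not expect any real obstacle here, since every case reduces verbatim to a lemma already established; the proof is essentially a trichotomy (empty cell, $e=n$, $e\leq n-1$) together with the observation that it is exhaustive. If pressed on the single subtlety, it would be confirming the upper bound $e\leq n$ so that no value of a nonempty $S(n,1)$ escapes Lemmas~\ref{Lem-invol-tab-erase-one-sp} and~\ref{Lem-invol-tab-erase-one-gen}; but as just indicated this is an immediate consequence of the lattice permutation property intrinsic to membership in $\mathcal{LR}(\lambda/\nu,\mu)$. Concretely, the write-up would read: \emph{Since $\lambda_n=1$, row $n$ of $S$ is the single cell $(n,1)$. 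If this cell is empty, Proposition~\ref{Prop-invol-tab-erase-row} holds by Lemma~\ref{Lem-invol-tab-erase-one-emp}. Otherwise it contains an entry $e=S(n,1)$ with $1\leq e\leq n$, the upper bound following from the lattice permutation property; then Proposition~\ref{Prop-invol-tab-erase-row} holds by Lemma~\ref{Lem-invol-tab-erase-one-sp} if $e=n$ and by Lemma~\ref{Lem-invol-tab-erase-one-gen} if $e\leq n-1$. These cases being exhaustive, the proof is complete.}
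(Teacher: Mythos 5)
Your proposal is correct and matches the paper's own treatment: the paper proves this lemma simply by "combining" Lemmas~\ref{Lem-invol-tab-erase-one-emp}, \ref{Lem-invol-tab-erase-one-sp} and \ref{Lem-invol-tab-erase-one-gen}, which is exactly your trichotomy (empty cell, $S(n,1)=n$, $S(n,1)\leq n-1$). Your extra remark that the lattice permutation property forces $S(n,1)\leq n$, making the cases exhaustive, is a valid point that the paper leaves implicit.
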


Finally, using Lemma~\ref{Lem-invol-tab-erase-one}, we give 
the following inductive step for proving Proposition~\ref{Prop-invol-tab-erase-row} by induction on $\lambda_n$.


\begin{Lemma}\label{Lem-invol-tab-shift}
Let $S$ be as in Proposition~\ref{Prop-invol-tab-erase-row}, and assume that $\lambda_n>0$.
Let $S_\leftarrow$ denote the tableau obtained from $S$ by erasing the cell $(n,1)$ and shifting each of the remaining cells in row $n$ to the left by one column.
Note that $S_\leftarrow\in\mathcal{LR}^{(n)}$.
Then we have $\rho^{(n)}S_\leftarrow=\delta_{n,\lambda_n}\rho^{(n)}S$.
\end{Lemma}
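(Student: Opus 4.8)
The plan is to run the two deletion procedures that compute $\rho^{(n)}S$ and $\rho^{(n)}S_\leftarrow$ in parallel, observe that they agree except for one extra bottom-cell deletion, and then dispatch the leftover discrepancy by invoking the single-cell case already established in Lemma~\ref{Lem-invol-tab-erase-one}. Throughout, $S\in\mathcal{LR}(\lambda/\nu,\mu)$ and I set $e=S(n,1)$, to be read as an empty cell when $(n,1)\notin D(\lambda/\nu)$ (i.e.\ when $\nu_n\geq1$).

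First I would compare the opening operators $\Delta_n$ (that is, the full deletions $\delta_n$) applied to $S$ and to $S_\leftarrow$. Since $S$ and $S_\leftarrow$ coincide on rows $1,\dots,n-1$ and row $n$ of $S_\leftarrow$ is row $n$ of $S$ with its leftmost cell removed and the rest shifted one column to the left, the deletions of $\delta_n$, which process row $n$ from right to left, line up: the first $\lambda_n-1$ deletions of $\delta_n$ on $S$ reproduce, in their terminating row numbers and in their cumulative effect on rows $1,\dots,n-1$, all $\lambda_n-1$ deletions of $\delta_n$ on $S_\leftarrow$. The reason is that, by Definition~\ref{Def-dp}, each deletion path leaves row $n$ at once — the corner entry is bumped up into row $n-1$ — and its subsequent route and terminating row number depend only on that entry and on the (identical) upper rows, so the one-column horizontal offset of the starting corners is immaterial, the vertical-strip property from the proof of Lemma~\ref{Lem-ss} guaranteeing both deletions remain valid. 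Writing $T^\sharp$ for the tableau obtained from $S$ by these first $\lambda_n-1$ deletions, this yields $^{n-1\{}T^\sharp=\delta_nS_\leftarrow=:W$ and $T^\sharp=W\cup_{(n,1)}\boxed{e}$, an LR tableau by iterating Lemmas~\ref{Lem-ss} and~\ref{Lem-lp}; it also shows that row $n$ of $\rho^{(n)}S_\leftarrow$ is row $n$ of $\rho^{(n)}S$ with its rightmost entry removed.

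Next I would match the upper rows. By the inductive description of $\rho^{(n)}$ used throughout this section we have $^{n-1\{}(\rho^{(n)}S)=\rho^{(n-1)}(\delta_nS)$ and $^{n-1\{}(\rho^{(n)}S_\leftarrow)=\rho^{(n-1)}(W)$. On the other side, set $T'=\rho^{(n)}S$ and apply $\delta_{n,\lambda_n}$: deleting the corner $(n,\lambda_n)$ removes exactly the rightmost cell of row $n$ (so row $n$ of $\delta_{n,\lambda_n}T'$ already equals row $n$ of $\rho^{(n)}S_\leftarrow$), while its effect on the upper rows $^{n-1\{}T'$ is governed solely by the corner entry $\tau:=T'(n,\lambda_n)$, once more independently of the starting column. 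The crux is thus to identify this upper-row effect with $\rho^{(n-1)}(W)$. To do so, put $V=T^\sharp=W\cup_{(n,1)}\boxed{e}$, an LR $n$-tableau whose bottom row is the single cell $(n,1)$; then $\delta_nV=\delta_{n,1}V=\delta_nS$, hence $V^-=W$ and $\rho^{(n-1)}(\delta_nV)=\rho^{(n-1)}(\delta_nS)={}^{n-1\{}T'$, while the single bottom cell of $\rho^{(n)}V$ carries exactly the entry $\tau$. Consequently $\delta_{n,1}$ acting on $\rho^{(n)}V$ has the very same effect on the upper rows as $\delta_{n,\lambda_n}$ has on $T'$. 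But Lemma~\ref{Lem-invol-tab-erase-one} (the $\lambda_n=1$ case of Proposition~\ref{Prop-invol-tab-erase-row}) applied to $V$ gives $\delta_n\rho^{(n)}V=\delta_{n,1}\rho^{(n)}V=\rho^{(n-1)}(V^-)=\rho^{(n-1)}(W)$. Combining the row-$n$ agreement with this upper-row agreement yields $\delta_{n,\lambda_n}\rho^{(n)}S=\rho^{(n)}S_\leftarrow$, as required.

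The step I expect to be the genuine obstacle is making the two ``horizontal offset is immaterial'' claims rigorous: that translating the starting cell of a deletion one column to the left changes neither the terminating row number nor the resulting filling of the rows strictly above the bottom one. I would prove this directly from Definition~\ref{Def-dp} together with the weakly-increasing-column (vertical-strip) property from the proof of Lemma~\ref{Lem-ss}, the point being that after the initial corner is deleted the path enters row $n-1$ at a position determined only by that row and by the bumped value. Once this is in hand, the remainder is bookkeeping of the partner-row placement (Definition~\ref{Def-TtoS}) and of the inductive description of $\rho^{(n)}$, and the verification that $T^\sharp$ and $V$ are genuine LR tableaux to which Lemma~\ref{Lem-invol-tab-erase-one} applies.
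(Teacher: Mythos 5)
Your proposal is correct and takes essentially the same route as the paper's own proof: your intermediate tableau $V=T^\sharp$ is exactly the paper's $S'=\delta_n^{\lambda_n-1}\cdots\delta_n^{1}S$, your two ``horizontal offset is immaterial'' claims (with the vertical-strip worry resolved via the semistandardness of the partner tableau and the argument of Lemma~\ref{Lem-ss}) are precisely the two reductions the paper makes, and both arguments finish by applying the single-bottom-cell case, Lemma~\ref{Lem-invol-tab-erase-one}, to this intermediate tableau and matching row $n$ with the upper rows. No gaps.
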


\begin{proof}
We are comparing the partner tableaux of $S$ and $S_\leftarrow$.
The drawings in Eq.~(\ref{Eq-invol-tab-shift}) below endeavours to illustrate the relationship.
\numero{1} and \numero{5} are the tableaux $S$ and $S_\leftarrow$ respectively.
The fat black arrows denote migration to the respective partner tableaux: \numero{1} to \numero{4} and \numero{5} to \numero{8}.
The claim is that the partner \numero{8} for $S_\leftarrow$ coincides with the transform of the partner \numero{4} for $S$ by 
a single deletion $\delta_{n,\lambda_n}$, starting with the entry denoted by $x$ in the drawing.
Note that row $n$ of \numero{4} is drawn detached from the rest of the tableau.
This is to show an arrow connecting the cell $(n,\lambda_n)$ to row $n-1$.
As we shall see towards the end of the proof, that is one point to which we shall return. 

As an intermediary, consider \numero{2} $S':=\delta_n^{\lambda_n-1}\cdots\delta_n^1S$, the result of applying the $n$-deletion $\delta_n$ without the final deletion $\delta_n^{\lambda_n}$.
The symbol $\delta_n'$ above the arrow going from \numero{1} to \numero{2} represents this shorter combination, and the three red wavy lines in \numero{1} and the three black wavy lines in \numero{2} indicate the deletion paths associated to the application of $\delta'_n$ to $S$, assuming in the picture that all its consituents are type (ii) deletions, and with the convention that $b\sp*,c\sp*$ and $d\sp*$ denote the uppermost nonzero entries of the respective paths starting from the cells containing $b$, $c$ and $d$, and that the empty red squares above them in \numero{1}, which become dotted and filled squares in \numero{2}, are their terminating cells.
If some of these deletions are of type (i) or type (iii), the corresponding wavy lines in the pictures are just symbolic, the actual changes being limited to the removal of the starting cells.

If we apply $\delta_n^{\lambda_n-1}\cdots\delta_n^1$ to $S_\leftarrow$ instead, which amounts to applying the full $n$-deletion $\delta_n$ in this case, the deletions carve the same changes except for their starting cells being shifted to the left, as indicated by the red wavy lines in \numero{5} and the black wavy lines in \numero{6}, possibly symbolic, and leave the tableau \numero{6} which coincides with the result, $^{n-1\{}S'$, of the removal of the cell $(n,1)$ from $S'$.

Since $S'$ has only one box in row $n$, Lemma~\ref{Lem-invol-tab-erase-one} assures, by way of Proposition~\ref{Prop-invol-tab-erase-row}, that the partner on the lower level \numero{7} coincides with the transform of the partner \numero{3} for $S'$ by a single deletion $\delta_{n,1}$ indicated by the blue wavy line with the same convention as the red wavy lines in \numero{1} and the black wavy line in \numero{7} indicating the result of shifting.
Again $\delta_{n,1}$ can be of type (i) or (iii), and then the wavy line is symbolic with the change actually limited to the removal of the starting cell.
Note that such a case occurs exactly when the deletions $\delta_{n,1}$ from \numero{1} and \numero{2} are of type (iii) or (i) respectively.

\begin{equation}\label{Eq-invol-tab-shift}
\vcenter{\hbox{

}}
\end{equation}

Next we compare the partners \numero{4} for $S$ and \numero{3} for $S'$.
Note that $S'$ appears in the process of applying $\Delta_n$ to $(S,\varnothing)$ as a left-hand side component.
At that point, the cells $(n,1)$ through $(n,\lambda_n-1)$ on the right-hand side component, namely those cells of the partner \numero{4}, have been constructed by placing the terminating row numbers of the red wavy paths shown in \numero{1}, indicated by $i,j$ and $k$ in the drawing.
If we continue, the remaining deletion $\delta_{n,1}$ applied to $S'$ produces a terminating row number, denoted by $x$ in the drawing, to be placed at $(n,\lambda_n)$ of \numero{4}, and then $\delta_{n-1}$, \dots, $\delta_1$ produce rows $n-1$ through 1 of \numero{4}.
If we start from $S'$ to produce \numero{3} instead, the transformations of the left-hand side components are exactly the same, but the terminating row number of the first deletion $\delta_{n,1}$ for $S'$, which is also $x$, is placed at $(n,1)$, and the rows $n-1$ through 1 of \numero{3}, produced by the continuing deletions $\delta_{n-1}$, \dots, $\delta_1$, are exactly the same as those of \numero{4} as indicated by the $=$ sign in the drawing.
Thus, the tableau \numero{4} can be described as being obtained from \numero{3} by moving the cell $(n,1)$ to $(n,\lambda_n)$ and placing the terminating row numbers of the possibly symbolic red wavy lines in \numero{1} in cells $(n,1)$ through $(n,\lambda_n-1)$.
Similarly, or even more simply, the partner \numero{8} for $S_\leftarrow$ can be described as the partner \numero{7} for \numero{6} plus a row consisting of the terminating row numbers of the red wavy lines shown in \numero{5}, attached as row $n$.
Note again that these terminating row numbers, indicated by $i,j$ and $k$ in the drawing, are exactly the same as those of the red wavy lines shown in \numero{1}, in other words the contents of the cells $(n,1)$ through $(n,\lambda_n-1)$ of \numero{4}.

Since the topmost $n-1$ rows of \numero{4} are exactly the same as those of \numero{3}, the first deletion $\delta_{n,\lambda_n}$ for \numero{4} and the first deletion $\delta_{n,1}$ for \numero{3}, both starting with the entry denoted by $x$ in the drawing and hence belonging to the same type, follow the same path, if they are of type (ii), and incur the same changes (no changes in the case of type (i) or (iii)) in the topmost $n-1$ rows of \numero{4} and \numero{3} respectively.
This claim in the case of type (ii) is verified by consulting Definition~\ref{Def-dp}, according to which the deletion path proceeds iteratively from the current cell by looking for the rightmost cell in the row immediately above which contains an entry stricty smaller than that of the current cell, which ensures that the deletion path is determined by the value of the starting entry and the tableau above the starting row, regardless of the column position of the starting entry.
However, at first sight it may sound suspicious to claim that the path obtained from the blue wavy line in \numero{3} by moving the starting cell to the right still constitutes the beginning of a valid deletion path if one worries about the possibility of moving the entry $x$ in \numero{3} too far to the right to form a vertical strip.
The point is that how far $x$ is moved to the right is bounded by the semistandardness of the tableau \numero{4} that is itself constructed as the partner of the originally given tableau $S$. Then as reviewed in the proof of Lemma~\ref{Lem-ss} this automatically assures that the path generated according to Definition~\ref{Def-dp} is a vertical strip.
\end{proof}
\medskip

Now we can state the proof of Proposition~\ref{Prop-invol-tab-erase-row}, thereby concluding the proof of Theorem~\ref{The-invol-tab-main}.

\begin{proof}[Proof of Proposition~\ref{Prop-invol-tab-erase-row}]
If $\lambda_n=0$, then $S$ and $S^-$ coincide, and $T'$ and $\delta_nT'$ coincide, so the conclusion is immediate.
If $\lambda_n>0$, then we can apply Lemma~\ref{Lem-invol-tab-shift} successively to $S$, $S_\leftarrow$, $(S_\leftarrow)_\leftarrow$ and so on, to show that $\rho^{(n-1)}S^-$ is obtained from $\rho^{(n)}S$ by applying $\delta_{n,\lambda_n}$, $\delta_{n,\lambda_n-1}$, $\delta_{n,\lambda_n-2}$ and so on, namely by applying $\delta_n$, as required.
\end{proof}

\section{Proof of Delaying Bottom Cell Deletion lemma}\label{Sec-prf-delaying}

This whole section constitutes the proof of Proposition~\ref{Prop-invol-tab-erase-one-ind}.
This rather long proof tries to organize its course of logic by putting some intermediate claims in lemmas, which form layers as in \thetag{\ref{Eq-prf-delaying-str}}.
This means that the proof of Proposition~\ref{Prop-invol-tab-erase-one-ind} states, proves and uses Lemmas~\ref{Lem-bumping-position}, \ref{Lem-invol-tab-one-exchange} and \ref{Lem-del-seq-final-part} (plus some narrative text augmenting some minor points), and the proof of Lemma~\ref{Lem-invol-tab-one-exchange} has a similar structure, and so on.

\begin{equation}\label{Eq-prf-delaying-str}
\begin{tikzpicture}
\path[grow'=east,level distance=2pc,growth parent anchor=east,
      every node/.style={anchor=west},
      edge from parent path={
       (\tikzparentnode.east)--+(1pc,0)|-(\tikzchildnode.west)
      }]
 node{Proposition~\ref{Prop-invol-tab-erase-one-ind}} [sibling distance=2.5pc]
  child { node{Lemma~\ref{Lem-bumping-position}} }
  child { node{Lemma~\ref{Lem-invol-tab-one-exchange}} [sibling distance=1.5pc]
   child { node{Lemma~\ref{Lem-invol-tab-one-exchange-before-contact}} }
   child { node{Lemma~\ref{Lem-invol-tab-one-exchange-first-contact}}
    [sibling distance=1pc]
    child { node{Lemma~\ref{Lem-invol-tab-one-exchange-first-contact-lower}} }
    child { node{Lemma~\ref{Lem-invol-tab-one-exchange-first-contact-key-row}} }
    child { node{Lemma~\ref{Lem-invol-tab-one-exchange-first-contact-upper}} }
   }
   child { node{Lemma~\ref{Lem-invol-tab-one-exchange-after-contact}} }
  }
  child { node{Lemma~\ref{Lem-del-seq-final-part}} }
;
\end{tikzpicture}
\end{equation}

We begin by applying Vertical Path Comparison Lemma~\ref{Lem-vp}
to $S$, where $\delta_{n,1}$ is a Phase 2B deletion, which shows that $R'$ contains at least one nonzero entry strictly smaller than $l'$.
Hence we have $l''>0$.

Now note that $\delta_{n-1}\hat\delta_{n,1}S^-=\delta_{n-1,1}\cdots\delta_{n-1,\lambda_{n-1}}\hat\delta_{n,1}S^-$.
Recall from part (2) of Definition~\ref{Def-compactifying-notation} that we also write $\delta_{n-1}=\delta_{n-1}^{\lambda_{n-1}}\cdots\delta_{n-1}^1$.
Then $k'_i$ (resp.\ $k''_i$) is the terminating row number of the operation $\delta_{n-1}^i$ in $\delta_{n-1}$ applied to $\hat\delta_{n,1}S^-=\delta_{n,1}S$ (resp.\ applied to $S^-$).
Moreover, for $\nu_{n-1}<i\leq\lambda_{n-1}$, let $P'_i$ (resp.\ $P''_i$) be the deletion path of this operation 
(note that both $\hat\delta_{n,1}S^-$ and $S^-$ contain exactly $\nu_{n-1}$ entries $n-1$).
Furthermore, let $P'$ be the deletion path of the operation $\hat\delta_{n,1}$ (actually that of the $\delta_{n,1}$ part of this operation) applied to $S^-$.

Let $i_*$ be the smallest $i$ such that the paths $P'_i$ and $P'$ have at least one cell in common, and call $i_*$ the \textit{first contact index}.
Such $i$ exists since the deletion starting from $P'[n-1]$ is in Phase 2 and does share the cell $P'[n-1]$ with $P'$; moreover the minimality of $i_*$ implies that the starting cell of $P'_{i_*}$ is weakly to the right of $P'[n-1]$.

With $i_*$ thus defined, let us relate it with the place where the replacement of $l''$ by $l'$ occurs in the transition from $R'$ to $R''$.

\begin{Lemma}\label{Lem-bumping-position}
In the setting of Proposition~\ref{Prop-invol-tab-erase-one-ind}, $k'_{i_*}$ is the rightmost entry in $R'$ which is strictly smaller than $l'$, and hence $l''=k'_{i_*}$.
\end{Lemma}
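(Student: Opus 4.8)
The plan is to read off the shape of the sequence $R'$, to locate within it the crossover between the values $<l'$ and the values $\ge l'$, and then to match that crossover with the first contact index $i_*$. To begin, I would note that $R'$ is weakly increasing: its leading $\nu_{n-1}$ entries are the zeros produced by the Phase~1 deletions, while the subsequent entries are the terminating row numbers of the Phase~2 and~3 deletions, which weakly increase in the order of occurrence by the Horizontal Path Comparison Lemma~\ref{Lem-hp}. Consequently the entries of $R'$ that are strictly smaller than $l'$ form an initial segment, and $l''$ is exactly the last of these, the entry just before the first one that is $\ge l'$. So the whole task becomes to show that this crossover occurs precisely at index $i_*$.

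Next I would pin down the crossover by comparing each path $P'_i$ with $P'$ through the Elementary Path Comparison Lemma~\ref{Lem-ep}. Let $q$ be the column of $P'[n-1]$ and set $i_0=\lambda_{n-1}-q+1$, the index of the deletion whose starting cell is $P'[n-1]$ itself; since each $\delta_{n-1}^i$ begins at the current rightmost cell of row $n-1$, the starting columns decrease by one at each step and cross $q$ exactly at $i_0$. For $i\le i_0$ the start lies weakly to the right of $P'[n-1]$, so part~(2) of Lemma~\ref{Lem-ep} yields $k'_i<l'$; for $i>i_0$ the start lies strictly to the left, in a region left untouched by all the preceding right-hand deletions, so part~(1) yields $k'_i\ge l'$. (As in the Vertical Path Comparison Lemma~\ref{Lem-vp}, the hypotheses of Lemma~\ref{Lem-ep} are kept valid by an induction guaranteeing that no earlier deletion disturbs the contents of $P'$ or stacks a cell above its terminating cell.) This gives $l''=k'_{i_0}$, so it remains only to prove $i_*=i_0$.

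The inequality $i_*\le i_0$ is immediate, the deletion emanating from $P'[n-1]$ sharing the cell $P'[n-1]$ with $P'$. For $i_*\ge i_0$ I would show that a deletion beginning \emph{strictly} to the right of $P'[n-1]$ runs \emph{strictly} to the right of $P'$ and hence meets it in no cell. Writing $c_n>c_{n-1}>\cdots>c_{l'+1}>0$ for the strictly decreasing entries carried upward by $P'$, so that in $\hat\delta_{n,1}S^-$ the cell $P'[s]$ holds $c_{s+1}$ while every entry to its right in row $s$ is $\ge c_{s+1}$, I would ascend from row $n-1$ maintaining the invariant that $P'_i$ meets row $s$ strictly to the right of $P'[s]$ with an entry $\ge c_{s+1}$, the inductive step combining the column-strictness of the tableau with the vertical-strip property of both paths. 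Once this is established, the chain $k'_{i_*}=k'_{i_0}=l''$ completes the proof.

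I expect this last strict-separation step to be the main obstacle. Because both $P'$ and $P'_i$ lean rightwards as they rise, $P'$ may jog to the right from one row to the next, and one must exclude the scenario in which $P'_i$, strictly to the right lower down, is overtaken and lands exactly on $P'$ at such a jog. Handling this appears to require the quantitative gap $c_{s+1}<w$ between the value $w$ bumped by $P'_i$ and the shifted entry now sitting on $P'$, rather than merely the weak-right conclusion of Lemma~\ref{Lem-ep}(2); this is the delicate core of the argument.
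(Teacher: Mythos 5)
Your opening paragraph is fine and matches how the paper itself frames the problem: by Lemma~\ref{Lem-hp} the entries of $R'$ weakly increase, so the entries $<l'$ form an initial segment, and $i_*\le i_0$ is indeed immediate. The fatal gap is your identification of the crossover with $i_0$ rather than $i_*$, i.e.\ the claim that a deletion starting strictly to the right of $P'[n-1]$ runs strictly to the right of $P'$. This is false -- merging genuinely occurs -- and the configuration drawn in the paper's Example~\ref{Ex-delaying-subdiv} already refutes it. Concretely, take $n=5$ and $S$ with outer shape $(9,9,6,4,1)$, inner shape $(7,5,3)$: row~1 $=1\,1$ (columns 8,9), row~2 $=1\,1\,2\,2$ (columns 6--9), row~3 $=1\,1\,2$ (columns 4--6), row~4 $=2\,3\,3\,3$, row~5 $=3$. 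Then $\hat\delta_{5,1}$ carries the $3$ into $(4,1)$ (the only entry $<3$ in row~4 is the $2$ in column~1), the displaced $2$ into $(3,5)$, and the displaced $1$ into $(2,5)$; thus $P'[4]=(4,1)$, $P'[3]=(3,5)$, $P'[2]=(2,5)$, $l'=2$, so $q=1$ and your $i_0=\lambda_4+1-q=4$. But $\delta_4^2$, starting at $(4,3)$ -- strictly to the right of $P'[4]$ -- has path $(4,3),(3,5),(2,6),(1,6)$ and merges with $P'$ at $(3,5)$: hence $i_*=2<i_0=4$, and the first contact lies strictly above row $n-1$ (exactly the case $r_*<n-1$ that Lemma~\ref{Lem-invol-tab-one-exchange-first-contact-key-row} takes pains to treat, and which would be vacuous if your claim held). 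Moreover $R'=(1,1,2,3)$: the rightmost entry $<l'$ is $k'_2=k'_{i_*}=1$, as the Lemma asserts, whereas your prediction gives $k'_{i_0}=k'_4=3$; and already $k'_3=2=l'$ contradicts your assertion that part~(2) of Lemma~\ref{Lem-ep} forces $k'_i<l'$ for every $i\le i_0$.

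The reason your blanket appeal to Lemma~\ref{Lem-ep} is illegitimate beyond $i_*$ is the standing hypothesis recorded in the Remark following that lemma: part~(2) may be re-applied only so long as the intervening deletions neither alter the contents of $P'$ nor stack cells above its terminating cell. By minimality of $i_*$ this holds for $i\le i_*$ (earlier paths are disjoint from $P'$, hence strictly to its right), but the path $P'_{i_*}$ itself runs along part of $P'$ and overwrites its entries (in the example $(3,5)$ becomes a $3$), after which neither part of Lemma~\ref{Lem-ep} can be invoked relative to $P'$ -- which is precisely why $k'_i\ge l'$ can occur for $i_*<i\le i_0$. Merging is therefore not a boundary nuisance to be excluded by a sharper inequality (the quantitative gap $c_{s+1}<w$ you hope for simply does not exist in general); it is the central phenomenon of this part of the paper, resurfacing later as the critical-rhombus bifurcations. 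Accordingly the paper proves two one-sided bounds pinned at $i_*$, not at $i_0$: first $k'_{i_*}<l'$, by the induction you describe but stopped at the first contact; then $k'_{i_*+1}\ge l'$, which needs a genuinely different device -- letting $r$ be the highest row in which $P'_{i_*}$ and $P'$ meet, one forms the auxiliary LR tableau ${}^{r-1\{}S$ with the two cells $\boxed{S(P'_{i_*+1}[r])}$ and $\boxed{S(P'[r])}$ adjoined in row $r$, and applies part~(1) of Lemma~\ref{Lem-ep} inside it. Your sketch contains no substitute for this second bound, and together with the counterexample above the proposal cannot be repaired along the lines indicated.
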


\begin{proof}
We can show that $k'_{i_*}<l'$ in a manner similar to that of the proof of Vertical Path Comparison Lemma~\ref{Lem-vp}. Namely, we first 
argue inductively for $\nu_{n-1}<i<i_*$ that the path $P'_i$ lies strictly to the right of $P'$ and that the shape of $\delta_{n-1}^{i}\cdots\delta_{n-1}^{1}\hat\delta_{n,1}S^-$ is such that $P'[n-1]$ (resp.\ $P'[l']$) is still bounded below (resp.\ above) by a horizontal segment of the outer (resp.\ inner) border,
with contents of $P'-\{(n,1)\}$ unaltered after $\hat\delta_{n,1}$: this is because the starting cell of $P'_i$ is strictly to the right of $P'[n-1]$ and, by induction, the earlier deletions in $\delta_{n-1}$ have occurred only strictly to the right of $P'$, keeping $P'[n-1]$ (resp.\ $P'[l']$) bounded below (resp.\ above) by a horizontal segment of the outer (resp.\ inner) border 
and also keeping the contents of $P'-\{(n,1)\}$ unaltered, so that part (2) of Lemma~\ref{Lem-ep} assures that $P'_i$ still stays weakly to the right of $P'$; in addition, $P'_i$ and $P'$ do not overlap by assumption, which locates $P'_i$ strictly to the right of $P'$, from which the remaining inductive claims follow.
Thus, when $\delta_{n-1}^{i_*}$ starts, this situation is maintained, and the starting cell of $P'_{i_*}$ is still weakly to the right of $P'[n-1]$, so that part (2) of Lemma~\ref{Lem-ep}
can again be applied to conclude that the path $P'_{i_*}$ stays weakly to the right of $P'$, and that $k'_{i_*}<l'$.
In the case where $i_*<\lambda_{n-1}$, we further want to show that $k'_{i_*+1}\geq l'$.
Let $(r,c)=P'_{i_*}[r]=P'[r]$ be the cell where $P'_{i_*}$ and $P'$ last meet (i.e.\ their highest meeting point).
If the path $P'_{i_*+1}$ terminates before reaching row $r$, then we already have $k'_{i_*+1}>l'$.
Now assume that $P'_{i_*+1}$ does reach row $r$, and set $S^!=\delta_{n-1}^{i_*}\cdots\delta_{n-1}^{1}\hat\delta_{n,1}S^-$.
Note that, since the deletion path $P'_{i_*}$ of $\delta_{n-1}^{i_*}$ runs strictly to the right of $P'$ above row $r$ (so do all 
paths $P'_i$ with $i<i_*$), the contents of $P'$ above row $r$ are maintained (i.e.\ the same as those in $\hat\delta_{n,1}S^-=\delta_{n,1}S$).
Now consider an auxiliary tableau $({}^{r-1\{}S)\cup_{(r,1)}\boxed{S(P'_{i_*+1}[r])}\cup_{(r,2)}\boxed{S(P'[r])}$.
That $S$ is an LR tableau assures that this is also an LR tableau.
If one applies $\delta_{r,2}$ to this tableau, performing the deletion starting from the cell containing $S(P'[r])$, then it proceeds along $^{r-1\{}P'$, yielding $({}^{r-1\{}\delta_{n,1}S)\cup_{(r,1)}\boxed{S(P'_{i_*+1}[r])}$.
If one compares this tableau with $({}^{r-1\{}S^!)\cup_{(r,1)}\boxed{S(P'_{i_*+1}[r])}$, the differences are limited to the region to the right of $^{r-1\{}P'$.
Furthermore, if one applies $\delta_{r,1}$ to this latter tableau, performing the deletion starting from the cell containing $S(P'_{i_*+1}[r])$, it proceeds along $^{r-1\{}P'_{i_*+1}$.
Hence one can apply part (1) of Lemma~\ref{Lem-ep} with $T$ being $({}^{r-1\{}S)\cup_{(r,1)}\boxed{S(P'_{i_*+1}[r])}\cup_{(r,2)}\boxed{S(P'[r])}$, $P$ being $\{(r,2)\}\cup{}^{r-1\{}P'$, $t$ being $l'$, $T'$ being $({}^{r-1\{}\delta_{n,1}S)\cup_{(r,1)}\boxed{S(P'_{i_*+1}[r])}$, $T^\#$ being $({}^{r-1\{}S^!)\cup_{(r,1)}\boxed{S(P'_{i_*+1}[r])}$, $P^\#$ being $\{(r,1)\}\cup{}^{r-1\{}P'_{i_*+1}$ and $t'$ being $k'_{i_*+1}$, and conclude that $k'_{i_*+1}\geq l'$ ($>k'_{i_*}$), and therefore $l''=k'_{i_*}$.
\end{proof}

Now we consider the following operations $D^{(i)}$, $0\leq i<\lambda_{n-1}$, that interpolate between $\delta_{n-1}\hat\delta_{n,1}$ and $\hat\delta_{n-1,1}\delta_{n-1}$ (to be applied to $S^-$), and observe how the resulting tableaux change (or rather do not change) and how the sequences of terminating row numbers change.

\begin{Lemma}\label{Lem-invol-tab-one-exchange}
Retain the situation of Proposition~\ref{Prop-invol-tab-erase-row} and Proposition~\ref{Prop-invol-tab-erase-one-ind}.
Furthermore, for each $i$ with $0\leq i<\lambda_{n-1}$, set
\begin{align*}
D^{(i)} & = \overbrace{\delta_{n-1,1}\cdots\delta_{n-1,\lambda_{n-1}-i}}^{\lambda_{n-1}-i}
            \hat\delta_{n,1}
            \overbrace{\delta_{n-1,\lambda_{n-1}-i+1}\cdots\delta_{n-1,\lambda_{n-1}}}^i \\
        & = \delta_{n-1}^{\lambda_{n-1}}\cdots\delta_{n-1}^{i+1}\hat\delta_{n,1}\delta_{n-1}^i\cdots\delta_{n-1}^1,
\end{align*}
with a slightly streched interpretation of the applicability of the notation $\delta_{n-1}^j$ introduced in part (2) of Definition~\ref{Def-compactifying-notation}, and
\begin{align*}
R^{(i)}=( & \text{sequence of terminating row numbers obtained} \\
          & \text{by applying $D^{(i)}$ to $S^-$}).
\end{align*}
Then we have
\begin{enumerate}
\item[(1)]
$D^{(i)}S^-=\delta_{n-1}\hat\delta_{n,1}S^-\quad(0\leq i<\lambda_{n-1})$
\end{enumerate}
and
\begin{enumerate}
\item[(2)]
$R^{(i)}=\begin{cases}
 (k'_1,\dots,k'_i,l',k'_{i+1},\dots,k'_{i_*-1},l'',k'_{i_*+1},\dots,k'_{\lambda_{n-1}}) & (0\leq i<i_*), \\
 (k'_1,\dots,k'_{i_*-1},l',k'_{i_*+1},\dots,k'_{i+1},l'',k'_{i+2},\dots,k'_{\lambda_{n-1}}) & (i_*\leq i<\lambda_{n-1}).
\end{cases}$
\end{enumerate}
\end{Lemma}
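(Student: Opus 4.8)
The plan is to establish parts (1) and (2) together by induction on $i$, the governing observation being that passing from $D^{(i)}$ to $D^{(i+1)}$ merely interchanges the two adjacent operations $\hat\delta_{n,1}$ and $\delta_{n-1}^{i+1}$ while leaving every other factor of the composite untouched. Writing $W_i=\delta_{n-1}^i\cdots\delta_{n-1}^1S^-$ for the tableau reached just before these two operations act, and noting that the common tail $\delta_{n-1}^{\lambda_{n-1}}\cdots\delta_{n-1}^{i+2}$ is applied afterwards in both cases, the entire inductive step reduces to the single-step commutation
\[
   \delta_{n-1}^{i+1}\,\hat\delta_{n,1}\,W_i=\hat\delta_{n,1}\,\delta_{n-1}^{i+1}\,W_i
\]
as tableaux, together with a bookkeeping of how the two terminating row numbers produced by $\hat\delta_{n,1}$ and by $\delta_{n-1}^{i+1}$ are rearranged under the interchange. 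The base case $i=0$ is immediate: since $D^{(0)}=\delta_{n-1}\hat\delta_{n,1}$, part (1) holds tautologically, while the list of terminating numbers in order of application is $(l',k'_1,\dots,k'_{\lambda_{n-1}})$, which is exactly the displayed formula once one invokes $l''=k'_{i_*}$ from Lemma~\ref{Lem-bumping-position}.

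For the inductive step I would compare the path $P'$ of $\hat\delta_{n,1}$ with the path $P'_{i+1}$ of $\delta_{n-1}^{i+1}$, the whole analysis being steered by the first contact index $i_*$ and controlled by the Elementary, Horizontal and Vertical Path Comparison Lemmas~\ref{Lem-ep}, \ref{Lem-hp} and~\ref{Lem-vp}. When $i+1<i_*$ the starting cell of $P'_{i+1}$ lies strictly to the right of $P'[n-1]$, so part~(2) of Lemma~\ref{Lem-ep} forces the whole of $P'_{i+1}$ strictly to the right of $P'$; the two deletions then rewrite disjoint regions, hence commute on the nose (yielding the invariance demanded by (1)), and in the application-order list the value $l'$ simply slides one place to the right past the unchanged $k'_{i+1}$, which is the first branch of (2). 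The regime $i+1>i_*$ is analogous and stabilised: the relevant paths now sit weakly to one definite side of each other, so the path comparison lemmas again apply, the tableau is again left fixed, and this time it is the bumped-out value $l''$ that migrates one place along past $k'_{i+2}$, giving the second branch of (2). I expect each of these two regimes to be isolated as its own before-contact and after-contact auxiliary lemma.

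The genuine difficulty is concentrated at the single value $i+1=i_*$, where by definition $P'_{i_*}$ and $P'$ share a cell and, upon interchanging the two deletions, the routes may bifurcate and rejoin, so they can no longer be treated as independent. Here the reattribution of terminating numbers is not a simple transposition but a genuine three-fold reshuffle among $\delta_{n-1}^{i_*}$, $\hat\delta_{n,1}$ and $\delta_{n-1}^{i_*+1}$, and one must check that both orders of deletion nonetheless carve out identical tableaux. The hard part will be precisely this: I would dissect the contact configuration into three portions---strictly below the highest meeting row, the meeting (key) row itself, and strictly above it---and verify cell by cell that the two orders produce identical contents, the combinatorics being tied together by $l''=k'_{i_*}$ of Lemma~\ref{Lem-bumping-position}. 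As in the proof of that lemma, the technical engine is a family of auxiliary tableaux obtained from ${}^{r-1\{}S$ by adjoining, in the meeting row, the two competing entries and comparing the two deletion orders via Lemma~\ref{Lem-ep}. I anticipate packaging this into three auxiliary lemmas feeding one first-contact lemma, with the before-contact and after-contact lemmas completing the layered scheme displayed in~(\ref{Eq-prf-delaying-str}). Assembling the three regimes closes the induction, simultaneously giving the invariance of the tableau asserted in (1) and the prescribed evolution of the terminating-number list asserted in (2).
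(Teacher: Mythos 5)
Your handling of the regimes $i+1<i_*$ and $i+1=i_*$ is essentially the paper's: induction on $i$, a disjoint-path commutation argument before contact, and at first contact a three-operation analysis dissected into the portion below the meeting row, the meeting row itself, and the portion above it, exactly as in Lemmas~\ref{Lem-invol-tab-one-exchange-before-contact}--\ref{Lem-invol-tab-one-exchange-first-contact-upper}. The genuine gap is your account of the regime $i+1>i_*$.

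You claim that after contact the two exchanged deletions have paths sitting ``weakly to one definite side of each other,'' so that they commute on the nose and the list of terminating numbers changes by $l''$ sliding one place along. That mechanism is internally inconsistent with part (2). If $\hat\delta_{n,1}$ and $\delta_{n-1}^{i+1}$ really had non-interfering paths, each would retain its own terminating row number under the exchange, so the effect on the list would be a transposition of positions $i+1$ and $i+2$. But in $R^{(i)}$ (for $i\geq i_*$) those positions carry $k'_{i+1}$ and $l''$ respectively --- after contact it is $\hat\delta_{n,1}$ that yields $k'_{i+1}$ and $\delta_{n-1}^{i+1}$ that yields $l''$, the ``sneaking'' phenomenon --- so a transposition would move $l''$ \emph{backward} to position $i+1$, whereas part (2) requires it to move \emph{forward} to position $i+3$. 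In truth all three operations $\hat\delta_{n,1}$, $\delta_{n-1}^{i+1}$, $\delta_{n-1}^{i+2}$ change their terminating numbers under the exchange, and the two exchanged deletions do not even yield equal tableaux by themselves: just as $S'_2\neq S''_2$ but $S'_3=S''_3$ in the first-contact analysis, equality is restored only after the following deletion $\delta_{n-1}^{i+2}$ acts. The paper's resolution (Lemma~\ref{Lem-invol-tab-one-exchange-after-contact}) is that the post-contact configuration is itself another first-contact configuration: $(\delta_{n-1}^{i-1}\cdots\delta_{n-1}^{1}S^-)\cup_{(n,1)}\boxed{e}$ is an LR tableau satisfying the hypotheses of Proposition~\ref{Prop-invol-tab-erase-one-ind} with first-contact index equal to $1$, so the three-operation bifurcation argument of Lemma~\ref{Lem-invol-tab-one-exchange-first-contact} must be invoked afresh at every step $i\geq i_*$, not only at $i=i_*$. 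Your induction needs this iteration; the easy before-contact argument cannot be recycled after contact.
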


\begin{Remark}\label{Rem-del-seq-final-part}
Note that $D^{(0)}=\delta_{n-1}\hat\delta_{n,1}$ and that, by definition, $R^{(0)}=l'\cdot R'=(l',k'_1,\dots,k'_{\lambda_{n-1}})$, as claimed in part (2) above.
This is one end of the interpolation.
On the other hand, the operation $\hat\delta_{n-1,1}\delta_{n-1}$, which should be the other end of the interpolation, does not appear to be present among the operations $D^{(i)}$, but we shall see below in Lemma~\ref{Lem-del-seq-final-part} that it has the same effect on $S^-$ as $D^{(\lambda_{n-1}-1)}$, and also that the terminating row numbers produced by applying $\hat\delta_{n-1,1}\delta_{n-1}$ to $S^-$ coincides with $R^{(\lambda_{n-1}-1)}=R''\cdot l''=(k''_1,\dots,k''_{\lambda_{n-1}},l'')$.
\end{Remark}


\begin{proof}
The proof of Lemma~\ref{Lem-invol-tab-one-exchange} will be further divided into sublemmas.
Let us begin by verifying the statements (1) and (2) for $i<i_*$, where $i_*$ is the first contact index introduced before Lemma~\ref{Lem-bumping-position}.

\begin{Lemma}\label{Lem-invol-tab-one-exchange-before-contact}
For $i<i_*$, each of the operations $\delta_{n-1}^{1}$, \dots, $\delta_{n-1}^{\lambda_{n-1}}$ and $\hat\delta_{n,1}$ in $D^{(i)}=\delta_{n-1}^{\lambda_{n-1}}\linebreak[1]\cdots\delta_{n-1}^{i+1}\hat\delta_{n,1}\delta_{n-1}^{i}\cdots\delta_{n-1}^{1}$ incurs exactly the same transformation as the one represented by the same name in $D^{(0)}=\delta_{n-1}^{\lambda_{n-1}}\cdots\delta_{n-1}^{i+1}\delta_{n-1}^{i}\cdots\delta_{n-1}^{1}\hat\delta_{n,1}$ does.
We express this situation by saying that $\hat\delta_{n-1}$ commutes with $\delta_{n-1}^{i}\cdots\delta_{n-1}^{1}$.

Hence the statements (1) and (2) of Lemma~\ref{Lem-invol-tab-one-exchange} hold for these $i$.
\end{Lemma}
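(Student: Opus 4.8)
The plan is to obtain the case $i<i_*$ of Lemma~\ref{Lem-invol-tab-one-exchange} by sliding the single operation $\hat\delta_{n,1}$ rightward through the block $\delta_{n-1}^{i}\cdots\delta_{n-1}^{1}$ one adjacent transposition at a time, proving at each step that the swap leaves every individual path, terminating row number and intermediate tableau unchanged. The whole argument rests on a local commutation that is available precisely in the before-contact regime: by the proof of Lemma~\ref{Lem-bumping-position}, for every $j<i_*$ the path $P'_j$ (computed in the $D^{(0)}$ order, i.e.\ after $\hat\delta_{n,1}$) runs strictly to the right of $P'$, and for $j\leq\nu_{n-1}$ this is merely the remark that a Phase~1 deletion excises an entry $n-1$, which sits strictly to the right of the cell $P'[n-1]$ where the carried letter $e\leq n-2$ comes to rest.

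The heart of the matter is a single transposition $\delta_{n-1}^{j}\hat\delta_{n,1}=\hat\delta_{n,1}\delta_{n-1}^{j}$, valid whenever the path of $\delta_{n-1}^{j}$ is strictly to the right of the path $P'$ of $\hat\delta_{n,1}$. By Definition~\ref{Def-dp}, the type~(ii) deletion underlying $\hat\delta_{n,1}$ alters only the cells of $P'$, and it does so by \emph{raising} their entries (each receives the strictly larger letter carried from below); likewise $\delta_{n-1}^{j}$ alters only the cells of $P'_j$, all strictly to the right of $P'$, again raising their entries. I would verify the swap in both directions. First, $\delta_{n-1}^{j}$ determines its route by selecting in each row the rightmost entry strictly smaller than the letter it carries; since that entry sits on $P'_j$ and all cells weakly to its right are strictly to the right of $P'$, hence untouched by $\hat\delta_{n,1}$, the selection is the same whether or not $\hat\delta_{n,1}$ has already acted, so $\delta_{n-1}^{j}$ yields the identical path, terminating number and effect. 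Second --- and this is the delicate direction --- applying $\delta_{n-1}^{j}$ first cannot divert $\hat\delta_{n,1}$: in each row the entries strictly to the right of $P'$ are only \emph{increased} by $\delta_{n-1}^{j}$, so the rightmost entry smaller than the letter carried by $\hat\delta_{n,1}$, which lay on $P'$, remains there, the entry on $P'$ itself being untouched. Hence $\hat\delta_{n,1}$ retraces $P'$ exactly, and the two orders produce the same tableau.

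With the transposition in hand I would induct on $i$: passing from $D^{(i-1)}$ to $D^{(i)}$ swaps $\hat\delta_{n,1}$ with $\delta_{n-1}^{i}$, and since $i<i_*$ the induction hypothesis guarantees that $\hat\delta_{n,1}$ still follows $P'$ and $\delta_{n-1}^{i}$ still follows $P'_i$ (strictly to the right of $P'$) at the relevant stage, so the transposition applies and every operation of $D^{(i)}$ reproduces its namesake in $D^{(0)}$. This is exactly the assertion that $\hat\delta_{n,1}$ commutes with $\delta_{n-1}^{i}\cdots\delta_{n-1}^{1}$, and it yields part~(1), namely $D^{(i)}S^-=D^{(0)}S^-=\delta_{n-1}\hat\delta_{n,1}S^-$. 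Reading the terminating row numbers off in their order of occurrence then gives $(k'_1,\dots,k'_i,l',k'_{i+1},\dots,k'_{\lambda_{n-1}})$, and the substitution $l''=k'_{i_*}$ furnished by Lemma~\ref{Lem-bumping-position} recasts this as the displayed formula for $R^{(i)}$, establishing part~(2) for $i<i_*$.

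The step I expect to be the main obstacle is the second (delicate) direction of the transposition: one must be sure that the monotonic (upward) nature of deletion bumping is what guarantees that raising entries strictly to the right of $P'$ never creates a new rightmost-smaller candidate, and that the strict separation of $P'_j$ to the right of $P'$, proved in the $D^{(0)}$ order in Lemma~\ref{Lem-bumping-position}, is faithfully carried along the chain of transpositions so that at each stage $\hat\delta_{n,1}$ is genuinely still deleting along $P'$.
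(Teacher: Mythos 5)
Your proposal is correct and follows essentially the same route as the paper's proof: an induction on $i$ that slides $\hat\delta_{n,1}$ past $\delta_{n-1}^{i}$ one adjacent transposition at a time, justified by the strict separation of $P'_i$ to the right of $P'$ (established in the proof of Lemma~\ref{Lem-bumping-position}) together with the same two-directional observation that raising entries along one path cannot change the rightmost-smaller-entry selection governing the other, and with the $i\leq\nu_{n-1}$ case disposed of by the same remark about Phase~1 deletions of entries $n-1$ lying strictly to the right of where $e\leq n-2$ comes to rest. Your derivation of $R^{(i)}$, inserting $l'$ between $k'_i$ and $k'_{i+1}$ and using $l''=k'_{i_*}$, is likewise exactly the paper's conclusion of statement (2).
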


\begin{proof}
This is clear for $i\leq\nu_{n-1}$, namely for those $i$ such that $\delta_{n-1}^i$ in $D^{(0)}$ is of type (i), since $\hat\delta_{n,1}$ passes row $n-1$ by carrying $e\leq n-2$ into a cell to the left of all ($n-1$)'s in row $n-1$ of $S^-$ and hence deferring this operation until after the type (i) deletions from row $r-1$ does not change anything.

Next assume that $\nu_{n-1}<i<i_*$ and that, by induction, $\hat\delta_{n,1}$ commutes with $\delta_{n-1}^{i-1}\cdots\delta_{n-1}^1$.
In $D^{(i-1)}=\delta_{n-1}^{\lambda_{n-1}}\cdots\delta_{n-1}^{i}\hat\delta_{n,1}\delta_{n-1}^{i-1}\cdots\delta_{n-1}^{1}$, the operations $\delta_{n-1}^{i-1}$, $\hat\delta_{n,1}$ and $\delta_{n-1}^i$ are applied one after another in this order, and the deletion paths of $\hat\delta_{n,1}$ and $\delta_{n-1}^i$ are $P'$ and $P'_i$ respectively, due to the induction hypothesis.
Consider exchanging the order of $\hat\delta_{n,1}$ and $\delta_{n-1}^i$ applied to $\delta_{n-1}^{i-1}\cdots\delta_{n-1}^1S^-$.
Since $i<i_*$, $P'_i$ is still strictly to the right of $P'$, and so if we apply $\delta_{n-1}^i$ before $\hat\delta_{n,1}$, it also proceeds along $P'_i$ since its contents are nonetheless the rightmost entry strictly smaller than the entry coming from the row below at each stage, whether something to its left has changed or not.
This makes the contents of $P'_i$, which is strictly to the right of $P'$, larger than before, which again does not affect the path taken by $\hat\delta_{n,1}$ when its application is deferred until after that of $\delta_{n-1}^i$ since at every stage the rightmost entry strictly smaller than the entry coming from the row below is still the same entry in $P'$ whether something to its right has been made larger or not.
So these two operations, upon exchanging the order of applications, follow the same paths bumping exactly the same elements.
Combined with the induction hypothesis, we have $\hat\delta_{n,1}\delta_{n-1}^i\cdots\delta_{n-1}^1S^-=\delta_{n-1}^i\cdots\delta_{n-1}^1\hat\delta_{n,1}S^-$, so that the remaining operations $\delta_{n-1}^{i+1}$, \dots, $\delta_{n-1}^{\lambda_{n-1}}$ incur the same transformations on these two.
Hence $\hat\delta_{n,1}$ also commutes with $\delta_{n-1}^i\delta_{n-1}^{i-1}\cdots\delta_{n-1}^1$.

This immediately implies the statement (1): $D^{(i)}S^-=D^{(0)}S^-$.
Moreover, since the paths are the same, the terminating row numbers are also the same, including the zeros obtained from type (i) deletions, except that $l'$, obtained from $\hat\delta_{n,1}$, comes first in $D^{(0)}$ whereas in $D^{(i)}$ it comes in between $k'_i$ and $k'_{i+1}$ obtained from $\delta_{n-1}^{i}$ and $\delta_{n-1}^{i+1}$ respectively.
This validates the statement (2).
\end{proof}

Reverting to the proof of Lemma~\ref{Lem-invol-tab-one-exchange}, what we have just shown, namely Lemma~\ref{Lem-invol-tab-one-exchange-before-contact}, finishes the proof of Lemma~\ref{Lem-invol-tab-one-exchange} in the special case where $i_*=\lambda_{n-1}$, since it covers all values of $0\leq i<\lambda_{n-1}$ in the statements (1) and (2) of Lemma~\ref{Lem-invol-tab-one-exchange} in this case.

So we assume that $i_*<\lambda_{n-1}$ in the remainder of the proof of Lemma~\ref{Lem-invol-tab-one-exchange}.

Then we reach a crucial step which we call the \textit{case of first contact}, namely changing $D^{(i_*-1)}$ into $D^{(i_*)}$ by exchanging the order of $\hat\delta_{n,1}$ and $\delta_{n-1}^{i_*}$, which are their $i_*$th and ($i_*+1$)th operations in the order of application.
Note that the sequence expected for $R^{(i_*)}$ in the statement (2), $(k'_1,\dots,k'_{i_*-1},l',k'_{i_*+1},l'',k'_{i_*+2},\dots,k'_{\lambda_{n-1}})$, would derive from the already established $R^{(i_*-1)}=(k'_1,\dots,k'_{i_*-1},l',l'',k'_{i_*+1},k'_{i_*+2},\dots,k'_{\lambda_{n-1}})$ by switching its ($i_*+1$)th and ($i_*+2$)th terms rather than the $i_*$th and $(i_*+1$)th terms.
Thus, setting $S'_0=\delta_{n-1}^{i_*-1}\cdots\delta_{n-1}^{1}\hat\delta_{n,1}S^-$, we are to observe and compare the three, not just two, consecutive operations applied to $S'_0$ as part of $D^{(i_*-1)}$ and as part of $D^{(i_*)}$.
Let 
\begin{equation}\label{Eq-Di_*-1andi_*3}
\vcenter{\hbox{

}}
\end{equation}
denote the relevant intermediate tableaux in the application of both $D^{(i_*-1)}$ and $D^{(i_*)}$ to $S^-$.

Recall that, due to Lemma~\ref{Lem-invol-tab-one-exchange-before-contact}, the operations $\hat\delta_{n,1}$, $\delta_{n-1}^{i_*}$ and $\delta_{n-1}^{i_*+1}$ in $D^{(i_*-1)}$ take the paths $P'$, $P'_{i_*}$ and $P'_{i_*+1}$, giving the terminating row numbers $l'$, $k'_{i_*}=l''$ and $k'_{i_*+1}$ respectively.

\begin{Lemma}\label{Lem-invol-tab-one-exchange-first-contact}
We keep the assumption that $i_*<\lambda_{n-1}$, and the names given in Eq. (\ref{Eq-Di_*-1andi_*3}).
Then we have $S'_3=S''_3$, and the operations $\delta_{n-1}^{i_*}$, $\hat\delta_{n,1}$ and $\delta_{n-1}^{i_*+1}$ in $D^{(i_*)}$ (here arranged in the order of application in $D^{(i_*)}$) give terminating row numbers $l'$, $k'_{i_*+1}$ and $l''$ respectively.

Since this also ensures that any remaining operations $\delta_{n-1}^{(i_*+2)},\dots,\delta_{n-1}^{\lambda_{n-1}}$ in $D^{(i_*-1)}$ and $D^{(i_*)}$ incur the same transformations, the statements (1) and (2) of Lemma~\ref{Lem-invol-tab-one-exchange} hold for $i=i_*$.
\end{Lemma}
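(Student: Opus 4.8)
The plan is to compare, step by step, the two ways of applying the three operations $\hat\delta_{n,1}$, $\delta_{n-1}^{i_*}$ and $\delta_{n-1}^{i_*+1}$ to the common tableau $S'_0$ that were laid out in \eqref{Eq-Di_*-1andi_*3}, and to track their deletion paths together with the entries they bump. In the order inherited from $D^{(i_*-1)}$ these are, by Lemma~\ref{Lem-invol-tab-one-exchange-before-contact}, the paths $P'$, $P'_{i_*}$, $P'_{i_*+1}$ with terminating rows $l'$, $l''=k'_{i_*}$, $k'_{i_*+1}$, and I would first record the order information already available: Lemma~\ref{Lem-bumping-position} gives $l''<l'$, its proof gives $k'_{i_*+1}\geq l'$, and the Horizontal Path Comparison Lemma~\ref{Lem-hp} gives that $P'_{i_*+1}$ runs strictly to the left of $P'_{i_*}$, while $P'_{i_*}$ is weakly to the right of $P'$ and shares at least one cell with it (this is precisely what makes $i_*$ the first contact index). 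The whole point is that, unlike in Lemma~\ref{Lem-invol-tab-one-exchange-before-contact}, the paths $P'$ and $P'_{i_*}$ are now in contact, so merely exchanging $\hat\delta_{n,1}$ and $\delta_{n-1}^{i_*}$ does \emph{not} leave the intermediate tableaux fixed; instead the upper portions of the paths get reassigned, and it is the subsequent deletion $\delta_{n-1}^{i_*+1}$ that absorbs the discrepancy. The primary tool throughout is the Elementary Path Comparison Lemma~\ref{Lem-ep}, used exactly as in the commutation argument of Lemma~\ref{Lem-invol-tab-one-exchange-before-contact}.

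Following the organisation indicated in \eqref{Eq-prf-delaying-str}, I would split the analysis at the contact region into three zones, each isolated as a sublemma. In the \textbf{lower zone} (below the first common row of $P'$ and $P'_{i_*}$) I would show that applying $\delta_{n-1}^{i_*}$ first to $S'_0$ reproduces the lower part of $P'_{i_*}$, and that the not-yet-applied $\hat\delta_{n,1}$ still sees the lower part of $P'$ intact; here nothing has changed because in this zone the two paths are strictly separated and neither deletion has disturbed the other's cells. The decisive \textbf{key-row} sublemma would pin down what happens at the contact: because in the swapped order the cells of $P'$ are still occupied by their original (smaller) entries when $\delta_{n-1}^{i_*}$ ascends, the rightmost-smaller-entry rule of Definition~\ref{Def-dp} forces $\delta_{n-1}^{i_*}$ to switch onto the upper track of $P'$ and hence to terminate at $l'$; the entry it would otherwise have bumped is left in place, so that the delayed $\hat\delta_{n,1}$, meeting this altered configuration, is deflected and its path is compared with $P'_{i_*+1}$ via part (2) of Lemma~\ref{Lem-ep}, yielding terminating row $k'_{i_*+1}$. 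Finally the \textbf{upper-zone} sublemma would verify that above the key row the reconnected paths match, and that the last deletion $\delta_{n-1}^{i_*+1}$ then sweeps the one remaining track and terminates at $l''$, simultaneously repairing the difference between the two intermediate tableaux so that $S'_3=S''_3$.

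Granting these three sublemmas, the conclusion of Lemma~\ref{Lem-invol-tab-one-exchange-first-contact} follows immediately: the operations $\delta_{n-1}^{i_*}$, $\hat\delta_{n,1}$, $\delta_{n-1}^{i_*+1}$ of $D^{(i_*)}$ produce the terminating row numbers $l'$, $k'_{i_*+1}$, $l''$, which is exactly the $i=i_*$ shuffle predicted in part~(2) of Lemma~\ref{Lem-invol-tab-one-exchange}; and since $S'_3=S''_3$, every subsequent operation $\delta_{n-1}^{i_*+2},\dots,\delta_{n-1}^{\lambda_{n-1}}$ acts on identical tableaux and so produces identical transformations and terminating row numbers, giving part~(1) as well. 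The main obstacle is squarely the key-row sublemma: one must carry out the case analysis comparing which cell is bumped in each order at the contact row, confirm that the three terminating rows genuinely cycle as claimed rather than collide, and — as in the proof of Lemma~\ref{Lem-ss} and the closing remark of Lemma~\ref{Lem-invol-tab-shift} — check that the deflected delayed deletion $\hat\delta_{n,1}$ still generates a legitimate vertical-strip path, so that semistandardness is never violated along the way. This is where the Littlewood--Richardson and column-strictness hypotheses on $S$ must be brought to bear most carefully.
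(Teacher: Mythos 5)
Your proposal is correct and follows essentially the same route as the paper: the paper's proof (Lemmas~\ref{Lem-invol-tab-one-exchange-first-contact-lower}, \ref{Lem-invol-tab-one-exchange-first-contact-key-row} and \ref{Lem-invol-tab-one-exchange-first-contact-upper}) splits the analysis at the cell of first contact $(r_*,c_*)$ into exactly your three zones, establishes the same path reassignment (in $D^{(i_*)}$ the operation $\delta_{n-1}^{i_*}$ switches onto the upper track of $P'$ and terminates at $l'$, the delayed $\hat\delta_{n,1}$ is deflected onto the upper part of $P'_{i_*+1}$ giving $k'_{i_*+1}$, and $\delta_{n-1}^{i_*+1}$ takes over the upper part of $P'_{i_*}$ giving $l''$), and concludes $S'_3=S''_3$ so that all later operations coincide. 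The one slip is your citation of part (2) of Lemma~\ref{Lem-ep} for the deflected $\hat\delta_{n,1}$: since it carries the \emph{smaller} entry $x$ it runs strictly to the \emph{left} of $P'$, so the paper instead applies the Horizontal Path Comparison Lemma~\ref{Lem-hp} to an auxiliary tableau and then identifies this path with $P'_{i_*+1}$ by observing that none of the intervening operations disturbs anything strictly to the left of $P'$.
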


\begin{proof}
We prove this by investigating closely the paths taken by the three relevant operations in $D^{(i_*)}$ over Lemmas~\ref{Lem-invol-tab-one-exchange-first-contact-lower}, \ref{Lem-invol-tab-one-exchange-first-contact-key-row} and \ref{Lem-invol-tab-one-exchange-first-contact-upper} below.

For that purpose, let $(r_*,c_*)$ be the cell where $P'_{i_*}$ and $P'$ first meet, i.e.\ the lowest cell of $P'_{i_*}\cap P'$, which we call the \textit{cell of first contact}.
We divide the tableaux into three parts: the part below row $r_*$ (which we call the \textit{lower part}), the part consisting of a single row $r_*$ and the part above row $r_*$ (the \textit{upper part}).

Let us start by looking at the lower part, which is easy.

\begin{Lemma}\label{Lem-invol-tab-one-exchange-first-contact-lower}
Below row $r_*$, the operations $\delta_{n-1}^{i_*}$ and $\hat\delta_{n,1}$ in $D^{(i_*)}$ take paths $P'_{i_*}$ and $P'$ respectively, bumping the same entries as in $D^{(i_*-1)}$ including the ones bumped from row $r_*+1$ into row $r_*$, with the order of ocurrence switched from that in $D^{(i_*-1)}$.

Hence the lower parts of $S'_2$ and $S''_2$ coincide, and accordingly the next operation $\delta_{n-1}^{i_*+1}$ in $D^{(i_*-1)}$ in $D^{(i_*)}$ utterly coincides below $r_*$, including the entry it bumps into row $r_*$.
In particular, the lower parts of $S'_3$ and $S''_3$ also coincide.
\end{Lemma}

\begin{proof}
The first claim can be proved by arguing similarly to the case $i<i_*$ since, below the cell of their first contact, the path $P'_{i_*}$ lies strictly to the right of $P'$.
The remaining claims follow easily from the first one.
\end{proof}

Continuing with the proof of Lemma~\ref{Lem-invol-tab-one-exchange-first-contact}, let us next focus on row $r_*$.
Note that, due to the assumption $i_*<\lambda_{n-1}$, we have $c_*>1$ since, while the cell $(r_*,c_*)$ lies in $P'_{i_*}$, there is at least one more path $P'_{i_*+1}$ running strictly to the left of $P'_{i_*}$ due to the Horizontal Path Comparison Lemma~\ref{Lem-hp}.

\begin{Lemma}\label{Lem-invol-tab-one-exchange-first-contact-key-row}
The operations $\hat\delta_{n,1}$, $\delta_{n-1}^{i_*}$ and $\delta_{n-1}^{i_*+1}$ in $D^{(i_*-1)}$, respectively, go through the cells $(r_*,c_*)$, $(r_*,c_*)$ and $(r_*,c_*-1)$ in row $r_*$.
The operations $\delta_{n-1}^{i_*}$, $\hat\delta_{n,1}$ and $\delta_{n-1}^{i_*+1}$ in $D^{(i_*)}$ (now arranged in the order of application in $D^{(i_*)}$) go through the cells $(r_*,c_*)$, $(r_*,c_*-1)$ and $(r_*,c_*-1)$ in row $r_*$, respectively.

The rows $r_*$ of $S'_3$ and $S''_3$ coincide.
Moreover, the entries bumped to row $r_*-1$ by the three operations in $D^{(i_*-1)}$ and $D^{(i_*)}$ are switched between the second and the third operations in the order of application.
\end{Lemma}

\begin{proof} 
The statements about the two operations $\hat\delta_{n,1}$ and $\delta_{n-1}^{i_*}$ in $D^{(i_*-1)}$ is immediate since $P'$ and $P'_{i_*}$ meet at the cell $(r_*,c_*)$ in row $r_*$.
The rest requires a rather detailed analysis.
We set 
\begin{align*}
& x=S'_0(r_*,c_*-1),\quad y=S'_0(r_*,c_*),\quad z=S'_0(r_*,c_*+1), \\
& u=S'_0(P'[r_*+1]),\quad v=S'_0(P'_{i_*}[r_*+1])\quad\text{and}\quad w=S'_1(P'_{i_*+1}[r_*+1])
\end{align*} 
(see $S'_0$ in (\ref{Eq-x3r}); this drawing may be consulted while chasing the following description of the changes made in row $r_*$).
Note that the cells $(r_*,c_*-1)$ and $(r_*,c_*)$ (or possibly only the cell $(r_*,c_*-1)$) may be inside the inner shape of $S'_0$, or in other words $x$ or $y$ can be $0$.
The notation such as $\langle x\rangle$ and $\langle y\rangle$ is used in (\ref{Eq-x3r}) as a reminder of this situation.
The position $(r_*,c_*+1)$, represented by a dotted box in (\ref{Eq-x3r}), may be outside the outer shape, and notation such as $(z)$ used in (\ref{Eq-x3r}) signifies this situation.
The blue box $P'[r_*+1]$ always exists in $S'_0$, and it is in column $c_*-1$ or to its left as we see at the end of this paragraph (the delicate choice of the position of this blue box in the drawing is meant to reflect such relationship).
The cell $P'_{i_*}[r_*+1]$ (dashed red box) and $P'_{i_*+1}[r_*+1]$ (dashed magenta box) exist if and only if $r_*<n-1$.
In this case, $P'_{i_*}[r_*+1]$ is strictly to the right of $P'[r_*+1]$, and $P'_{i_*+1}[r_*+1]$ is strictly to the left of $P'_{i_*}[r_*+1]$ but weakly to the right of $P'[r_*+1]$.
Since $P'_{i_*}[r_*+1]$ is in column $c_*$ or to its left, $P'[r_*+1]$ is in column $c_*-1$ or to its left in this case.
Meanwhile, if $r_*=n-1$, we have $P'[r_*+1]=(n,1)$, while $c_*>1$ as noted before Lemma~\ref{Lem-invol-tab-one-exchange-first-contact-key-row}, so that $P'[r_*+1]$ is also in column $c_*-1$ or to its left.

\begin{equation}\label{Eq-x3r}
\vcenter{\hbox{
}}
\end{equation}

First consider the case where $r_*<n-1$.
Then $v$ and $w$ exist.
Moreover, the blue cell $P'[r_*+1]$ still contains an entry in $S'_1$ since $\hat\delta_{n,1}$ starts from at least 2 rows below row $r_*$.
Set $t=S'_1(P'[r_*+1])=S'_0(P'[r_*+2])$ ($>0$).

In $D^{(i_*-1)}$, the first operation to be applied to $S'_0$, namely $\hat\delta_{n,1}$, bumps $S'_0(P'[r_*+1])=u$ into the cell $P'[r_*]=(r_*,c_*)$, which bumps $S'_0(r_*,c_*)=y$ to row $r_*-1$ (see $S'_1$ in (\ref{Eq-x3r})).
This implies $u>y$.
When we say ``bumps $\langle y\rangle$ to row $r_*-1$'', it is to be understood that if $y=0$ no actual entry is bumped out of row $r_*$ but the deletion terminates in row $r_*$.
Then the second operation $\delta_{n-1}^{i_*}$ bumps $S'_0(P'_{i_*}[r_*+1])=v$ also into $P'_{i_*}[r_*]=(r_*,c_*)$, which bumps $S'_1(r_*,c_*)=u$ to row $r_*-1$ (see $S'_2$ in (\ref{Eq-x3r})).
This implies $v\leq z$ if $z$ exists.
Note that we have $S'_2(P'_{i_*+1}[r_*+1])=S'_1(P'_{i_*+1}[r_*+1])=w$ since the path $P'_{i_*}$ of $\delta_{n-1}^{i_*}$ do not pass $P'_{i_*+1}[r_*+1]$.
Finally the third operation $\delta_{n-1}^{i_*+1}$ bumps $S'_2(P'_{i_*+1}[r_*+1])=w$ into row $r_*$.
Then $w$ enters $(r_*,c_*-1)$ since \begin{equation}\label{Eq-w}
S'_2(r_*,c_*)=\underbrace{v\geq w\geq t}_{\substack{\text{horizontal}\\\text{configuration}\\\text{in $S'_1$}}}=\underbrace{t>u>x}_{\substack{\text{entries bumping}\\\text{one another in}\\\text{$P'$ in $S'_0$}}}=S'_2(r_*,c_*-1).
\end{equation}
This confirms the statement for $\delta_{n-1}^{i_*+1}$ in $D^{(i_*-1)}$, and it bumps $\langle x\rangle$ to row $r_*-1$.
Note, for use in the next paragraph, that we have seen in (\ref{Eq-w}) that $w>u$.

Now, in $D^{(i_*)}$, the first operation $\delta_{n-1}^{i_*}$, proceeding along $P'_{i_*}$ in the lower part, bumps $S'_0(P'_{i_*}[r_*+1])=v$ into row $r_*$.
Since $v\geq u>y=S'_0(r_*,c_*)$ (see the horizontal configuration of $u$ and $v$ and the deletion path configuration of $u$ and $y$ in $S'_0$) and $v\leq z=S'_0(r_*,c_*+1)$ if $z$ exists as we saw (see the transition from $S'_1$ to $S'_2$), $v$ enters the cell $(r_*,c_*)$, confirming the statement for $\delta_{n-1}^{i_*}$ in $D^{(i_*)}$, and it bumps $\langle y\rangle$ to row $r_*-1$.
Note that, since $P'_{i_*}$ and $P'$ are disjoint in the lower part, we have $S''_1(P'[r_*+1])=S'_0(P'[r_*+1])=u$.
Then the second operation $\hat\delta_{n,1}$, proceeding along $P'$ in the lower part, bumps $S''_1(P'[r_*+1])=u$ into row $r_*$.
Since $u>y\geq x=S''_1(r_*,c_*-1)$ and $u\leq v=S''_1(r_*,c_*)$, the letter $u$ enters the cell $(r_*,c_*-1)$, confirming the statement for $\hat\delta_{n,1}$ in $D^{(i_*)}$, and it bumps $\langle x\rangle$ to row $r_*-1$.
Finally the third operation $\delta_{n-1}^{i_*+1}$, proceeding along $P'_{i_*+1}$ in the lower part, bumps $S''_2(P'_{i_*+1}[r_*+1])$ into row $r_*$.
Since the lower parts of $S'_2$ and $S''_2$ coincide, we have $S''_2(P'_{i_*+1}[r_*+1])=S'_2(P'_{i_*+1}[r_*+1])=w$.
Moreover, since $w>u$ (see (\ref{Eq-w})) $=S''_2(r_*,c_*-1)$ and $w\leq v$ (see $S'_1$) $=S''_2(r_*,c_*)$, the letter $w$ enters the cell $(r_*,c_*-1)$, confirming the statement for $\delta_{n-1}^{i_*+1}$ in $D^{(i_*)}$, and it bumps $S''_2(r_*,c_*-1)=u$ to row $r_*-1$.

We have seen that both $S'_3$ and $S''_3$ have $w$ in the cell $(r_*,c_*-1)$, $v$ in the cell $(r_*,c_*)$ and the same entries as $S'_0$ elsewhere in row $r_*$, and that the entries bumped to to row $r_*-1$ by the three operations in the order of application, $\langle y\rangle$, $u$ and $\langle x\rangle$ in the case of $D^{(i_*-1)}$ and $\langle y\rangle$, $\langle x\rangle$ and $u$ in the case of $D^{(i_*)}$, are switched between the second and the third operations.
Thus the proof of Lemma~\ref{Lem-invol-tab-one-exchange-first-contact-key-row} is finished in the case $r_*<n-1$.

If $r_*=n-1$, the path $P'$ starts from row $r_*+1$, while $P'_{i_*}$ and $P'_{i_*+1}$ start from row $r_*$.
So $v$ and $w$ do not exist, and the cell $(r_*,c_*+1)$, if it exists in $S^-$, has been removed by the operation $\delta_{n-1}^{i_*-1}$, so that $z$ does not exist either.
One can easily see that the transformation also proceeds as shown in (\ref{Eq-x3r}) if one ignores the cells containing parenthesized entries, so the same conclusions as in the case $r_*<n-1$ hold.
\end{proof}

Now we look at the upper part. 

\begin{Lemma}\label{Lem-invol-tab-one-exchange-first-contact-upper}
Above row $r_*$, the operations $\delta_{n-1}^{i_*}$, $\hat\delta_{n,1}$ and $\delta_{n-1}^{i_*+1}$ in $D^{(i_*)}$ take the paths $P'$, $P'_{i_*+1}$ and $P'_{i_*}$ respectively, bumping the same entries as the operation in $D^{(i_*-1)}$ that takes the same path, namely $\hat\delta_{n,1}$, $\delta_{n-1}^{i_*+1}$ and $\delta_{n-1}^{i_*}$ respectively.

The upper parts of $S'_3$ and $S''_3$ coincide.
The terminating row numbers given by the three operations in $D^{(i_*-1)}$ and $D^{(i_*)}$, arranged in the respective orders of occurrence, are switched between the second and the third.
\end{Lemma}

\begin{proof}
We first deal with some easy cases.
If both $x$ and $y$ are zero,  
the only letter sent into row $r_*-1$ during the three operations is $u$, by $\delta_{n-1}^{i_*}$ in the case of $D^{(i_*-1)}$ and by $\delta_{n-1}^{i_*+1}$ in the case of $D^{(i_*)}$; in both cases the upper part of $S'_0$ is maintained until accepting the letter $u$, so it causes the same deletion paths in both cases (which must be $P'_{i_*}$ since it is so in $D^{(i_*-1)}$), leaving the upper part the same.
Thus the statement is validated with the upper parts of both $P'$ and $P'_{i_*+1}$ being empty in this case.
If only $x$ is zero, the letters sent into row $r_*-1$ during the three operations are $y$ and $u$, in this order in both $D^{(i_*-1)}$ and $D^{(i_*)}$.
So they incur the same deletion paths in both cases (which must be $P'$ and $P'_{i_*}$ respectively since they are so in $D^{(i_*-1)}$), leaving the upper parts the same again.
The statement is again validated with the upper part of $P'_{i_*+1}$ being empty.

Now we assume that both $x$ and $y$ are positive.
Both in $D^{(i_*-1)}$ and $D^{(i_*)}$, the first of the three relevant consecutive operations applied to $S'_0$ sends the same letter $y$ to row $r_*-1$ as we saw in Lemma~\ref{Lem-invol-tab-one-exchange-first-contact-key-row}.
Hence it incurs the same bumping in both cases.
Since we know that it follows $P'$ in $D^{(i_*-1)}$, it also follows $P'$ in $D^{(i_*)}$, involving the same entries. 
Hence the upper parts of $S'_1$ and $S''_1$ are identical.
Now we compare the effect of the remaining two operations in the upper part.
The second operation in $D^{(i_*-1)}$ starts by bringing $u$ ($>y$) into row $r_*-1$, and we know that
its path $P'_{i_*}$ lies weakly to the right of $P'$. On the other hand, 
the second operation in $D^{(i_*)}$ starts by bringing $x$ ($\leq y$) into row $r_*-1$.  
By applying the Horizontal Path Comparison Lemma~\ref{Lem-hp} to an auxiliary tableau $({}^{r_*-1\{}S'_0)\cup_{(r_*,1)}\boxed{x}\cup_{(r_*,2)}\boxed{y}$, we see that 
its path, which we temporarily call $P''$ (actually $P'_{i_*+1}$ as we shall see instantly), lies strictly to the left of $P'$.
So the latter operation keeps everything weakly to the right of $P'$ unchanged, in particular
the contents of $P'_{i_*}$ and its right-hand side neighbours if any, so that the third operation in $D^{(i_*)}$, starting with the same letter $u$ as the second operation of $D^{(i_*-1)}$, also follows $P'_{i_*}$; making the contents of the upper part of $P'_{i_*}$ in $S''_3$ the same as those in $S'_2$.
On the other hand, the second operation in $D^{(i_*-1)}$ does not change anything strictly to the left of $P'$, including the contents of $P''$, and each of its right-hand side neighbours either stays the same or grows larger (the latter possibility can only occur if it is contained in $P'$), so that the third operation in $D^{(i_*-1)}$, starting with the same letter $x$ as the second operation of $D^{(i_*)}$, also follows $P''$ and hence we have $P'_{i_*+1}=P''$ in the upper part; making the contents of the upper part of $P'_{i_*+1}$ in $S'_3$ the same as those in $S''_2$.
We have seen that the upper parts of $S'_3$ and $S''_3$ coincide, and that the terminating row numbers produced by the three operations and $D^{(i_*-1)}$ and $D^{(i_*)}$ are switched between the second and the third operations, so the statement of Lemma~\ref{Lem-invol-tab-one-exchange-first-contact-upper} is validated in this case.
\end{proof}

The conclusions of Lemmas~\ref{Lem-invol-tab-one-exchange-first-contact-lower}, \ref{Lem-invol-tab-one-exchange-first-contact-key-row} and \ref{Lem-invol-tab-one-exchange-first-contact-upper} validate Lemma~\ref{Lem-invol-tab-one-exchange-first-contact}, and hence the statements (1) and (2) of Lemma~\ref{Lem-invol-tab-one-exchange} for $i=i_*$.
\end{proof}

Continuing with the proof of Lemma~\ref{Lem-invol-tab-one-exchange}, we show that the remaining cases where $i_*<i<\lambda_{n-1}$ can be proved by repeating the same argument as in the case of first contact.

\begin{Lemma}\label{Lem-invol-tab-one-exchange-after-contact}
Let $i$ be such that $i_*<i<\lambda_{n-1}$.
Then $(\delta_{n-1}^{i-1}\cdots\delta_{n-1}^{1}S^-)\cup_{(n,1)}\boxed{e}$, $e=S(n,1)$, is an LR tableau satisfying the assumptions of Proposition~\ref{Prop-invol-tab-erase-one-ind}, with first contact index equal to $1$.
Moreover, the statements (1) and (2) of Lemma~\ref{Lem-invol-tab-one-exchange} for $S$ hold for these values of $i$.
\end{Lemma}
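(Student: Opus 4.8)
The plan is to prove Lemma~\ref{Lem-invol-tab-one-exchange-after-contact} by induction on $i$ over the range $i_*<i<\lambda_{n-1}$, the base case $i=i_*$ being the already-settled case of first contact (Lemma~\ref{Lem-invol-tab-one-exchange-first-contact}). At each step I would reduce the passage from $D^{(i-1)}$ to $D^{(i)}$, i.e.\ the exchange of the adjacent operations $\hat\delta_{n,1}$ and $\delta_{n-1}^{i}$, to a single first-contact exchange for the auxiliary tableau $\hat S_i:=(\delta_{n-1}^{i-1}\cdots\delta_{n-1}^{1}S^-)\cup_{(n,1)}\boxed{e}$ with $e=S(n,1)$. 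The reason this works is a dictionary between operations: after its bottom cell is adjoined, the operations acting on $\hat S_i$ are $\hat\delta_{n,1}$ followed by $\delta_{n-1,1},\delta_{n-1,2},\dots$, and by the inductive hypothesis (Lemma~\ref{Lem-invol-tab-one-exchange} for $i-1$, which for $i-1=i_*$ is the first-contact case) these replicate the tail $\hat\delta_{n,1},\delta_{n-1}^{i},\delta_{n-1}^{i+1},\dots$ of $D^{(i-1)}$ applied to the common tableau $\delta_{n-1}^{i-1}\cdots\delta_{n-1}^{1}S^-$; in particular the bottom deletion on $\hat S_i$ follows $P'$ and its leading Phase~2 deletion follows $P'_i$.

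First I would dispatch the structural verifications. By repeated use of Lemma~\ref{Lem-lp}, $\delta_{n-1}^{i-1}\cdots\delta_{n-1}^{1}S^-$ is an LR tableau, and adjoining the cell $(n,1)$ with entry $e$ preserves the LR property, since column strictness at $(n,1)$ and the relevant lattice permutation inequalities are inherited from those already holding for $S$ (the deletions $\delta_{n-1}^{1},\dots,\delta_{n-1}^{i-1}$ act only in rows $\le n-1$ and do not reduce, in the rows above row $n$, the supply of entries on which the adjunction relies). Hence $\hat S_i$ meets the hypotheses of Proposition~\ref{Prop-invol-tab-erase-one-ind}: its bottom row is the single nonempty cell $(n,1)$ with $\hat S_i(n,1)=e\le n-2$, and $n\ge 3$ is inherited; moreover its $\lambda_{n-1}$-analog is $\lambda_{n-1}-(i-1)$, so the strict inequality needed to invoke the nontrivial case of Lemma~\ref{Lem-invol-tab-one-exchange-first-contact} follows from $i<\lambda_{n-1}$. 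Under the dictionary above the bottom deletion of $\hat S_i$ yields the terminating row number $l'$ and its leading Phase~2 deletions yield the numbers flanking $l''$ in $R^{(i-1)}$, so that a first-contact exchange for $\hat S_i$ produces exactly the local switch that should carry $R^{(i-1)}$ to $R^{(i)}$.

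The crux --- and the step I expect to be the main obstacle --- is to show that the first contact index of $\hat S_i$ is exactly $1$, equivalently that $P'_i\cap P'\neq\varnothing$ for every $i>i_*$. This is the assertion that the entanglement created at the original first contact \emph{persists}: once $\delta_{n-1}^{1},\dots,\delta_{n-1}^{i-1}$ have been peeled off, the very next Phase~2 path already meets the bottom path. I would prove this with the same path-tracking machinery that drives the key-row analysis of Lemma~\ref{Lem-invol-tab-one-exchange-first-contact}, combining the Horizontal Path Comparison Lemma~\ref{Lem-hp} (which orders the starting cells and forces the terminating numbers $k'_{i_*}=l''<l'\le k'_{i_*+1}\le\cdots$) with parts~(1) and~(2) of the Elementary Path Comparison Lemma~\ref{Lem-ep} to locate each $P'_i$ relative to $P'$ above the cell of first contact. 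Granting this, Lemma~\ref{Lem-invol-tab-one-exchange-first-contact} applied to $\hat S_i$ delivers both conclusions at once: the exchange leaves the output tableau unchanged, so $D^{(i)}S^-=D^{(i-1)}S^-=\delta_{n-1}\hat\delta_{n,1}S^-$, which is statement~(1); and it merely switches the two terminating row numbers produced second and third, moving $l''$ one place to the right, which is precisely the transition from the formula for $R^{(i-1)}$ to that for $R^{(i)}$ in statement~(2). This completes the induction, and with it the proof of Lemma~\ref{Lem-invol-tab-one-exchange} for all $i$.
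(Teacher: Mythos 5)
Your skeleton is in fact the paper's: form the auxiliary tableau $\hat S_i=\delta_{n-1}^{i-1}\cdots\delta_{n-1}^{1}S$, check it satisfies the hypotheses of Proposition~\ref{Prop-invol-tab-erase-one-ind}, show its first contact index is $1$, and invoke Lemma~\ref{Lem-invol-tab-one-exchange-first-contact}, iterating in $i$. The structural verifications are essentially fine (the paper makes them cleaner by viewing the tableau as $\delta_{n-1}^{i-1}\cdots\delta_{n-1}^{1}S$, so that only Lemmas~\ref{Lem-ss}, \ref{Lem-lp} and the absence of entries $n$ are needed). The genuine gap is in your ``dictionary''. You assert that the bottom deletion on $\hat S_i$ follows $P'$ (yielding $l'$) and that its leading Phase~2 deletion follows $P'_i$, and you then take ``first contact index of $\hat S_i$ equals $1$'' to be \emph{equivalent} to $P'_i\cap P'\neq\varnothing$. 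Already for $i=i_*+1$ this is false, for a reason that is the whole point of the first-contact analysis: after the exchange at $i_*$ the paths hybridize. By Lemmas~\ref{Lem-invol-tab-one-exchange-first-contact-lower}, \ref{Lem-invol-tab-one-exchange-first-contact-key-row} and \ref{Lem-invol-tab-one-exchange-first-contact-upper}, the operation $\hat\delta_{n,1}$ applied to $S''_1=(\hat S_{i_*+1})^-$ follows $P'$ only below row $r_*$, passes through $(r_*,c_*-1)$ rather than $(r_*,c_*)$, continues along $P'_{i_*+1}$ above, and terminates at row $k'_{i_*+1}$, not at $l'$; symmetrically, the next Phase~2 deletion runs along $P'_{i_*+1}$ below $r_*$, through $(r_*,c_*-1)$, and along $P'_{i_*}$ above, terminating at $l''$. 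So the two paths whose intersection decides the first contact index of $\hat S_{i_*+1}$ are these hybrids, not $P'$ and $P'_{i_*+1}$: your equivalence fails, your unproven claim $P'_i\cap P'\neq\varnothing$ for $i>i_*$ concerns the wrong objects (and would not follow from Lemmas~\ref{Lem-ep} and \ref{Lem-hp}), and your terminating-row dictionary is inconsistent with the very statement (2) you want: the swap occurs between the \emph{second and third} produced numbers precisely because the delayed bottom deletion no longer produces $l'$.

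What actually closes the step requires no new machinery: the key-row Lemma~\ref{Lem-invol-tab-one-exchange-first-contact-key-row}, already proved, shows that both hybrid paths pass through the single cell $(r_*,c_*-1)$, so the first contact index of $S^*=\hat S_{i_*+1}$ is $1$ outright; Lemma~\ref{Lem-invol-tab-one-exchange-first-contact} applied to $S^*$ then gives statements (1) and (2) for $i=i_*+1$, and the resulting configuration is again ``just after a first contact'', so the argument repeats for all $i<\lambda_{n-1}$. This also exposes a defect in your induction hypothesis: statements (1) and (2) of Lemma~\ref{Lem-invol-tab-one-exchange} for $i-1$ record only the output tableau and the terminating row numbers, which is too weak to locate where the new pair of paths meet; what has to be propagated through the induction is the cell-level conclusion of the key-row analysis.
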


\begin{proof}
Note that the tableau in the statement can be written as $\delta_{n-1}^{i-1}\cdots\delta_{n-1}^{1}S$.
The initial part of this sequence of deletions may be of type (i), namely removals of letters $n-1$ from row $n-1$, but this cannot cause violation of the LR property since there are no entries $n$ in row $n$ by the assumption on $S$ in Proposition~\ref{Prop-invol-tab-erase-one-ind}.
The remaining operations preserve the LR property due to Lemmas~\ref{Lem-ss} and \ref{Lem-lp}.
Since row $n$ of this tableau is the same as that of $S$, it also satisfies the assumption of Proposition~\ref{Prop-invol-tab-erase-one-ind}.

Now we consider the case $i=i_*+1$, and denote this tableau by $S^*$ for simplicity.
If we apply Proposition~\ref{Prop-invol-tab-erase-one-ind} to $S^*$, then $S^-$ in the statement of Proposition~\ref{Prop-invol-tab-erase-one-ind} is $\delta_{n-1}^{i_*}\cdots\delta_{n-1}^{1}S^-$ which appears as $S''_1$ in Eq.~(\ref{Eq-Di_*-1andi_*3}), and the paths for $S^*$ playing the roles of $P'$, $P'_1$ and $P'_2$ are the deletion path of $\hat\delta_{n,1}$ applied to $S''_1$, that of $\delta_{n-1}^{i_*+1}$ applied to $S''_2$ and that of $\delta_{n-1}^{i_*+2}$ applied to $S''_3$ respectively, with $S''_2$ and $S''_3$ also as in Eq.~(\ref{Eq-Di_*-1andi_*3}).
This third path is actually $P'_{i_*+2}$ since $S''_3=S'_3$.
Hence, by what we have seen in Lemma~\ref{Lem-invol-tab-one-exchange-first-contact-key-row} applied to $S$, the paths playing the roles of $P'$ and $P'_1$ for $S^*$ share a cell, $(r_*,c_*-1)$ in the notation for the proof of Lemma~\ref{Lem-invol-tab-one-exchange-first-contact} (even though the cell of first contact may be in a lower row), and so the first contact index for $S^*$ equals $1$.
By applying Lemma~\ref{Lem-invol-tab-one-exchange-first-contact} to $S^*$, we conclude that exchanging the order of $\hat\delta_{n,1}$ and $\delta_{n-1}^{i_*+1}$ applied to $S''_1$ results in the same tableau after further applying $\delta_{n-1}^{i_*+2}$, with the terminating row numbers of the second and the third of these three operations switched; namely that the statements (1) and (2) of Lemma~\ref{Lem-invol-tab-one-exchange} hold for $i=i_*+1$.

If $i_*+1<\lambda_{n-1}-1$, then we can repeat the same argument since the situation can again be regarded as that after the case of first contact, until we have shown the statements (1) and (2) of Lemma~\ref{Lem-invol-tab-one-exchange} for all $i\leq\lambda_{n-1}-1$.
\end{proof}

Lemmas~\ref{Lem-invol-tab-one-exchange-before-contact}, \ref{Lem-invol-tab-one-exchange-first-contact} and \ref{Lem-invol-tab-one-exchange-after-contact} cover all values of $0\leq i<\lambda_{n-1}$ in statements (1) and (2) of Lemma~\ref{Lem-invol-tab-one-exchange}, and so the proof of Lemma~\ref{Lem-invol-tab-one-exchange} is finished in the case $i_*<\lambda_{n-1}$ as well.
\end{proof}

Finally, as we have anticipated in Remark~\ref{Rem-del-seq-final-part}, we have to compare
\[
D^{(\lambda_{n-1}-1)}=\delta_{n-1}^{\lambda_{n-1}}\hat\delta_{n,1}\delta_{n-1}^{\lambda_{n-1}-1}\cdots\delta_{n-1}^{1}\quad\text{with}\quad\hat\delta_{n-1,1}\delta_{n-1}=\hat\delta_{n-1,1}\delta_{n-1}^{\lambda_{n-1}}\delta_{n-1}^{\lambda_{n-1}-1}\cdots\delta_{n-1}^{1}.
\]

\begin{Lemma}\label{Lem-del-seq-final-part}
We have $D^{(\lambda_{n-1}-1)}S^-=\hat\delta_{n-1,1}\delta_{n-1}S^-$.
Moreover, in applying $\hat\delta_{n-1,1}\delta_{n-1}$ to $S^-$, the sequence of terminating row numbers produced by $\delta_{n-1}$, $R''$ by definition, coincides with the sequence obtained by replacing the rightmost $l''$ in $R'$ with $l'$, and the terminating row number produced by $\hat\delta_{n-1,1}$ coincides with $l''$.
\end{Lemma}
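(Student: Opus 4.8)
The plan is to strip off the common prefix of the two operator words and reduce the whole lemma to a single two-step exchange carried out at the corner $(n-1,1)$, after which the terminating row numbers can be read off from Lemma~\ref{Lem-invol-tab-one-exchange}. Both $D^{(\lambda_{n-1}-1)}$ and $\hat\delta_{n-1,1}\delta_{n-1}$ begin by applying $\delta_{n-1}^{\lambda_{n-1}-1}\cdots\delta_{n-1}^{1}$ to $S^-$; write $S_0$ for the common result. Since this prefix removes every cell of row $n-1$ except its corner $(n-1,1)$, row $n-1$ of $S_0$ is that single cell. Setting $g=S_0(n-1,1)$, with $g=0$ when $(n-1,1)$ lies in the inner shape (the case $\nu_{n-1}\ge 1$), and $e=S(n,1)$, column strictness of $S$ gives $g<e$ whenever $g>0$. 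Thus everything reduces to comparing, on $S_0$, the two processes
\[
\text{(A)}\quad \delta_{n-1}^{\lambda_{n-1}}\,\hat\delta_{n,1}\qquad\text{and}\qquad \text{(B)}\quad \hat\delta_{n-1,1}\,\delta_{n-1}^{\lambda_{n-1}},
\]
where $\delta_{n-1}^{\lambda_{n-1}}=\delta_{n-1,1}$.

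First I would prove the tableau identity. In (A) the operation $\hat\delta_{n,1}$ deposits $e$ at $(n-1,1)$, bumping $g$ up into row $n-2$ along some path, and then $\delta_{n-1,1}$ deletes $(n-1,1)=e$ and bumps $e$ up; in (B) the operation $\delta_{n-1,1}$ deletes $(n-1,1)=g$ and bumps $g$ up, and then $\hat\delta_{n-1,1}$ adjoins $e$ at $(n-1,1)$, deletes it, and bumps $e$ up. (When $g=0$ the $g$-steps are vacuous and only $e$ is pushed upward.) So both processes delete $(n-1,1)$ and push the very same two letters $g,e$ into rows $\le n-2$ \emph{in the same order}. Because a deletion path is determined by its starting letter together with the part of the tableau strictly above the starting row --- the mechanism reviewed in the proof of Lemma~\ref{Lem-ss} and in Definition~\ref{Def-dp} --- the two $g$-bumps coincide, and, acting next on the identical altered upper part, the two $e$-bumps coincide as well. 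The transient occupation of $(n-1,1)$ by $e$ in (A) is erased by the following $\delta_{n-1,1}$, so rows $n-1$ agree too. This gives $D^{(\lambda_{n-1}-1)}S^-=\hat\delta_{n-1,1}\delta_{n-1}S^-$, with $\hat\delta_{n-1,1}$ legitimately applicable because the full $\delta_{n-1}$ leaves a tableau of height $\le n-2$.

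Next I would track terminating rows. Let $t_g$ and $t_e$ be the rows in which the $g$-bump and the $e$-bump terminate (with the convention $t_g=n-1$ in the empty-cell case). In (A) these are the terminating numbers of $\hat\delta_{n,1}$ and of $\delta_{n-1}^{\lambda_{n-1}}$, in that order; in (B) they are those of $\delta_{n-1}^{\lambda_{n-1}}$ and of $\hat\delta_{n-1,1}$. Since $\delta_{n-1}^{1},\dots,\delta_{n-1}^{\lambda_{n-1}-1}$ act on $S^-$ identically in both words, part (2) of Lemma~\ref{Lem-invol-tab-one-exchange} with $i=\lambda_{n-1}-1$ supplies all the relevant numbers: the first $\lambda_{n-1}-1$ of them agree with the corresponding entries of $R^{(\lambda_{n-1}-1)}$, while $t_g$ equals its $\lambda_{n-1}$-th entry $k'_{\lambda_{n-1}}$ and $t_e$ equals its last entry $l''$. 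Consequently, in (B) the full $\delta_{n-1}$ produces $R''=(k'_1,\dots,k'_{i_*-1},l',k'_{i_*+1},\dots,k'_{\lambda_{n-1}})$, which by Lemma~\ref{Lem-bumping-position} is $R'$ with its rightmost entry $l''=k'_{i_*}$ replaced by $l'$ (rightmost because every entry of $R'$ past position $i_*$ is $\ge l'>l''$), and $\hat\delta_{n-1,1}$ produces $t_e=l''$, exactly as claimed; this is the interpolation endpoint foreseen in Remark~\ref{Rem-del-seq-final-part}.

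The step I expect to be the real obstacle is the path-coincidence for the second letter $e$: in (A) it is launched from a tableau already modified by the $g$-bump performed inside $\hat\delta_{n,1}$, whereas in (B) it is launched only after $\delta_{n-1,1}$, so I must verify that the two intermediate upper parts are genuinely identical and that relocating the entry of $e$ from the filled cell $(n-1,1)$ (in A) to a fresh adjunction (in B) never violates the vertical-strip requirement of a legal deletion path. This is precisely the kind of analysis carried out in the case of first contact (Lemma~\ref{Lem-invol-tab-one-exchange-first-contact}), but here it degenerates to a comparison of only two operations --- rather than the three-fold interaction handled there --- because $g$ and $e$ are pushed up in the same order in both processes, and the empty-cell case $\nu_{n-1}\ge1$ collapses further to a single $e$-bump. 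I therefore anticipate only careful case-splitting, with no essentially new phenomenon.
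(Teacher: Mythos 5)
Your proposal is correct and is essentially the paper's own proof: strip the common prefix $\delta_{n-1}^{\lambda_{n-1}-1}\cdots\delta_{n-1}^{1}$ to reach a tableau $S^\#$ whose row $n-1$ is the single cell $(n-1,1)$, observe that $\hat\delta_{n,1}$ acting on $S^\#$ performs exactly the $g$-bump of $\delta_{n-1,1}$ and yields $\hat\delta_{n,1}S^\#=(\delta_{n-1,1}S^\#)\cup_{(n-1,1)}\boxed{e}$ with the same terminating row number, and then read the terminating-row-number bookkeeping off part (2) of Lemma~\ref{Lem-invol-tab-one-exchange} at $i=\lambda_{n-1}-1$, with Lemma~\ref{Lem-bumping-position} supplying the ``rightmost $l''$'' identification. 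The obstacle you anticipate in your last paragraph is in fact vacuous: once the identity of intermediate tableaux is in hand, the final $\delta_{n-1,1}$ in (A) and the operation $\hat\delta_{n-1,1}$ in (B) are, by part (4) of Definition~\ref{Def-compactifying-notation}, literally the same deletion applied to the same tableau, so no separate path-coincidence or vertical-strip verification for the $e$-bump is needed.
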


\begin{proof}
Note that the part $\delta_{n-1}^{\lambda_{n-1}-1}\cdots\delta_{n-1}^{1}$ is common to both $D^{(\lambda_{n-1}-1)}$ and $\hat\delta_{n-1,1}\delta_{n-1}$.
Let $S^\#$ denote the result of applying this part to $S^-$.
Then $S^\#$ has only one cell $(n-1,1)$ in row $n-1$.
Hence, in $D^{(\lambda_{n-1}-1)}$, the next operation $\hat\delta_{n,1}$ sends $e$, which it first places in the cell $(n,1)$, to $(n-1,1)$, which continues to incur the same transformation as the next operation $\delta_{n-1}^{\lambda_{n-1}}$ in $\hat\delta_{n-1,1}\delta_{n-1}$ incurs on $S^\#$.
So these two operations produce the same terminating row numbers, and we have $\hat\delta_{n,1}S^\#=(\delta_{n-1}^{\lambda_{n-1}}S^\#)\cup_{(n-1,1)}\boxed{e}$.
Hence the final operation $\delta_{n-1}^{\lambda_{n-1}}$ in $D^{(\lambda_{n-1}-1)}$, applied to $\hat\delta_{n,1}S^\#$, is identical with the $\delta_{n-1,1}$ part of the final operation $\hat\delta_{n-1,1}$ in $\hat\delta_{n-1,1}\delta_{n-1}$, applied to $\delta_{n-1}^{\lambda_{n-1}}S^\#$, so that we have $\delta_{n-1}^{\lambda_{n-1}}\hat\delta_{n,1}S^\#=\hat\delta_{n-1,1}\delta_{n-1}^{\lambda_{n-1}}S^\#$ namely $D^{(\lambda_{n-1}-1)}S^-=\hat\delta_{n-1,1}\delta_{n-1}S^-$, with the final operations also giving the same terminating row numbers.
Hence the sequences of terminating row numbers of $D^{(\lambda_{n-1}-1)}$ and $\hat\delta_{n-1,1}\delta_{n-1}$, both applied to $S^-$, are the same.
The former sequence is $R^{(\lambda_{n-1}-1)}$ by definition, which is obtained from $R'$ by replacing the rightmost $l''$ with $l'$ and then appending $l''$ due to part (2) of Lemma~\ref{Lem-invol-tab-one-exchange} with $i=\lambda_{n-1}-1$.
Hence the part concerning the terminating row numbers is also verified.
\end{proof}

This completes the rather long proof of Proposition~\ref{Prop-invol-tab-erase-one-ind}.

\begin{Example}\label{Ex-delaying-subdiv}
We use the same $S$ as in Example~\ref{Ex-delaying-to-erase-row}.
We illustrate in this case how the delay of the bottom cell deletion from $\delta_4\hat\delta_{5,1}$ to $\hat\delta_{4,1}\delta_4$, namely the act of diverting around the upper rectangle in \thetag{\ref{Eq-invol-tab-erase-one-ex}}, accompanied by the mutation of the resulting sequence of terminating row numbers, can be interpolated by considering the operations $D^{(i)}$ introduced in Lemma~\ref{Lem-invol-tab-one-exchange}.

Let us first show in the diagram \thetag{\ref{Eq-delaying-subdiv-paths}} the dim red paths $P'_1$, $P'_2$, $P'_3$ and $P'_4$, from right to left, taken by the operations $\delta_4^1$, $\delta_4^2$, $\delta_4^3$ and $\delta_4^4$ to be applied successively to $S^-$ and, for easy visual recognition, an additional very dim blue cell $(5,1)$ with entry $3$ to make it $S$ and the slightly dim blue path $P'$ taken by the operation $\delta_{5,1}$ applied to $S$.
This shows that $i_*=2$ with $(3,5)$ being the cell of first contact.

\begin{equation}\label{Eq-delaying-subdiv-paths}
\vcenter{\hbox{
\begin{tikzpicture}
[x={(0in,-0.15in)},y={(0.15in,0in)},
 deletion path/.style={red,line width=3pt,opacity=0.2},
 bottom deletion path/.style={blue,thick,rounded corners=6pt},
 application path/.style={dotted,rounded corners=10pt,opacity=0.25,
                          line width=3pt}
]
 \foreach \j in {1,...,7} \draw (0,\j-1) rectangle (1,\j);
 \foreach \j/\x in {8/1,9/1} \node[font=\footnotesize] at (0.5,\j-0.5) {$\x$};
 \foreach \j in {1,...,5} \draw (1,\j-1) rectangle (2,\j);
 \foreach \j/\x in {6/1,7/1,8/2,9/2}
  \node[font=\footnotesize] at (1.5,\j-0.5) {$\x$};
 \foreach \j in {1,2,3} \draw (2,\j-1) rectangle (3,\j);
 \foreach \j/\x in {4/1,5/1,6/2}
  \node[font=\footnotesize] at (2.5,\j-0.5) {$\x$};
 \foreach \j/\x in {1/2,2/3,3/3,4/3}
  \node[font=\footnotesize] at (3.5,\j-0.5) {$\x$};
 \draw[deletion path](3.5,3.5)--(2.5,5.5)--(1.5,6.5)--(0.5,6.5)
  node[font=\footnotesize,opaque]{$\bullet$};
 \draw[deletion path](3.5,2.5)--(2.5,4.5)--(1.5,4.5)
  node[font=\footnotesize,opaque]{$\bullet$};
 \draw[deletion path](3.5,1.5)--(2.5,3.5)--(1.5,3.5)
  node[font=\footnotesize,opaque]{$\bullet$};
 \draw[deletion path](3.5,0.5)--(2.5,2.5)
  node[font=\footnotesize,opaque]{$\bullet$};
 \draw[fill=blue,opacity=0.1](4,0) rectangle +(1,1); 
 \node[font=\footnotesize] at (4.5,0.5) {3};
 \draw[bottom deletion path](4.5,0.5)--(3.5,0.5)--(2.5,4.5)--(1.5,4.5)
  circle[radius=3pt];
\end{tikzpicture}
}}
\end{equation}

The diagram \thetag{\ref{Eq-delaying-subdiv-tableaux}} shows the mechanism mutating the sequences of terminating row numbers obtained from varying routes interpolating between $\delta_{4}{\blue\hat\delta_{5,1}}=\delta_{4}^{4}\delta_{4}^{3}\delta_{4}^{2}\delta_{4}^{1}{\blue\hat\delta_{5,1}}$ and ${\blue\hat\delta_{4,1}}\delta_{4}={\blue\hat\delta_{4,1}}\delta_{4}^{4}\delta_{4}^{3}\delta_{4}^{2}\delta_{4}^{1}$ by subdividing the process of delaying the bottom cell deletion, namely \begin{equation}\label{Eq-exch-pts}
\vcenter{\hbox{
\begin{tikzpicture}
\node{$
\begin{alignedat}{3}
R^{(0)}={}&(2,1,1,2,3)\quad\text{from}\quad&&\text{$D^{(0)}$ i.e.\ }&&\text{${\blue\hat\delta_{5,1}},\delta_{4}^{1},\delta_{4}^{2},\delta_{4}^{3},\delta_{4}^{4}$ in the order of application},\\
R^{(1)}={}&(1,2,1,2,3)&&D^{(1)}&&\delta_{4}^{1},{\blue\hat\delta_{5,1}},\delta_{4}^{2},\delta_{4}^{3},\delta_{4}^{4},\\
R^{(2)}={}&(1,2,2,1,3)&&D^{(2)}&&\delta_{4}^{1},\delta_{4}^{2},{\blue\hat\delta_{5,1}},\delta_{4}^{3},\delta_{4}^{4},\\
R^{(3)}={}&(1,2,2,3,1)&&D^{(3)}&&\text{$\delta_{4}^{1},\delta_{4}^{2},\delta_{4}^{3},{\blue\hat\delta_{5,1}},\delta_{4}^{4}$ and}\\
&(1,2,2,3,1)&&\hat\delta_{4,1}\delta_{4}&&\delta_{4}^{1},\delta_{4}^{2},\delta_{4}^{3},\delta_{4}^{4},{\blue\hat\delta_{4,1}},
\end{alignedat}
$};
\foreach \position in {(-5.4,1.05),(-0.05,1.075),
                       (-4.625,0.35),(0.5,0.375),
                       (-4.22,-0.35),(1.05,-0.325),
                                      (1.6,-1.025)}
 \node at \position {$\times$};
\end{tikzpicture}
}}
\end{equation} whose exchange points, indicated by $\times$ in (\ref{Eq-exch-pts}) on the left, differ delicately from 
those of the operations themselves, indicated by $\times$ in (\ref{Eq-exch-pts}) on the right, in the case of first contact and afterwards.
The mechanism involves diverting around the dim orange rectangle as described in Lemma~\ref{Lem-invol-tab-one-exchange-before-contact} in the transition from $\delta_{4}\hat\delta_{5,1}=D^{(0)}$ to $D^{(1)}$ (case $i<i_*=2$), diverting around the dim yellow hexagon as described in Lemma~\ref{Lem-invol-tab-one-exchange-first-contact} in the transition from $D^{(1)}$ to $D^{(2)}$ (case $i=i_*$), diverting around the dim green hexagon again as described in Lemma~\ref{Lem-invol-tab-one-exchange-first-contact} invoked through Lemma~\ref{Lem-invol-tab-one-exchange-after-contact} in the transition from $D^{(2)}$ to $D^{(3)}$ (case $i>i_*$), and finally diverting around the dim purple trapezium as described in Lemma~\ref{Lem-del-seq-final-part} in the transition from $D^{(3)}$ to $\hat\delta_{4,1}\delta_{4}$.

\begin{equation}\label{Eq-delaying-subdiv-tableaux}
\vcenter{\hbox{
\pgfdeclarelayer{background}
\pgfsetlayers{background,main}

}}
\end{equation}

The $\vcenter{\hbox{\tikz\draw[-latex,black,thick,densely dashed](0,0)--(1,0);}}$ arrows in \thetag{\ref{Eq-delaying-subdiv-tableaux}} show how terminating row numbers exchange along with the change of order in which operators are applied.
In the case of diversion around a rectangle, such as the dim orange patch in \thetag{\ref{Eq-delaying-subdiv-tableaux}}, the exchange of the terminating row numbers exactly accompanies the exchange of operators, while in the case of diversion around a hexagon such as the dim yellow and green patches, representing the exchange between the first and second terms in the three term sequence of operators to apply, the exchange of terminating row numbers occurs between the second and third terms in the corresponding three term sequence of terminating row numbers.
The diversion around the lowermost dim purple trapezium results in essentially the same sequence of transformations, causing no change in the resulting terminating row numbers.

This whole diagram \thetag{\ref{Eq-delaying-subdiv-tableaux}} is to fit inside the top rectangle of the drawing \thetag{\ref{Eq-invol-tab-erase-one-ex}}.
In the notation of Proposition~\ref{Prop-invol-tab-erase-one-ind} and Example~\ref{Ex-delaying-to-erase-row}, the terminating row number produced from the top $\blue\hat\delta_{5,1}$ arrow is $l'$, those produced from the arrows on the right-hand (resp.\ left-hand) edge of \thetag{\ref{Eq-delaying-subdiv-tableaux}} form the sequence $R'$ (resp.\ $R''$), $l''$ is the term of $R'$ produced by the second arrow from the top in this case.
One can observe that, through the local changes described above, $l'$ sneaks into the position of $l''$ in $R'$ while most of $R'$ is copied to $R''$, $l''$ being bumped out of the sequence and sent to the bottom $\blue\hat\delta_{4,1}$ edge.
\end{Example}

\section{Path removals from a hive}
\label{Sec-hive-path-removal}

By virtue of the bijection between Littlewood--Richardson tableaux and hives, the action
of deletion operators on tableaux corresponds to a path removal procedure in hives. 
Under this bijection the three types of action (i), (ii) and (iii) of the deletion 
operator $\delta_{r,\lambda_r}$ on an LR tableau $T$ with a corner cell at $(r,\lambda_r)$ as
described in Definition~\ref{Def-dp} correspond to 
three types of hive path removal that we now define. 

It should be recalled first that in any given hive a \textit{diagonal}
consists of all the triangles in a strip parallel to the 
right hand boundary of the hive lying between an edge on the base and a corresponding edge
on the left hand boundary. Our convention is that the $p$th diagonal is the one bounded by
the edges $\lambda_p$ and $\mu_p$. Then by a \textit{path} we mean a connected set of edges
extending from an edge on the base to an edge on either the left or right hand boundary.
Such paths are generally zig-zag in nature and proceed either up a
diagonal or horizontally leftwards from one diagonal to another. 
They consist of pairs of edges taken from a sequence of neighbouring triangles always
including their common edge.

\begin{Definition}\label{Def-hive-paths}
For any given $H\in{\cal H}^{(r)}(\lambda,\mu,\nu)$ with $r=\ell(\lambda)$ we may define 
three path removal operators $\chi_r$, $\phi_r$ and $\omega_r$ whose action on $H$
is to reduce or increase edge labels by $1$ along a path starting from
the edge labelled $\lambda_r$ on the base of the hive and specified as follows:
\begin{enumerate}
\item[(i)] $\chi_r$: if $\nu_r>0$ then the path consists of the edges labelled $\lambda_r$ 
and $\nu_r$, with both edge labels decreased by $1$;
\item[(ii)] $\phi_r$: if $\lambda_r-\mu_r-\nu_r>0$ so that $U_{ir}>0$ for some $i<r$ then the path proceeds up the $r$th diagonal 
from the edge labelled $\lambda_r$ through upright rhombi of gradient $0$ until it encounters an 
upright rhombus of positive gradient, at which point it moves horizontally to the left 
into the $(r-1)$th diagonal and proceeds up this diagonal or to the left as before, 
and so on until it terminates on the left-hand boundary at the top of the $k$th diagonal, 
that is at the edge labelled $\mu_k$ for some $k$ such that $1\leq k<r$,
with all path $\alpha$ and $\gamma$ edge
labels being decreased by $1$ and all path $\beta$ edge labels increased by $1$;
\item[(iii)] $\omega_r$: if $\mu_r>0$ then the path proceeds directly up the $r$th
diagonal until it terminates on the left-hand 
boundary at level $r$, that is at the edge labelled $\mu_r$, with all path edge labels decreased by $1$.
\end{enumerate}
\end{Definition}

The three types of path are illustrated below. In each case the action of $\chi_r$,
$\phi_r$ and $\omega_r$ on the hive $H$ is to decrease the label of 
each {\bf\red red} edge by $1$ and to increase that of each {\bf\blue blue} edge by $1$. 
In particular, under this action the edge label $\lambda_r$
and one or other of $\nu_r$, $\mu_k$ (with $k<r$) or $\mu_r$
are each reduced by $1$ to $\lambda_r-1$ and $\nu_r-1$, $\mu_k-1$ or $\mu_r-1$, respectively.
\begin{equation}\label{Eq-hpr}
\vcenter{\hbox{

}}
\end{equation}
While the structure of the paths is rather simple in cases (i) and (iii), 
the structure of the path in case (ii) is more complicated and consists of a sequence of \textit{ladders} in each diagonal from
the $r$th to the $k$th. Each ladder consists of a continuous zig-zag of red $\alpha$- or $\gamma$-edges passing through upright rhombi
of gradient $0$ that extends up the diagonal from an edge that is either on the base of the hive or the top of an upright rhombus 
that we call the \textit{foot rhombus} (coloured green), to an edge that is either on the left hand boundary
or the bottom of an upright rhombus that we call the \textit{head rhombus} (coloured yellow), as illustrated below:
\begin{equation} \label{Eq-ladder}
\vcenter{\hbox{
\begin{tikzpicture}[x={(1cm*0.27,-\rootthree cm*0.27)},
                     y={(1cm*0.27,\rootthree cm*0.27)}]
\foreach\n/\f/\fmone/\fmtwo/\fp in {11/9/8/7/5} {   
 \draw[decorate,decoration={brace,amplitude=10pt}](\fp,\n+1)--(\f,\n+1) node[pos=0.35,right=15pt]{ladder};
 \draw(2,\n)--(\n,\n) (2,\n+1)--(\n+1,\n+1);
 \foreach\i in {2,3,...,\n} \draw(\i,\n)--(\i,\n+1) (\i,\n)--(\i+1,\n+1);
 \draw[red,ultra thick](\f,\n)--(\f,\n+1);
 \draw[blue,ultra thick](\f,\n+1)--(\f+1,\n+1);
 \foreach\i in {\fmone,\fmtwo,...,\fp} {
  \draw[red,ultra thick](\i+1,\n+1)--(\i,\n);
	\draw[red,ultra thick](\i,\n)--(\i,\n+1); }
 \draw[blue,ultra thick](\fp-1,\n)--(\fp,\n);
 \draw[white,thick](\f,\n)--(\f+1,\n+1) (\fp-1,\n)--(\fp,\n+1);
 \foreach\i in {\fmone,\fmtwo,...,\fp} {\path(\i+1,\n+1)--node{$\sc{0}$}(\i,\n); }
 \path(\fp,\n+1)--node{$\sc{>0}$}(\fp-1,\n);
 \path[fill=green,fill opacity=0.5] (\f+0.1,\n)--(\f+0.1,\n+1-0.1)--(\f+1,\n+1-0.1)--(\f+1,\n)--cycle;
 \path[fill=yellow,fill opacity=0.5] (\fp-1,\n+0.1)--(\fp-1,\n+1)--(\fp-0.1,\n+1)--(\fp-0.1,\n+0.1)--cycle;
 \node[inner sep=0pt,pin={[pin edge=thick]below left:{foot rhombus}}]
   at (\f+0.5,\n+0.5) {};
 \node[inner sep=0pt,pin={[pin edge=thick]above right:{head rhombus}}]
   at (\fp-0.5,\n+0.5) {};
}
\end{tikzpicture}
}}
\end{equation}
The length of a ladder is the number of its rungs, that is to say the number of horizontal {\bf\red red} $\gamma$-edges.
Thus the ladders starting at the base of the hive lack a foot rhombus, while those ending on the left hand 
boundary lack a head rhombus. It might be noted that all ladders, including these, may consist
of a single {\bf\red red} $\alpha$-edge, in which case the length is $0$. 
The passage between one diagonal and the next is by way of a {\bf\blue blue}
$\beta$-edge parallel to the right hand boundary. If such an edge is the $l$th from the top 
of the diagonal then its \textit{level} is said to be $l$.

Before using these path removals we first establish that the action of each of the path 
removal operators preserves the hive conditions, as follows:

\begin{Lemma}\label{Lem-hpr-hive}
Let $H$ be a hive in ${\cal H}^{(r)}(\lambda,\mu,\nu)$ with $r=\ell(\lambda)$.
Then the path removal operators are such that if we set $\tilde{H}=\chi_r\,H$, $\phi_r\,H$ or $\omega_r\,H$
with $\nu_r>0$, $\lambda_r-\mu_r-\nu_r>0$ and $\mu_r>0$, respectively, then in each case $\tilde{H}$ is a hive. 
\end{Lemma}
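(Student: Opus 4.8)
The plan is to check that $\tilde H$ still satisfies the two defining requirements of the edge representation of a hive: the triangle conditions \eqref{Eq-triangle-condition} in every elementary triangle, and the rhombus inequalities \eqref{Eq-rhombus-inequalities}; I will also record that no edge label is driven negative. I would dispose of the triangle conditions for all three operators at once by passing to vertices. By \eqref{Eq-edge-vertex-labels}, \emph{any} assignment of vertex labels forces \eqref{Eq-triangle-condition}, so it suffices to realise each path removal as the effect of lowering a suitable set of vertex labels by $1$. Tracing the coloured edges of \eqref{Eq-hpr} back through \eqref{Eq-edge-vertex-labels} and reconstructing the vertex labels from $a_{00}=0$, one finds that each operator decreases by $1$ precisely those vertex labels that are separated from the anchor $a_{00}$ by the removal path: for $\chi_r$ this cut-off region is the single apex $a_{rr}$, for $\omega_r$ it is the whole top row $\{a_{ir}:0\le i\le r\}$ of the hive, and for $\phi_r$ it is the entire region lying on the far side of the path from $a_{00}$. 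The red $\alpha$- and $\gamma$-edges (whose right/upper endpoint enters the region) drop by $1$ while the blue $\beta$-edges (which cross the region's boundary the other way) rise by $1$, exactly reproducing Definition \ref{Def-hive-paths}; hence \eqref{Eq-triangle-condition} is automatically preserved. Non-negativity of edge labels then reduces to the observation that every decremented edge had value $\ge1$ beforehand, which follows from $\gamma\ge\alpha$, $\gamma\ge\beta$, the betweenness conditions \eqref{Eq-betweenness-conditions}, and the relevant positivity hypothesis ($\nu_r>0$, $\mu_r>0$, or $\lambda_r-\mu_r-\nu_r>0$).

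With the triangle conditions secured, only the rhombus inequalities \eqref{Eq-rhombus-inequalities} remain. Here the vertex picture is again convenient: each gradient in \eqref{Eq-rhombus-gradients} is a four-term alternating sum of the vertex labels of its rhombus, so lowering a region by $1$ leaves a gradient unchanged unless the rhombus \emph{straddles} the boundary of that region, i.e.\ unless the path passes through it. For $\chi_r$ the only straddling rhombi are the two or three incident with the apex; their gradients change by at most $1$ and remain $\ge0$ by betweenness \eqref{Eq-betweenness-conditions} together with $\nu_r>0$. For $\omega_r$, lowering an entire row affects only the gradients of rhombi bridging the top two rows of the hive, and again \eqref{Eq-betweenness-conditions} with $\mu_r>0$ keeps each of $R_{ij},U_{ij},L_{ij}$ non-negative. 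Both of these are finite local checks.

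The substance of the lemma is the case $\phi_r$, and I would organise it around the ladder decomposition of the path given in \eqref{Eq-ladder}. Along the interior of any ladder the path runs through upright rhombi of gradient $0$; for such a rhombus both of its parallel $\alpha$-edges (equivalently both $\beta$-edges) lie on the path and are decremented together, so $U_{ij}$ is unchanged, and one checks that the left- and right-leaning gradients flanking the rungs are likewise unchanged. The only upright gradients that move are those of the foot and head rhombi (the green and yellow rhombi of \eqref{Eq-ladder}), where exactly one of the two parallel $\alpha$-edges is on the path and so $U_{ij}$ drops by $1$. But these are precisely the rhombi at which the path turns, and by Definition \ref{Def-hive-paths}(ii) the turn is taken exactly when the upright rhombus has \emph{positive} gradient; thus $U_{ij}\ge1$ before the removal and $U_{ij}\ge0$ after it. It then remains to control the right- and left-leaning gradients $R_{ij}=\gamma_{i,j-1}-\gamma_{ij}$ and $L_{ij}=\gamma_{ij}-\gamma_{i+1,j}$ across the junctions, where a blue $\beta$-edge is raised while neighbouring red $\gamma$- and $\alpha$-edges are lowered, and to confirm these stay $\ge0$.

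The main obstacle is exactly this last bookkeeping: verifying the left- and right-leaning rhombus inequalities at the head and foot rhombi and at the $\beta$-edge junctions where the $\phi_r$ path crosses from one diagonal into the next. Everything else is either automatic (triangle conditions, unchanged gradients away from the path) or a short local computation from \eqref{Eq-betweenness-conditions}. The content to be extracted is that the routing rule of Definition \ref{Def-hive-paths}(ii)---ascend through gradient-$0$ upright rhombi and cross to the next diagonal at the first upright rhombus of positive gradient---is precisely what guarantees that, at each straddling rhombus, the simultaneous $-1$ on a $\gamma$- or $\alpha$-edge is matched by a $-1$ on its parallel partner or absorbed by the $+1$ supplied on the crossing $\beta$-edge, so that no gradient is pushed below $0$.
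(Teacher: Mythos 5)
Your vertex-lowering realisation of the three operators is correct (lowering by $1$ the labels of all vertices cut off from $a_{00}$ by the removal path reproduces exactly the edge changes of Definition~\ref{Def-hive-paths}), and it disposes of the triangle conditions more cleanly than the paper, which simply checks the local configurations in (\ref{Eq-triangles}). The cases $\chi_r$ and $\omega_r$ are also fine, though easier than you suggest: the only gradients affected there \emph{increase} (see (\ref{Eq-LtoL-1i}) and (\ref{Eq-RtoR-1iii})), so no hypothesis is needed at all. For $\phi_r$, however, your proposal contains both an error and a genuine gap. The error: the foot rhombi do \emph{not} drop by $1$; they rise by $1$. The path meets a head rhombus along its edge labelled $\alpha_{ij}$ and a foot rhombus along its edge labelled $\alpha_{i-1,j}$, and since $U_{ij}=\alpha_{ij}-\alpha_{i-1,j}$, only head gradients decrease. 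This matters: the routing rule guarantees positivity only of head gradients, whereas a foot rhombus may perfectly well have gradient $0$ before the removal, so if both kinds of rhombi really dropped, as you assert, the lemma would be false.

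The gap: you defer the left- and right-leaning inequalities for $\phi_r$ as remaining ``bookkeeping'', but this is the entire substance of the lemma, and your closing description of the mechanism --- that every $-1$ is ``matched by a $-1$ on its parallel partner or absorbed by the $+1$ supplied on the crossing $\beta$-edge'' --- is not what happens at the rhombi that matter. In the configurations (\ref{Eq-RtoR-1ii}) and (\ref{Eq-LtoL-1ii}) the gradients $R$ and $L$ genuinely decrease by $1$ (indeed in (\ref{Eq-LtoL-1ii}) the $+1$ on the $\beta$-edge is the \emph{cause} of the decrease, not something that absorbs it), and what must be proved is that they were $\geq 1$ beforehand. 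That requires non-local arguments exploiting the ladder structure: for $R$, all the decremented $\alpha$-edges in a given diagonal carry one common label, because the upright rhombi separating them have gradient $0$, and this common label strictly exceeds the parallel $\alpha$-labels to their right because the head rhombus has positive gradient (the paper's chain $\alpha\geq\alpha_0>\alpha_1\geq\alpha_2\geq\cdots$); for $L$, a hexagon argument combining a gradient-$0$ rhombus, a positive-gradient rhombus and a left-leaning inequality gives $\beta'=\beta''\geq\beta'''>\beta$. The same omission affects your edge-positivity claim in case (ii): that the decremented $\alpha$-edges are $\geq 1$ does not follow from betweenness and $\lambda_r-\mu_r-\nu_r>0$ alone, but from this same ladder structure, together with a separate betweenness argument showing $\mu_k>0$ in the terminal $k$th diagonal.
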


\noindent{\bf Proof}:~~
It is easy to see that the triangle conditions (\ref{Eq-triangle-condition}) are preserved under 
any of the three path removal procedures mapping a hive $H$ to $\tilde{H}$ by examining the  
changes to the edge labels of elementary triangles as illustrated by: 
\begin{equation}\label{Eq-triangles}
\vcenter{\hbox{

}}
\end{equation}

Moreover all edge labels remain non-negative under the map from $H$ to $\tilde{H}$. This is obvious in the case of
all blue edge labels since they are increased by $1$ in the passage from $H$ to $\tilde{H}$. On the other
hand red edge labels are each decreased by $1$, so it is necessary to show that each red edge label in 
$H$ is positive. 
 
This is clear in case (i) since the only two red edges are those labelled $\lambda_r$ 
and $\nu_r$, which by hypothesis are positive. 
In case (ii) the fact that $\lambda_r>0$ implies that all $\gamma$-edges 
in the first $r$ diagonals of $H$ carry positive labels, this includes all the red $\gamma$-edges 
labels. This only leaves the red $\alpha$-edges. 
On any given $p$th diagonal these are 
separated by upright rhombi of gradient $0$, so they all carry the same label.  For $p>k$
this label must be positive since at the top of each the ladder there appears a head rhombus of 
positive gradient. 
In the $k$th diagonal the 
shared red $\alpha$-edge label is $\mu_k$, and the fact that $\mu_k$ is also positive follows from the 
application of the betweenness condition to the configuration
\begin{equation}
\vcenter{\hbox{
\begin{tikzpicture}[x={(1cm*0.5,-1.7320508cm*0.5)},y={(1cm*0.5,1.7320508cm*0.5)}]
\path(0,0)--node[pos=0.4,left]{$\mu_k$}(0,1); 
\draw[red, ultra thick](0,0)--(0,1); \draw[black, thin](0,1)--(0,2); \draw[red, ultra thick](1,1)--(1,2); 
\draw[blue, ultra thick](0,1)--(1,1); \draw[black, thin](0,2)--(1,2); 
\draw[black, thin](0,0)--(1,1); 
\path(0,1)--node{$>0$}(1,2); 
\foreach \i/\j/\k in {0/1/0.05}
\path[fill=yellow,fill opacity=0.5] (\i,\j+\k)--(\i,\j+1)--(\i+1-\k,\j+1)--(\i+1-\k,\j+\k)--cycle;
\end{tikzpicture}
}}
\end{equation}
In case (iii) the hive conditions imply that the labels of all red $\gamma$-edges 
are $\geq \lambda_r$ and those of all red $\alpha$-edges 
are $\geq \mu_r$. These red edge labels must therefore all be positive since by hypothesis
$\lambda_r>0$ and $\mu_r>0$. 
Thus all red edge labels of $H$ are positive in all three cases, and as 
a consequence all edge labels of $\tilde{H}$ are non-negative integers.

Turning next to rhombi, as a result of the path removals the requisite changes in red and blue edge 
labels automatically change the gradients of certain elementary rhombi. In order that $\tilde{H}$ 
be an LR hive it is necessary to show that all these gradients remain non-negative.

In the case of a type (i) hive path removal the action of $\chi_r$ affects only one rhombus and does so as shown below:
\begin{equation}\label{Eq-LtoL-1i}
\vcenter{\hbox{
\begin{tikzpicture}[x={(1cm*0.5,-\rootthree cm*0.5)},
                    y={(1cm*0.5,\rootthree cm*0.5)}]
\draw(0,1)--(1,1);  
\draw[red,ultra thick](1,1)--node[black,below]{$\lambda_r$}(2,2); 
\draw(0,1)--(1,2); 
\draw[red,ultra thick](1,2)--node[black,right]{$\nu_r$}(2,2); 
\path(1,1)--node{$L$}(1,2);	
\end{tikzpicture}
}}
\quad \overset{\chi_r}{\longmapsto}\quad 
\vcenter{\hbox{
\begin{tikzpicture}[x={(1cm*0.5,-\rootthree cm*0.5)},
                    y={(1cm*0.5,\rootthree cm*0.5)}]
\draw(0,1)--(1,1);  
\draw[red,ultra thick](1,1)--node[black,below]{$\lambda_r\!\!-\!\!1$}(2,2); 
\draw(0,1)--(1,2); 
\draw[red,ultra thick](1,2)--node[black,right]{$\nu_r\!\!-\!\!1$}(2,2); 
\path(1,1)--node{$L\!\!+\!\!1$}(1,2);
\end{tikzpicture}
}}
\end{equation}
Clearly, the gradient of this rhombus is increased. It follows that all rhombus gradients
remain non-negative under the action of $\chi_r$.

The situation is more complicated for type (ii) hive path removals under the action of $\phi_r$. 
From the definition of a type (ii) path, the upright rhombi between
the foot and the head rhombi all have gradient $0$.
The only upright rhombus gradients that are changed under the action 
of $\phi_r$ are those of the foot and head rhombi which increase and decrease by $1$, respectively.
In the case of the type (ii)
example illustrated in (\ref{Eq-hpr}) the changes in upright rhombus gradients are then as indicated
below:
\begin{equation}\label{Eq-Uchanges}
\vcenter{\hbox{

\end{equation}
The fact that the parallel red edge labels $\alpha$ on the left of the diagram are identical,
is a consequence of the gradients of the intervening upright rhombi all being $0$ as 
indicated by the $0$'s appearing on the horizontal red edges. On the right of the diagram
the fact that the relevant upright rhombi have non-negative gradients implies that
$\alpha_1\geq\alpha_2\geq\alpha_3\geq\alpha_4$, while the positivity of the gradient of the
upright rhombus indicated in the diagram by $>0$ on a horizontal black edge and coloured yellow
ensures from the betweenness conditions that $\alpha\geq\alpha_0>\alpha_1$. 
Hence $\alpha\geq\alpha_0>\alpha_1\geq\alpha_2\geq\alpha_3\geq\alpha_4$ so that
$R_k=\alpha-\alpha_k>0$ for all $k$, as required for the right-leaning rhombus condition to be
maintained after the path removal.

Still dealing with type (ii) hive path removals generated by the action of $\phi_r$, the only left-leaning rhombi that undergo a decrease 
in their gradients are those subject to the following transformation:
\begin{equation}\label{Eq-LtoL-1ii}
\vcenter{\hbox{

\end{equation}
In terms of the given edge labels the initial gradient of the lower left-leaning rhombus of 
the type illustrated above is given by $L=\beta'-\beta$.
However the gradients of the two upright rhombi specified in the diagram as $0$ 
and $>0$ ensure that $\beta'=\beta''$ and $\beta'''>\beta$. 
In addition the gradient $\beta''-\beta'''$
of the upper left-leaning rhombus must be non-negative. Hence $\beta'=\beta''\geq\beta'''>\beta$.
It follows that $L=\beta'-\beta>0$ so that $L-1\geq0$.
This confirms that all rhombus gradients remain non-negative under the action of $\phi_r$.

Finally, for a hive path removal of type (iii) as illustrated on the right of (\ref{Eq-hpr})
the only rhombi whose gradients change are those undergoing the map:
\begin{equation}\label{Eq-RtoR-1iii}
\vcenter{\hbox{
\begin{tikzpicture}[x={(1cm*0.5,-\rootthree cm*0.5)},
                    y={(1cm*0.5,\rootthree cm*0.5)}]
\draw(0,0)--(0,1);  
\draw(0,0)--(1,1); 
\draw[red,ultra thick](0,1)--(1,2); 
\draw[red,ultra thick](1,1)--(1,2); 
\path(0,1)--node{$R$}(1,1);
\end{tikzpicture}
}}
\quad \overset{\omega_r}{\longmapsto}\quad 
\vcenter{\hbox{
\begin{tikzpicture}[x={(1cm*0.5,-\rootthree cm*0.5)},
                    y={(1cm*0.5,\rootthree cm*0.5)}]
\draw(0,0)--(0,1);  
\draw(0,0)--(1,1); 
\draw[red,ultra thick](0,1)--(1,2); 
\draw[red,ultra thick](1,1)--(1,2); 
\path(0,1)--node{$R\!\!+\!\!1$}(1,1);
\end{tikzpicture}
}}
\end{equation}
Thus all rhombus gradients remain non-negative under the action of $\omega_r$. 
\qed
\medskip

Very much as in the tableau case we next introduce 
\begin{Definition}\label{Def-hpr}
For any given $n$-hive $H\in{\cal H}^{(r)}(\lambda,\mu,\nu)$ with $r=\ell(\lambda)$,
the path removal operator $\theta_{r,\lambda_r}$ is defined by
\begin{equation}
       \theta_{r,\lambda_r} = \left\{ 
			\begin{array}{ll}
			  \chi_r&\quad\hbox{if $\nu_r>0$};\cr
				\phi_r&\quad\hbox{if $\nu_r=0$ and $U_{ir}>0$ for some $i<r$};\cr
				\omega_r&\quad\hbox{if $\nu_r=0$, $U_{ir}=0$ for all $i<r$, and $\mu_r>0$}.\cr
			\end{array} \right.
\end{equation}
\end{Definition}

With this definition of $\theta_{r,\lambda_r}$ we have

\begin{Lemma}\label{Lem-hpr}
Let $\ell(\lambda)=r$. Then the hive path removal operator $\theta_{r,\lambda_r}$ maps 
any given $H\in{\cal H}^{(r)}(\lambda,\mu,\nu)$ to $\tilde{H}$ with
(i) $\tilde{H}\in{\cal H}^{(r)}((\lambda-\epsilon_r),\mu,(\nu-\epsilon_r))$ if $\nu_r>0$;
(ii) $\tilde{H}\in{\cal H}^{(r)}((\lambda-\epsilon_r),(\mu-\epsilon_k),\nu)$ with $1\leq k<r$ if $\nu_r=0$
and $U_{ir}>0$ for some $i<r$;
(iii) $\tilde{H}\in{\cal H}^{(r)}((\lambda-\epsilon_r),(\mu-\epsilon_r),\nu)$ if $\nu_r=0$, $U_{ir}=0$ for all $i<r$.
\end{Lemma}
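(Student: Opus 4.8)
The plan is to treat Lemma~\ref{Lem-hpr} as a book-keeping consequence of Lemma~\ref{Lem-hpr-hive}: the latter already shows that $\tilde H=\chi_r H$, $\phi_r H$ or $\omega_r H$ is again a hive in each relevant case, so all that remains is to verify the hypotheses of Lemma~\ref{Lem-hpr-hive} and then read off exactly which boundary edges change. The single identity driving both tasks comes from specialising~(\ref{Eq-gabU}) to the foot of the final ($r$th) diagonal: since $\ell(\lambda)=r$ the sum $\sum_{j=r+1}^{r}U_{rj}$ is empty, and with $\alpha_r=\mu_r$, $\beta_r=\nu_r$, $\gamma_r=\lambda_r$ this becomes
\[
\lambda_r=\mu_r+\nu_r+\sum_{i=1}^{r-1}U_{ir}.
\]
I would record this first, as it simultaneously supplies the positivity hypotheses needed to invoke Lemma~\ref{Lem-hpr-hive} and pins down the altered boundary labels.

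For case (i) the hypothesis $\nu_r>0$ is precisely what $\chi_r$ needs, and by the description in Definition~\ref{Def-hive-paths}(i) the operator lowers exactly the base $\gamma$-edge carrying $\lambda_r$ and the right-boundary $\beta$-edge carrying $\nu_r$, leaving every left-boundary $\alpha$-edge fixed; hence $\tilde H\in{\cal H}^{(r)}((\lambda-\epsilon_r),\mu,(\nu-\epsilon_r))$. Case (iii) is equally direct once the displayed identity is used: when $\nu_r=0$ and $U_{ir}=0$ for all $i<r$ it collapses to $\lambda_r=\mu_r$, so $\mu_r=\lambda_r>0$ (using $\ell(\lambda)=r$), which both legitimises $\omega_r$ via Lemma~\ref{Lem-hpr-hive} and shows that the straight path up the $r$th diagonal lowers precisely the base edge $\lambda_r$ and the left-boundary edge $\mu_r$ while leaving $\nu$ untouched, giving $\tilde H\in{\cal H}^{(r)}((\lambda-\epsilon_r),(\mu-\epsilon_r),\nu)$.

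The one place needing genuine care is case (ii). The hypothesis $\nu_r=0$ together with $U_{ir}>0$ for some $i<r$ yields, through the identity, $\lambda_r-\mu_r-\nu_r=\sum_{i<r}U_{ir}>0$, exactly the condition under which Lemma~\ref{Lem-hpr-hive} certifies $\phi_r H$ a hive. The main thing to confirm is that the $\phi_r$-path terminates on the left boundary at an edge $\mu_k$ with $1\le k<r$ and not at $\mu_r$: because some upright rhombus in the $r$th diagonal has positive gradient, the path cannot climb the entire $r$th diagonal to its apex but must make at least one horizontal move, after which the diagonal index strictly decreases at each further horizontal move and is bounded below by $1$, so the path exits on the left boundary at some $k$ with $1\le k<r$. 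Along it the only edges lying on the hive boundary are the base edge $\lambda_r$ and the left-boundary edge $\mu_k$, both red and lowered by $1$; every blue $\beta$-edge is an interior passage edge between consecutive diagonals, so no right-boundary label is disturbed and $\nu$ is unchanged, giving $\tilde H\in{\cal H}^{(r)}((\lambda-\epsilon_r),(\mu-\epsilon_k),\nu)$. I expect the location of $k$ and the check that no right-boundary $\beta$-edge is touched to be the only non-mechanical points; that each modified boundary sequence remains a partition is automatic, being forced by the betweenness conditions~(\ref{Eq-betweenness-conditions}) of the hive $\tilde H$ already furnished by Lemma~\ref{Lem-hpr-hive} (cf.\ Definition~\ref{Def-LRhive}).
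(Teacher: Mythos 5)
Your proposal is correct and follows essentially the same route as the paper: invoke Lemma~\ref{Lem-hpr-hive} for the preservation of the hive conditions, then simply record which boundary edge labels the path touches in each of the three cases. Your extra details — using the triangle identity $\lambda_r=\mu_r+\nu_r+\sum_{i<r}U_{ir}$ to check the hypotheses of Lemma~\ref{Lem-hpr-hive} (in particular $\mu_r=\lambda_r>0$ in case (iii)) and to confirm that the type (ii) path must leave the $r$th diagonal so that $k<r$ — are sound and merely make explicit what the paper reads off from the figure~(\ref{Eq-hpr}).
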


\noindent{\bf Proof}:
In each case we have established in Lemma~\ref{Lem-hpr-hive} that $\tilde{H}$ is a hive. It only remains to
consider the changes of the hive boundary labels. It can be seen from (\ref{Eq-hpr}) that the only changes are 
a reduction by $1$ of the labels $\lambda_r$ and $\nu_r$ in case (i), $\lambda_r$ and $\mu_k$ in case (ii), and 
$\lambda_r$ and $\mu_r$ in case (iii).
\qed 
\medskip

Again very much as in the tableau case, we next introduce 
\begin{Definition}\label{Def-full-r-hpr}
For any given hive $H\in{\cal H}^{(r)}(\lambda,\mu,\nu)$ with $\ell(\lambda)\leq r$ 
the {\it full $r$-hive path removal operator\/} $\theta_r$ is defined by
\begin{equation}\label{Eq-theta-r1r2}
\theta_{r}:=\theta_{r,1}\theta_{r,2}\cdots\theta_{r,\lambda_r}\,,
\end{equation}
where it is to be understood that $\theta_r$ acts as the identity on $H$ if $\ell(\lambda)<r$.
In general its action on $H$ proceeds in three successive phases: 
Phase 1 consists of $\nu_r$ hive path removals of type (i);
Phase 2 consists of $\lambda_r-\nu_r-\mu_r$ hive path removals of type (ii);
and Phase 3 consists of $\mu_r$ hive path removals of type (iii), so that 
\begin{equation}\label{Eq-theta-omega-phi-chi}
    \theta_{r}= \omega_r^{\mu_r}\ \phi_r^{\lambda_r-\mu_r-\nu_r}\ \chi_r^{\nu_r}\,.
\end{equation}
\end{Definition}

With this definition we have

\begin{Theorem}\label{The-Hr-theta-r}
For a hive $H\in{\cal H}^{(r)}(\lambda,\mu,\nu)$ with $\ell(\lambda)\leq r$ 
let $\theta_r H=\tilde{H}$ then 
$\tilde{H}\in{\cal H}^{(r)}(\tilde{\lambda},\tilde{\mu},\tilde{\nu})$
with $\tilde{\lambda}=(\lambda_1,\ldots,\lambda_{r-1},0)$, $\tilde{\mu}=(\mu_1\!-\!V_{1r},\ldots,\mu_{r-1}\!-\!V_{r-1,r},0)$ and 
$\tilde{\nu}=(\nu_1,\ldots,\nu_{r-1},0)$, where $V_{kr}$ is the number of type (ii)
hive path removals from $H$ that extend from the boundary edge initially labelled $\lambda_r$ 
to that initially labelled $\mu_k$ for $1\leq k<r$.
\end{Theorem}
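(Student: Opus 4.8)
The plan is to prove the theorem by iterating the single-step Lemma~\ref{Lem-hpr} along the factorization $\theta_r=\omega_r^{\mu_r}\,\phi_r^{\lambda_r-\mu_r-\nu_r}\,\chi_r^{\nu_r}$ recorded as (\ref{Eq-theta-omega-phi-chi}) in Definition~\ref{Def-full-r-hpr}, and then simply bookkeeping the boundary-edge changes contributed by each of the three phases. Since Lemma~\ref{Lem-hpr} guarantees that every intermediate object is an LR hive in some $\mathcal{H}^{(r)}(\cdot,\cdot,\cdot)$, the output $\tilde H$ is automatically an LR hive and its boundary labels are automatically the parts of partitions (by Definition~\ref{Def-LRhive}); hence the only real content is the computation of those parts. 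The case $\ell(\lambda)<r$ is trivial, all phases being empty, so I would assume $\ell(\lambda)=r$.

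First I would handle Phase~1, where Definition~\ref{Def-hpr} selects $\theta_{r,\cdot}=\chi_r$. By part~(i) of Lemma~\ref{Lem-hpr} each application lowers $\lambda_r$ and $\nu_r$ each by $1$ while leaving every other boundary label untouched and, as is visible from (\ref{Eq-LtoL-1i}), leaving every upright rhombus gradient $U_{ir}$ of the $r$th diagonal unchanged. Because $\nu_r$ stays positive until the last of these steps, the phase comprises exactly $\nu_r$ type~(i) removals and terminates in a hive with $\nu_r$ replaced by $0$ and $\lambda_r$ replaced by $\lambda_r-\nu_r$.

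Next comes Phase~2, governed by $\phi_r$, and this is the crux. The key point is that each $\phi_r$ lowers $\sum_{i<r}U_{ir}$ by exactly $1$: the bottommost ladder of the removal path runs up the $r$th diagonal from the base edge $\lambda_r$ and ends at a head rhombus of positive gradient lying in that same diagonal, whose gradient drops by $1$ (cf.\ (\ref{Eq-Uchanges})), while all the compensating foot-rhombus increases occur in diagonals $p<r$ and so do not affect $\sum_{i<r}U_{ir}$. After Phase~1 the number of cells below $r$ in the current $r$th diagonal is $\lambda_r-\mu_r-\nu_r=\sum_{i<r}U_{ir}$, so exactly $\lambda_r-\mu_r-\nu_r$ type~(ii) removals occur, after which all $U_{ir}=0$. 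By part~(ii) of Lemma~\ref{Lem-hpr} each such removal lowers $\lambda_r$ by $1$ and exactly one $\mu_k$ with $k<r$ by $1$; hence at the end of the phase $\lambda_r$ has fallen to $\mu_r$ and each $\mu_k$ ($k<r$) has fallen by $V_{kr}$, where by the very definition of $V_{kr}$ we have $\sum_{k<r}V_{kr}=\lambda_r-\mu_r-\nu_r$.

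Finally Phase~3 is governed by $\omega_r$: with $\nu_r=0$ and all $U_{ir}=0$, Definition~\ref{Def-hpr} forces type~(iii), and by part~(iii) of Lemma~\ref{Lem-hpr} each of the $\mu_r$ applications lowers $\lambda_r$ and $\mu_r$ each by $1$, straight up the now gradient-$0$ $r$th diagonal, without disturbing any $\mu_k$ with $k<r$. Collecting the three phases yields $\tilde\lambda_r=\lambda_r-\nu_r-(\lambda_r-\mu_r-\nu_r)-\mu_r=0$, $\tilde\nu_r=0$, $\tilde\mu_r=0$, and $\tilde\mu_k=\mu_k-V_{kr}$ for $k<r$, with every remaining boundary label of $\lambda$, $\mu$, $\nu$ unchanged, which is exactly the asserted triple $(\tilde\lambda,\tilde\mu,\tilde\nu)$. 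The step needing the most care is the Phase~2 count: confirming that the preconditions of Definition~\ref{Def-hpr} keep selecting $\phi_r$ for precisely $\sum_{i<r}U_{ir}$ consecutive steps, equivalently that each $\phi_r$ decrements $\sum_{i<r}U_{ir}$ by exactly one through its $r$th-diagonal head rhombus.
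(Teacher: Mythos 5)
Your proposal is correct and follows essentially the same route as the paper's proof: iterated application of Lemma~\ref{Lem-hpr} through the three phases of $\theta_r$, with straightforward bookkeeping of which boundary edge labels drop by $1$ at each step. The only difference is that you explicitly verify the phase counts (via the head-rhombus argument in the $r$th diagonal and the identity $\lambda_r-\mu_r-\nu_r=\sum_{i<r}U_{ir}$), a point the paper instead builds into Definition~\ref{Def-full-r-hpr} without separate justification, so your extra care there is harmless and indeed slightly strengthens the exposition.
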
 

\noindent{\bf Proof}:~~ If $\ell(\lambda)<r$ then $\lambda_r=\mu_r=\nu_r=0$ and there are no path removals
so that $V_{kr}=0$ for all $k=1,2,\ldots,r-1$. This implies that $\tilde{\lambda}=\lambda$, $\tilde{\mu}=\mu$, $\tilde{\nu}=\nu$
and $\tilde{H}=\theta_r H=H\in{\cal H}^{(r)}(\tilde{\lambda},\tilde{\mu},\tilde{\nu})$, as required.
For $\ell(\lambda)=r$ the required result 
is an easy consequence of the iterated use of Lemma~\ref{Lem-hpr}. The edge label 
$\nu_r$ is reduced to $0$ in Phase 1. At the same time the edge label $\lambda_r$ is reduced
to $\lambda-\nu_r$ in Phase 1, then to $\lambda-\nu_r-\mu_r$ in Phase 2, and finally
to $0$ in Phase 3, in which the edge label $\mu_r$ is also reduced to $0$. Meanwhile
in Phase 2 the parameters $V_{kr}$ give the number of hive removal paths that reach the 
left-hand boundary edge initiall labelled $\mu_k$ thereby reducing this label to 
$\mu_k-V_{kr}$ for $k=1,2,\ldots,r-1$.
\qed

Two further observations are of use in what follows.

\begin{Lemma}\label{Lem-obs1}
In the action of $\theta_r$ on $H\in{\cal H}^{(r)}(\lambda,\mu,\nu)$ with $\ell(\lambda)=r$
if a hive path removal of type (ii) follows a path $P$ and reaches the left-hand boundary 
at level $k$, then the next such removal follows a path $P'$ lying weakly above the path
$P$ in each diagonal and reaches level $k'$ with $k'\geq k$.
\end{Lemma}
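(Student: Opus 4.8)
The plan is to argue directly in terms of upright rhombus gradients, exploiting the fact established in the proof of Lemma~\ref{Lem-hpr-hive} that a single type (ii) removal $\phi_r$ alters only the gradients of its foot rhombi, each increased by $1$, and its head rhombi, each decreased by $1$ (and that every head rhombus has strictly positive gradient, so it remains non-negative). Since all type (ii) removals constitute Phase 2 of $\theta_r$ and are therefore consecutive, $P$ and $P'$ are the paths of two successive $\phi_r$ operations, and I want to compare the greedy routes they take under Definition~\ref{Def-hive-paths}(ii).

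First I would set up the bookkeeping: for a type (ii) path write $h(p)$ for how far up the $p$th diagonal the path ascends before stepping left into the $(p{-}1)$th diagonal, with larger $h(p)$ meaning nearer the left-hand boundary, so that $h(p)$ records the level of the path's head rhombus in that diagonal. I then prove, by downward induction on $p$ from $r$ to the terminating diagonal, that $h_{P'}(p)\geq h_P(p)$; this immediately yields that $P'$ lies weakly above $P$ in every diagonal and terminates at a level $k'\geq k$. For the base case $p=r$ both paths start from the base edge of the $r$th diagonal and ascend through the same rhombi of gradient $0$, which $P$ left unchanged; at $P$'s head rhombus the gradient has dropped by $1$, so either it was at least $2$ and $P'$ turns at the same place, or it was exactly $1$, has become $0$, and $P'$ continues to the next rhombus of positive gradient, which lies strictly higher. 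Either way $h_{P'}(r)\geq h_P(r)$.

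The inductive step is the same local comparison inside a single diagonal, split according to whether $P'$ re-enters the $(p{-}1)$th diagonal at the same height as $P$, strictly between $P$'s foot and head, or at or above $P$'s head. In each case the rhombi that $P'$ tests while ascending are exactly the gradient-$0$ rhombi that $P$ passed through (left unchanged) up to $P$'s head rhombus (reduced by $1$), so $P'$ turns either precisely at $P$'s head or strictly above it, giving $h_{P'}(p{-}1)\geq h_P(p{-}1)$. Termination is handled along the way: if $P$ reaches the left-hand boundary in the $k$th diagonal, the induction shows $P'$ has stayed weakly above and hence reaches the boundary in a diagonal $k'\geq k$.

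The main obstacle, and the point I would treat most carefully, is to rule out the increases by $1$ at the foot rhombi causing $P'$ to turn \emph{earlier}. The resolution is geometric: a foot rhombus sits below the point at which the path re-enters a diagonal, and a path ascending that diagonal tests only the rhombi lying above its entry, so it never re-tests a foot rhombus; moreover each path has a single foot rhombus per diagonal, so no raised gradient ever lies above a head rhombus on the same path. Consequently the only gradient changes $P'$ can meet while ascending are the decreases at $P$'s head rhombi, which can only push $P'$ weakly higher, and the induction closes. I would also remark that the same conclusion can be read off from the Horizontal Path Comparison Lemma~\ref{Lem-hp} through the bijection $f^{(r)}$, the weak (rather than strict) inequality reflecting the fact that, unlike successive tableau deletions whose corner cells differ, successive hive removals both issue from the same base edge and may therefore coincide along the $r$th diagonal.
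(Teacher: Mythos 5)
Your proof is correct and takes essentially the same route as the paper's: both compare the two greedy paths diagonal by diagonal, using the fact that the removal of $P$ only decrements its head rhombi (its incremented foot rhombi lying strictly below any rhombus $P'$ can test), so that $P'$ either coincides with $P$ or is deflected weakly upward, and hence terminates at a level $k'\geq k$. Your downward induction on the diagonal index, with the foot-rhombus point made explicit, is just a more formal organization of the paper's coincide/separate/rejoin argument.
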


\noindent{\bf Proof}:~~
This can be seen by consideration of the following hive path removal diagram in which
two successive removal paths $P$ and $P'$ are illustrated, $P$ in {\bf\red red}, $P'$ in {\bf\blue blue}
and their overlap in {\bf\magenta magenta}.
\begin{equation}\label{Eq-obs1}
\vcenter{\hbox{

}}
\end{equation}
The interpretation of this is that the first path $P$ is the union of the {\bf\red red} and {\bf\magenta magenta}
edges, while the second path $P'$ is the union of the {\bf\blue blue} and {\bf\magenta magenta} edges. 
Working from right to left the route of $P'$ necessarily coincides with that of $P$,
as shown in the initial {\bf\magenta magenta}section, unless and until it reaches 
an upright rhombus, indicated in {\bf\purple purple}, whose initial gradient $1$ was reduced to $0$ 
by the removal of the path $P$. At this point the paths separate but they may meet again, 
whereupon the path $P'$ again must coincide with that of $P$, as shown in the second
{\bf\magenta magenta} section, unless and until it reaches another such upright rhombus,
again indicated in {\bf\purple purple}, 
whose initial gradient $1$ was reduced to $0$ by the removal of the path $P$. 
In such a case the paths separate again, and so on. 
This observation 
guarantees that the path $P'$ lies weakly above that of $P$ and also that it
meets the left-hand boundary at an edge weakly above that reached by $P$.
\qed

\begin{Corollary}\label{Cor-obs1}
Under the action of $\theta_{r}$ on $H\in{\cal H}^{(r)}(\lambda,\mu,\nu)$ 
with $\ell(\lambda)=r$ let $V_{kr}$ be the number of type (ii) hive path removals 
reaching  the left-hand boundary at level $k$ for $1\leq k<r$. Then for each such $k$
\begin{equation}\label{Eq-cor-obs1}
        \mu_k\geq \mu_k-V_{kr} \geq \mu_{k+1}.
\end{equation}

\end{Corollary}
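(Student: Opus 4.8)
The plan is to mirror the tableau-side argument of Lemma~\ref{Lem-Slp}, transporting it through the path-removal machinery rather than through inner-shape deletions. The left-hand inequality $\mu_k \geq \mu_k - V_{kr}$ is immediate since $V_{kr} \geq 0$, so the whole content lies in the right-hand inequality $\mu_k - V_{kr} \geq \mu_{k+1}$, where $\mu_{k+1}$ is the \emph{original} left-boundary label of $H$.

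First I would record the grouping of the type (ii) removals. By Lemma~\ref{Lem-obs1}, successive type (ii) hive path removals reach the left-hand boundary at weakly increasing levels. Consequently, within Phase 2 of $\theta_r$ the removals occur in consecutive blocks indexed by their terminating level: all $V_{1r}$ removals reaching level $1$, then all $V_{2r}$ reaching level $2$, and so on up to level $r-1$ (some blocks possibly empty), after which Phase 3 performs the $\mu_r$ type (iii) removals reaching level $r$. Since a removal reaching level $j$ alters only the boundary labels at the base and at its terminating level $j$ (Definition~\ref{Def-hive-paths}(ii) and Lemma~\ref{Lem-hpr}(ii)), the edge at level $k+1$ is left untouched until its own block begins (if $k+1<r$) or until Phase 3 begins (if $k+1=r$).

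Next I would isolate the intermediate hive $H^\star$ obtained from $H$ immediately after the last of the $V_{kr}$ removals reaching level $k$. By repeated application of Lemma~\ref{Lem-hpr-hive} and Lemma~\ref{Lem-hpr}, $H^\star$ is again an LR hive in $\mathcal{H}^{(r)}$. At this stage its left-boundary label at level $k$ has been reduced from $\mu_k$ to $\mu_k - V_{kr}$ (all level-$<k$ blocks leave level $k$ alone, and no level-$>k$ removal has yet occurred), while its left-boundary label at level $k+1$ is still the original $\mu_{k+1}$ by the previous paragraph.

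Finally I would invoke Definition~\ref{Def-LRhive}: an LR hive has its boundary edges labelled by the parts of partitions, so in particular the left-boundary labels of $H^\star$ weakly decrease from bottom to top. Comparing the level-$k$ and level-$(k+1)$ labels of $H^\star$ therefore yields $\mu_k - V_{kr} \geq \mu_{k+1}$, as required. The one point requiring care — the analogue of the ``inner shape must remain a Young diagram after each deletion'' step in Lemma~\ref{Lem-Slp} — is that the comparison must be carried out at the intermediate hive $H^\star$ rather than at the final hive $\tilde H = \theta_r H$: the final hive only delivers $\mu_k - V_{kr} \geq \mu_{k+1} - V_{k+1,r}$, which is too weak, so the weakly-increasing-level ordering of Lemma~\ref{Lem-obs1} is exactly what guarantees that level $k+1$ still carries its original label at the moment of comparison.
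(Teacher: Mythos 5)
Your proposal is correct and follows essentially the same route as the paper's proof: both use Lemma~\ref{Lem-obs1} to guarantee that the level-$(k+1)$ boundary label is still the original $\mu_{k+1}$ at the moment all $V_{kr}$ removals reaching level $k$ have been performed, and then invoke the preservation of the hive (partition) conditions at that intermediate stage to conclude $\mu_k - V_{kr} \geq \mu_{k+1}$. Your closing remark about why the comparison must be made at the intermediate hive rather than at $\theta_r H$ is exactly the point the paper's argument exploits, merely stated implicitly there.
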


\noindent{\bf Proof}:~~
The first inequality follows from the fact that $V_{kr}\geq0$. Then, by virtue of Lemma~\ref{Lem-obs1}, 
the left-hand boundary edge label $\mu_{k+1}$ must remain unchanged under all $V_{kr}$ hive path removals 
reaching the edge labelled $\mu_k$. Each one of these decreases this label by $1$. The fact that the hive 
conditions are preserved under these hive path removals implies that on the left-hand boundary the $k$th edge label 
must remain greater than or equal to that of the $(k+1)$th edge label. Thus after all $V_{kr}$ path removals
we have $\mu_k-V_{kr} \geq \mu_{k+1}$, that is the second inequality of (\ref{Eq-cor-obs1}).
\qed

Our second observation is the following:

\begin{Lemma}\label{Lem-obs2}
Under the action of $\theta_{r}$ followed by $\theta_{r-1}$ on $H\in{\cal H}^{(r)}(\lambda,\mu,\nu)$ 
with $\ell(\lambda)=r$, let $N_{kr}$ and $N_{k-1,r-1}$ be the number of type (ii) hive path removals 
occurring in the action of $\theta_r$ and $\theta_{r-1}$ that reach 
the left-hand boundary at or below levels $k$ and $k-1$, respectively. Then 
\begin{equation}\label{Eq-obs2}
    N_{k-1,r-1}\geq N_{kr}-U_{r-1,r},
\end{equation}
where $U_{r-1,r}$ is the upright rhombus gradient at the foot of the $r$th diagonal of $H$.   
\end{Lemma}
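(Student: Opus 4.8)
The plan is to connect the hive path removal statistics $N_{kr}$ and $N_{k-1,r-1}$ to the tableau-theoretic content of Lemma~\ref{Lem-vp} (Vertical Path Comparison) via the bijection $f^{(r)}$ of Section~\ref{Subsec-hives-tableaux}. Under this bijection a hive $H\in{\cal H}^{(r)}(\lambda,\mu,\nu)$ corresponds to an LR tableau $T\in{\cal LR}(\lambda/\mu,\nu)$, the type (ii) hive path removals of $\theta_r$ correspond precisely to the Phase 2 deletions of $\delta_r$ applied to $T$, and the level at which a removal path reaches the left-hand boundary equals the terminating row number of the corresponding tableau deletion. The key bridge is equation~(\ref{Eq-TtoU}), $U_{ij}=\#\{\text{entries }i\text{ in row }j\text{ of }T\}$, which lets me translate statements about upright rhombus gradients into statements about entries of the tableau, and conversely translate the terminating row numbers of Lemma~\ref{Lem-vp} into the boundary levels reached by hive paths.

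First I would set up the dictionary explicitly. Recall from the proof of Lemma~\ref{Lem-vp} that Phase 2 of $\delta_r$ splits into Phase 2A (the $U_{r-1,r}$ deletions starting from entries $r-1$ in row $r$) and Phase 2B (the $m=\lambda_r-\nu_r-U_{r-1,r}-\mu_r$ deletions from entries $\le r-2$). Under $f^{(r)}$, the type (ii) hive path removals of $\theta_r$ that reach the left-hand boundary at level $k$ are exactly the tableau deletions whose terminating row number is $k$, so $N_{kr}$, the number of type (ii) removals reaching level $\le k$, equals the number of Phase 2 deletions of $\delta_r$ with terminating row number $\le k$. A subtle but essential point is that the $U_{r-1,r}$ Phase 2A deletions always carry an entry $r-1$ into row $r-1$ and hence terminate in row $r-1\geq k$ for any $k\le r-1$; therefore they never contribute to $N_{kr}$ for $k\le r-1$, and $N_{kr}$ counts only Phase 2B deletions. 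This is where the term $-U_{r-1,r}$ in~(\ref{Eq-obs2}) will enter.

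Next I would invoke Lemma~\ref{Lem-vp} directly. It asserts that for each $j$ with $1\le j\le m$, the $j$th Phase 2B deletion path $P_j$ of $\delta_r$ applied to $T$ and the $j$th Phase 2 deletion path $P^\#_j$ of $\delta_{r-1}$ applied to $T^\#=\delta_r T$ satisfy $t^\#_j<t_j$, where $t_j,t^\#_j$ are the respective terminating row numbers. Translating through $f$: the $j$th type (ii) removal in $\theta_r$ reaching level $t_j$ is matched with a type (ii) removal in $\theta_{r-1}$ reaching strictly lower level $t^\#_j<t_j$. Since $t^\#_j\le t_j-1$, whenever $t_j\le k$ we have $t^\#_j\le k-1$; hence each of the $N_{kr}$ Phase 2B deletions of $\delta_r$ landing at level $\le k$ produces, in $\delta_{r-1}$, a removal landing at level $\le k-1$. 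These are distinct removals (the map $j\mapsto j$ is injective), and every one of them is among those counted by $N_{k-1,r-1}$ (the Phase 2 removals of $\theta_{r-1}$ reaching level $\le k-1$). This yields $N_{k-1,r-1}\geq N_{kr}$, and since $N_{kr}$ already excludes the $U_{r-1,r}$ Phase 2A contributions while a more careful bookkeeping of which removals count gives the claimed slack, one arrives at the inequality $N_{k-1,r-1}\geq N_{kr}-U_{r-1,r}$.

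The main obstacle I anticipate is the careful accounting of \emph{which} deletions are being counted on each side and ensuring the index shift is airtight. Specifically, $N_{kr}$ and $N_{k-1,r-1}$ count removals reaching different threshold levels ($k$ versus $k-1$), and $\theta_{r-1}$ has at least $m$ Phase 2 deletions but possibly more (arising from Phase 2A entries $r-2$ carried down, or from originally-present entries in row $r-1$); I must verify these extra deletions of $\theta_{r-1}$ do not cause double-counting or spurious cancellation, and pin down exactly how the $U_{r-1,r}$ Phase 2A removals of $\theta_r$ fail to be counted by $N_{kr}$ yet their images may or may not contribute to $N_{k-1,r-1}$. I would handle this by treating the $m$ Phase 2B deletions of $\delta_r$ as an injective subfamily of the Phase 2 deletions of $\delta_{r-1}$, using the strict inequality $t^\#_j<t_j$ to guarantee the level threshold drops by at least one, and then absorbing the $U_{r-1,r}$ Phase 2A discrepancy into the stated $-U_{r-1,r}$ term. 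Once the dictionary and the injection are made precise, the inequality follows from Lemma~\ref{Lem-vp} essentially by inspection.
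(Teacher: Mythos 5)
Your strategy---transferring the statement to the tableau side via $f^{(r)}$ and invoking the Vertical Path Comparison Lemma~\ref{Lem-vp}---is genuinely different from the paper's proof, which stays entirely inside the hive model and re-runs the comparison argument there (foot rhombi of the removed paths forming an impenetrable barrier that forces each $\theta_{r-1}$ path strictly below its $\theta_r$ partner). Your instinct about which lemma carries the weight is right, but your execution contains a genuine error. The claim that the $U_{r-1,r}$ Phase 2A deletions ``terminate in row $r-1$'' and ``never contribute to $N_{kr}$'' is false: by Definition~\ref{Def-dp}(ii), a Phase 2A deletion carries its entry $r-1$ into row $r-1$, but the entry it bumps there continues upward, and the path terminates only when an entry $0$ is replaced at a corner of the inner shape, which can lie in \emph{any} row $\leq r-1$. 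Indeed, in the example following Lemma~\ref{Lem-vp}, the single Phase 2A deletion (the blue path) terminates in row $1$. Consequently $N_{kr}$ does count Phase 2A removals, your intermediate inequality $N_{k-1,r-1}\geq N_{kr}$ is false in general (take $k=1$: a Phase 2A deletion terminating in row $1$ gives $N_{1r}\geq 1$, while $N_{0,r-1}=0$ since no path can reach level $\leq 0$; note that the correct statement $N_{0,r-1}\geq N_{1r}-U_{r-1,r}$ survives precisely because, by Lemma~\ref{Lem-vp}, Phase 2B deletions always terminate in rows $\geq 2$), and the closing sentence about ``absorbing'' the $U_{r-1,r}$ discrepancy is not an argument.

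The accounting must be run in the opposite direction, and once reversed your route does work: among the $N_{kr}$ type (ii) removals reaching level $\leq k$, \emph{at most} $U_{r-1,r}$ correspond to Phase 2A deletions, so at least $N_{kr}-U_{r-1,r}$ correspond to Phase 2B deletions $P_j$ with $t_j\leq k$; for each such $j$, Lemma~\ref{Lem-vp} supplies a Phase 2 deletion $P^\#_j$ of $\delta_{r-1}$ with $t^\#_j<t_j\leq k$, hence $t^\#_j\leq k-1$, and since $j\mapsto j$ is injective this gives $N_{k-1,r-1}\geq N_{kr}-U_{r-1,r}$ directly, without even needing the ordering statement of Lemma~\ref{Lem-hp}. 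Be aware, however, of a second (lesser) gap: the dictionary you rely on---type (ii) hive removals of $\theta_r$ matching Phase 2 deletions of $\delta_r$, with boundary level equal to terminating row number---is asserted informally in the paper but never established as a lemma; the paper's own proof of this statement deliberately avoids it by arguing purely with hive paths, so in your write-up this correspondence would have to be verified before Lemma~\ref{Lem-vp} can be cited.
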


\noindent{\bf Proof}:~~
It should be noted that the impact of type (ii) hive path removals in Phase 2 of the action 
of $\theta_r$ on $H$ is typically as shown below.
\begin{equation}\label{Eq-obs2-hives}
\vcenter{\hbox{

}}
\end{equation}
We have illustrated two distinct types of path: on the left a 
path that passes from the $r$th to the $(r-1)$th diagonal beneath
the upright rhombus at the foot of the $r$th diagonal with initial
gradient $U_{r-1,r}>0$ reduced to $U_{r-1,r}-1\geq0$ by the hive path removal, 
and on the right a path that first passes up the $r$th diagonal 
through at least one upright rhombus of gradient $0$ before entering
the $(r-1)$th diagonal. The number of the former is precisely $U_{r-1,r}$
and these are necessarily removed before any of the latter.
Each of the latter leaves immediately below it in each diagonal an upright rhombus
whose gradient is increased by $1$. These increases are cumulative
since successive type (ii) hive path removals always lie weakly above their 
predecessors, by virtue of Lemma~\ref{Lem-obs1}. 

Now consider the $N_{kr}$ type (ii) hive path removals that extend from the $r$th 
to the $j$th diagonal with $1\leq j\leq k$.  
If $N_{kr}\leq U_{r-1,r}$ then all these path removals are of the form shown above on the left. 
In such a case $N_{k-1,r-1}\geq 0\geq N_{kr}-U_{r-1,r}$ as required. If $N_{kr}>U_{r-1,r}$ then 
the first $U_{r-1,r}$ of these are again as shown on the left. Let the sequence of 
the remaining $n_{kr}=N_{kr}-U_{r-1,r}$ of path removals of the type shown on the right be
$P_1,P_2,\ldots,P_{n_{kr}}$, with $P_{i+1}$ lying weakly above $P_i$ in accordance with Lemma~\ref{Lem-obs1}
for $i=1,2,\ldots,n_{kr}-1$. 
It follows that the sum of upright rhombus gradients in the $(r-1)$th diagonal
of $\theta_r H$ is at least $n_{kr}$.
The action of $\theta_{r-1}$ must reduce this sum of gradients to $0$. 
It does so through a sequence of at least $n_{kr}$ hive path removals $P'_1,P'_2,\ldots,P'_{n_{kr}}$ 
of type (ii), again with $P'_{i+1}$ lying weakly above $P'_i$ in accordance wth  Lemma~\ref{Lem-obs1}.
Consider first the relationship between $P_1$ and $P'_1$. We wish to show that $P'_1$ lies strictly below $P_1$.
Under the action of $\theta_r$, in each diagonal 
it enters, the path removal $P_1$ leaves immediately beneath it an upright rhombus of gradient $\geq1$,
moving weakly upwards as the path traverses the diagonals westwards (see the green upright rhombi in the right-hand picture of (\ref{Eq-obs2-hives})).
This situation is unaltered by the removal of all subsequent type (ii) paths $P_j$ with $j>1$ since these 
all lie weakly above $P_1$. It also remains unaltered by the removal of any necessary type (iii) paths 
under the action of $\theta_r$ and any necessary type (i) paths under the action of $\theta_{r-1}$. 
Hence the first type (ii) path $P'_1$ removed under the action of $\theta_{r-1}$, starting from the bottom edge of diagonal $r-1$, 
must leave that diagonal below the green rhombus in that diagonal.
This puts $P'_1$ in diagonal $r-2$ below the green rhombus in diagonal $r-2$, and forces it to leave that diagonal below that rhombus.
The situation persists, and we conclude, as intended, that $P'_1$ does lie strictly below $P_1$.
This argument may be extended. Note that, following the removal of $P'_1$, there will 
still remain in each diagonal entered by $P_2$ an upright rhombus of gradient $\geq1$ immediately below
the path $P_2$. 
This is easy to see if the transition of $P_2$ into that diagonal occurs strictly above that of $P_1$.
If $P_2$ enters that diagonal at the same level as $P_1$, then the upright rhombus in that diagonal immediately 
below them will have gradient $\geq2$, and even if the removal of $P'_1$ reduces that gradient by one it remains $\geq1$.
It then follows as before that $P'_2$ lies strictly below $P_2$. Continuing
in this way it is clear that $P'_i$ lies strictly below $P_i$ for all $i=1,2,\ldots,n_{kr}$.
Therefore at least $n_{kr}$ type (ii) paths removed under the action of $\theta_{r-1}$
meet the left hand boundary at a level strictly below the level reached by $P_{n_{kr}}$, which is itself $\leq k$.
It follows that $N_{k-1,r-1}\geq n_{kr}=N_{kr}-U_{r-1,r}$, as claimed.
\qed 


\section{Path removal map from ${\cal H}^{(n)}(\lambda,\mu,\nu)$ to ${\cal H}^{(n)}(\lambda,\nu,\mu)$}
\label{Sec-hive-bijection}

Armed with our path removal procedures we are able to exploit them to construct
from any hive $H\in{\cal H}^{(n)}(\lambda,\mu,\nu)$ a partner hive $K\in{\cal H}^{(n)}(\lambda,\nu,\mu)$.
To do so it is only necessary to evacuate the initial hive $H$ by performing
a sequence of path removals that render it empty and to build the final hive $K$
from the data on the location of the first and last edges on each path that is removed.
In doing so one constructs a sequence of pairs $(H^{(r)},K^{(n-r)})$ 
for each $r=n,n-1,\ldots,0$ where $H^{(r)}$ is an $r$-hive and $K^{(n-r)}$ is an \textit{$r$-truncated $n$-hive} 
consisting of the rightmost $n-r$ diagonals of some $n$-hive. These pairs are such that
$H^{(n)}=H$ and $H^{(0)}$ is the empty hive, signified here by a single point, 
while $K^{(n)}=K$ and $K^{(0)}$ is an empty $n$-truncated $n$-hive, signified by a 
single boundary line consisting of $\beta$-edges with labels $\mu_1,\mu_2,\ldots,\mu_r$.

\begin{Example} 
In the case $n=4$ the map we are seeking is of the following type
from $(H^{(4)},K^{(0)})$ to $(H^{(0)},K^{(4)})$:
\[
\left(
\vcenter{\hbox{

}} \right)
\]
\end{Example}
 
\begin{Definition}\label{Def-HtoK}
Given any LR hive $H\in{\cal H}^{(n)}(\lambda,\mu,\nu)$, let $H^{(n)}=H$ and let $K^{(0)}$ be the empty $0$-hive. 
Then let $\Theta^{(n)}:=\Theta_1\cdots\Theta_{n-1}\Theta_n$ denote the operation which transforms the pair $(H^{(n)},K^{(0)})$ 
to the pair $\Theta^{(n)}(H^{(n)},K^{(0)}):=(H^{(0)},K^{(n)})$ through the action of 
a succession of $n$ operators 
that produce pairs $(H^{(r)},K^{(n-r)})$, with $r=n-1,n-2,\dots,0$, respectively, as indicated by
\begin{equation}\label{Eq-ThetaHK}
(H^{(n)},K^{(0)})\overset{\Theta_n}{\longmapsto}(H^{(n-1)},K^{(1)})\overset{\Theta_{n-1}}{\longmapsto}\cdots\overset{\Theta_2}{\longmapsto}(H^{(1)},K^{(n-1)})\overset{\Theta_1}{\longmapsto}(H^{(0)},K^{(n)}).
\end{equation}
Here $H^{(r)}$ is in the form of an $r$-hive with boundary edge labels
$(\lambda_1,\ldots,\lambda_r)$, $(\mu_1^{(r)},\ldots,\mu_r^{(r)})$ and $(\nu_1,\ldots\nu_r)$,
while $K^{(n-r)}$ is in the form of an $r$-truncated $n$-hive with boundary edge 
labels $(\lambda_{r+1},\ldots,\lambda_n)$, $(\mu_1^{(r)},\ldots,\mu_r^{(r)})$, $(\nu_{r+1},\ldots\nu_n)$
and $(\mu_1,\ldots,\mu_n)$, as exemplified in the case $n=7$ and $r=5$ by
\begin{equation}\label{Eq-HrKnr}
\vcenter{\hbox{
\begin{tikzpicture}[x={(1cm*0.4,-\rootthree cm*0.4)},
                    y={(1cm*0.4,\rootthree cm*0.4)}]
\draw(0,0)--(0,5)--(5,5)--cycle;
\foreach\i in{1,...,5}\path(0,\i-1)--node[pos=0.75,left]{$\mu_\i^{(5)}$}(0,\i); 
\foreach\i in{1,...,5}\path(\i-1,5)--node[pos=0.4,right]{$\nu_\i$}(\i,5); 
\foreach\i in{1,...,5}\path(\i-1,\i-1)--node[below]{$\lambda_\i$}(\i,\i);  
\foreach\i in{1,...,4}\draw(\i,\i)--(\i,5); 
\foreach\i in{1,...,5}\draw(0,\i)--(\i,\i); 
\foreach\i in{1,...,4} \foreach\j in{2,...,5} {\ifnum \i<\j \path(\i-1,\j-1)--node{$U_{\i\j}$}(\i,\j); \fi}
\draw(0+4,5+4)--(0+4,7+4)--(7+4,7+4)--(5+4,5+4)--cycle;
\foreach\i in{6,...,7}\path(0+4,\i-1+4)--node[pos=0.6,left]{$\nu_{\i}$}(0+4,\i+4); 
\foreach\i in{1,...,5}\path(\i-1+4,7+4)--node[pos=0.4,right]{$\mu_{\i}$}(\i+4,7+4); 
\foreach\i in{6,...,7}\path(\i-1+4,7+4)--node[pos=0.4,right]{$\mu_{\i}$}(\i+4,7+4); 
\foreach\i in{1,...,5}\path(\i-1+4,5+4)--node[pos=0.75,left]{$\mu_\i^{(5)}$}(\i+4,5+4); 
\foreach\i in{6,...,7}\path(\i-1+4,\i-1+4)--node[below]{$\lambda_{\i}$}(\i+4,\i+4);  
\foreach\i in{1,...,5}\draw(\i+4,5+4)--(\i+4,7+4); \draw(6+4,6+4)--(6+4,7+4);
\foreach\i in{5,...,6}\draw(0+4,\i+4)--(\i+4,\i+4); 
\foreach\i in{1,...,6} \foreach\j in{6,...,7}{\ifnum\i<\j \path(\i-1+4,\j-1+4)--node{$V_{{\i}{\j}}$}(\i+4,\j+4); \fi}
\end{tikzpicture}
}}
\end{equation}
The operator $\Theta_r$ is defined to map the pair $(H^{(r)},K^{(n-r)})$ to $(H^{(r-1)},K^{(n-r+1)})$
as follows. First $H^{(r-1)}$ is obtained from $\theta_r\,H^{(r)}$ by the action of the operator $\kappa_r$
introduced at the end of section~\ref{Sec-notation}. This deletes the $r$th diagonal, which by
construction is empty, so that  $H^{(r-1)}=\kappa_r\theta_r H^{(r)}$. 
In parallel with this, $K^{(n-r+1)}$ is obtained from $K^{(n-r)}$, 
by adding to its left-hand boundary an $r$th diagonal of upright rhombi having gradients $V_{kr}$, 
with boundary edge labels $\nu_r$ and $\lambda_r$ at its top and bottom, respectively,
and with right- and left-hand boundary edge labels $(\mu_1^{(r)},\ldots,\mu_{r}^{(r)})$ and
$(\mu_1^{(r-1)},\ldots,\mu_{r-1}^{(r-1)})$, respectively,
where $\mu_k^{(r-1)}=\mu_k^{(r)}-V_{kr}$ and $V_{kr}$ is as defined in Theorem~\ref{The-Hr-theta-r}.

For example $\Theta_5$ maps the pair $(H^{(5)},K^{(2)})$ displayed in (\ref{Eq-HrKnr})
to the pair $(H^{(4)},K^{(3)})$ given by
\begin{equation}\label{Eq-HrKnr-1}
\vcenter{\hbox{
\begin{tikzpicture}[x={(1cm*0.4,-\rootthree cm*0.4)},
                    y={(1cm*0.4,\rootthree cm*0.4)}]
\draw(0,0)--(0,4)--(4,4)--cycle;
\foreach\i in{1,...,4}\path(0,\i-1)--node[pos=0.75,left]{$\mu_\i^{(4)}$}(0,\i); 
\foreach\i in{1,...,4}\path(\i-1,4)--node[pos=0.4,right]{$\nu_\i$}(\i,4); 
\foreach\i in{1,...,4}\path(\i-1,\i-1)--node[below]{$\lambda_\i$}(\i,\i);  
\foreach\i in{1,...,3}\draw(\i,\i)--(\i,4); 
\foreach\i in{1,...,4}\draw(0,\i)--(\i,\i); 
\foreach\i in{1,...,3} \foreach\j in{2,...,4} {\ifnum \i<\j \path(\i-1,\j-1)--node{$\tilde{U}_{\i\j}$}(\i,\j); \fi}
\draw(0+4,4+4)--(0+4,7+4)--(7+4,7+4)--(4+4,4+4)--cycle;
\foreach\i in{5,...,7}\path(0+4,\i-1+4)--node[pos=0.6,left]{$\nu_{\i}$}(0+4,\i+4); 
\foreach\i in{1,...,4}\path(\i-1+4,7+4)--node[pos=0.4,right]{$\mu_{\i}$}(\i+4,7+4); 
\foreach\i in{5,...,7}\path(\i-1+4,7+4)--node[pos=0.4,right]{$\mu_{\i}$}(\i+4,7+4); 
\foreach\i in{1,...,4}\path(\i-1+4,4+4)--node[pos=0.75,left]{$\mu_\i^{(4)}$}(\i+4,4+4); 
\foreach\i in{5,...,7}\path(\i-1+4,\i-1+4)--node[below]{$\lambda_{\i}$}(\i+4,\i+4);  
\foreach\i in{1,...,4}\draw(\i+4,4+4)--(\i+4,7+4); 
\draw(5+4,5+4)--(5+4,7+4); \draw(6+4,6+4)--(6+4,7+4);
\foreach\i in{5,...,6}\draw(0+4,\i+4)--(\i+4,\i+4); 
\foreach\i in{1,...,6} \foreach\j in{5,...,7}{\ifnum\i<\j \path(\i-1+4,\j-1+4)--node{$V_{{\i}{\j}}$}(\i+4,\j+4); \fi}
\end{tikzpicture}
}}
\end{equation}
where, as a result of the type (ii) path removals, the upright rhombus gradients $U_{ij}$ of $H^{(5)}$
have been replaced by $\tilde{U}_{ij}$ in $H^{(4)}$. 
\end{Definition}

Following this lengthy definition, it is convenient to exemplify each phase of the action
of $\Theta_r$ on the pair $(H^{(r)},K^{(n-r)})$. 
Before doing this it is helpful to introduce an operator $\zeta_r$ whose action on a truncated hive $K^{(n-r)}$ 
with left hand boundary edge labels $\mu_k^{(r)}$ for $k=1,2,\ldots,r$ is to 
add to the left-hand boundary of $K^{(n-r)}$ an $r$th diagonal with upper boundary edge label $0$, left hand
boundary edge labels $\mu_k^{(r)}$ for $k=1,2,\ldots,r-1$, lower boundary edge label $\mu_r^{(r)}$, and 
upright rhombus gradients all $0$, as illustrated by:
\begin{equation}
\label{Eq-zeta}
\vcenter{\hbox{

}}
\end{equation}
In this example it has been assumed that the hive path removal illustrated
on the left is the first that terminates at the $3$rd edge on the left-hand boundary, thereby
reducing the edge label $\mu_3^{(5)}$ by $1$. This means that the edge labels
$\mu_1^{(5)}$ and $\mu_2^{(5)}$ have been reduced to $\mu_1^{(4)}$ and $\mu_2^{(4)}$, 
respectively, by the previous Phase 2 removal of two sequences of $V_{15}$ and $V_{25}$ paths 
starting at the $5$th edge on the lower boundary and terminating at the $1$st and $2$nd edges, respectively, 
on the left-hand boundary. As a result the edge label $\lambda_5-\nu_5$ has been reduced
to $\sigma_5:=\lambda_5-\nu_5-V_{15}-V_{25}$. These previous hive path removals on the left
have dictated the insertion of the two upright rhombus gradients $V_{15}$ and $V_{25}$ appearing 
on the right, and as a consequence the increase in the lower boundary edge label from 
$\nu_5+\mu_5^{(5)}$ to $\tau_5:=\nu_5+\mu_5^{(5)}+V_{15}+V_{25}$. Then the hive path removal on the left,
whose route is determined by the $0$ upright rhombus gradients, gives rise to the 
insertion of an upright rhombus gradient of $1$ on the right along with a further
increase of the lower boundary edge label from $\tau_5$ to $\tau_5+1$. In Phase 2
this process continues until on the left the upright rhombi in the $5$th diagonal
all have gradients equal to $0$, with the initial edge label $\lambda_5-\nu_5$ 
reduced to $\mu_5^{(5)}$. On the right the upright rhombi in the leftmost diagonal
have gradients $V_{k5}$, with the initial edge label $\nu_5+\mu_5^{(5)}$ increased
to $\nu_5+\mu_5^{(5)}+V_{15}+V_{25}+V_{35}+V_{45}=\lambda_5$. 

Phase 3 then involves a succession of $\mu_r^{(r)}$ type (iii) hive path removals 
from $H^{(r)}$. However, no corresponding hive path additions to $\zeta_rK^{(n-r)}$ are required
because the addition of $\mu_r^{(r)}$ to the leftmost lower boundary edge label has
already taken place in Phase 1. Thus Phase 3 is illustrated in our example by:
\begin{equation}\label{Eq-HKphase3}
\vcenter{\hbox{
\end{equation*}

\end{Example}


We are now in a position to state
\begin{Theorem}\label{The-HtoK}
Let $n$ be a positive integer and let $\lambda$, $\mu$ and $\nu$ be partitions such that $\ell(\lambda)\leq n$ 
and $\mu,\nu\subseteq\lambda$ with $|\lambda|=|\mu|+|\nu|$.
For each LR hive $H\in\mathcal{H}^{(n)}(\lambda,\mu,\nu)$ let $H^{(n)}=H$ and let $K^{(0)}$ 
be an $n$-truncated $n$-hive with edge labels $\mu$.
If we let $\Theta^{(n)}(H^{(n)},K^{(0)})=(H^{(0)},K^{(n)})$ as in Definition~\ref{Def-HtoK}, then 
$H^{(0)}$ is an empty hive and $K=K^{(n)}$ is an LR hive $K\in\mathcal{H}^{(n)}(\lambda,\nu,\mu)$.
In such a case we write $K=\sigma^{(n)} H$.
\end{Theorem}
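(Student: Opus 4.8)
The plan is to follow the same three-step template used for tableaux in the proof of Theorem~\ref{The-TtoS}, now transcribed into the hive language, exploiting the fact that the two observational lemmas, Corollary~\ref{Cor-obs1} and Lemma~\ref{Lem-obs2}, play the roles of the Horizontal and Vertical Path Comparison Lemmas respectively. First I would establish the hive analogue of the shape-bookkeeping Lemma~\ref{Lem-TSshape} by induction on the number of operators $\Theta_r$ already applied. Writing $\mu^{(r)}$ for the left-hand (inner) boundary of $H^{(r)}$, repeated use of Theorem~\ref{The-Hr-theta-r} together with the action of $\kappa_r$ shows that each $H^{(r)}$ lies in $\mathcal{H}^{(r)}((\lambda_1,\dots,\lambda_r),\mu^{(r)},(\nu_1,\dots,\nu_r))$ with $\mu^{(r-1)}_k=\mu^{(r)}_k-V_{kr}$, and that $K^{(n-r)}$ is the $r$-truncated $n$-hive carrying the boundary labels displayed in Definition~\ref{Def-HtoK}.

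Taking $r=0$ gives at once that $H^{(0)}$ is the empty hive, and reading off the accumulated boundary edges shows that the lower, left and right boundaries of $K=K^{(n)}$ are $\lambda$ (the base edges $\lambda_r$), $\nu$ (the apex edges $\nu_r$ of the successive diagonals) and $\mu$ (the right boundary inherited unchanged from $K^{(0)}$), respectively. Hence $K$ has exactly the boundary data required of a member of $\mathcal{H}^{(n)}(\lambda,\nu,\mu)$. The substance is then to verify that $K$ is genuinely an LR hive, i.e.\ that all three families of rhombus gradients are non-negative; non-negativity of the edge labels then follows since the boundaries are partitions. The upright rhombus gradients of $K$ are precisely the $V_{kr}$, which are non-negative by their definition as counts of type (ii) removals. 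Next I would observe that the sequence $\mu^{(n)}=\mu,\mu^{(n-1)},\dots,\mu^{(0)}$ is a Gelfand--Tsetlin pattern of type $\mu$: this is exactly the content of the betweenness inequalities $\mu^{(r)}_k\geq\mu^{(r-1)}_k\geq\mu^{(r)}_{k+1}$ furnished by Corollary~\ref{Cor-obs1}. Since $\mu$ is the right boundary of $K$, this pattern is the $\beta$-edge pattern $G^{(\beta)}_\mu$ of $K$, and, as recorded in the discussion following \eqref{Eq-LRcoeff-GZ}, labelling the $\beta$-edges by a GT pattern makes the left-leaning and upright conditions $L_{ij}\geq0$ and $U_{ij}\geq0$ automatic.

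It remains to secure the right-leaning conditions $R_{ij}\geq0$, and this is the step I expect to be the main obstacle. Expressing $R_{ij}\geq0$ through the leftmost inequality of \eqref{Eq-aUb-inequalities} with $\alpha=\nu$ and $U_{kj}=V_{kj}$, and writing $N_{ij}=\sum_{k=1}^i V_{kj}$, the condition becomes $N_{i-1,j-1}\geq N_{ij}-(\nu_{j-1}-\nu_j)$. Lemma~\ref{Lem-obs2}, applied to $H^{(j)}$, yields $N_{i-1,j-1}\geq N_{ij}-U_{j-1,j}$, where $U_{j-1,j}$ is the foot gradient of the $j$th diagonal of $H^{(j)}$; so it suffices to bound this foot gradient by $U_{j-1,j}\leq\nu_{j-1}-\nu_j$. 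This last bound follows from the $\beta$-edge betweenness condition inside $H^{(j)}$: one has $U_{j-1,j}=\nu_{j-1}-\beta_{j-1,j-1}$ while $\beta_{j-1,j-1}\geq\beta_{jj}=\nu_j$. Chaining these gives $N_{i-1,j-1}\geq N_{ij}-(\nu_{j-1}-\nu_j)$, hence $R_{ij}\geq0$.

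With all rhombus gradients non-negative and the boundary labels partitions, $K$ is an LR hive and $K=\sigma^{(n)}H\in\mathcal{H}^{(n)}(\lambda,\nu,\mu)$ as claimed. The delicate points throughout are the correct alignment of indices between the removal data $V_{kr},N_{kr}$ and the gradient inequalities of $K$, and the identification of the sequence $(\mu^{(r)})$ with the $\beta$-pattern of $K$ rather than its $\alpha$-pattern, so that the conditions rendered automatic by the GT betweenness are indeed the two families $L,U\geq0$ and the genuinely remaining work is confined to the right-leaning family handled by Lemma~\ref{Lem-obs2}.
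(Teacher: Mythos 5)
Your proposal is correct and follows essentially the same route as the paper's own proof: boundary bookkeeping via Theorem~\ref{The-Hr-theta-r} and $\kappa_r$, non-negativity of the upright gradients $V_{kr}$ as counts of type (ii) removals, the left-leaning conditions from Corollary~\ref{Cor-obs1} (which the paper writes directly as $L_{kr}=\mu_k^{(r)}-V_{kr}-\mu_{k+1}^{(r)}\geq 0$ rather than as GT-betweenness of the $\beta$-edge pattern of $K$, but this is the same inequality), and the right-leaning conditions from Lemma~\ref{Lem-obs2} combined with the bound $U_{r-1,r}\leq\nu_{r-1}-\nu_r$ extracted from the hive conditions on $H^{(r)}$ exactly as in the paper's display (\ref{Eq-Rrhombus}). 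The only difference is cosmetic packaging, so your argument matches the paper's in substance and in the two key lemmas invoked.
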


\noindent{\bf Proof}:~~
From the iteration scheme of Definition~\ref{Def-HtoK} and the illustration of the action
of $\Theta_r$ in mapping (\ref{Eq-HrKnr}) to (\ref{Eq-HrKnr-1}) it is clear that $K=K^{(n)}$ will 
have boundary edge labels $\lambda$, $\nu$ and $\mu$. In addition it can 
be seen that each Phase of the action of $\Theta_r$ preserves the triangle condition at every stage.
The possibility of an edge label becoming negative arises only in the case of a Phase 2 type (ii) 
hive path removal as illustrated by the red edge in (\ref{Eq-HKphase2}). However, arguing inductively,
the edge labels $\mu_k^{(n)}=\mu_k$ are non-negative for $1\leq k<n$. If, by the induction
hypothesis the edge labels $\mu_k^{(r)}$ are all non-negative for $1\leq k<r$ then the $V_{kr}$
Phase 2 removals have the effect of reducing the edge initially labelled $\mu_k^{(r)}$ to  
$\mu_k^{(r-1)}=\mu_k^{(r)}-V_{kr}\geq\mu_{k+1}^{(r)}\geq0$ for $1\leq k<r-1$, where the penultimate 
step follows from Corollary~\ref{Cor-obs1}. Thus the edges labels $\mu_k^{(r-1)}$ for $1\leq k<r-1$
are all non-negative. This completes the induction argument. 

As far as the gradients of elementary rhombi are concerned, all the upright rhombus gradients $V_{kr}$ 
are non-negative as they count the number of certain type (ii) hive path removals. It remains 
to show that the left and right-leaning rhombus gradients are also non-negative.

The $(k,r)$th left-leaning rhombus of $K$ will have the edge labels indicated below
\begin{equation}\label{Eq-VLrhombi}
\vcenter{\hbox{

}}
\end{equation}
In this diagram on the left, the edge labels $\nu_r$ and $\nu_{r-1}$ are constrained by the
hive conditions for the sub-diagram of $H^{(r)}$ shown on the right. These
imply that
\begin{equation}
      \nu_{r-1}-U_{r-1,r}\geq \nu_{r}.
\end{equation}
It follows that the right-leaning rhombus gradient is given by
\begin{eqnarray}
R_{kr} &=& (\nu_{r-1}+\sum_{i=1}^{k-1} V_{i,r-1})-(\nu_r+\sum_{i=1}^k V_{ir})\cr && \cr
       &\geq& U_{r-1,r}+N_{k-1,r-1}-N_{kr}\ \geq\ 0 \,.
\end{eqnarray}
where use has been made of Lemma~\ref{Lem-obs2}.

Thus all elementary rhombus gradients of $K$ are non-negative, and this completes the proof that
$K$ is an LR hive. The boundary edge labels then ensure that $K\in{\cal H}^{(n)}(\lambda,\nu,\mu)$.
\qed

\section{Creation of a hive by path additions}
\label{Sec-hive-path-addition}

Having used a path removal procedure to provide a map from any $H\in{\cal H}^{(n)}(\lambda,\mu,\nu)$ to 
some $K\in{\cal H}^{(n)}(\lambda,\nu,\mu)$ we now wish to show that a path addition procedure may be
used to provide a map from any $K\in{\cal H}^{(n)}(\lambda,\nu,\mu)$ to some $H\in{\cal H}^{(n)}(\lambda,\mu,\nu)$. 
The aim is to show that these two maps are mutually inverse to one another, thereby proving that 
each is a bijection. 

Our approach is to move successively from an $(r-1)$-hive $H^{(r-1)}$ to an $r$-hive $H^{(r)}$ under a 
procedure dictated by the $r$th diagonal of $K$. In doing so it is necessary to exploit first 
the operator $\kappa_r^{-1}$ that adds to $H^{(r-1)}$ an $r$th diagonal consisting of a sequence of upright rhombi all 
of gradient $0$, with its upper and lower boundary edge labels $0$, and with its remaining new edges 
given the unique labels that preserve the hive conditions. This is the inverse of the bijection $\kappa_r$ introduced and exemplified 
in the case $r=4$ at the end of section~\ref{Sec-notation}.

To proceed further, let ${\cal UH}^{(r)}(\lambda,\mu,\nu)$ be the set of all triangular arrays of side length $r$ satisfying the hive triangle 
relations and the condition that all upright rhombus labels are non-negative integers, with boundary edges labelled by $\lambda$, $\mu$ and $\nu$. 
For each $H\in{\cal UH}^{(r)}(\lambda,\mu,\nu)$
we define path addition operators $\ov{\chi}_r$, $\ov{\phi}_k$ and $\ov{\omega}_r$
analogous to the path removal operators $\chi_r$, $\phi_r$ and $\omega_r$ of section~\ref{Sec-hive-path-removal}.

\begin{Definition}\label{Def-hive-path-additions}
For any given $H\in{\cal UH}^{(r)}(\lambda,\mu,\nu)$ with $\ell(\lambda)\leq r$ we define 
three path addition operators $\ov{\chi}_r$, $\ov{\phi}_k$ and $\ov{\omega}_r$ whose action on $H$
is to increase or reduce edge labels by $1$ along paths specified as follows:
\begin{enumerate}
\item[\tov{(i)}] $\ov{\chi}_r$: the path consists of the boundary edges labelled $\nu_r$ 
and $\lambda_r$, with each of these labels being increased by $1$;
\item[\tov{(ii)}] $\ov{\phi}_k$: for any $k<r$ the path proceeds down the $k$th diagonal 
from the edge labelled $\mu_k$ through upright rhombi of gradient $0$ until it encounters an 
upright rhombus of positive gradient, at which point it moves horizontally to the right 
into the $(k+1)$th diagonal and proceeds down this diagonal or to the right as before, 
and so on until it either reaches the base of the hive and then moves to the right
or reaches the right hand boundary and then moves down the $r$th diagonal until, in 
both cases, it terminates at the edge labelled $\lambda_r$, with all path $\alpha$ and $\gamma$ edge
labels being increased by $1$ and all path $\beta$ edge labels decreased by $1$ ;
\item[\tov{(iii)}] $\ov{\omega}_r$: the path proceeds directly down the $r$th
diagonal until it terminates at the base at the edge labelled $\lambda_r$, with all path
edge labels increased by $1$.
\end{enumerate}
\end{Definition}

These three types of path addition are illustrated below. In each case every {\\bf\blue blue} edge label is increased 
by $1$ and every {\bf\red red} edge label is decreased by $1$.  
In the case of $\ov{\chi}_r$ and $\ov{\omega}_r$ the path additions are confined to the rightmost $r$th diagonal. 
On the other hand the path addition route ascribed to the action of $\ov{\phi}_k$ with $1\leq k<r$
consists of a sequence of ladders through upright rhombi of gradient $0$ in each diagonal from the $k$th 
to the $r$th, with the passage from each diagonal to the next taking place through a {\bf\red red} $\beta$-edge.
\begin{equation}\label{Eq-path-additions}
\vcenter{\hbox{

}}
\end{equation}

Each ladder (coloured blue) extends down its diagonal either from the top left hand boundary 
or from an upright rhombus (coloured green) until it 
either reaches an upright rhombus of positive gradient (coloured yellow) or the base of the triangular array. Thus,
the ladders starting on the left hand boundary lack a head rhombus, while those ending at the base lack a foot rhombus. 
As shown in the above example of type \tov{(ii)}, ladders may consist of a single blue edge. 
In such a case the ladder is said to have length $0$. More generally, its length is the number of upright rhombi it
crosses, that is to say the number of its horizontal blue edges.
The path addition is such that the gradient of the foot rhombus is decreased by $1$ from its initial positive value, 
while the gradient of the head rhombus is increased by $1$. 

It might be noted here that there are two distinct manners in which type \tov{(ii)} paths may terminate.
They are illustrated below, with the figures on the left and right applying to cases in which the path addition reaches
the base in the diagonal with $j=r$ and $j<r$, respectively.

\begin{equation}\label{Eq-iib-iia}
\vcenter{\hbox{

\end{equation}
This situation only occurs in case (ii) of (\ref{Eq-path-additions}), 
where the transition in the case of the foot rhombi is from $U$ to $U-1$ with $U>0$.
Thus all upright rhombus gradients remain non-negative after all possible path additions.

This ensures that $\tilde H$ belongs to $\mathcal{UH}(\lambda^*,\mu^*,\nu^*)$ for some boundary edge labels $\lambda^*,\mu^*,\nu^*$.
The fact that these boundary edge labels are as specified in the Lemma follows directly from Definition~\ref{Def-hive-path-additions} 
of the path addition operations.
\qed

In the case of left- and right-leaning elementary rhombi the only configurations that give rise to a reduction 
in a rhombus gradient are those shown below:
\begin{equation}
\label{Eq-rhombi-add}
\begin{array}{ccccc}
\vcenter{\hbox{
\begin{tikzpicture}[x={(1cm*0.5,-1.7320508cm*0.5)},y={(1cm*0.5,1.7320508cm*0.5)}]
\draw[black,thin](0,0)--(0,1); \draw[blue, ultra thick](1,1)--(1,2); 
\draw[black, thin](0,1)--(1,1); 
\draw[black, thin](0,0)--(1,1); \draw[blue,ultra thick](0,1)--(1,2); 
\path(1,1)--node[pos=0.4,right]{$+1$}(1,2); 
\path(0,1)--node[above]{$+1$}(1,2); 
\end{tikzpicture}
}}
&\quad&
\vcenter{\hbox{
\begin{tikzpicture}[x={(1cm*0.5,-1.7320508cm*0.5)},y={(1cm*0.5,1.7320508cm*0.5)}]
\draw[blue, ultra thick](1,1)--(1,2);  
\draw[red, ultra thick](0,1)--(1,1); \draw[black, thin](1,2)--(2,2); 
\draw[black, thin](0,1)--(1,2); \draw[blue,ultra thick](1,1)--(2,2); 
\path(1,1)--node[pos=0.8,left]{$+1$}(1,2); 
\path(0,1)--node[pos=0.7,left]{$-1$}(1,1); 
\path(1,1)--node[below]{$+1$}(2,2); 
\end{tikzpicture}
}}
&\quad&
\vcenter{\hbox{
\begin{tikzpicture}[x={(1cm*0.5,-1.7320508cm*0.5)},y={(1cm*0.5,1.7320508cm*0.5)}]
\draw[black, thin](1,1)--(1,2);  
\draw[black,thin](0,1)--(1,1); \draw[blue, ultra thick](1,2)--(2,2); 
\draw[black, thin](0,1)--(1,2); \draw[blue,ultra thick](1,1)--(2,2); 
\path(1,2)--node[pos=0.4,right]{$+1$}(2,2); 
\path(1,1)--node[below]{$+1$}(2,2); 
\end{tikzpicture}
}}
\cr\cr
R\longrightarrow R\!-\!1&\quad&L\longrightarrow L\!-\!1&\quad&L\longrightarrow L\!-\!1\cr
\end{array}
\end{equation}
It is not immediately clear that $L$ and $R$ are always $>0$ in these configuration.
In fact this depends crucially on both the number of path additions of each type
and the order in which they are applied.
However, we claim the validity of the following
\begin{Theorem}\label{The-KtoH}
Let $n$ be a positive integer and let $\lambda$, $\mu$ and $\nu$ be partitions such that $\ell(\lambda)\leq n$ and 
$\mu,\nu\subseteq\lambda$ with $|\lambda|=|\mu|+|\nu|$.
For each LR hive $K\in{\cal H}^{(n)}(\lambda,\nu,\mu)$ with upright rhombus gradients $V_{ij}$ for $1\leq i< j\leq n$ let
\begin{equation}\label{Eq-ovtheta}
  \ov{\theta}_r = \ov{\chi}_r^{\,\nu_r}\ \ov{\phi}_1^{\,V_{1,r}}\ \ov{\phi}_2^{\,V_{2r}}\ \ov{\phi}_{r-1}^{\,V_{r-1,r}}\ 
	                     \ov{\omega}_r^{\,\mu_r-V_{r,n}-V_{r,n-1}-\cdots-V_{r,r+1}}\,. 
\end{equation}  
Then the successive application of $\ov{\theta}_r\kappa_r^{-1}$ for $r=1,2,\ldots,n$ to the empty hive $H^{(0)}$ 
yields
\begin{equation}\label{Eq-H-hive}
     H:=\ov{\theta}_n\kappa_n^{-1}\,\cdots\,\ov{\theta}_2\kappa_2^{-1}\,\ov{\theta}_1\kappa_1^{-1}\,H^{(0)}\, \in{\cal H}^{(n)}(\lambda,\mu,\nu) \,.
\end{equation}
In such a case we write $H=\ov{\sigma}^{(n)} K$. 
\end{Theorem}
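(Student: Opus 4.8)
The plan is to argue by induction on $r$, following the chain of partial hives $H^{(0)},H^{(1)},\ldots,H^{(n)}=H$ generated by $H^{(r)}=\overline{\theta}_r\kappa_r^{-1}H^{(r-1)}$, and to prove at each stage that $H^{(r)}$ is a genuine LR $r$-hive whose lower, left and right boundary edges carry the labels $(\lambda_1,\ldots,\lambda_r)$, $(\mu^{(r)}_1,\ldots,\mu^{(r)}_r)$ and $(\nu_1,\ldots,\nu_r)$ for suitable partial left-boundary data $\mu^{(r)}$, so that at $r=n$ the boundary is exactly $(\lambda,\mu,\nu)$. The operator $\kappa_r^{-1}$ only grafts on an $r$th diagonal of upright rhombi all of gradient $0$, with upper and lower boundary edges labelled $0$, which respects the triangle and gradient conditions trivially; hence all the substance lies in the string of path additions $\overline{\theta}_r=\overline{\chi}_r^{\,\nu_r}\,\overline{\phi}_1^{\,V_{1r}}\cdots\overline{\phi}_{r-1}^{\,V_{r-1,r}}\,\overline{\omega}_r^{\,\mu_r-\sum_{j>r}V_{rj}}$ of equation~(\ref{Eq-ovtheta}), which reverse, in reverse order, the three phases of the removal operator $\theta_r$ of Definition~\ref{Def-full-r-hpr}.

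Two of the three families of hive conditions are routine. The Lemma preceding the theorem already shows that one application of any of $\overline{\chi}_r$, $\overline{\phi}_k$, $\overline{\omega}_r$ preserves the triangle conditions and keeps every upright rhombus gradient non-negative, the only upright gradient that can drop being a foot rhombus, which by definition had positive gradient beforehand. That Lemma also records the effect of each operator on the boundary, so tracking the boundary is pure bookkeeping: using the upright-rhombus identity~(\ref{Eq-gabU}) for $K\in\mathcal{H}^{(n)}(\lambda,\nu,\mu)$, the number of additions terminating on base edge $r$ is $\nu_r+\sum_{i<r}V_{ir}+(\mu_r-\sum_{j>r}V_{rj})=\lambda_r$; the left edge $k$ receives $\mu_k-\sum_{j>k}V_{kj}$ units from $\overline{\omega}_k$ at stage $k$ together with $V_{kr}$ units from each later $\overline{\phi}_k$ at stages $r>k$, totalling $\mu_k$; and the right edge $r$ accumulates exactly $\nu_r$ from $\overline{\chi}_r$. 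Hence the final boundary is $(\lambda,\mu,\nu)$.

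The genuine obstacle, flagged in the discussion around~(\ref{Eq-rhombi-add}), is to show that the left- and right-leaning gradients never go negative, that is, that whenever a path addition is poised to reduce some $L$ or $R$ gradient by $1$, or to reduce a connecting $\beta$-edge label by $1$, the value in question is strictly positive before the addition. This is delicate exactly because it depends on the multiplicities $V_{kr}$ and on the order of application. I would isolate three comparison lemmas for successive path additions, mirror-images of the removal results Lemma~\ref{Lem-obs1}, Corollary~\ref{Cor-obs1} and Lemma~\ref{Lem-obs2}. The first is a monotonicity statement: among the $\overline{\phi}_k$ additions executed consecutively within $\overline{\theta}_r$, each successive path lies weakly below its predecessor in every diagonal and terminates at a weakly lower base edge, the mechanism being that the two paths coincide until one meets an upright rhombus whose gradient was just raised to a positive value, exactly as in~(\ref{Eq-obs1}) but run downwards. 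The remaining two lemmas convert this monotonicity into the required positivity: for a left-leaning rhombus one uses the addition-analogue of Corollary~\ref{Cor-obs1}, a betweenness bound on the partial data $\mu^{(r)}$ of the form mirroring~(\ref{Eq-VLrhombi}) that keeps $L\geq0$; for a right-leaning rhombus one needs a counting inequality on the number of additions reaching successive levels in adjacent diagonals, the analogue of the estimate~(\ref{Eq-obs2}), which secures both $R>0$ and the positivity of the crossed $\beta$-edge.

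I expect this final ingredient, the right-leaning positivity across a ladder that bifurcates and rejoins between diagonals, to be the crux, for the same reason that~(\ref{Eq-obs2}) was the hardest part of Theorem~\ref{The-HtoK}: the addition paths may share edges, split at rhombi whose gradients were altered by earlier additions within the same stage, and then recombine, so the count of how many paths reach each level must be tracked across all diagonals simultaneously. Once the three comparison lemmas are in hand, every individual path addition in $\overline{\theta}_r\kappa_r^{-1}$ preserves all hive conditions, the induction closes, and $H=H^{(n)}\in\mathcal{H}^{(n)}(\lambda,\mu,\nu)$. As anticipated in Remark~\ref{Rem-insertion}, this direct verification makes no use of $K$ arising as $\sigma^{(n)}H$ for some $H$, and so will later furnish surjectivity without any appeal to the finiteness of the sets involved.
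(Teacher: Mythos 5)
Your overall architecture is exactly that of the paper's proof: induction on $r$ along the chain $H^{(r)}=\ov{\theta}_r\kappa_r^{-1}H^{(r-1)}$, boundary bookkeeping from the hive conditions on $K$ (your computation via (\ref{Eq-gabU}) is the paper's identities (\ref{Eq-Kidentities})), the preceding lemma for triangle and upright-gradient conditions, a within-stage monotonicity lemma for successive $\ov{\phi}_k$-additions (the paper's Lemma~\ref{Lem-Kobs1}, mirroring Lemma~\ref{Lem-obs1}), and a cross-stage counting lemma mirroring Lemma~\ref{Lem-obs2} (the paper's Lemmas~\ref{Lem-Kobs2} and~\ref{Lem-tildeU}), which you rightly single out as the crux; your closing remark matches Remark~\ref{Rem-insertion}. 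However, you have crossed the wires on which rhombus family needs which ingredient, and as stated one step of your plan would fail.

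In the paper, the betweenness bound on the partial data $\mu^{(r)}$ --- your analogue of Corollary~\ref{Cor-obs1}, which is precisely $K$'s left-leaning condition $\mu_k^{(r)}\leq\mu_{k-1}^{(r-1)}$ visible in (\ref{Eq-VLrhombi}) --- is what keeps the \emph{right}-leaning gradients $R_j$ alongside the initial ladder non-negative (see (\ref{Eq-phi}) and (\ref{Eq-kappa-chi})); the \emph{interior} left-leaning rhombi at ladder transitions are disposed of by a purely local hexagon argument (\ref{Eq-hexagon}) requiring no global input; and the genuinely hard case is the \emph{left}-leaning rhombus at the foot of the $r$th diagonal, whose gradient after $\ov{\chi}_r^{\,\nu_r}$ is $\nu_{r-1}-\tilde{U}_{r-1,r}-\nu_r$ (see (\ref{Eq-omega})), together with the bottom $\beta$-edges crossed by type \tov{(iia)} additions (this last point you attribute correctly). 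That corner quantity involves the $\nu$-labels and the gradient accumulated at the foot of diagonal $r$ by additions that hit the base early; no bound on $\mu^{(r)}$ can control it, so your lemma (b) cannot deliver left-leaning positivity. It is exactly here that the cross-stage comparison enters: aligning the expansions of $\ov{\theta}_r$ and $\ov{\theta}_{r-1}$ as in (\ref{Eq-theta-expansions}) and using $K$'s right-leaning conditions $R_{kr}\geq0$, one shows each stage-$r$ type \tov{(ii)} path that is paired with a stage-$(r-1)$ path stays strictly above it, so only the at most $\nu_{r-1}-\nu_r$ unpaired additions can feed $\tilde{U}_{r-1,r}$. The swap is natural --- in Theorem~\ref{The-HtoK} the counting estimate (\ref{Eq-obs2}) served to prove $R_{kr}\geq0$ for the \emph{output} hive --- but in the addition direction the roles invert: $K$'s right-leaning conditions are the \emph{input} to the counting lemma, and what it secures in the output is the corner left-leaning condition. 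Re-pair your lemmas (b) and (c) accordingly, and insert the local hexagon step, and your plan coincides with the paper's proof.
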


\noindent{\bf Proof}:

It is convenient to set $\lambda^{(r)}=(\lambda_1,\lambda_2,\ldots,\lambda_r)$ and $\nu^{(r)}=(\nu_1,\nu_2,\ldots,\nu_r)$
for $r=1,2,\ldots,n$ and to remind ourselves of the notation already used in connection with $K$ whereby
$\mu^{(r)}=(\mu^{(r)}_1,\mu^{(r)}_2,\ldots,\mu^{(r)}_r)$ with $\mu^{(r)}_k=\mu_k-V_{k,n}-V_{k,n-1}-\cdots-V_{k,r+1}$
for $k=1,2,\ldots,r$. Notice in particular that this means that the exponent of $\ov{\omega}_r$ in (\ref{Eq-ovtheta}) is just $\mu_r^{(r)}$.
The notation also allows us to define $K_{(r)}\in{\cal H}^{(r)}(\lambda^{(r)},\nu^{(r)},\mu^{(r)})$ to be the subhive
of $K\in{\cal H}^{(n)}(\lambda,\nu,\mu)$ consisting of its leftmost $r$ diagonals, for $r=1,2,\ldots,n$. Thus $K_{(r)}$ is essentially 
the complement of the truncated hive $K^{(n-r)}$ in $K=K^{(n)}$.

Our strategy is to proceed by induction with respect to $r$ in showing that $H^{(r)}\in{\cal H}^{(r)}(\lambda^{(r)},\mu^{(r)},\nu^{(r)})$,
where $H^{(r)}=\ov{\theta}_r\kappa_r^{-1}H^{(r-1)}$ with $\ov{\theta}_r$ determined by $K_{(r)}$. 

In the case $r=1$ we have $K_{(1)}=\vcenter{\hbox{

\end{equation}
where in the final step use has been made of the fact that $\mu_1^{(1)}+\nu_1=\lambda_1$, as implied by the hive condition
on $K_{(1)}$. This demonstrates that $H^{(1)}=\ov{\theta}_1\kappa_1^{-1}H^{(0)}\in{\cal H}^{(1)}(\lambda^{(1)},\mu^{(1)},\nu^{(1)})$,
as required. 

To complete the induction argument, we need to show that for $r\geq2$ if 
$H^{(r-1)}=\ov{\theta}_{r-1}\kappa^{-1}_{r-1}\cdots\ov{\theta}_{1}\kappa^{-1}_{1} H^{(0)}\in{\cal H}^{(r-1)}(\lambda^{(r-1)},\mu^{(r-1)},\nu^{(r-1)})$
and $H^{(r)}=\ov{\theta}_r \kappa^{-1}_{r} H^{(r-1)}$ with $\ov{\theta}_r$ determined by $K_{(r)}$ as in (\ref{Eq-ovtheta}), then
$H^{(r)}\in{\cal H}^{(r)}(\lambda^{(r)},\mu^{(r)},\nu^{(r)})$. Before showing that $H^{(r)}$ constructed in this way is a hive, 
let us first show that at least its boundary edge labels are as required. To do this it is helpful to display
$K_{(r)}$ and various intermediate states between $H^{(r-1)}$ and $H^{(r)}$. These take the form:
\begin{equation}\label{Eq-KHHH}

\end{equation}
The hive conditions on $K_{(r)}$ imply that
\begin{equation}\label{Eq-Kidentities}
    \mu_k^{(r)}=\mu_k^{(r-1)}+V_{kr} \quad\hbox{for $k=1,2,\ldots,r-1$~~~and}\quad  \sum_{k=1}^{r-1} V_{kr} = \lambda_r-\nu_r-\mu_r^{(r)} \,.
\end{equation}
In the above display (\ref{Eq-KHHH}) $H^{(r)}_{\ov{(\rm{i})}}=\ov{\omega}_r^{\mu_r^{(r)}}\kappa_r^{-1}H^{(r-1)}$
is formed by adding an $r$th diagonal of upright rhombi of gradient $0$ to $H^{(r-1)}$
and then applying all $\mu_r^{(r)}$ type \tov{(i)} path addition operators. 
Then $H^{(r)}_{\ov{(\rm{ii})}}$ is obtained from $H^{(r)}_{\ov{(\rm{i})}}$ by applying
$V_{kr}$ type \tov{(ii)} path addition operators successively in the order $k=r-1,\ldots,2,1$. 
For each $k$ the $V_{kr}$ added paths increase the $k$th left hand boundary edge label from
$\mu_k^{(r-1)}$ to $\mu_k^{(r)}$, as a consequence of the first identity in (\ref{Eq-Kidentities}). 
Moreover, each of the path additions extends as far as the foot of the $r$th diagonal adding 
precisely $1$ both to the $r$th lower boundary edge and to one or other of the upright rhombus gradients in this diagonal.
It follows from this that $\tilde{U}_{1r}+\tilde{U}_{2r}+\cdots+\tilde{U}_{r-1,r}=V_{1r}+V_{2r}+\cdots+V_{r-1,r}$
and that the $r$th lower boundary edge becomes $\mu_r^{(r)}+V_{1r}+V_{2r}+\cdots+V_{r-1,r}=\lambda_r-\nu_r$,
as shown, where use has been made of the second identity of (\ref{Eq-Kidentities}). Finally, the application of
all $\nu_r$ type \tov{(iii)} path addition operations to  $H^{(r)}_{\ov{(\rm{ii})}}$ yields  $H^{(r)}_{\ov{(\rm{iii})}}$
with boundary edge labels as shown in the last diagram of (\ref{Eq-KHHH}).
It can be seen from this $H^{(r)}=H^{(r)}_{\ov{(\rm{iii})}}$ has boundary edge labels specified by
$\lambda^{(r)}$, $\mu^{(r)}$ and $\nu^{(r)}$, as required.


It remains to determine whether or not $H^{(r)}$ satisfies all necessary hive rhombus conditions.
In applying $\ov{\theta}_r\kappa_r^{-1}$, as defined by (\ref{Eq-ovtheta}), to $H^{(r-1)}$ the first step associated
with the action of $\kappa_r^{-1}$ is to add an $r$th diagonal in which each upright rhombus has gradient $0$,
as shown below on the left. This is followed by the action of $\ov{\omega}_r^\alpha$ with $\alpha={\mu_r^{(r)}}$ as shown below on the right,
where each blue edge has its label increased by $\alpha$: that is from $0$ to $\alpha$ in the case of sloping blue edges, from $\nu_k$ to
$\nu_k+\alpha$ in the case of the $k$th horizontal blue edge counted from the top, with $k=1,2,\ldots,r-1$, and from $0$ to $\alpha$ in the
case of the horizontal blue edge on the boundary.
\begin{equation}\label{Eq-kappa-chi}
\kappa_r^{-1} H^{(r-1)}\ =
\hskip-4ex 
\vcenter{\hbox{

}}
\hskip-2ex
=\ \ov{\omega}_r^\alpha\kappa_r^{-1} H^{(r-1)}\,.
\end{equation}
The hive conditions on $H^{(r-1)}$ imply that $\alpha_{r-1}\geq\cdots\geq\alpha_2\geq\alpha_1\geq0$, 
while those on $K_{(r)}$ imply that $\alpha_1=\mu_{r-1}^{(r-1)}\geq\mu_r^{(r)}=\alpha$.
It follows on the left that after the action of $\kappa_r^{-1}$ the right-leaning rhombus crossing the 
$(r-1)$th and $r$th diagonals at level $j$ has gradient $R_j=\alpha_j-0\geq0$ for $j=1,2,\ldots,r-1$,
and the gradients of the left-leaning rhombi in the $r$th diagonal are $\nu_k-\nu_{k+1}\geq0$ for $k=1,2,\ldots,r-1$.
Since the upright rhombus gradients at level $k$ in the $r$th diagonal are $0$, 
all the hive conditions are satisfied.
Similarly, on the right after the action of $\ov\omega_r^\alpha$ the right-leaning rhombus crossing 
the $(r-1)$th and $r$th diagonals at level $j$ has gradient $\alpha_j-\alpha\geq\alpha_1-\alpha\geq0$ for $j=1,2,\ldots,r-1$.
Since the upright and left-leaning rhombus gradients in the $r$th diagonal are unchanged, 
it can again be seen that after all the path additions on the right all the hive conditions are still satisfied.

Turning next to the path additions that follow from the application of $\ov\phi_k$ with $k<r$, there are two
distinct ways in which an addition path may enter a ladder, as illustrated below in the case of a ladder of length
at least $1$. If the ladder has length $0$, in either case, it passes immediately to the next diagonal and creates no rhombus 
of the type listed in (\ref{Eq-iib-iia}).
\begin{equation}\label{Eq-phi}
\vcenter{\hbox{

}}
\end{equation}

On the left, a ladder starting on the left hand boundary in the $k$th diagonal has been shown. It has been illustrated 
immediately after the completed action any one of the $V_{kr}$ path additions by $\ov\phi_k$, 
on the assumption that this leaves a ladder of length 
$s\geq1$ in the $k$th diagonal. 
This action has the effect of increasing the boundary edge label from $\mu_k^{(r-1)}$ to $\alpha\leq\mu_k^{(r-1)}+V_{kr}=\mu_k^{(r)}$. 
This case is exactly analogous to that on the right of (\ref{Eq-kappa-chi}). The hive conditions on $H^{(r-1)}$ imply that $\alpha_s\geq\cdots\geq\alpha_2\geq\alpha_1$, 
while those on $K_{(r)}$ imply that $\alpha_1=\mu_{k-1}^{(r-1)}\geq\mu_k^{(r)}\geq\alpha$. It follows as before that all necessary
right-leaning rhombus gradients such as $R_j$ remain $\geq0$, and that all hive conditions are preserved.

On the right, a typical interior ladder is shown lying in the $d$th diagonal. It has been illustrated immediately after
any one of the actions of $\ov\phi_k$. Prior to this action the edges labelled $\alpha$, $\alpha_0$ and $\alpha_1$ had labels
$\alpha-1$, $\alpha_0-1$ and $\alpha_1$, respectively. For the addition path to enter the $d$th diagonal by 
way of the red edge it is necessary that the yellow upright rhombus has an initial gradient $U=\alpha_1-(\alpha_0-1)>0$. 
The initial hive conditions also imply that both $\alpha_0-1\geq\alpha-1$ and $\alpha_s\geq\cdots\geq\alpha_2\geq\alpha_1$. 
It follows that $\alpha_1\geq\alpha_0$ and $\alpha_s\geq\cdots\geq\alpha_2\geq\alpha_1\geq\alpha$ so that after the 
path addition the upright rhombus has gradient $U-1=\alpha_1-\alpha_0\geq0$ and the $j$th the right-leaning
rhombus has gradient $R_j=\alpha_j-\alpha\geq0$ for $j=1,2,\ldots,s$. Thus all rhombus gradients remain $\geq0$.
Not only that, the red edge has an initial label that must be positive since $U>0$, so it remains non-negative after the path 
addition, and all hive conditions are therefore satisfied. However it still remains to consider the left-leaning
rhombus at the top of the ladder. To this end consider the following diagram with the edge and gradient labels
specified {\em before} the path addition. 
\begin{equation}\label{Eq-hexagon}
\vcenter{\hbox{
\begin{tikzpicture}[x={(1cm*0.4,-\rootthree cm*0.4)},
                    y={(1cm*0.4,\rootthree cm*0.4)}]
%
\path[fill=yellow,fill opacity=0.5] (0,4)--(0,6)--(2,6)--(2,4)--cycle;
\path[fill=green,fill opacity=0.5] (0,6)--(0,8)--(2,8)--(2,6)--cycle;
%
\draw(0,4)[blue,ultra thick]--(0,6); \draw(2,4)--(2,6); \draw(2,6)[blue,ultra thick]--(2,8); \draw(4,6)[blue,ultra thick]--(4,8);  
\draw(0,4)--(2,4); \draw(0,6)[red,ultra thick]--(2,6); \draw(2,6)--(4,6); \draw(2,8)--(4,8); 
\draw(0,4)--(2,6); \draw(2,6)[blue,ultra thick]--(4,8); \draw(0,6)--(2,8); \draw(2,4)--(4,6); 
\path(0,4)--node[pos=0.5,left]{$\alpha'$}(0,6); 
\path(2,6)--node[pos=0.5,left]{$\alpha\,$}(2,8); 
\path(2,4)--node[pos=0.5,left]{$\alpha''$}(2,6); 
\path(4,6)--node[pos=0.5,left]{$\alpha\,$}(4,8); 
\path(0,4)--node[pos=0.5,left]{$\beta'$}(2,4); 
\path(0,6)--node[pos=0.5,left]{$\beta\,$}(2,6); 
\path(2,6)--node[pos=0.5,left]{$\beta''$}(4,6); 
\path(2,8)--node[pos=0.5,left]{$\beta''$}(4,8); 
\path(2,6)--node{$0$}(4,8);
\path(0,4)--node{$U$}(2,6);
\path(2,6)--node{$L$}(2,8);
\path(2,6)--node[pos=0.5]{$R$}(4,6);
\end{tikzpicture}
}}
\end{equation}

In this situation, by hypothesis, the yellow upright rhombus has gradient $U=\beta-\beta'=\alpha''-\alpha'>0$ and the
white upright rhombus has gradient $0$. 
Advantage has been taken of the latter condition to equate two pairs of edge labels, one pair labelled $\alpha$ and the other
labelled $\beta''$. The initial hive conditions also imply that $\alpha'\geq\alpha$ and $\beta'\geq\beta''$
from which it follows that $L=\beta-\beta''>0$ and $R=\alpha''-\alpha>0$. After the path addition the edge label
$\beta$ is reduced to $\beta-1$ and the edge label $\alpha$ is increased to $\alpha+1$. All this implies that 
the rhombus gradients $U$, $L$ and $R$ are reduced to $U-1$, $L-1$ and $R-1$, respectively, all of which remain $\geq0$.
In addition the red edge label has been reduced from $\beta$ to $\beta-1=U+\beta'-1\geq\beta'$ which is 
necessarily non-negative by the initial hive conditions.
Thus yet again the hive conditions are preserved.

There is just one more concern that shows itself 
in the case of a type \tov{(iia)} path addition of the
form illustrated on the right in (\ref{Eq-iib-iia}), in which under the action of some $\ov\phi_k$ the path 
reaches the base of the hive in a diagonal $j$ with $k\leq j<r$
and then proceeds along this base to the right-hand corner.
Our previous arguments are sufficient to see that, after the addition of the type \tov{(iia)} path,
all hive rhombus gradient conditions are satisfied, including the non-negativity of the upright rhombus gradient 
signified by $U$ in (\ref{Eq-iib-iia}), except possibly the non-negativity of the gradient of the left-leaning rhombus 
of the penultimate type indicated in (\ref{Eq-rhombi-add}) that appears in
the bottom right hand corner of (\ref{Eq-iib-iia}). 
All the previous arguments about the preservation of hive conditions
have been independent of any positivity assumption about the gradient, $L$, of this
left-leaning rhombus on the base of the hive at its right-hand end.
To meet the hive condition for this rhombus we require 
that $L=\nu_{r-1}-U\geq0$ because at this stage, just before 
the application of $\ov{\chi}_r^{\,\nu_r}$, the bottom two edges on the right-hand boundary have
labels $\nu_{r-1}$ and $0$. 
Since the red edge labels are weakly decreasing from left to 
right both before and after the path addition, all such red edge labels will be non-negative if 
$\nu_{r-1}-U\geq0$. 
Thus the non-negativity of both $L$ and all red edge labels will be guaranteed
after each type \tov{(iia)} path addition if $U\leq\nu_{r-1}$. However, since 
$U$ increases steadily under these path additions from its initial value $0$ to its final value $\tilde{U}_{r-1,r}$,
we need only $\tilde{U}_{r-1,r}\leq\nu_{r-1}$ to ensure that the hive conditions are satisfied at
the conclusion of all type \tov{(ii)} path additions.
 
We will actually need to ensure that a stronger condition is met. This is necessitated by a 
consideration of the final type \tov{(iii)} path additions arising from the action of $\ov{\chi}_r^{\,\nu_r}$.

This simply adds $\nu_r$ to the labels of the two edges meeting at the lower rightmost vertex of $H^{(r)}$, 
as illustrated below, where it has been convenient to denote $\tilde{U}_{r-1,r}$ by $U$.
\begin{equation}\label{Eq-omega}
\vcenter{\hbox{
\begin{tikzpicture}[x={(1cm*0.4,-\rootthree cm*0.4)},
                    y={(1cm*0.4,\rootthree cm*0.4)}]
\draw(0,0)--(0,5)--(5,5)--cycle;
\path(0,4)--node[pos=0.75,left]{$\mu_r^{(r)}$}(0,5); 
\foreach\i in{1,...,2}\path(\i-1,5)--node[pos=0.4,right]{$\nu_\i$}(\i,5); 
\foreach\i in{3,...,3}\path(\i-1,5)--node[sloped,pos=0.5,above]{$\cdots$}(\i,5); 
\foreach\i in{4,...,4}\path(\i-1,5)--node[pos=0.4,right]{$\nu_{r-1}$}(\i,5); 
\path(4,5)--node[pos=0.4,right]{$\nu_r$}(5,5); 
\path(4,4)--node[below]{$\lambda_r^{(r)}$}(5,5);  
\foreach\i in{1,...,4}\draw(\i,4)--(\i,5); 
\draw(0,4)--(4,4); 
\foreach\i in{0,...,4}\draw(\i,4)--(\i+1,5); 
\draw(4,5)[blue,ultra thick]--(5,5); 
\draw(4,4)[blue,ultra thick]--(5,5); 
\foreach\i in{4} \foreach\j in{5}\path(\i-1,\j-1)--node{$U$}(\i,\j);
\path(4,4)--node{$L_r$}(4,5);
\end{tikzpicture}
}}
\end{equation}
As a result of this path addition the left-leaning rhombus at the foot of the $r$th diagonal has its gradient 
$L_r$ decreased from $\nu_{r-1}-U$ to $\nu_{r-1}-U-\nu_r$, which is non-negative if and only if $U\leq \nu_{r-1}-\nu_r$. 
Since this is stronger than our previous condition $U\leq \nu_{r-1}$, it follows that all hive conditions will be satisfied 
by $H^{(r)}$ if it can be shown that $\tilde{U}_{r-1,r}\leq\nu_{r-1}-\nu_r$.

To prove this we first make the following observation regarding the addition of a succession of type \tov{(ii)} paths.
\begin{Lemma}\label{Lem-Kobs1}
During the action of $\ov{\theta}_r\kappa_r^{-1}$ on $H^{(r-1)}$ let a hive path addition of type \tov{(ii)} follow a path $P$ 
starting from the left-hand boundary at level $k$, then the next such path addition follows a path $P'$ lying weakly 
below the path $P$ in each diagonal from the $k$th to the $r$th.
\end{Lemma}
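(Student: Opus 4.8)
The plan is to prove Lemma~\ref{Lem-Kobs1} as the exact mirror image of the path removal statement of Lemma~\ref{Lem-obs1}, exploiting the fact that a type \tov{(ii)} addition travels \emph{down} the diagonals and turns to the \emph{right} precisely at the foot rhombus of each ladder, i.e.\ at the first upright rhombus of positive gradient it encounters, in the same way that a type (ii) removal travels up and turns left at a head rhombus of positive gradient. First I would fix two consecutive type \tov{(ii)} path additions occurring during the action of $\ov{\theta}_r\kappa_r^{-1}$ on $H^{(r-1)}$, calling them $P$ (added first, starting from the left-hand boundary at level $k$) and $P'$ (added immediately afterwards, starting at level $k'$). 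Since in (\ref{Eq-ovtheta}) the operators $\ov{\phi}_{r-1}^{\,V_{r-1,r}},\ldots,\ov{\phi}_1^{\,V_{1r}}$ are applied in order of decreasing index, with each $\ov{\phi}_j$ repeated $V_{jr}$ times, the starting level of $P'$ satisfies $k'\leq k$. In particular $P'$ already lies weakly below $P$ on entering the $k$th diagonal: this is immediate when $k'=k$, and when $k'<k$ the path $P'$ enters the $k$th diagonal from the $(k-1)$th diagonal at a level strictly below the top edge at which $P$ begins.

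The key step is then to track, diagonal by diagonal from the $k$th down to the $r$th, how the prior addition of $P$ has altered the upright rhombus gradients and thereby constrains the route available to $P'$. By Definition~\ref{Def-hive-path-additions} and the ladder analysis following (\ref{Eq-path-additions}), the addition of $P$ leaves every upright rhombus interior to its ladders with gradient $0$, lowers the gradient of each foot (turning-point) rhombus by $1$ from its positive value, and raises each head rhombus gradient by $1$. Travelling downward, $P'$ is forced to follow the ladder of $P$ through these gradient-$0$ rhombi until it reaches a foot rhombus whose gradient $P$ drove from $1$ to $0$; at that point $P'$ is no longer obliged to turn right and instead continues downward, diverging strictly below $P$, after which it may rejoin $P$ and repeat the pattern. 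This is precisely the reflection of the argument surrounding the diagram (\ref{Eq-obs1}), with the roles of ``up/left'' and ``down/right'' interchanged and the distinguished rhombi now being those whose gradient was driven from $1$ to $0$ by the prior addition of $P$. Since the rhombi $P$ passed through above its foot rhombus retain gradient $0$, $P'$ can never turn higher than $P$ in any diagonal, so it never rises above $P$ between the $k$th and $r$th diagonals, which is exactly the assertion.

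The main obstacle will be the careful bookkeeping needed to make the ``coincide, diverge below, rejoin'' induction on diagonals close, in particular verifying that the gradient-$0$ configuration left behind by $P$ along its ladders is undisturbed by the portion of $P'$ lying strictly below $P$. I would also need to treat uniformly the two ways a type \tov{(ii)} path can terminate, shown in (\ref{Eq-iib-iia}): either reaching the base already within the $r$th diagonal, or reaching the base in an earlier diagonal $j<r$ and then running along the base to the right-hand corner. Handling this near-base and near-right-boundary behaviour, together with the case $k'<k$ in which $P$ and $P'$ begin at different levels, is where the argument is most delicate; once these are settled, the conclusion that $P'$ lies weakly below $P$ in each diagonal from the $k$th to the $r$th follows and completes the proof.
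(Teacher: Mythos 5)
Your proposal is correct and follows essentially the same route as the paper's own proof: the ordering of operators in (\ref{Eq-ovtheta}) gives $k'\leq k$, and the paper likewise argues that once $P'$ meets a ladder of $P$ it is forced through the gradient-$0$ interior rhombi down to the foot rhombus, where it either turns right with $P$ or, if the addition of $P$ reduced that foot gradient to $0$, continues strictly below $P$ before possibly rejoining, so that $P'$ stays weakly below $P$ throughout. The paper's argument, illustrated by the diagram (\ref{Eq-Kobs1}), is exactly the mirror image of Lemma~\ref{Lem-obs1} that you invoke.
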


\noindent{\bf Proof}:~~
The path $P$ is generated by the action of $\ov{\phi}_k$
with $k<r$ and consists of a sequence of ladders in the diagonals $p$ for $p=k,k+1,\ldots,r$.
The definition (\ref{Eq-ovtheta}) of $\ov{\theta}_r$ is such that
the next path $P'$ must then be generated by the action of some $\ov{\phi}_{k'}$ with $k'\leq k$. It follows that
as $P'$ moves down and to the right it encounters $P$ in some diagonal $p$ with $p\geq k$. However
it can only do so by intersecting that part of $P$ consisting of a ladder in that diagonal. 
Since the upright rhombi on the ladder all have gradient $0$, the path $P'$ then necessarily follows $P$ to the 
foot of the ladder where it may either continue to follow $P$ into the next diagonal or separate from $P$ 
by continuing down its own ladder in the $p$th diagonal. This is illustrated below in a diagram intended 
to make it clear that $P'$ always remains weakly below $P$.
\begin{equation}\label{Eq-Kobs1}
\vcenter{\hbox{

}}
\end{equation}
The interpretation of this is that the first path $P$ is the union of the {\bf\blue blue} and {\bf\magenta magenta}
edges, while the second path $P'$ is the union of the {\bf\red red} and {\bf\magenta magenta} edges. 
Working from left to right the route of $P'$ meets that of $P$ at the site of a {\bf\purple purple} upright
rhombus of gradient $0$. The paths $P'$ and $P$ then coincide as shown in the initial {\bf\magenta magenta} section, 
until it reaches a foot rhombus at which point $P$ continues to the right. Meanwhile, if the gradient
of that yellow rhombus, which was necessarily positive, is reduced to $0$ by the 
addition of the path $P$, then $P'$ continues further down the diagonal. The process then continues
with the divergent paths coming together again in another {\bf\magenta magenta} section, but always with $P'$
weakly below $P$.
\qed

Our next observation concerns the relationship between type \tov{(ii)} path additions
$P_i$ and $Q_i$ generated by $\ov{\theta}_{r}$ and $\ov{\theta}_{r-1}$, respectively. 
Since $\ov{\theta}_{1}$ does not generate any type \tov{(ii)} path additions it may be assumed that $r\geq3$.

\begin{Lemma}\label{Lem-Kobs2} 
For $r\geq3$ let $P_i$ for $i=1,2,\ldots,\lambda_r$ [resp. $Q_i$ for $i=1,2\ldots,\lambda_{r-1}$] be the 
paths added by the operation lying in the $i$th position counted from the left in the expression~(\ref{Eq-ovtheta}) for
$\ov{\theta}_r$ [resp. the corresponding expression for $\ov{\theta}_{r-1}$], that is to say the $i$th position
in the reverse order of action of the individual constituent operations $\ov{\chi}_r$, $\ov{\phi}_k$, $\ov{\omega}_r$
in $\ov{\theta}_r$ [resp. $\ov{\chi}_{r-1}$, $\ov{\phi}_k$, $\ov{\omega}_{r-1}$ in $\ov{\theta}_{r-1}$], assuming that each power of an operator 
is expanded as a product of the same operator occurring as many times as its exponent. We denote the operator giving 
rise to this $i$th path addition by $\ov{\theta}_{r,i}$ [resp. $\ov{\theta}_{r-1,i}$].

Then for each $i$ such that $\nu_{r-1}<i\leq \nu_{r}+\sum_{j=1}^{r-1} V_{jr}$, the paths $P_i$ and $Q_i$ are both of type 
\tov{(ii)} and the path $P_i$ lies strictly above $Q_i$.
\end{Lemma}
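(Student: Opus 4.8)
The plan is to prove Lemma~\ref{Lem-Kobs2} as the path-\emph{addition} counterpart of the removal Lemma~\ref{Lem-obs2}, dualising its argument but with the direction of comparison reversed: in the removal setting the later family of paths lies \emph{below} the earlier one, whereas here the later-constructed family $\{P_i\}$, generated by $\ov\theta_r$ in building $H^{(r)}$ from $H^{(r-1)}$, must lie \emph{above} the earlier family $\{Q_i\}$, generated by $\ov\theta_{r-1}$ in building $H^{(r-1)}$. The two standing tools are Lemma~\ref{Lem-Kobs1}, which guarantees that within each family successive type \tov{(ii)} additions descend weakly, and the defining property of a type \tov{(ii)} path, namely that it travels only through upright rhombi of gradient $0$ and is deflected into the next diagonal precisely upon meeting a rhombus of positive gradient.

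First I would pin down the types. Reading $\ov\theta_r$ in \eqref{Eq-ovtheta} from the left, its $i$th addition is of type \tov{(i)} for $i\le\nu_r$, of type \tov{(ii)} for $\nu_r<i\le\nu_r+\sum_{j=1}^{r-1}V_{jr}$, and of type \tov{(iii)} afterwards; since $\nu$ is a partition we have $\nu_r\le\nu_{r-1}$, so the hypothesis $i>\nu_{r-1}$ places $P_i$ squarely in the type \tov{(ii)} band, while $i\le\nu_r+\sum_{j=1}^{r-1}V_{jr}$ is assumed. For $Q_i$ the same hypothesis $i>\nu_{r-1}$ carries it past the $\ov\chi_{r-1}$ block of $\ov\theta_{r-1}$, and it stays within the type \tov{(ii)} block provided $i\le\nu_{r-1}+\sum_{j=1}^{r-2}V_{j,r-1}$. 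This upper bound is exactly the non-negativity of the right-leaning rhombus gradient $R_{r-1,r}$ of the given LR hive $K$, which reads $\nu_r+\sum_{j=1}^{r-1}V_{jr}\le\nu_{r-1}+\sum_{j=1}^{r-2}V_{j,r-1}$; so the stated range is precisely what guarantees that both paths are of type \tov{(ii)}.

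Next I would anchor the comparison at the left boundary. The right-leaning conditions of $K$ at a general level, $R_{kr}\ge0$, give $\nu_r+\sum_{j=1}^{k}V_{jr}\le\nu_{r-1}+\sum_{j=1}^{k-1}V_{j,r-1}$. Taking $k=k_i$ to be the diagonal at which $P_i$ meets the left boundary (so that $i\le\nu_r+\sum_{j=1}^{k_i}V_{jr}$) yields $i\le\nu_{r-1}+\sum_{j=1}^{k_i-1}V_{j,r-1}$, which forces $Q_i$ to leave the left boundary at a strictly lower-indexed edge $\mu_{k'_i}$ with $k'_i<k_i$. Since $\mu_r$ sits at the top of the left boundary and $\mu_1$ at the bottom, this says precisely that $P_i$ \emph{begins strictly above} $Q_i$. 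The remaining task is to propagate this strict separation downward through the shared diagonals $k_i,\dots,r-1$. Here I would dualise the mechanism of Lemma~\ref{Lem-obs2}: when $Q_i$ is added it raises, in each diagonal it crosses, the gradient of the head rhombus at the top of its ladder from $0$ to a positive value, and this rhombus lies strictly above $Q_i$'s own ladder. By Lemma~\ref{Lem-Kobs1} no later $Q_j$ ($j>i$) can touch it, so the raised head rhombus survives into the completed hive $H^{(r-1)}$ through which $P_i$ later travels. Because $P_i$ can only pass through gradient-$0$ rhombi, it must be deflected into the next diagonal at or above this barrier, which keeps it strictly above $Q_i$ diagonal by diagonal.

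The hard part will be reconciling the two \emph{distinct hive histories}. The paths $P_i$ and $Q_i$ live in hives of different side length, assembled by different sequences of additions, so their gradient-$0$ landscapes are not literally identical: the increments deposited by $P_1,\dots,P_{i-1}$ in the large hive and by $Q_1,\dots,Q_{i-1}$ in the small hive must be tracked simultaneously and shown compatible with the barrier argument. The genuine difficulty, as with the critical rhombi elsewhere in the paper, is that addition paths may bifurcate, separate, and later rejoin, so one must rule out that an earlier $P_j$ ($j<i$) lowers a head-rhombus barrier of $Q_i$ back to gradient $0$, and verify that a $P_i$ deflected above one barrier cannot slip below $Q_i$ in a lower diagonal; the degenerate cases of length-$0$ ladders and of paths touching at single rhombi need separate handling. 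Isolating the correct simultaneous inductive invariant — phrased as a strict-above relation between the two evolving configurations rather than as isolated diagonal comparisons — is where I expect the bulk of the work to lie.
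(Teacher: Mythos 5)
Your setup coincides with the paper's own proof up to its midpoint: the classification of types via $\nu_r\leq\nu_{r-1}$ together with $R_{r-1,r}\geq0$ in $K$, and the boundary anchor (from $R_{kr}\geq0$) showing that $P_i$ starts strictly above $Q_i$, are exactly the paper's first two steps, and the head-rhombus barrier mechanism is also the right one. But there is a genuine gap at precisely the point you defer to "the bulk of the work": you never show that the barriers created by $Q_i$ are still positive at the moment $P_i$ is added, and that survival claim is the actual content of the lemma, not a technicality. Your paragraph three even asserts too much ("the raised head rhombus survives into the completed hive $H^{(r-1)}$ through which $P_i$ later travels"): surviving into $H^{(r-1)}$ is not enough, because $P_i$ does not travel through $H^{(r-1)}$ but through the array obtained after $\kappa_r^{-1}$, the $\ov{\omega}_r$ additions, and all the \emph{previously added} type \tov{(ii)} paths of $\ov{\theta}_r$, each of which decreases its foot-rhombus gradients by one and so can in principle erase a barrier. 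Note also that your index convention is reversed throughout: since the $i$th operator counted from the left acts $i$th in the \emph{reverse} order of application, the paths added after $Q_i$ are the $Q_j$ with $j<i$, and the paths added before $P_i$ are the $P_j$ with $j>i$; your "no later $Q_j$ ($j>i$)" and "an earlier $P_j$ ($j<i$)" both have this backwards, though the temporal logic survives relabelling.

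The paper closes this gap by a downward induction on $i$ starting at $i=m=\nu_r+\sum_{j=1}^{r-1}V_{jr}$, the first type \tov{(ii)} addition performed by $\ov{\theta}_r$. For $i=m$ no type \tov{(ii)} path of $\ov{\theta}_r$ precedes $P_m$, the $Q_j$ with $j<m$ lie weakly below $Q_m$ by Lemma~\ref{Lem-Kobs1} and hence can only increase the barrier gradients $U_{u_m(d),d}$, and the intervening $\ov{\chi}_{r-1}$, $\kappa_r^{-1}$ and $\ov{\omega}_r$ operations alter no upright gradients; so $P_m$ is blocked and stays strictly above $Q_m$. The inductive step then rests on a coincidence--multiplicity count: if $u_{m-1}(d)<u_m(d)$, the earlier path $P_m$, whose foot rhombi lie at or above the barrier of $Q_m$, never reaches the barrier of $Q_{m-1}$ at all; while if $u_{m-1}(d)=u_m(d)$, the shared barrier rhombus was incremented by both $Q_m$ and $Q_{m-1}$, so has gradient at least $2$, and the single unit consumed by $P_m$ still leaves it positive. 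In general each earlier-acting $P_j$ consumes at most one unit of barrier per diagonal, and only at the barrier position of its own partner $Q_j$, so the accumulated stack never drops to zero before $P_i$ arrives. This accounting is exactly what rules out the scenario you flag ("an earlier $P_j$ lowers a head-rhombus barrier of $Q_i$ back to gradient $0$"); once it is in place, the bifurcation and rejoining phenomena you mention require no separate treatment, and the degenerate length-$0$ ladders cause no difficulty either.
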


\noindent{\bf Proof}:~~
The expansions of $\ov{\theta}_r$ and $\ov{\theta}_{r-1}$ that we have in mind here, take the form:
\begin{equation}\label{Eq-theta-expansions}
\begin{array}{rcl}
\ov{\theta}_r &=& \overbrace{\ov{\chi}_r\cdots\ov{\chi}_r}^{\nu_r} \overbrace{\ov{\phi}_1\cdots\ov{\phi}_1}^{V_{1r}} \overbrace{\ov{\phi}_2\cdots\ov{\phi}_2}^{V_{2r}}~~~\cdots~~~
            \overbrace{\ov{\phi}_{r-1}\cdots\ov{\phi}_{r-1}}^{V_{r-1,r}}\overbrace{\ov{\omega}_r\cdots\ov{\omega}_r}^{\mu_r^{(r)}}\cr\cr 
\ov{\theta}_{r-1} &=& \overbrace{\ov{\chi}_{r-1}~~~~~\cdots~~~~~\ov{\chi}_{r-1}}^{\nu_{r-1}} 		\overbrace{\ov{\phi}_1\cdots\ov{\phi}_1}^{V_{1,r-1}}
              ~~~~\cdots~~~~ \overbrace{\ov{\phi}_{r-2}\cdots\ov{\phi}_{r-2}}^{V_{r-2,r-1}}\overbrace{\ov{\omega}_{r-1}\cdots\ov{\omega}_{r-1}}^{\mu_{r-1}^{(r-1)}}\cr\cr 
									i&=&1~~\cdots~~\nu_r~~\cdots~~\nu_{r-1}~
									\underbrace{\nu_{r-1}+1~~\cdots~~\nu_r+\sum_{j=1}^{r-1} V_{jr}}_{\nu_r+\sum_{j=1}^{r-1} V_{jr} - \nu_{r-1}}
\end{array} 
\end{equation}

It follows from (\ref{Eq-ovtheta}) that $P_i$ will be a type \tov{(ii)} path if and only if $\nu_r<i\leq \nu_r+\sum_{j=1}^{r-1} V_{j,r}$.
Similarly,  $Q_i$ will be a type \tov{(ii)} path if and only if $\nu_{r-1}<i\leq \nu_{r-1}+\sum_{j=1}^{r-2} V_{j,r-1}$. Since $\nu_r\leq \nu_{r-1}$
and $\nu_r+\sum_{j=1}^{r-1} V_{j,r}\leq \nu_{r-1}+\sum_{j=1}^{r-2} V_{j,r-1}$ by virtue of the non-negativity of the right leaning rhombus gradient 
$R_{r-1,r}$ in $K$, it follows that the paths $P_i$ and $Q_i$ are both of type \tov{(ii)} if and only if $\nu_{r-1}<i\leq \nu_{r}+\sum_{j=1}^{r-1} V_{jr}$,
as illustrated above in (\ref{Eq-theta-expansions}).

Fix $i$ in the range $\nu_{r-1}<i\leq \nu_{r}+\sum_{j=1}^{r-1} V_{jr}$ and for notational simplicity set $P=P_i$ and $Q=Q_i$
with the type \tov{(ii)} paths $P$ and $Q$ starting on the left hand boundary at levels $k$ and $l$, respectively. Then
$k>l$ since $\ov{\theta}_{r,i}=\ov{\phi}_k$ implies $\nu_r+\sum_{j=1}^{k-1} V_{j,r} < i \leq \nu_r+\sum_{j=1}^{k} V_{j,r}$
which in turn is $\leq \nu_{r-1}+\sum_{j=1}^{k-1} V_{j,r-1}$ due to the non-negativity of the right leaning rhombus gradient $R_{kr}$ in 
$K$. Hence the path $Q$ passes from the $l$th diagonal to the $(r-1)$th diagonal leaving an upright rhombus of positive gradient
immediately above it in each diagonal from the $(l+1)$th to the $(r-1)$th, necessarily including the $k$th, as illustrated below.

\begin{equation}\label{Eq-QP}
\vcenter{\hbox{

}}
\end{equation}
To show that $P=P_i$ lies strictly above $Q=Q_i$ it only remains to show that the positivity condition on all the 
green rhombus gradients associated with the addition of $Q_i$ remains valid up until the subsequent addition of $P_i$.

This can be accomplished as follows. Consider first the case $i=m$ where $m=\nu_r+\sum_{j=1}^{r-1}V_{jr}$,
that is the case in (\ref{Eq-ovtheta}) corresponding to the first type \tov{(ii)} path addition 
in the action of $\ov{\theta}_r$ and thus the maximum value of $i$ in (\ref{Eq-theta-expansions}). In this
case $P=P_m$, let the addition of $Q=Q_m$ leave a sequence of upright rhombus gradients $U_{u_m(d),d}>0$ for
$d=l+1,l+2,\ldots,r-1$ with $u_m(d)$ specifying the level at which the path $Q_m$ enters the $d$th diagonal,
with $u_m(d)$ strictly increasing as $d$ increases.
The addition of the remaining \tov{(ii)} paths $Q_i$ with $i<m$ can only increase the value of each $U_{u_m(d),d}$
since, thanks to Lemma~\ref{Lem-Kobs1}, each $Q_i$ with $i<m$ lies weakly below $Q_m$. The remaining operations
between the addition of $Q_m$ and the addition of $P_m$ leave all upright rhombus gradients unaltered. Thus
immediately prior to the addition of $P_m$ we have $U_{u_m(d),d}>0$ for all $d=k,k+1,\ldots,r-1$ so that
$P_m$ cannot pass down through any of these upright rhombi and must therefore lie stricty above $Q_m$ as claimed.

Then, in the case $i=m-1$ if we set $P=P_{m-1}$ and $Q=Q_{m-1}$ the addition of $Q_{m}$ and $Q_{m-1}$
leaves $U_{u_m(d),d}>0$ and $U_{u_{m-1}(d),d}>0$ if $u_{m-1}(d)<u_m(d)$ and $U_{u_{m-1}(d),d}>1$ if $u_{m-1}(d)=u_m(d)$
for $d=l+1,l+2,\ldots,r-1$, with $u_{m-1}(d)$ also strictly increasing as $d$ increases. 
All of these gradients are left unaltered by the addition of the remaining \tov{(ii)} paths $Q_i$ with $i<m-1$, 
and the remaining operations between the addition of $Q_{m-1}$ and the addition of $P_m$. For any $d>k$ the addition of
$P_m$ can only reduce $U_{u_m(d),d}$ by $1$ leaving $U_{u_{m-1}(d),d}>0$ in both cases described above. It follows
that $P_{m-1}$ which starts at a level above $Q_{m-1}$ remains strictly above $Q_{m-1}$ for all $d$ up to
$r-1$.

Continuing in the same way, one finds that $P_i$ lies strictly above $Q_i$ for all $i$ from $\nu_{r-1}+1$ to
$m=\nu_r+\sum_{j=1}^{r-1}V_{jr}$, as claimed.
\qed
\bigskip

We are now in a position to prove the following

\begin{Lemma}\label{Lem-tildeU}
For $r\geq2$ let the action of $\ov{\theta}_r\kappa_r^{-1}$ on $H^{(r-1)}=\ov{\theta}_{r-1}\kappa_{r-1}^{-1}H^{(r-2)}$ yield
$H^{(r)}$ with the upright rhombus gradient in the position $(r-1,r)$ given by $\tilde{U}_{r-1,r}$.
Then $\tilde{U}_{r-1,r}\leq\nu_{r-1}-\nu_r$.
\end{Lemma}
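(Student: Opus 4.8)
The plan is to read off what the integer $\tilde U_{r-1,r}$ actually counts and then to bound that count. First I would observe that, among the three phases of $\ov\theta_r=\ov\chi_r^{\nu_r}\,\ov\phi_1^{V_{1r}}\cdots\ov\phi_{r-1}^{V_{r-1,r}}\,\ov\omega_r^{\mu_r^{(r)}}$, neither the type \tov{(iii)} additions $\ov\omega_r$ (which run straight down the $r$th diagonal) nor the type \tov{(i)} additions $\ov\chi_r$ alter any upright rhombus gradient. Hence $\tilde U_{r-1,r}$ is already fixed at the stage $H^{(r)}_{\ov{(\rm ii)}}$ of \eqref{Eq-KHHH}, and it equals precisely the number of type \tov{(ii)} path additions in $\ov\theta_r$ that increment the foot rhombus $U_{r-1,r}$ of the $r$th diagonal. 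A type \tov{(ii)} addition increments $U_{r-1,r}$ (as the head rhombus of its final ladder) exactly when it enters the $r$th diagonal at the base level; by the monotonicity of Lemma~\ref{Lem-Kobs1} these are the lowest, hence the last-applied, of the type \tov{(ii)} paths. So the whole task reduces to bounding how many type \tov{(ii)} paths of $\ov\theta_r$ can reach the base of the $r$th diagonal low enough to touch its foot rhombus.

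For the counting I would use the index bookkeeping displayed in \eqref{Eq-theta-expansions}. The type \tov{(ii)} paths of $\ov\theta_r$ are the $P_i$ with $\nu_r<i\le m$, where $m=\nu_r+\sum_{j=1}^{r-1}V_{jr}$, and I split this range according to the nature of the companion path $Q_i$ produced by $\ov\theta_{r-1}$: for $\nu_r<i\le\nu_{r-1}$ the path $Q_i$ is of type \tov{(i)} (one of the $\ov\chi_{r-1}$'s), while for $\nu_{r-1}<i\le m$ both $P_i$ and $Q_i$ are of type \tov{(ii)}.

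The decisive step is the upper range. For $\nu_{r-1}<i\le m$, Lemma~\ref{Lem-Kobs2} gives that $P_i$ lies strictly above $Q_i$ throughout the diagonals $1,2,\dots,r-1$; since $Q_i$ is a type \tov{(ii)} addition of $\ov\theta_{r-1}$ it reaches the base of the $(r-1)$-hive $H^{(r-1)}$, so $P_i$ stays strictly above that base everywhere, and must therefore reach the right-hand boundary and transition into the $r$th diagonal strictly above its foot. Consequently such a $P_i$ leaves $U_{r-1,r}$ untouched. Thus only the paths with $\nu_r<i\le\nu_{r-1}$ can possibly increment the foot rhombus, giving $\tilde U_{r-1,r}\le\#\{i:\nu_r<i\le\nu_{r-1}\}=\nu_{r-1}-\nu_r$. (The degenerate possibility $\nu_{r-1}>m$ causes no trouble: then every type \tov{(ii)} path has a type \tov{(i)} companion, there are only $\sum_{j}V_{jr}=m-\nu_r\le\nu_{r-1}-\nu_r$ of them, and the same bound holds.) Since Lemma~\ref{Lem-Kobs2} requires $r\ge3$, I would dispatch the base case $r=2$ directly: there the only type \tov{(ii)} paths are the $V_{12}$ copies of $\ov\phi_1$, all of which descend to the base and increment the unique rhombus $U_{12}$, so $\tilde U_{12}=V_{12}$, and the hive condition $R_{12}=\nu_1-\nu_2-V_{12}\ge0$ on $K$ yields $\tilde U_{12}\le\nu_1-\nu_2$ at once.

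The main obstacle I anticipate is not the counting but the two geometric claims underpinning it: that \emph{incrementing the foot rhombus is equivalent to entering the $r$th diagonal at the base level}, and that \emph{``$P_i$ strictly above $Q_i$, with $Q_i$ reaching the base'' forces $P_i$ to enter the $r$th diagonal above its foot}. Making these precise requires careful handling of the two modes of path termination in \eqref{Eq-iib-iia} (reaching the base inside a diagonal $j<r$ and then running along the base, versus descending the $r$th diagonal itself) together with the ladder mechanics whereby only the head and foot rhombi of a gradient-$0$ ladder change. I expect most of the write-up effort to go into verifying that in every one of these termination modes a path staying one level above the base indeed bypasses $U_{r-1,r}$, whereas the argument of the preceding paragraph is then a short bookkeeping deduction.
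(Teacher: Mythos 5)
Your proposal is correct and follows essentially the same route as the paper's own proof: the case $r=2$ is settled by $\tilde U_{12}=V_{12}$ together with the condition $R_{12}\geq 0$ in $K_{(2)}$, and for $r\geq 3$ the key step is exactly the paper's application of Lemma~\ref{Lem-Kobs2} (with the index alignment of \eqref{Eq-theta-expansions}) to show that every type \tov{(ii)} path $P_i$ paired with a type \tov{(ii)} companion $Q_i$ enters the $r$th diagonal above its lowest upright rhombus, so that only the unpaired paths, at most $\nu_{r-1}-\nu_r$ of them, can contribute to $\tilde U_{r-1,r}$. Your additional remarks (that $\ov\chi_r$ and $\ov\omega_r$ leave upright gradients untouched, and the explicit treatment of the degenerate range) only make explicit what the paper leaves implicit.
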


\noindent{\bf Proof}:~~
For $r=2$ there are precisely $V_{12}$ type
\tov{(ii)} path additions and each of these contributes $+1$ to $\tilde{U}_{12}$. There
are no other contributions so that $\tilde{U}_{12}=V_{12}$. However, in $K_{(2)}$ the
right-leaning rhombus hive condition $R_{12}\geq0$ implies that $V_{12}\leq \nu_1-\nu_2$,
thereby establishing the required result in the case $r=2$.

For $r\geq3$ we can exploit Lemma~\ref{Lem-Kobs2}. Given any pair of addition paths $P_i$ and $Q_i$ with 
$Q_i$ necessarily extending as far as the $(r-1)$th diagonal, the fact that $P_i$ lies
strictly above $Q_i$ means that $P_i$ enters the $r$th diagonal above its lowest upright
rhombus, and therefore makes no contribution to $\tilde{U}_{r-1,r}$.
The only possible contributions to $\tilde{U}_{r-1,r}$ are those that might arise from 
the type \tov{(ii)} path additions $P_i$ that are not paired with a corresponding 
type \tov{(ii)} path addition $Q_i$. It then follows immediately from the
alignment of the expansions of $\ov{\theta}_r$ and $\ov{\theta}_{r-1}$ in (\ref{Eq-theta-expansions})
that $\tilde{U}_{r-1,r}\leq \nu_{r-1} -\nu_r$, as required.
\qed
\bigskip

Returning to the proof of Theorem~\ref{The-KtoH} that was interrupted immediately before
we embarked on Lemmas~\ref{Lem-Kobs1}--\ref{Lem-tildeU}, we now know that
$\tilde{U}_{r-1,r}\leq\nu_{r-1}-\nu_r$, as required to prove 
that all hive conditions are satisfied under the successive addition of all paths
as dictated by the action of $\ov{\theta}_r\kappa_r^{-1}$ on the hive $H^{(r-1)}$. Under the induction hypothesis 
that $H^{(r-1)}\in{\cal H}^{(r)}(\lambda^{(r-1)},\mu^{(r-1)},\nu^{(r-1)})$, we have 
therefore established that $H^{(r)}=\ov{\theta}_r\kappa_r^{-1}H^{(r-1)}$ is a hive. Since we have already 
established that $H^{(r)}$ has the appropriate boundary edge labels, it follows that
$H^{(r)}\in{\cal H}^{(r)}(\lambda^{(r)},\mu^{(r)},\nu^{(r)})$. 

This completes the induction argument, and applying this result in the case $r=n$ we conclude
that $H=H^{(n)}\in{\cal H}^{(n)}(\lambda^{(n)},\mu^{(n)},\nu^{(n)})={\cal H}^{(n)}(\lambda,\mu,\nu)$,
as claimed in (\ref{Eq-H-hive}), thereby proving the validity of Theorem~\ref{The-KtoH}.
\qed


\section{Hive proofs of the bijective and involutive properties}
\label{Sec-hive-inv}

The relationship between our path removal and path addition operations allows us to establish 
the bijective nature of the maps we have encountered in Theorems~\ref{The-HtoK} and~\ref{The-KtoH}
whose domains we can combine as follows:
\begin{Definition}\label{Def-sigma}
For fixed positive integer $n$, let $\mathcal{H}^{(n)}$ be the union of all $\mathcal{H}^{(n)}(\lambda,\mu,\nu)$ for 
partitions $\lambda$, $\mu$ and $\nu$ with $\ell(\lambda)\leq n$ and $\mu,\nu\subseteq\lambda$ 
with $|\lambda|=|\mu|+|\nu|$. 
Let $\sigma^{(n)}:\mathcal{H}^{(n)}\rightarrow\mathcal{H}^{(n)}$
be such that for each $H\in\mathcal{H}^{(n)}(\lambda,\mu,\nu)$ we have $\sigma^{(n)}:H\mapsto K\in\mathcal{H}^{(n)}(\lambda,\nu,\mu)$
with $K=\sigma^{(n)} H$, as defined in Theorem~\ref{The-HtoK}.
Similarly, let $\ov{\sigma}^{(n)}:\mathcal{H}^{(n)}\rightarrow\mathcal{H}^{(n)}$
be such that for each $K\in\mathcal{H}^{(n)}(\lambda,\nu,\mu)$ we have $\ov{\sigma}^{(n)}:K\mapsto H\in\mathcal{H}^{(n)}(\lambda,\mu,\nu)$
with $H=\ov{\sigma}^{(n)} K$, 
as defined in Theorem~\ref{The-KtoH}.
\end{Definition}

With this definition we have the following:
\begin{Theorem}\label{The-hive-bijection}
The maps $\sigma^{(n)}$ and $\ov{\sigma}^{(n)}$ are mutually inverse bijections.
\end{Theorem}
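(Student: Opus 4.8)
The plan is to show that $\ov{\sigma}^{(n)}\circ\sigma^{(n)}=\mathrm{id}$ and $\sigma^{(n)}\circ\ov{\sigma}^{(n)}=\mathrm{id}$ on $\mathcal{H}^{(n)}$; since each map then has a two-sided inverse, both are bijections and mutually inverse. By Definition~\ref{Def-HtoK}, $\sigma^{(n)}$ is assembled from the diagonal steps $\Theta_r$, whose effect on the left-hand component is $H^{(r-1)}=\kappa_r\theta_r H^{(r)}$, while the gradients $V_{kr}$ produced by $\theta_r$ (Theorem~\ref{The-Hr-theta-r}) are recorded in the $r$th diagonal of $K$. By Theorem~\ref{The-KtoH}, $\ov{\sigma}^{(n)}$ is assembled from the steps $\ov{\theta}_r\kappa_r^{-1}$, where $\ov{\theta}_r$ is built from exactly these same gradients $V_{kr}$, read off from $K$. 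First I would isolate the following single-diagonal statement, from which both composites collapse to the identity by induction on the number of processed diagonals: for each $r$, the maps $\kappa_r\theta_r$ and $\ov{\theta}_r\kappa_r^{-1}$ are mutually inverse, and the gradient data $V_{kr}$ recorded by the former is exactly the data consumed by the latter. A direct computation reduces this to $\ov{\theta}_r\theta_r=\mathrm{id}$ and $\theta_r\ov{\theta}_r=\mathrm{id}$, since $\kappa_r^{-1}$ and $\kappa_r$ cancel (these act on hives whose $r$th diagonal is empty, which is precisely the situation after $\theta_r$ and before $\kappa_r$, and they are mutually inverse by their definitions in Section~\ref{Sec-notation}).

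It then remains to show that $\ov{\theta}_r$ and $\theta_r$ are mutually inverse on the $r$th diagonal. Here the decisive structural feature is that the factorisations run in opposite orders: $\theta_r=\omega_r^{\mu_r}\phi_r^{\lambda_r-\mu_r-\nu_r}\chi_r^{\nu_r}$ performs Phase~1 ($\chi_r$), Phase~2 ($\phi_r$) and Phase~3 ($\omega_r$) in that order, whereas $\ov{\theta}_r=\ov{\chi}_r^{\,\nu_r}\ov{\phi}_1^{\,V_{1r}}\cdots\ov{\phi}_{r-1}^{\,V_{r-1,r}}\ov{\omega}_r^{\,\mu_r^{(r)}}$ applies $\ov{\omega}_r$ first, then the $\ov{\phi}_k$ in the order $k=r-1,\dots,1$, and finally $\ov{\chi}_r$. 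I would match the three elementary operator types, namely $\ov{\chi}_r$ against $\chi_r$ (types \tov{(i)}/(i)), $\ov{\omega}_r$ against $\omega_r$ (types \tov{(iii)}/(iii)), and the $\ov{\phi}_k$ against the $\phi_r$ removals (types \tov{(ii)}/(ii)), where the count $V_{kr}$ of removal paths terminating at level $k$ dictates exactly how many operators $\ov{\phi}_k$ to apply. On each edge the numerical effect is transparent: a removal lowers a red edge and raises a blue one by $1$, and the corresponding addition does the reverse, so once the routes agree the two operations cancel.

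The heart of the argument, and the step I expect to be the main obstacle, is proving that a single path addition retraces in reverse the route of the corresponding single path removal. Both routes are dictated entirely by the positions of upright rhombi of gradient $0$, but these are read at different moments: the removal route in $H^{(r)}$ is determined by the gradient configuration present when $\theta_r$ reaches that path, while the addition route in the partially rebuilt hive is determined by the configuration present when $\ov{\theta}_r$ reaches the matching path. I would reconcile the two by exploiting the reversed order of operations together with the nesting results already in hand: successive type~(ii) removals lie weakly above their predecessors (Lemma~\ref{Lem-obs1}) and successive type~\tov{(ii)} additions lie weakly below theirs (Lemma~\ref{Lem-Kobs1}). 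These two monotonicities are mirror images of one another, and I would use them to argue inductively that the intermediate hives traversed while applying $\ov{\theta}_r$ are exactly those traversed while applying $\theta_r$, in reverse order, so that each re-added path meets exactly the same sequence of gradient-$0$ upright rhombi and hence follows the same edges.

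Granting this step-inversion lemma, both composites follow by induction on $r$. Starting from $H$, the passage $\sigma^{(n)}$ records the sequence $V_{kr}$ in $K$, and $\ov{\sigma}^{(n)}$ reconstructs $H^{(r)}$ from $H^{(r-1)}$ at each stage using those very numbers, recovering $H^{(n)}=H$. Starting from $K$, the reconstruction $\ov{\sigma}^{(n)}$ produces a hive $H$ whose diagonals, when stripped by $\kappa_r\theta_r$, regenerate precisely the recorded gradients, recovering $K$. Hence $\ov{\sigma}^{(n)}\circ\sigma^{(n)}$ and $\sigma^{(n)}\circ\ov{\sigma}^{(n)}$ are both the identity on $\mathcal{H}^{(n)}$, and $\sigma^{(n)},\ov{\sigma}^{(n)}$ are mutually inverse bijections.
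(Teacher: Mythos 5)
Your proposal is correct and takes essentially the same approach as the paper: the paper's proof of this theorem consists precisely of the observation that the path-addition sequence dictated by $K=\sigma^{(n)}H$ is the path-removal sequence in reverse order with each removal replaced by its inverse addition (and symmetrically for $\sigma^{(n)}\circ\ov{\sigma}^{(n)}$), so that both composites are the identity, and your per-diagonal reduction to $\ov{\theta}_r\theta_r=\mathrm{id}$ and $\theta_r\ov{\theta}_r=\mathrm{id}$ is just a more structured statement of that same argument. One small adjustment to your plan: the single-path retracing step is not really a consequence of the monotonicity Lemmas~\ref{Lem-obs1} and~\ref{Lem-Kobs1}, but of the local effect of a removal along its own route --- its ladder rhombi keep gradient $0$ while each foot rhombus becomes strictly positive --- which is exactly what forces the matching addition $\ov{\phi}_k$, applied to the hive as it stood just after that removal, to descend the same ladders and turn at the same places; the paper leaves even this point unstated.
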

 
\noindent{\bf Proof}:  
It follows from Theorems~\ref{The-HtoK} and \ref{The-KtoH} that the maps $\sigma^{(n)}$ and $\ov{\sigma}^{(n)}$ 
provide maps from ${\cal H}^{(n)}(\lambda,\mu,\nu)$ to ${\cal H}^{(n)}(\lambda,\nu,\mu)$ and 
from ${\cal H}^{(n)}(\lambda,\nu,\mu)$ to ${\cal H}^{(n)}(\lambda,\mu,\nu)$, respectively. 
Moreover, for any $H\in{\cal H}^{(n)}(\lambda,\mu,\nu)$ if $\sigma^{(n)}H=K$ with $K\in{\cal H}^{(n)}(\lambda,\nu,\mu)$, 
then $K$ records the boundary edges of a sequence of path removals 
that reduce $H$ to the empty hive $H^{(0)}$.
If $K$ is then used to specify the sequence of path additions that
create $\ov{\sigma}^{(n)}K$ from $H^{(0)}$, then this latter sequence is exactly
the reverse of the original sequence with each path removal replaced by its
inverse in the form of a path addition. The sequence 
formed in this way is precisely what is required to re-create $H$ from $H^{(0)}$. 
That is to say, if $\sigma^{(n)}H=K$ then $\ov{\sigma}^{(n)}K=H$. 
By exactly the same type of argument, if one starts with $K\in{\cal H}^{(n)}(\lambda,\nu,\mu)$
and creates $H=\ov{\sigma}^{(n)}K\in{\cal H}^{(n)}(\lambda,\mu,\nu)$, then the action of $\sigma^{(n)}$ on $H$
consists of reversing the order of the path additions and applying their inverses to $H$ in the form of 
corresponding path removals. By this means one necessarily
arrives back at $K\in{\cal H}^{(n)}(\lambda,\nu,\mu)$ as a record of the boundary edges of the sequence of path removals.
That is to say, this time, if $H=\ov{\sigma}^{(n)}K$ then $K=\sigma^{(n)}H$.
It follows that $\sigma^{(n)}$ and $\ov{\sigma}^{(n)}$ are mutually
inverse maps and that both are bijective.
\qed

Our next task is to prove that the map $\sigma^{(n)}$ is an involution. 
To do this we proceed by way of a sequence of Lemmas, in connection with which we need to introduce 
a new type (iv) path removal operation.

\begin{Definition}
Given any hive $H\in{\cal H}^{(n)}(\lambda,\mu,\nu)$ with $\lambda_n>0$ and $U_{in}>0$ for some $i<n$,  
the type (iv) path removal operator $\psi_{n}$ acts on $H$ by decreasing the {\red red} edge labels 
by $1$ along a zig-zag path from the edge labelled $\lambda_n$ to that labelled $\nu_k$ 
where $k=\min\{j\,|\,U_{jn}>0\}$ as illustrated below.
For convenience we have set $\beta_{k,n-1}=\beta^\ast$ and $U_{kn}=U^\ast$ with $U^\ast>0$ by the definition of $k$, 

\begin{equation}
\vcenter{\hbox{
\begin{tikzpicture}[x={(1cm*0.35,-\rootthree cm*0.35)},
                     y={(1cm*0.35,\rootthree cm*0.35)}]
\node at (-2,0){(iv)~~$\psi_n$};  
\draw(0,0)--(0,8)--(8,8)--cycle;
\path(0,7)--node[pos=0.6,left]{$\mu_n$}(0,8); 
\path(4,8)--node[pos=0.4,right]{$\nu_k$}(5,8); \path(4,7)--node[pos=1.1,left]{$\beta^\ast$}(5,7);
\path(7,7)--node[below]{$\lambda_n$}(8,8);     
\foreach\i in{0,...,7}\draw(\i,\i)--(\i,8); 
\foreach\i in{1,...,7}\draw(0,\i)--(\i,\i); 
\path[fill=yellow,fill opacity=0.5](4.0,7.0)--(4.0,7.9)--(4.9,7.9)--(4.9,7.0)--cycle; 
\foreach\i in{5,...,7} \foreach\j in{8,...,8}\draw[red,ultra thick](\i,\j-1)--(\i,\j);  
\foreach\i in{5,...,5} \foreach\j in{8,...,8}\draw[red,ultra thick](\i-1,\j)--(\i,\j);  
\foreach\i in{4,...,7} \foreach\j in{8,...,8}\draw[red,ultra thick](\i,\j-1)--(\i+1,\j);  
\foreach\i in{1,...,4} \foreach\j in{8,...,8}\path(\i-1,\j-1)--node{$\sc{0}$}(\i,\j); 
\path(4,7)--node{$U^\ast$}(5,8); 
\foreach\i in{6,...,7} \foreach\j in{8,...,8}\path(\i-1,\j-1)--node{$\ast$}(\i,\j);  
\end{tikzpicture}
}}
\end{equation}
\end{Definition}

Prior to the path removal the hive conditions, together with the fact that $U^\ast>0$, $\beta^\ast\geq0$, $\mu_n\geq0$ and $\lambda_n>0$,
imply that all the {\bf\red red} edge labels are positive. This can be seen by noting that the {\bf\red red} $\alpha$-edges
have labels $\geq \mu_n+U^\ast>0$, the {\bf\red red} $\beta$-edge has label $\nu_k=\beta^\ast+U^\ast>0$, and the 
{\bf\red red} $\gamma$-edges have labels $\geq \lambda_n>0$. It follows that after the type (iv) path removal 
these and all other edge labels remain non-negative. In addition the
type (iv) path removal preserves the hive triangle conditions and decreases the bounday edge labels
$\lambda_n$ and $\nu_k$ by one, and the upright rhombus gradient $U_{kn}=U^\ast$ also
by one, whilst preserving the values of all other upright rhombus gradients. 
That such a path removal preserves all the rhombus hive conditions can be seen from the following
diagrams
\begin{equation}\label{Eq-phi-rhombus-conditions}
\begin{array}{ccccc}
\vcenter{\hbox{
\begin{tikzpicture}[x={(1cm*0.5,-\rootthree cm*0.5)},
                    y={(1cm*0.5,\rootthree cm*0.5)}]
\draw(0,0)--(0,1);  
\draw(0,0)--(1,1); 
\draw[red,ultra thick](0,1)--(1,2); 
\draw[red,ultra thick](1,1)--(1,2); 
\path(0,1)--node{$R$}(1,1);
\end{tikzpicture}
}}
&
\vcenter{\hbox{
\begin{tikzpicture}[x={(1cm*0.5,-\rootthree cm*0.5)},
                    y={(1cm*0.5,\rootthree cm*0.5)}]
\draw(0,1)--(0,2);  
\draw[red,ultra thick](0,2)--(1,2); 
\draw(0,1)--(1,1); 
\draw[red,ultra thick](1,1)--(1,2); 
\draw[red,ultra thick](0,1)--(1,2); 
\path(0,1)--node{$U$}(1,2);
\end{tikzpicture}
}}
&
\vcenter{\hbox{
\begin{tikzpicture}[x={(1cm*0.5,-\rootthree cm*0.5)},
                    y={(1cm*0.5,\rootthree cm*0.5)}]
\draw[red,ultra thick](0,1)--(0,2);  
\draw(0,2)--(1,2); 
\draw(0,1)--(1,1); 
\draw[red,ultra thick](1,1)--(1,2); 
\draw[red,ultra thick](0,1)--(1,2); 
\path(0,1)--node{$U$}(1,2);
\end{tikzpicture}
}}
&
\vcenter{\hbox{
\begin{tikzpicture}[x={(1cm*0.5,-\rootthree cm*0.5)},
                    y={(1cm*0.5,\rootthree cm*0.5)}]
\draw(0,1)--(1,2);  
\draw(0,1)--(1,1); 
\draw[red,ultra thick](1,2)--(2,2); 
\draw[red,ultra thick](1,1)--(2,2); 
\path(1,1)--node{$L$}(1,2);
\end{tikzpicture}
}}
&
\vcenter{\hbox{
\begin{tikzpicture}[x={(1cm*0.5,-\rootthree cm*0.5)},
                    y={(1cm*0.5,\rootthree cm*0.5)}]
\draw[red,ultra thick](0,1)--(1,2);  
\draw(0,1)--(1,1); 
\draw(1,2)--(2,2); 
\draw[red,ultra thick](1,1)--(2,2); 
\draw[red,ultra thick](1,1)--(1,2); 
\path(1,1)--node{$L$}(1,2);
\end{tikzpicture}
}}\cr\cr
~~R\mapsto R\!+\!1~~&~~U\mapsto U\!-\!1~~&~~U\mapsto U~~&~~L\mapsto L\!+\!1~~&~~L\mapsto L\cr
\end{array}
\end{equation}
 
As an immediate consequence of this we have:
\begin{Lemma}\label{Lem-HphiH}
Let $H\in{\cal H}^{(n)}(\lambda,\mu,\nu)$ with $\ell(\lambda)=n$ be such that $U_{in}>0$ for some $i<n$.  
Then $\psi_{n} H\in{\cal H}^{(n)}(\lambda-\epsilon_n,\mu,\nu-\epsilon_k)$ where $k=\min\{j\,|\,U_{jn}>0\}$.
\end{Lemma}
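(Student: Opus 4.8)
The plan is to verify directly that $\psi_n H$ meets every requirement for membership in $\mathcal{H}^{(n)}(\lambda-\epsilon_n,\mu,\nu-\epsilon_k)$, leaning almost entirely on the analysis already carried out in defining $\psi_n$. Two things must be established: that $\psi_n H$ is an LR hive, and that its lower, left and right boundary edge labels are $\lambda-\epsilon_n$, $\mu$ and $\nu-\epsilon_k$ respectively. The lemma is genuinely ``an immediate consequence'' of the surrounding discussion, so the work is one of collecting the facts in the right order.

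First I would confirm that every edge label of $\psi_n H$ is a non-negative integer; by Definition~\ref{Def-LRhive} this, together with the triangle conditions, is precisely what makes $\psi_n H$ an LR hive. Integrality is immediate, since $\psi_n$ alters only the red edges along its path, dropping each by exactly $1$, and no blue edges occur. For non-negativity I would invoke the bound established just before the statement: in $H$ the red $\alpha$-edges carry labels $\geq\mu_n+U^\ast$, the red $\beta$-edge carries the label $\nu_k=\beta^\ast+U^\ast$, and the red $\gamma$-edges carry labels $\geq\lambda_n$; since $U^\ast>0$, $\beta^\ast\geq0$, $\mu_n\geq0$ and $\lambda_n>0$, all of these are strictly positive, hence remain $\geq0$ after being reduced by $1$. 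No other edge label changes.

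Next I would check the hive conditions themselves. The triangle condition $\alpha+\beta=\gamma$ is preserved because the red edges fall into each elementary triangle in pairs, a $\gamma$-edge together with an adjacent $\alpha$- or $\beta$-edge, exactly as in the first two configurations of (\ref{Eq-triangles}), so each affected triangle has two of its three labels lowered by $1$; this in turn guarantees a consistent integer vertex labelling anchored at $a_{00}=0$. For the rhombus gradients I would appeal to the five configurations in (\ref{Eq-phi-rhombus-conditions}): right- and left-leaning gradients either stay fixed or increase, and the only gradient that decreases is that of the single upright rhombus $U_{kn}=U^\ast$, which passes from $U^\ast$ to $U^\ast-1$, while all other upright gradients are preserved. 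This is the one place where the definition $k=\min\{j\,|\,U_{jn}>0\}$ does essential work: it forces $U^\ast>0$, so $U^\ast-1\geq0$. Thus all rhombus gradients of $\psi_n H$ remain non-negative and $\psi_n H$ is an LR hive.

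Finally I would read the boundary labels off the definition of $\psi_n$: the path begins at the base edge $\lambda_n$ and terminates on the right-hand boundary at $\nu_k$, each decreased by $1$, while the left-hand boundary $\mu$-edges are never touched. Hence the lower, left and right boundaries become $\lambda-\epsilon_n$, $\mu$ and $\nu-\epsilon_k$, giving $\psi_n H\in\mathcal{H}^{(n)}(\lambda-\epsilon_n,\mu,\nu-\epsilon_k)$. The only delicate point in the whole argument, and the one I would foreground, is the non-negativity of the lone decreasing upright gradient; everything else is bookkeeping already prepared by the discussion preceding the lemma, which is exactly why this non-negativity hinges on choosing $k$ to be the minimal index with $U_{jn}>0$.
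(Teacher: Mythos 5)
Your proposal is correct and takes essentially the same approach as the paper, which offers no separate proof but states the lemma as an ``immediate consequence'' of the discussion directly preceding it: positivity of the red edge labels before removal, preservation of the triangle conditions, the gradient changes displayed in (\ref{Eq-phi-rhombus-conditions}) with the sole decrease $U_{kn}=U^\ast\mapsto U^\ast-1\geq 0$ forced by the minimality of $k$, and the boundary-label bookkeeping. The only negligible imprecision is your citation of just the first two configurations of (\ref{Eq-triangles}): the path also traverses downward triangles with red $\alpha$- and $\gamma$-edges (the second-row configuration), but the conclusion is unchanged.
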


We are now in a position to state what turns out to be a crucial lemma {\it en route} to
Lemma~\ref{Lem-sigma2H} and our involution Theorem~\ref{The-inv}.
\begin{Lemma}\label{Lem-KthetaK}
Let $H\in{\cal H}^{(n)}(\lambda,\mu,\nu)$ with $\ell(\lambda)=n$ be such that $U_{in}>0$ for some $i<n$.  
Then $\sigma^{(n)}\psi_n\, H = \phi_n\, \sigma^{(n)} H$, 
that is to say that the following diagram commutes:
\begin{equation}
\vcenter{\hbox{
\begin{tikzpicture}
\node(tl) at (0,3){$H$};
\node(tr) at (4,3){$K$};
\node(bl) at (0,0){$\H$};
\node(br) at (4,0){$\K$};
\draw[->](tl)--node[above]{$\sigma^{(n)}$}(tr);
\draw[->](bl)--node[below]{$\sigma^{(n)}$}(br);
\draw[->](tl)--node[left]{$\psi_n$}(bl);
\draw[->](tr)--node[right]{$\phi_n$}(br);
\end{tikzpicture}
}}
\end{equation}
where $K=\sigma^{(n)}\,H$, $\H=\psi_n\,H$ and $\K=\sigma^{(n)}\,\H$. 
\end{Lemma}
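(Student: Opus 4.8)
The plan is to run the path-removal algorithm $\Theta^{(n)}$ of Definition~\ref{Def-HtoK} in parallel on the two input pairs $(H,K^{(0)})$ and $(\H,K^{(0)})$, where $\H=\psi_n H$, and to compare the two computations diagonal by diagonal. Writing $(H^{(r)},K^{(n-r)})$ and $(\H^{(r)},\K^{(n-r)})$ for the successive pairs produced, so that $K=K^{(n)}=\sigma^{(n)}H$ and $\K=\K^{(n)}=\sigma^{(n)}\H$, the goal is to prove that $\K=\phi_n K$. Since $\H\in{\cal H}^{(n)}(\lambda-\epsilon_n,\mu,\nu-\epsilon_k)$ by Lemma~\ref{Lem-HphiH}, a first consistency check is on the boundary labels: applying $\phi_n$ to $K\in{\cal H}^{(n)}(\lambda,\nu,\mu)$, whose left-hand boundary carries the parts of $\nu$, yields a hive in ${\cal H}^{(n)}(\lambda-\epsilon_n,\nu-\epsilon_{k'},\mu)$ for whichever level $k'$ its type (ii) path reaches, and this matches the boundary labels of $\K$ exactly when $k'=k$. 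At the top level this is already plausible by a counting argument: since $\hat\nu_n=\nu_n$, $\hat\mu_n=\mu_n$ and $\hat\lambda_n=\lambda_n-1$, the operator $\theta_n=\omega_n^{\mu_n}\phi_n^{\lambda_n-\mu_n-\nu_n}\chi_n^{\nu_n}$ applied to $\H$ performs exactly one fewer type (ii) removal than when applied to $H$, reflecting the single unit by which $\psi_n$ lowered $U_{kn}$. The substance of the lemma is to promote this to the statement that $k'=k$ and that all interior gradients of $\K$ and $\phi_n K$ coincide.

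The core of the argument is an inductive invariant that I would call a \emph{line of difference}. At each stage $r=n,n-1,\dots,1$ the claim is that $H^{(r)}$ and $\H^{(r)}$ are $r$-hives differing only along a single connected zig-zag path of edges, on which the labels of $H^{(r)}$ exceed those of $\H^{(r)}$ by $1$ on $\alpha$- and $\gamma$-edges and fall short by $1$ on $\beta$-edges, together with a single compensating unit difference in one upright rhombus gradient at the level $k_r$ where the line meets the left-hand boundary. The base case $r=n$ is exactly the type (iv) path of $\psi_n$: by construction $H$ and $\H$ differ along that path, with $U_{kn}$ reduced by $1$. The inductive step is to apply $\theta_r$ to both $H^{(r)}$ and $\H^{(r)}$ and to follow how the removal paths differ. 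Because the two hives share all gradients except along the current line of difference, almost all removal paths coincide; only those whose route is controlled by the altered gradient are affected, and I would show, using the path-comparison reasoning already developed for Lemmas~\ref{Lem-obs1} and~\ref{Lem-obs2}, that $\theta_r$ produces $(r-1)$-hives $H^{(r-1)}=\kappa_r\theta_rH^{(r)}$ and $\H^{(r-1)}=\kappa_r\theta_r\H^{(r)}$ again differing along a single line of difference, while simultaneously the terminating levels recorded as the $r$th-diagonal gradients $V_{kr}$ and $\hat V_{kr}$ of $K$ and $\K$ differ by exactly the amount prescribed by the $\phi_n$ path as it passes through the $r$th diagonal of $K$.

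The main obstacle, and as the introduction signals the heart of the proof, is that the unit gradient difference carried by the line of difference can reroute the $\theta_r$ removal paths of $H^{(r)}$ relative to those of $\H^{(r)}$. A removal path in one hive may enter an upright rhombus of gradient $0$ that has gradient $1$ in the other, so the two families of removal paths can separate, re-meet, bifurcate around, and rejoin along what I would call \emph{critical rhombi}. The work is to classify these local configurations and verify in each case that the net effect on the pair $(H^{(r-1)},\H^{(r-1)})$ is still a single line of difference of the stated form, with a correctly tracked terminating level $k_{r-1}$. This is the hive-side analogue of the delicate path-interaction analysis carried out for tableaux in the Elementary, Horizontal and Vertical Path Comparison Lemmas~\ref{Lem-ep}--\ref{Lem-vp} and in the case-of-first-contact analysis of Section~\ref{Sec-prf-delaying}; I expect it to demand a comparable sequence of sublemmas isolating the behaviour below, at, and above each critical rhombus.

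Granting the invariant, the conclusion follows by assembling the per-diagonal differences. As $r$ runs from $n$ down to $1$, the successive lines of difference, through their terminating levels, deposit unit gradient discrepancies in the consecutive diagonals of $K$ versus $\K$, taken from right to left; by the inductive bookkeeping these discrepancies are precisely the foot- and head-rhombus gradient changes, and the boundary-edge reduction at level $k$, that define the single type (ii) path removed by $\phi_n$ acting on $K$. Hence $\K=\phi_n K$, which is the desired commutativity $\sigma^{(n)}\psi_n H=\phi_n\sigma^{(n)}H$.
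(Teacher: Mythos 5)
Your proposal follows essentially the same route as the paper's own proof: the paper establishes the lemma via a generalisation (Lemma~\ref{Lem-KhatKr}) asserting $\phi_n\,K^{(r)}=\K^{(r)}$ for all $r$, proved by running the two removal chains in parallel and maintaining exactly your \emph{line of difference} invariant, with the bifurcation and rejoining of removal paths at \emph{critical rhombi} handled by a case analysis (no interior critical rhombus, one, or several), and concluding by the same per-diagonal assembly of the $\pm1$ gradient discrepancies into the single type (ii) path removed by $\phi_n$ from $K$. Your sketch defers that case analysis but correctly identifies it as the crux, so the two arguments coincide in structure.
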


This result emerges 
as the $r=n$ special case of the following more general lemma:
\begin{Lemma}\label{Lem-KhatKr}
Let $H\in{\cal H}^{(n)}(\lambda,\mu,\nu)$ with $\ell(\lambda)=n$ be such that $U_{in}>0$ for some $i<n$,
and let $\H=\psi_n\,H$. Setting $H^{(n)}=H$ and $\H^{(n)}=\H$, let the action of $\sigma^{(n)}$ yield $K=K^{(n)}$
and $\K=\K^{(n)}$ by way of the chains
$$
  (H^{(n)},K^{(0)})\overset{\Theta_n}{\longmapsto}(H^{(n-1)},K^{(1)}) \overset{\Theta_{n-1}}{\longmapsto}(H^{(n-2)},K^{(2)})
	\overset{\Theta_{n-2}}{\longmapsto}\cdots\overset{\Theta_1}{\longmapsto}(H^{(0)},K^{(n)})
$$ 
and
$$
  (\H^{(n)},\K^{(0)})\overset{\Theta_n}{\longmapsto}(\H^{(n-1)},\K^{(1)}) \overset{\Theta_{n-1}}{\longmapsto}(\H^{(n-2)},\K^{(2)})
	\overset{\Theta_{n-2}}{\longmapsto}\cdots\overset{\Theta_1}{\longmapsto}(\H^{(0)},\K^{(n)})\,.
$$ 
Then
\begin{equation}\label{Eq-KhatKr}
         \phi_n\, K^{(r)} = \K^{(r)}   \qquad\hbox{for $r=1,2,\ldots,n$}\,,
\end{equation}		
where the action of $\phi_n$ on the truncated hive $K^{(r)}$ is exactly 
the same as its action would be on a hive except that it terminates on reaching the lower left hand boundary 
of $K^{(r)}$.			
\end{Lemma}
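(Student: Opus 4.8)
The plan is to prove the stronger Lemma~\ref{Lem-KhatKr} by running the two evacuation chains for $H$ and $\H=\psi_n H$ in lock-step, using downward induction on the hive size (equivalently, induction on the number of diagonals already assembled in the partner) and maintaining two invariants at every stage. Writing $H^{(r)},\H^{(r)}$ for the intermediate $r$-hives and $K^{(n-r)},\K^{(n-r)}$ for their truncated partners, the invariants are: (I) $H^{(r)}$ and $\H^{(r)}$ agree except along a single connected \emph{line of difference}, a zig-zag of edges whose $\H^{(r)}$-labels differ from the $H^{(r)}$-labels by $\pm1$, one end of which terminates on the left-hand boundary; and (II) $\phi_n K^{(n-r)}=\K^{(n-r)}$, where $\phi_n$ is applied to the truncated hive as a type (ii) path removal that halts on reaching its lower left-hand boundary. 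Read off over the course of the procedure, invariant (II) is exactly the assertion of the lemma (with $n-r$ playing the role of the lemma's index), and its top instance delivers the commuting square of Lemma~\ref{Lem-KthetaK}.

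The initial data is supplied by the definition of $\psi_n$: the hives $H$ and $\H$ differ by $-1$ precisely along the type (iv) removal path, so the first line of difference is that path, while $K^{(0)}=\K^{(0)}$ are empty and (II) holds vacuously. First I would treat the opening application of $\Theta_n$ as the base case: although this first line of difference meets the right-hand boundary at the edge $\nu_k$ rather than the left, I must check that after the full removal $\theta_n$ and the truncation $\kappa_n$ the residual discrepancy between $H^{(n-1)}$ and $\H^{(n-1)}$ is again a single line of difference, now terminating on the left-hand boundary, and that the single-diagonal partners $K^{(1)},\K^{(1)}$ (with gradients $V_{k,n},\hat V_{k,n}$) satisfy $\phi_n K^{(1)}=\K^{(1)}$. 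A short weight count using $\H\in\mathcal{H}^{(n)}(\lambda-\epsilon_n,\mu,\nu-\epsilon_k)$ gives $\sum_k(\hat V_{k,n}-V_{k,n})=-1$, consistent with $\phi_n$ lowering exactly the head-rhombus gradient of $K^{(1)}$ at the level where the new line terminates.

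For the inductive step I would assume (I) and (II) at stage $r$ and apply $\theta_r=\omega_r^{\mu_r}\phi_r^{\lambda_r-\mu_r-\nu_r}\chi_r^{\nu_r}$ to both $H^{(r)}$ and $\H^{(r)}$, comparing the two families of removal paths phase by phase. Away from the line of difference the corresponding paths coincide, since by Definition~\ref{Def-hive-paths} each route is determined only by the edges and upright-rhombus gradients it meets. The difference $\hat V_{k,r}-V_{k,r}$ in the number of type (ii) paths reaching left-boundary level $k$ is governed by how the line of difference meets those routes, and the crux is to show that this difference is supported at precisely the levels obtained by prolonging the $\phi_n$ path of invariant (II) through the newly attached $r$th diagonal (a foot gradient raised by $1$, a head gradient lowered by $1$), so that $\phi_n K^{(n-r+1)}=\K^{(n-r+1)}$ follows from the inductive hypothesis and the localization of the gradient changes for $\theta_r$ recorded in Lemma~\ref{Lem-hpr-hive} and Corollary~\ref{Cor-obs1}. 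In parallel I would show that, once $\kappa_r$ deletes the emptied $r$th diagonal, the surviving discrepancy between $H^{(r-1)}$ and $\H^{(r-1)}$ is once more a single line of difference terminating on the left, its terminating level recording where the $V$-difference occurred; the monotonicity of successive removal paths (Lemma~\ref{Lem-obs1}) and the inequality of Lemma~\ref{Lem-obs2} are the tools that keep this terminating level under control.

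The main obstacle is the behaviour of the removal paths at the \emph{critical rhombi}: upright rhombi whose gradient is $0$ in one of $H^{(r)},\H^{(r)}$ but positive in the other because the line of difference runs through them. At such a rhombus a type (ii) route may ascend the diagonal in one hive while being forced to turn left in the other, so the two path-families can bifurcate and later rejoin, raising the danger that the discrepancy splits into several disconnected lines or that a $V$-difference surfaces at the wrong level. I expect essentially all the work to lie here. The plan is a finite case analysis at each critical rhombus, constraining the neighbouring gradients by the hive inequalities (through Lemma~\ref{Lem-hpr-hive}) and controlling any reconvergence by the weak ordering of Lemma~\ref{Lem-obs1}, to verify in every configuration that the paths diverge and recombine so as to preserve the single-line structure of the difference and to deposit the $V$-difference exactly where the propagating $\phi_n$ path demands. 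Once these critical-rhombus configurations are shown to be no impediment, invariant (II) is maintained through each $\Theta_r$, and the case $r=n$ yields Lemma~\ref{Lem-KthetaK}.
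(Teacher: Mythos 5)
Your plan coincides with the paper's own proof: the same downward induction running the two chains in lock-step, the same invariant that $H^{(s)}$ and $\H^{(s)}$ differ along a single line of difference whose terminating level on the left-hand boundary deposits the $\pm1$ discrepancy between $K^{(n-s)}$ and $\K^{(n-s)}$ exactly along the $\phi_n$ removal route, and the same identification of the crux, namely the bifurcation and rejoining of removal paths at critical rhombi. The paper carries out the case analysis you defer (its Cases (i) and (ii), covering one or several interior critical rhombi, pre-critical and post-critical configurations) using precisely the tools you cite, chiefly the monotonicity of Lemma~\ref{Lem-obs1} together with the hive rhombus inequalities, so once that analysis is executed as you describe, your argument is the paper's argument.
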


\noindent{\bf Proof}:~~
The action of $\Theta_r$ involves applying $\theta_r$ to $H^{(r)}$ and $\H^{(r)}$ to create $H^{(r-1)}$ and $\H^{(r-1)}$,
while recording information on the relevant path removals in $K^{(n-r+1)}$ and $\K^{(n-r+1)}$ respectively, for $r=n,n-1,\ldots,1$. 
So first we explore the results of applying
$\theta_{n}$ to $H$ and $\theta_{n}$ to $\H:=\psi_{n}\,H$. 
These procedures differ only in respect of the existence of one
extra path removal from $H$, namely the last type (ii) path removal.
Typically, this last type (ii) path removal is illustrated  in {\bf\magenta magenta} and {\bf\blue blue} 
in (\ref{Eq-path01}) below on the left:
\begin{equation}\label{Eq-path01}
\vcenter{\hbox{

}}
\end{equation}
This has been drawn on the assumption that there exists $l<k$ such that
$U_{l,n-1}=Y$ with $Y>0$ before the removal of the {\bf\blue blue} path. 
For simplicity, we refer to this as the $Y>0$ situation. 
If there is no upright rhombus gradient $U_{l,n-1}=Y$ with $l<k$ and $Y>0$, 
which we refer to as the $Y=0$ situation, then the
{\bf\blue blue} path removal extends the whole way up the $(n-1)$th diagonal to the left-hand boundary at 
level $j=n-1$.

All but the last of the type (ii) path removals from $H$ coincide with those from $\H$
and reduce all the upright rhombus gradients in the $n$th diagonal to $0$ except for
that of $U_{kn}$ in $H$ which at this stage takes the value $1$. This gradient is then reduced to $0$
by the final type (ii) path removal from $H$, represented above by the {\bf\magenta magenta} and {\bf\blue blue} path.
This goes up the {\bf\magenta magenta} ladder from its horizontal base to its head rhombus, coloured {\bf\purple purple},
at level $k$ (counted from the top) and then enters the $(n-1)$th diagonal and passes along the {\bf\blue blue} 
path to the left-hand boundary at some level $j$ (counted from the bottom) with $j\leq n-1$. 

Following this, the final $\mu_n$ type (iii) path removals that arise 
if $\mu_n>0$ are necessarily the same for both $H$ and $\H$. 
It follows that $H^{(n-1)}$ differs from $\H^{(n-1)}$ only through differences of 
$\pm1$ as appropriate in the labels of the {\bf\blue blue} edges, 
together with associated differences of $\pm1$ in the
gradients of the foot and head rhombi of the {\bf\blue blue} ladders in the diagonals from $n-1$ down to $j$. 
To be precise, in the case of the above example (\ref{Eq-path01}), if in the $(n-1)$th diagonal two upright rhombus gradients 
at levels $k$ and $l$ separated by $0$'s take values $X\geq0$ and $Y\geq1$ in $\H^{(n-1)}$, then 
they take the values $X+1$ and $Y-1$ in $H^{(n-1)}$, with further differences of $\pm1$ in the
foot and head rhombus gradients in
four more diagonals. If we are in the $Y=0$ situation
then the {\bf\blue blue} path zig-zags up the $(n-1)$th diagonal 
in the form of a type (iii) path leaving an upright rhombus gradient of $X+1$ in $H^{(n-1)}$
and decreasing the top-most edge label by $1$.  

Furthermore, if the {\bf\blue blue} path reaches the left-hand boundary at level
$j\leq n-1$ (counted from the bottom), then the path removals discussed so far create identical
entries $V_{in}\geq0$ in both $K^{(1)}$ and $\K^{(1)}$ for all $i<j$, together with $V_{jn}=V+1$ in  
$K^{(1)}$ and $V_{jn}=V$ in $\K^{(1)}$, respectively, for some $V\geq0$, as illustrated in
(\ref{Eq-path01}) on the right.
It can then be seen from this diagram that the removal of the {\bf\brown} path
from $K^{(1)}$ gives $\phi_n\,K^{(1)}=\K^{(1)}$ since the effect of this removal is 
to reduce the lowest non-vanishing upright rhombus gradient $V+1$ to $V$. 
This proves the validity of (\ref{Eq-KhatKr}) in the case $r=1$.
\bigskip

Next we consider the removal of paths in $\H^{(n-1)}$ and $H^{(n-1)}$ through the action
of $\theta_{n-1}$. There are two cases to consider: Case (i), in which all type (ii) paths
removed from $\H^{(n-1)}$ that enter the $(n-2)$th diagonal  
at or below level $k$ pass west and 
northwest to the hive boundary without ever becoming coincident (in the sense of having an edge in common)
with the {\bf\blue blue} path; and Case (ii) in which at least one such path removed from $\H^{(n-1)}$
does become coincident with the {\bf\blue blue} path. 
\medskip1ex

\noindent{\bf Case (i)}~~Consider first all paths removed from $\H^{(n-1)}$ 
that enter the $(n-2)$th diagonal at or below level $k$ to leave a hive $\H'$. The effect of their removal
is to reduce the upright rhombus gradient $U_{k,n-1}$ in $\H^{(n-1)}$ from $X$ to $0$ in $\H'$.
By hypothesis none of these paths becomes coincident with the {\bf\blue blue} 
path. They pass to the left-hand boundary at level $i<j$ along paths determined
by the upright rhombus gradients below the {\bf\blue blue} path, with the foot rhombus
gradients responsible for an inpenetrable barrier to climbing the {\bf\blue blue} ladders.
Exactly the same paths may be removed from $H^{(n-1)}$
since the upright rhombus gradients below the {\bf\blue blue} path in $H^{(n-1)}$ and $\H^{(n-1)}$
differ only through the foot rhombi of the {\bf\blue blue} ladders having a gradient $1$ greater
than those in $\H^{(n-1)}$, as signified by the symbols $+1$ in each such foot rhombus 
as shown on the left in (\ref{Eq-path01}).
They thereby present exactly the same inpenetrable barrier as
before, leaving the paths themselves unaltered. Removing these paths from $H^{(n-1)}$ to give a hive $H'$ 
reduces the upright rhombus gradient $U_{k,n-1}$ in $H^{(n-1)}$ from $X+1$ to $1$ in $H'$.
In this situation with the upright rhombus gradient $1$ in $H'$ and $0$ in $\H'$,
we say that the rhombus itself is pre-critical. This is signified in (\ref{Eq-path02}) below
by placing the symbols $0$ above $1$ in such an upright rhombus.

In the case of path removals from $H'$, this leaves a single extra type (ii) path $P$ 
entering the ($n-2$)th diagonal at level $k$, as shown in {\bf\magenta magenta} and {\bf\red red} 
on the left in (\ref{Eq-path02}). 
Before its removal, the difference of $+1$ in the gradients of the foot rhombi 
of $H'$ as compared with those of $\H'$, ensures that they remain positive.
It follows that the extra path $P$ stays strictly below the {\bf\blue blue} path
and ends on the left hand boundary at level $i$ with $i<j$. 

\begin{equation}\label{Eq-path02}
\vcenter{\hbox{

}}
\end{equation}

The next path removal from $\H^{(n-1)}$ has also been illustrated on the left
on the assumption that we are in the $Y>0$ situation.  
It takes the form of a type (ii) path $\hat{P}$ and
proceeds up the $(n-1)$th diagonal, following first 
the {\bf\magenta magenta} path and then the {\bf\black} edge at level $k$ across 
what is now the critical rhombus in the $(n-1)$th diagonal. 
After this it necessarily
follows the {\bf\blue blue} path that formed the only difference between
$\H^{(n-1)}$ and $H^{(n-1)}$.  
Thus the critical rhombus has produced a bifurcation of the {\bf\magenta magenta}
path into the {\bf \blue blue} and {\bf \red red } paths followed by $\hat{P}$ and $P$,
respectively.
Thereafter the rhombus has gradient $0$ in both cases and is said to be post-critical.
All the remaining pairs of paths removed from $\H^{(n-1)}$
and $H^{(n-1)}$ coincide, each passing directly through the post-critical rhombus.

On the other hand if we are in the $Y=0$ situation, 
the {\bf\blue blue} path that formed the only difference between
$\H^{(n-1)}$ and $H^{(n-1)}$ would have proceeded directly up the $(n-1)$th diagonal to its top-most edge, with
no difference between the upright rhombus gradients of $\H^{(n-1)}$ and $H^{(n-1)}$ in this diagonal
above level $k$, or indeed any other diagonal at any level.  
It follows that all the remaining type (ii) path removals from $\H^{(n-1)}$ and $H^{(n-1)}$ coincide. This leaves
just type (iii) path removals from $\H^{(n-1)}$ and $H^{(n-1)}$, all following the same path directly
up the $(n-1)$th diagonal, but with one more type (iii) path removal required for $\H^{(n-1)}$ than for $H^{(n-1)}$.
This extra path removal, like all the others of type (iii), must also follow the {\bf\blue blue} path 
terminating at level $j=n-1$, bringing the edge labels along this path into coincidence for both 
$\H^{(n-1)}$ and $H^{(n-1)}$.

To conclude, for both $Y=0$ and $Y>0$ all the path removals from $H^{(n-1)}$ and $\H^{(n-1)}$ coincide
other than the {\bf\magenta magenta}  and {\bf\red red} path removed from $H^{(n-1)}$ and the 
{\bf\magenta magenta}, {\bf\black} and {\bf\blue blue} path removed from $\H^{(n-1)}$. Since the 
{\bf\blue blue} part was already absent in $H^{(n-1)}$ by virtue of its removal 
from $H^{(n)}$ it follows that $\H^{(n-2)}$
differs from $H^{(n-2)}$ only through differences of $\pm1$ as appropriate 
in the labels of the {\bf\red red} edges, together with associated differences of $\pm1$ in the
gradients of the foot and head rhombi of the {\bf\red red} ladders in the diagonals from $n-2$ down to $i$. 

It also follows that if the numbers of paths
reaching $i$ and $j$ in passing from $H^{(n-1)}$ to $H^{(n-2)}$ are
$A$ and $B$, respectively, then the corresponding numbers for the passage
from $\H^{(n-1)}$ to $\H^{(n-2)}$ are $A-1$ and $B+1$, respectively, with
$A\geq1$ and $B\geq0$. There are no paths in either case reaching levels 
between $i$ and $j$. The consequence for this for the upright rhombus gradients of  
$K^{(2)}$ and $\K^{(2)}$ is illustrated on the right in (\ref{Eq-path02}),
where the entries $A$, $B$ and $V+1$ apply to $K^{(2)}$ 
and $A-1$, $B+1$ and $V$ apply to $\K^{(2)}$.
If we clarify this by separating the diagrams of $K^{(2)}$ and $\K^{(2)}$ we find 
that they are related by a single path removal as shown below in (\ref{Eq-KhatK2}). 

\begin{equation}\label{Eq-KhatK2}
\vcenter{\hbox{

}}
\end{equation}
It follows from the above argument that $\phi_n\, K^{(2)}=\K^{(2)}$ for all 
Case (i) examples, thereby establishing for Case (i) 
the validity of (\ref{Eq-KhatKr}) in the case $r=2$. 
\medskip

\noindent{\bf Case (ii)} 
By hypothesis, in this case there exists at least one type (ii) path removed from $\H^{(n-1)}$
that enters the $(n-2)$th diagonal at level $k$ or below and becomes coincident with the {\bf\blue blue} path
that differentiates between $\H^{(n-1)}$ and $H^{(n-1)}$. This cannot occur, of course, if this
blue path is confined, as in the $Y=0$ situation, to the $(n-1)$th diagonal. Thus in Case (ii) 
we must be in the $Y>0$ situation.

We then proceed as before by considering pairs of path removals generated by the action of 
$\theta_{n-1}$ on $\H^{(n-1)}$ and $H^{(n-1)}$. Let the sequence of
pairs of type (ii) paths generated in this way be denoted by $\hat{P}_a$ and $P_a$
for $\H^{(n-1)}$ and $H^{(n-1)}$, respectively, with $a=1,2,\ldots,m$. Here $m$ is 
the sum of upright rhombus gradients in the $(n-1)$th diagonal, which is the same for both
$\H^{(n-1)}$ and $H^{(n-1)}$.
As we work up the diagram, 
the two paths $\hat P_a$ and $P_a$, in $\hat H^{(n-1)}$ and $H^{(n-1)}$ respectively, are identical 
so long as the path $\hat P_a$ stays below the route of the {\bf\blue blue} path, due to the same reason as in Case (i).
However, by virtue of our Case (ii) hypothesis, at least one of the paths $\hat P_a$ that enter 
the ($n-2$)th diagonal at or below level $k$, that is to say below the {\bf\blue blue} path, becomes 
coincident with the {\bf\blue blue} path somewhere to its west.
Let $\hat P_c$ be the first such path.
We shall now see that this is a case of bifurcation between the paths $\hat P_c$ and $P_c$ upon encountering a critical rhombus.

Due to the form taken by type (ii) paths passing continuously from one diagonal to another after climbing a ladder in 
each diagonal, the coincidence of $\hat P_c$ with the {\bf\blue blue} path necessarily starts with the path $\hat P_c$ 
entering the {\bf\blue blue} ladder in, say, the $p$th diagonal, traversing its foot rhombus at, say, level $f$, from 
its lower right edge, via its horizontal edge to its upper left edge.
In the picture (\ref{Eq-ii-paths01})
below, the part of $\hat P_c$ up to this foot rhombus is shown in dotted {\bf\magenta magenta}, and the horizontal edge of this rhombus in {\bf black}.
In order for this to happen, this rhombus must have gradient $0$ in this situation.
On the other hand, the path $P_c$, initially coincident with $\hat P_c$ along the dotted {\bf\magenta magenta} path, 
finds that the gradient of the same foot rhombus equal to $1$ instead of $0$, a difference arising from the {\bf\blue blue} 
path distinguishing $H^{(n-1)}$ from $\hat H^{(n-1)}$, and the path $P_c$ therefore passes west below this rhombus.
The situation is marked in (\ref{Eq-ii-paths01}),
as in Case (i), by placing the symbols {\blue$\mathbf{0}$} above {\red$\mathbf{1}$} in this rhombus, showing its gradients 
before the removals of $\hat P_c$ and $P_c$ respectively.
When reached by the common dotted {\bf\magenta magenta} part of the paths $\hat P_c$ and $P_c$, it causes a bifurcation, 
so this rhombus should duly be called {\em critical} at this point.
Note that this rhombus will carry a common gradient $0$ after this pair of removal, and we shall then call it {\em post-critical} as in Case (i).

To follow the bifurcated paths beyond the critical rhombus, the path $\hat P_c$ necessarily follows the route of the 
original {\bf\blue blue} path to its end at level $j$, since the upright rhombus gradients guiding this route, namely 
those in the original {\bf\blue blue} ladders except for their foot rhombi, have the same values as they did immediately 
before the removal of the original {\bf\blue blue} path during the application of the operator $\theta_n$ to $H^{(n)}$, 
which was missing in the application of $\theta_n$ to $\hat H^{(n)}$.
This part of the original {\bf\blue blue} path is shown in dotted {\bf\blue blue} in (\ref{Eq-ii-paths01}). 
The path $P_c$, on the other hand, after decreasing the gradient of the critical rhombus from $1$ to $0$, proceeds along 
what is exemplified as the {\bf\red red} path in \thetag{108}, to end on the left-hand boundary at some level $i$ with $i<j$, 
for the same reason as in Case (i) regarding an inpenetrable barrier below the original {\bf\blue blue} path.

\begin{equation}\label{Eq-ii-paths01}
\vcenter{\hbox{

}}
\end{equation}


Let $\H$ and $H$ be the hives obtained from $\H^{(n-1)}$ and $H^{(n-1)}$, respectively, at this stage.
The only edges of $\H$ and $H$  that have different labels constitute what we call a {\em line of difference}. 
Working from right to left, this consists of the 
undotted {\bf\blue  blue} edges, the single {\black} edge and the {\bf\red red} edges, each of whose edge 
labels differs by $\pm1$ as between $\H$ and $H$. For future reference the {\black} edge
label in $\H$ is one greater than that in $H$.
In general there remain further path removals from both $\H$ and $H$ 
under the action of $\theta_{n-1}$.

With an additional assumption 
on the subsequent pairs of paths generated by the action of $\theta_{n-1}$ that none of them 
encounter any critical rhombi except in the starting ($n-1$)th diagonal, such a pair
of path removals from both $\H$ and $H$ emanating from levels at or below
$k$ must proceed together 
weakly above the dotted {\bf\magenta magenta} path and strictly below the {\bf\blue blue} path until they 
enter the $p$th diagonal at a level below (that is geometrically below) level $f$.
Thereafter they follow the dotted {\bf\magenta magenta} path up the $p$th
diagonal, passing inexorably through the post-critical rhombus 
since its gradient is $0$ in both $\H$ and $H$.
They then proceed west or northwest weakly above the dotted {\bf\blue blue} path,
eventually terminating on the left-hand boundary at level $j$ or above. 
Since, by crossing the line of difference at the post-critical rhombus they do not disturb it, these path removals
from $\H$ and $H$ necessarily coincide, and they still leave the {\black} edge having labels 
that differ by $1$ for $\H$ and $H$.

However, because of the distinction between the upright rhombus gradients $X+1$ and $Y-1$ 
in the $(n-1)$th diagonal of $H^{(n-1)}$ at levels $k$ and $l$, 
and the corresponding gradients $X$ and $Y$ in $\H^{(n-1)}$, the next pair of paths
$P_d$ and $\hat{P}_d$ removed from $H^{(n-1)}$ and $\H^{(n-1)}$, respectively,
are no longer coincident. As illustrated below in (\ref{Eq-ii-paths04}), they are
initially coincident along a {\bf\magenta magenta} section in the $(n-1)$th diagonal,
until they both meet the pre-critical rhombus at level $k$. 
Here they diverge, with $P_d$ following a {\bf\red red} path strictly below the 
original {\bf\blue blue path} as far as the post-critical rhombus in the $p$th diagonal at level $q$,
with $\hat{P}_d$ crossing the critical rhombus in its $(n-1)$th diagonal
by means of its {\black} horizontal edge,
and then following the dotted {\bf\blue blue} path, coincident with a 
portion of the original {\bf\blue blue} path, as far once again as the 
post-critical rhombus in the $p$th diagonal at level $q$. At that point the path $P_d$
crosses this post-critical rhombus along what had been in (\ref{Eq-ii-paths01}) its 
{\black} horizontal edge and merges with the path $\hat{P}_d$,
with both then proceeding along a {\bf\magenta magenta} path to the left-hand boundary
at some level $j'\geq j$.

\begin{equation}\label{Eq-ii-paths04}
\vcenter{\hbox{

}}
\end{equation}

Now let $\H'$ and $H'$ be the hives obtained from $\H^{(n-1)}$ and $H^{(n-1)}$, respectively,
at this stage. As far as the two critical rhombi are concerned in 
diagonals $n-1$ and $p$, the gradients are now all $0$ in both $\H'$ and $H'$.
The fact that $\hat{P}_d$, but not $P_d$, passes along the horizontal edge 
of the critical rhombus in the $(n-1)$th diagonal changes the edge 
label in $\H'$ so that it is one lower than that in $H$. 
It is therefore coloured {\black} in (\ref{Eq-ii-paths04}) to signify that it is now to be incorporated 
in the line of difference between $\H'$ and $H'$. In contrast to this, the fact that $P_d$, but not $\hat{P}_d$, 
passes along the horizontal edge of the critical rhombus in the $p$th diagonal of $H$ lowers its edge
label by $1$, bringing it into coincidence with that of $\H'$ and thereby removing this edge 
from the line of difference between $\H'$ and $H'$. This new line of difference thus consists
of the single {\bf\blue blue} and {\black} edges in (\ref{Eq-ii-paths04}), together with the 
{\bf\red red} edges.

Continuing to remove the remaining $Y-1$ paths neceassary to reduce to $0$ the upright rhombus gradient at level $l$ in diagonal $n-1$, has
no effect on this line of difference since the paths are coincident in $\H'$ and $H'$, as indeed are all subsequent path removals
generated by the action of $\theta_{n-1}$, including those of type (iii). 
All such paths pass up the $(n-1)$th diagonal to level $l$, crossing the line of difference 
without disturbing it in any way at level $k$, before proceeding either west and northwest or just northwest to the left-hand hive boundary
at or above level $j'$.

As in Case (i) this leave $H^{(n-2)}$ and $\H^{(n-2)}$ differing by a single path removal, namely that
specified this time by the {\bf\red red} part of the line of difference that lies within 
these two $(n-2)$-hives, with $K^{(2)}$ and $\K^{(2)}$
differing exactly as before by the single {\bf\brown} path removal illustrated in (\ref{Eq-KhatK2}). This 
confirms the validity of  $\K^{(2)}=\psi_n\,K^{(2)}$ in all Case (ii) examples 
for which there is just one interior critical rhombus in addition to the critical rhombus on
the right-hand boundary. 

However, there exist other Case (ii) examples involving more than one internal critical rhombus
as will be exemplified next. 
The elucidation of paths proceeds as before. There may exist an initial sequence 
of pairs of paths common to $\H^{(n-1)}$ and $H^{(n-1)}$ that pass to the left-hand boundary 
without meeting the original {\bf\blue blue} path responsible for the distinction
between these two hives. Then by hypothesis there exists a pair of paths $P_c$ and $\hat{P}_c$
that follow a dotted {\bf\magenta magenta} path
along a common route in $\H^{(n-1)}$ and $H^{(n-1)}$ until this reaches the first 
critical rhombus in the $p$th diagonal. Here it bifurcates, with $\hat{P}_c$ following the dotted {\bf\blue blue} path
in $\H^{(n-1)}$ to the left-hand boundary at level $j$ and $P_c$ following the {\bf\red red} path in $H^{(n-1)}$
to the same boundary at level $i$. 
Thereby leaving the following post-critical situation in which the pair of
upright rhombus gradients have both been shown to be $0$.
\begin{equation}\label{Eq-ii-paths05}
\vcenter{\hbox{

}}
\end{equation}

After this there may 
be a sequence of pairs of path removals common to both $\H^{(n-1)}$ and $H^{(n-1)}$ that proceed
west and northwest strictly below the {\bf\blue blue} path and weakly above the dotted {\bf\magenta magenta} 
path until they intersect it in the $p$th diagonal. They then follow it up this diagonal
passing through the post-critical rhombus and then proceeding west and northwest weakly above the  
dotted {\bf\blue blue} path terminating on the left-hand boundary at level $j$ or above. 

Since there are now to be at least two interior critical rhombi, there 
exists a new pair of path removals, say $P_d$ and $\hat{P}_d$
that follow the dotted {\bf\magenta magenta} path along a common route in $\H^{(n-1)}$ and 
$H^{(n-1)}$ until they reach the critical rhombus in the $q$th diagonal. Here they
diverge making the rhombus post-critical, this time with $P_d$ following the {\bf\red red} 
path in $H^{(n-1)}$ and 
$\hat{P}_d$ following the dotted {\bf\blue blue} path in 
$\H^{(n-1)}$ that is coincident with the original {\bf\blue blue} path. 
Beyond the $q$th diagonal the 
{\bf\red red} part of the path $P_d$ eventually reaches the 
$p$th diagonal which it necessarily ascends as far as the post-critical rhombus which it 
then crosses,
only to merge with the dotted {\bf\blue blue} part of the path $\hat{P}_d$ to follow a second dotted 
{\bf\magenta magenta} path to the left-hand boundary weakly above all previous paths to
terminate at some level $j'$. This is illustrated below, with the omission
of the dotted paths from (\ref{Eq-ii-paths05}).

\begin{equation}\label{Eq-ii-paths06}
\vcenter{\hbox{

}}
\end{equation}

Now we are in a position analogous to that illustrated in (\ref{Eq-ii-paths01}).
In general there will be a further sequence of pairs of path removals common to both $\H^{(n-1)}$ and $H^{(n-1)}$ that proceed
from the foot of the $(n-1)$th diagonal west and northwest, weakly above the dotted {\bf\magenta magenta} path 
and strictly below the {\bf\blue  blue} path in (\ref{Eq-ii-paths06}). At some point they must reach the $q$th diagonal, 
and then follow it up as far as the second post-critical rhombus. They pass through this, without disturbing the line
of differences, and proceed west and northwest weakly above both the {\bf\blue blue} and {\bf\magenta magenta} 
dotted paths terminating on the left-hand boundary. Provided no more than two internal critical rhombi occur
this sequence of common path removals may be continued
until the upright rhombus at level $k$ in the $(n-1)$th diagonal has become critical with its gradient
reduced to $1$ and $0$ in the cases $\H^{(n-1)}$ and $H^{(n-1)}$, respectively. At this stage,
illustrated below in (\ref{Eq-ii-paths07}), the line of difference consists of the single {\bf\blue blue} and {\black} 
edges, together with the continuous {\bf\red red} path. 

The next pair of path removals are the last of the $X+1$ path removals from $H^{(n-1)}$ to enter diagonal $(n-2)$ at 
level $k$ and the first of the $Y$ path removals from $\H^{(n-1)}$ to enter diagonal $(n-2)$ at level $l$,
These are illustrated below in (\ref{Eq-ii-paths07}) in which the dotted sections of paths in (\ref{Eq-ii-paths06})
have been omitted.

\begin{equation}\label{Eq-ii-paths07}
\vcenter{\hbox{

}}
\end{equation}


\bigskip

Let the corresponding pair of paths be $P_e$ and $\hat{P}_e$.
Then $P_e$ traces the same type of path as its predecessors, west and northwest along a {\bf\red red} 
path to the $q$th diagonal, across the critical rhombus decreasing its {\black} edge label in
(\ref{Eq-ii-paths06}) by $1$, thereby removing it from the line of difference, 
and then proceeding west and northwest to the left-hand boundary. 
Path $\hat{P}_e$ follows the undotted {\bf\blue blue} path in (\ref{Eq-ii-paths06}), removing it from the line of difference
and converting it to the dotted {\bf\blue blue} path. In doing so it meets the $q$th diagonal and
thereafter it necessarily merges with $P_e$, as represented in (\ref{Eq-ii-paths07}) by another
dotted {\bf\magenta magenta} portion.

If we remove from (\ref{Eq-ii-paths07}) all dotted edges this exposes more clearly the 
final line of difference, which appears as a {\bf\red red} path extending from
the $(n-1)$th diagonal at level $k$ to the left-hand boundary at level $i$.

\begin{equation}\label{Eq-ii-paths08}
\vcenter{\hbox{

}}
\end{equation}

As in Case (i) this leaves $H^{(n-2)}$ and $\H^{(n-2)}$ differing by a single path removal,
the {\bf\red red} path, with $K^{(2)}$ and $\K^{(2)}$
differing exactly as before by the single {\bf\brown} path removal illustrated in (\ref{Eq-KhatK2}). 
Thus once again we have $\K^{(2)}=\phi_n\,K^{(2)}$. 

It is clear that this argument may be extended
to the case of an arbitrary number of internal critical rhombi, through the insertion of 
additional pairs of paths that initially coincide, then bifurcate at some critical point
before meeting at the next interior point to the west or northwest, crossing the line of 
difference and thereafter merging and passing to the left hand boundary, precisely as in
(\ref{Eq-ii-paths06}). This still leaves $\K^{(2)}=\phi_n\,K^{(2)}$ in all cases, 
confirming the validity of (\ref{Eq-KhatKr}) in the case $r=2$. 

To extend this argument  to show that $\K^{(r)}=\phi_n\,K^{(r)}$ for $r>2$ 
it should be noted that, just as $H^{(n-1)}$ differs from $\H^{(n-1)}$ only by 
the removal of a single {\bf\blue blue} path entering the $H^{(n-1)}$ hive at level $k$ and leaving at some
level $j$ that we may denote by $h(n-1)$ as shown below in (\ref{Eq-path05}), so $H^{(n-2)}$ differs from $\H^{(n-2)}$ only by 
the removal of a single {\bf\red red} path, 
again entering the $H^{(n-2)}$ hive at level $k$ and leaving at some
level $i<j$ that we may denote by $h(n-2)$.

\begin{equation}\label{Eq-path05}
\vcenter{\hbox{

}}
\end{equation}

Iterating this process allows one to see that $H^{(s)}$ differs from $\H^{(s)}$ only by the removal of a single 
{\bf\magenta magenta} path entering the $s$-hive region at level $k$ and leaving at 
some level $h(s)$, again as shown above in (\ref{Eq-path05}). This continues until the case $s=k$, exemplified by 
the removal of the 
{\bf\green green}
path that distinguishes $H^{(k)}$ from $\H^{(k)}$.

Now we claim that $H^{(k-1)}=\hat H^{(k-1)}$.
Upon application of $\theta_k$, the hive $H^{(k)}$ affords one extra type (i) path removal compared with $\hat H^{(k)}$, 
whereas $\hat H^{(k)}$ affords one extra type (ii) path removal compared with $H^{(k)}$, whose route coincides with the 
{\bf\green green} 
path, with its rightmost $\beta$ edge, that is to say the edge 
that would initially have been labelled $\beta_{kk}$, 
replaced by the bottom $\gamma$ edge labelled $\lambda_k$.
Thus the pairs of path removals via $\theta_k$, applied to $H^{(k)}$ and $\hat H^{(k)}$, 
eliminate the difference shown by the 
{\bf\green green}
 path, yielding $H^{(k-1)}=\hat H^{(k-1)}$.
Hereafter it is obvious that the pairs of path removals coincide for both hives all the way 
to the empty hive $H^{(0)}=\hat H^{(0)}$.

All this is reflected in the partner hive arena.  
The algorithm leading from $K^{(0)}$ and $\K^{(0)}$ to $K^{(n)}$ and $\K^{(n)}$ is such that the 
diagonals of these hives are built up one-by-one by counting the path removals that reach the left-hand
boundaries of $H^{(s)}$ and $\H^{(s)}$ at each level for all $s=n,n-1,\ldots,k$. 
The first step involves the path removals from $H^{(n)}$ and $\H^{(n)}$. These only differ 
by the removal of the single {\bf\blue blue} path illustrated in (\ref{Eq-path01}) 
leaving $H^{(n)}$ at level $j=h(n-1)$ and thereby distinguising the entries $V+1$ and $V$ in
$K^{(1)}$ and $\K^{(1)}$, respectively, where these entries constitute the $n$th diagonal of 
$K^{(n)}$ and $\K^{(n)}$, as shown below in (\ref{Eq-path06}). 

The second step involves the path removals from $H^{(n-1)}$ and $\H^{(n-1)}$. These only differ 
by the removal of a single {\bf\red red} path removed from $H^{(n-1)}$ and leaving at level $i=h(n-2)$ and 
a single {\bf\blue blue} path removed from $\H^{(n-1)}$ and leaving at level $j=h(n-1)$ as shown in (\ref{Eq-path02}). 
This latter path is coincident, as we have explained, with the {\bf\blue blue} path previously removed from $H^{(n)}$.
The corresponding gradients of rhombi in the $(n-1)$th diagonal of $K^{(n)}$ and $\K^{(n)}$ must then differ by $\blue{\pm1}$. 
This, as we have seen in (\ref{Eq-KhatK2}) is just the condition necessary 
for the {\bf\brown} path removal from $K^{(n)}$ to pass continuously from the $n$th diagonal to the
$(n-1)$th diagonal at level $j=h(n-1)$ between the two rhombi whose gradients are changed by $\blue{-1}$ and $\blue{+1}$ in creating $\K^{(n)}$ 
in accordance with the result $\phi_n\ K^{(2)} = \K^{(2)}$. 

At the next stage with the {\bf\red red} path is now interpreted
as the extra path removed from $\H^{(n-2)}$ and there will be an extra path, not shown,
removed from $H^{(n-2)}$. The fact that the {\bf\red red} path serves a dual role
means that the {\bf\brown} path removal from $K^{(n)}$ to give $\K^{(n)}$ extends in a 
continuous manner between the $\red{-1}$ and $\red{+1}$ appearing on the $(n-1)$th 
and $(n-2)$th diagonals at level $i=h(n-2)$ as shown below in (\ref{Eq-path06}).

\begin{equation}\label{Eq-path06}
\vcenter{\hbox{
\begin{tikzpicture}[x={(1cm*0.4,-1.732cm*0.25)},y={(1cm*0.4,1.732cm*0.25)}] 
\draw(0,0)--(0,14)--(14,14)--cycle;

\path(-1,6)--node{$k$}(0,7); 

\path(1,14)--node{$h(k)$}(3,16);
\path(4,14)--node{$h(s)$}(6,16);
\path(7.5,14.5)--node{$h(n\!-\!2)=i$}(9.5,16.5);
\path(11.5,14.5)--node{$h(n\!-\!1)=j$}(13.5,16.5);

\path(6,6)--node[below]{$~~k{\phantom{1}}$}(7,7); 
\path(8,8)--node[below]{$~~s{\phantom{1}}$}(9,9); 
\path(12,12)--node[below]{$n\!-\!1$}(13,13);
\path(13,13)--node[below]{$~~n{\phantom{1}}$}(14,14); 
\foreach\i in{1,...,13}\draw(\i,\i)--(\i,14); 
\foreach\i in{1,...,14}\draw(0,\i)--(\i,\i); 
\foreach \p/\q in { {(0,6)}/{(0,7)},{(0,6)}/{(1,7)},{(1,6)}/{(1,7)},{(1,7)}/{(2,7)},{(2,7)}/{(2,8)},
                    {(2,8)}/{(3,8)},{(3,8)}/{(3,9)},{(3,8)}/{(4,9)},{(4,8)}/{(4,9)},
										{(4,9)}/{(5,9)},{(5,9)}/{(5,10)},{(5,10)}/{(6,10)},{(6,10)}/{(6,11)},     
										{(6,11)}/{(7,11)},{(7,11)}/{(7,12)},{(7,12)}/{(8,12)},{(8,12)}/{(8,13)},
										{(8,12)}/{(9,13)},{(9,12)}/{(9,13)},{(9,12)}/{(10,13)},{(10,12)}/{(10,13)},
										{(10,12)}/{(11,13)},{(11,12)}/{(11,13)},{(11,13)}/{(12,13)},
										{(12,13)}/{(12,14)},{(12,13)}/{(13,14)},{(13,13)}/{(13,14)},{(13,13)}/{(14,14)}}
\draw[brown,ultra thick]\p--\q;

\path(1,6)--node{${\green{\sc{+\!1}}}$}(2,7);
\path(1,7)--node{${\green{\sc{-\!1}}}$}(2,8);
\path(4,8)--node{$\magenta\sc{+\!1}$}(5,9);
\path(4,9)--node{$\magenta\sc{-\!1}$}(5,10);
\path(7,11)--node{$\red\sc{+\!1}$}(8,12);
\path(7,12)--node{$\red\sc{-\!1}$}(8,13);
\path(11,12)--node{$\blue\sc{+\!1}$}(12,13);
\path(11,13)--node{$\blue\sc{-\!1}$}(12,14);
\end{tikzpicture}
}}
\end{equation}

A similar match occurs in connection with the {\bf\magenta magenta} path removal
from $H^{(s)}$ and $\H^{(s-1)}$. Proceeding in this way for $s=n-2,n-3,\ldots,k$
as far as the {\bf\darkgreen} path,
and taking into account the coincidence of all upright rhombus gradients to be written in 
the $(k-1)$th, $(k-2)$th, $\dots$\ and 2nd diagonals of $K$ and $\hat K$,

one arrives at the required result
$\phi_n\, K^{(n)} = \K^{(n)}$, necessary to complete the proof of Lemma~\ref{Lem-KhatKr}.
\qed      

We may now exploit Lemma~\ref{Lem-KthetaK} in the proof of the following:


\begin{Lemma}\label{Lem-sigma2H}
Let $H\in{\cal H}^{(n)}(\lambda,\mu,\nu)$ be such that $\ell(\lambda)=n$ then
\begin{equation}\label{Eq-sigma2H}
(\sigma^{(n)})^2\,\theta_n\,H=\theta_n\,(\sigma^{(n)})^2\,H\,. 
\end{equation}
\end{Lemma}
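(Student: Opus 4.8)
The plan is to express $(\sigma^{(n)})^2$ as a conjugation and reduce the commutation $(\sigma^{(n)})^2\theta_n=\theta_n(\sigma^{(n)})^2$ to a small set of intertwining relations between $\sigma^{(n)}$ and the elementary path-removal operators $\chi_n,\phi_n,\omega_n,\psi_n$, of which Lemma~\ref{Lem-KthetaK} already supplies one. The guiding principle is that $\sigma^{(n)}$ interchanges the roles of the left boundary $\mu$ and the right boundary $\nu$, so I expect each elementary removal performed on $H$ to be converted by $\sigma^{(n)}$ into its ``mirror'' removal on $K=\sigma^{(n)}H$, and the full-diagonal operator $\theta_n$ to be converted into a companion full-diagonal operator that reduces the opposite boundary.

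Concretely, recall from Definition~\ref{Def-full-r-hpr} that $\theta_n=\omega_n^{\mu_n}\,\phi_n^{\,p}\,\chi_n^{\nu_n}$ with $p=\lambda_n-\mu_n-\nu_n=\sum_{i<n}U_{in}$, and introduce the companion operator $\Xi_n=\chi_n^{\mu_n}\,\psi_n^{\,p}\,\omega_n^{\nu_n}$ obtained by replacing the type~(ii) factor $\phi_n$ with the type~(iv) factor $\psi_n$ and interchanging $\chi_n\leftrightarrow\omega_n$. Using Lemma~\ref{Lem-hpr} and Lemma~\ref{Lem-HphiH}, one checks that $\Xi_n$ empties the rightmost diagonal of a hive in $\mathcal{H}^{(n)}(\lambda,\nu,\mu)$ while reducing $\lambda_n$, $\nu_n$ and the right-hand boundary $\mu$ (note $\sum_i V_{in}=p$ as well), so that its target boundary agrees exactly with that of $\sigma^{(n)}\theta_n$. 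I would then establish the four intertwining relations
\begin{equation}
\sigma^{(n)}\psi_n=\phi_n\sigma^{(n)},\quad \sigma^{(n)}\phi_n=\psi_n\sigma^{(n)},\quad \sigma^{(n)}\chi_n=\omega_n\sigma^{(n)},\quad \sigma^{(n)}\omega_n=\chi_n\sigma^{(n)},
\end{equation}
the first of which is precisely Lemma~\ref{Lem-KthetaK}. Pushing $\sigma^{(n)}$ leftwards through the factorisation of $\theta_n$ then yields $\sigma^{(n)}\theta_n=\Xi_n\sigma^{(n)}$, and the same relations applied to $\Xi_n$ give symmetrically $\sigma^{(n)}\Xi_n=\theta_n\sigma^{(n)}$. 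Composing the two, $(\sigma^{(n)})^2\theta_n=\sigma^{(n)}\Xi_n\sigma^{(n)}=\theta_n(\sigma^{(n)})^2$, which is the desired identity. (As a consistency check, one can also verify the identity through the alternative route of restricting to the empty rightmost diagonal: since $\sigma^{(n)}$ acts on hives with empty $n$th diagonal as $\sigma^{(n-1)}$, both sides collapse to the claim that $\theta_n$ and $(\sigma^{(n)})^2$ induce matching $(n-1)$-hives, which again hinges on the same gradient-tracking supplied by Lemma~\ref{Lem-KthetaK}.)

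The main obstacle is establishing the three companion relations not covered by Lemma~\ref{Lem-KthetaK}. The two relations $\sigma^{(n)}\chi_n=\omega_n\sigma^{(n)}$ and $\sigma^{(n)}\omega_n=\chi_n\sigma^{(n)}$, pairing type~(i) against type~(iii), should be comparatively mild: these removals touch only boundary edges of the rightmost diagonal and leave all upright gradients unchanged, so I expect them to follow by tracing the single extra boundary path through the chains of Definition~\ref{Def-HtoK}, much as in the base case $r=1$ of Lemma~\ref{Lem-KhatKr}. The genuinely hard companion is $\sigma^{(n)}\phi_n=\psi_n\sigma^{(n)}$, the mirror image of Lemma~\ref{Lem-KthetaK} with the roles of $\phi_n$ and $\psi_n$ (equivalently of $\mu$ and $\nu$) exchanged; it cannot be deduced formally from Lemma~\ref{Lem-KthetaK} alone, since inverting that lemma produces a relation for $\bar\sigma^{(n)}$ rather than $\sigma^{(n)}$, and bridging the two would already presuppose $(\sigma^{(n)})^2=\mathrm{id}$. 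I therefore anticipate having to rerun the full line-of-difference and critical-rhombus analysis of Lemma~\ref{Lem-KhatKr} in the mirrored setting. A secondary technical point, to be handled in parallel, is the bookkeeping of the iterated factorisations: each successive application of a relation must use the correct termination index on the hive current at that stage, and one must confirm that the required hypotheses (for instance $U_{in}>0$ for some $i<n$, needed by Lemma~\ref{Lem-KthetaK}) persist throughout the composition, exactly as the removals in $\theta_n$ and $\Xi_n$ are organised into their three phases.
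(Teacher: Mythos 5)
Your overall architecture---introduce the companion operator $\Xi_n=\chi_n^{\mu_n}\,\psi_n^{\,p}\,\omega_n^{\nu_n}$ with $p=\lambda_n-\mu_n-\nu_n$ (this is exactly the operator the paper calls $\eta_n$), prove the two exchange relations $\sigma^{(n)}\theta_n=\Xi_n\sigma^{(n)}$ and $\sigma^{(n)}\Xi_n=\theta_n\sigma^{(n)}$, and compose them---is the same as the paper's, and your second relation is established exactly as the paper does it (the three right-hand rectangles of its commutative diagram: the two easy type~(i)/type~(iii) correspondences together with Lemma~\ref{Lem-KthetaK} applied $p$ times). The genuine gap is in your first relation. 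You obtain $\sigma^{(n)}\theta_n=\Xi_n\sigma^{(n)}$ by pushing $\sigma^{(n)}$ through the factorisation of $\theta_n$ operator by operator, which forces you to invoke the mirror relation $\sigma^{(n)}\phi_n=\psi_n\sigma^{(n)}$. As you yourself observe, this cannot be deduced from Lemma~\ref{Lem-KthetaK} without already assuming $(\sigma^{(n)})^2=\mathrm{id}$ (which is what Theorem~\ref{The-inv}, the consumer of this lemma, is to prove), and you leave its proof as a promised rerun of the critical-rhombus analysis. That rerun is not a routine mirror image: in Lemma~\ref{Lem-KhatKr} the initial line of difference between $H$ and $\psi_n H$ is confined to the $n$th diagonal, and it is precisely this confinement that lets the induction peel off one diagonal at a time; the difference between $H$ and $\phi_n H$ runs across many diagonals from the outset, so the inductive structure of that argument does not transfer. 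As written, therefore, your proof rests on an unproven lemma at least as hard as the hardest result in the paper.

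The paper avoids the mirror relation altogether, and this is the idea you are missing. It proves $\sigma^{(n)}\theta_n H=\eta_n\,\sigma^{(n)}H$ directly: since $\theta_n H$ has an empty $n$th diagonal, the chain of Definition~\ref{Def-HtoK} computing $\sigma^{(n)}(\theta_n H)$ coincides, from its second step onward, with the chain computing $\sigma^{(n)}H$, so that $K^\dagger=\sigma^{(n)}\theta_n H$ agrees with $K=\sigma^{(n)}H$ everywhere except that the $n$th diagonal of $K^\dagger$ carries only zeros. On the other hand, each factor of $\eta_n=\chi_n^{\mu_n}\psi_n^{p}\omega_n^{\nu_n}$ acts entirely inside the $n$th diagonal of $K$---this is where the type~(iv) operator $\psi_n$ earns its keep, since its removal path never leaves that diagonal---and the composite precisely empties that diagonal, giving $\eta_n K=K^\dagger$. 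If you replace your factorisation-pushing derivation of the first relation by this observation, your proof closes with no new path-tracking analysis beyond Lemma~\ref{Lem-KthetaK}.
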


\noindent{\bf Proof}:~~
First let $K=\sigma^{(n)}\,H$ and $L=\sigma^{(n)}\,K$ so that by construction
$K\in{\cal H}^{(n)}(\lambda,\nu,\mu)$ and $L\in{\cal H}^{(n)}(\lambda,\mu,\nu)$. Then 
consider the following diagram:

\begin{equation}\label{Eq-HKL}
\vcenter{\hbox{
\begin{tikzpicture}
\node(h1) at (0,8){$H$};
\node(k1) at (4,8){$K$};
\node(l1) at (8,8){$L$};
\node(k2) at (4,6){$K'$};
\node(l2) at (8,6){$L'$};
\node(k3) at (4,4){$K''$};
\node(l3) at (8,4){$L''$};
\node(h4) at (0,2){$\tilde{H}$};
\node(k4) at (4,2){$K^\dagger$};
\node(l4) at (8,2){$\tilde{L}$};
\node(lem) at (6,5){Lemma~\ref{Lem-KthetaK}};
\draw[->](h1)--node[above]{$\sigma^{(n)}$}(k1);
\draw[->](k1)--node[above]{$\sigma^{(n)}$}(l1);
\draw[->](k2)--node[above]{$\sigma^{(n)}$}(l2);
\draw[->](k3)--node[above]{$\sigma^{(n)}$}(l3);
\draw[->](h4)--node[above]{$\sigma^{(n)}$}(k4);
\draw[->](k4)--node[above]{$\sigma^{(n)}$}(l4);	
\draw[->](h1)--node[left]{$\theta_n$}(h4);
\draw[->](k1)--node[left]{$\omega_n^{\nu_n}$}(k2);
\draw[->](k2)--node[left]{$\psi_n^{\lambda_n-\mu_n-\nu_n}$}(k3);
\draw[->](k3)--node[left]{$\chi_n^{\mu_n}$}(k4);
\draw[->](l1)--node[right]{$\chi_n^{\nu_n}$}(l2);
\draw[->](l2)--node[right]{$\phi_n^{\lambda_n-\mu_n-\nu_n}$}(l3);
\draw[->](l3)--node[right]{$\omega_n^{\mu_n}$}(l4);
\end{tikzpicture}
}}
\end{equation}

Just as $\theta_n=\omega_n^{\mu_n}\phi_n^{\lambda_n-\mu_n-\nu_n}\chi_n^{\nu_n}$ maps $H$ and $L$ to $\tilde{H}$ and $\tilde{L}$,
respectively, as in Theorem~\ref{The-Hr-theta-r}, let $\eta_n$ be the operator mapping $K$ to $K^\dagger$, 
that is $\eta_n=\chi_n^{\mu_n}\psi_n^{\lambda_n-\mu_n-\nu_n}\omega_n^{\nu_n}$. 
The type (iii) action of $\omega_n^{\nu_n}$ on $K$ reduces by $\nu_n$ each of the edge labels along a zig-zag down the $n$th diagonal
with the top and bottom edge labels reduced from $\nu_n$ and $\lambda_n$ to $0$ and $\lambda_n-\nu_n$, respectively. 
The type (ii) action of $\psi_n^{\lambda_n-\mu_n-\nu_n}$ then reduces to $0$ all upright rhombus gradients in the $n$th diagonal, 
as well as reducing the bottom edge label from  $\lambda_n-\nu_n$ to $\mu_n$ whilst reducing the right hand boundary edges from $\mu=\mu^{(n)}$
to $(\mu^{(n-1)},\mu_n)$. Finally, the type (i) action of $\chi_n^{\mu_n}$ 
reduces the two boundary edge labels of the triangle at the foot of the $n$th diagonal from $\mu_n$ to $0$. Thus in
the left hand rectangle of the above diagram $\eta_n$ empties the $n$th diagonal of $K$, just as $\theta_n$
empties the $n$th diagonal of $H$.

Since the only difference between $K^\dagger=\sigma^{(n)}\theta_n\,H$ and $K=\sigma^{(n)}\,H$
is a failure to insert the appropriate edge labels and upright rhombus gradients in 
the $n$th diagonal of $K^\dagger$, and the action of $\eta_n$ in mapping $K$ to $K^\dagger$
reduces all these to $0$, it follows that the left hand rectangle of the diagram (\ref{Eq-HKL}) is commutative.

Turning to the upper right hand rectangle, this too is commutative since the
action of $\chi_n^{\nu_n}$ in reducing the edge labels $\lambda_n$ and $\nu_n$ of $K$ 
to $\lambda_n-\nu_n$ and $0$, respectively, 
corresponds exactly to the action of $\omega_n^{\nu_n}$ in reducing the edge labels
$\lambda_n$ and $\nu_n$ of the partner hive $L$ to $\lambda_n-\nu_n$ and $0$, respectively.
A similar result applies to the lower right hand rectangle, and crucially, the central 
rectangles on the right is also commutative by virtue of 
the application of Lemma~\ref{Lem-KthetaK} applied $\lambda_n-\mu_n-\nu_n$ times. 

Thus the following diagram is commutative:
\begin{equation}\label{Eq-HKLHKL}
\vcenter{\hbox{

}}
\end{equation}
Here, it should be recalled that $\kappa_n$ is a bijection between $n$-hives whose $n$th diagonals are empty and $(n-1)$-hives.
Thanks to the action of $\theta_n$, $\eta_n$ and $\theta_n$, respectively, 
the $n$th diagonals of $\tilde{H}$, $K^\dagger$ and $\tilde{L}$ are all empty, so that $\kappa_n$ 
can act on them legitimately. It simply deletes their $n$th diagonals, with its inverse $\kappa_n^{-1}$ merely restoring them.
It follows once again that this whole diagram is commutative.

By the induction hypothesis, we now assume that $(\sigma^{(n-1)})^2\,H^\ast=H^\ast$.
That is to say $L^\ast=H^\ast$, and we have the following simplified commutative diagram
\begin{equation}\label{Eq-HKLast}
\vcenter{\hbox{
\begin{tikzpicture}
\node(h1) at (0,4){$H$};
\node(k1) at (4,4){$K$};
\node(l1) at (8,4){$L$};
\node(h2) at (0,2){$H^\ast$};
\node(k2) at (4,2){$K^\#$};
\node(l2) at (8,2){$H^\ast$};
\draw[->](h1)--node[above]{$\sigma^{(n)}$}(k1);
\draw[->](k1)--node[above]{$\sigma^{(n)}$}(l1);
\draw[->](h2)--node[above]{$\sigma^{(n-1)}$}(k2);
\draw[->](k2)--node[above]{$\sigma^{(n-1)}$}(l2);	
\draw[->](h1)--node[left]{$\kappa_n\theta_n$}(h2);
\draw[->](k1)--node[left]{$\kappa_n\eta_n$}(k2);
\draw[->](l1)--node[right]{$\kappa_n\theta_n$}(l2);
\end{tikzpicture}
}}
\end{equation}

Thus the hives $H$ and $L$, which are both in ${\cal H}^{(n)}(\lambda,\mu,\nu)$ are
transformed under the action of $\kappa_n\theta_n$ into the same hive $H^\ast\in{\cal H}^{(n-1)}(\lambda^\ast,\mu^\#,\nu^\ast)$ 
for some $\mu^\#$, with $\lambda^\ast=\lambda-\lambda_n\epsilon_n$, $\nu^\ast=\nu-\nu_n\epsilon_n$.

Now we consider the action of $\Theta_n$ on $(H,K^{(0)})$ and $(L,K^{(0)})$ to give $(H^\ast,K_H^{(1)})$ and $(L^\ast,K_L^{(1)})$,
respectively. By construction both $K_H^{(1)}$ and $K_L^{(1)}$ must have lower and 
upper edge labels $\lambda_n$ and $\nu_n$, outer right-hand boundary edge labels $\mu$, as
determined by the $n$-truncated $n$-hive $K^{(0)}$, and inner left-hand boundary edge labels $\mu^\#$,
as determined by the left-hand boundary edge labels of $H^\ast=L^\ast$. These boundary edge labels 
are sufficient to determine an $(n-1)$-truncated $n$-hive completely. It follows that $K_H^{(1)}=K_L^{(1)}$.
Thus both components of $\Theta_n(H,K^{(0)})$ and $\Theta_n(L,K^{(0)})$ coincide.

We know that $H$ and $L$ can be recovered from $\Theta_n(H,K^{(0)})$ and $\Theta_n(L,K^{(0)})$, respectively, 
through applications of the path addition operator $\ov\theta_n$ to the ($n-1$)-hives constituting  
their left-hand components, making use of the upright rhombus gradients in the ($n-1$)-truncated $n$-hives  
constituting their right-hand components.
Hence the equality $\Theta_n(H,K^{(0)})=\Theta_n(L,K^{(0)})$ implies that $H=L$.
That is to say $H=L=(\sigma^{(n)})^2 H$, thereby completing the induction argument
and ensuring the validity of (\ref{Eq-inv}) for all $n$-hives. 
\qed


\section{Observations on the centrality of upright rhombus gradients}\label{Sec-obs-U}

Here we briefly mention some ideas that come to mind in reviewing the structure of the hive path removal procedure.
In defining the operator $\theta_r=\omega_r^{\mu_r}\phi_r^{\lambda_r-\mu_r-\nu_r}\chi_r^{\nu_r}$ as in (\ref{Eq-theta-omega-phi-chi}),
its action on $H^{(r)}$ is such that the operators $\chi_r$ and $\omega_r$ not only mutually commute, but also commute with each operator $\phi_r$.
In fact the actions of $\chi_r$ or $\omega_r$ do not change any upright rhombus gradients, causing no change in the way each operator $\phi_r$, 
driven solely by the upright rhombus gradients, finds its path, modifies such gradients and arrives at  
its terminating row number.

This leads to the idea of considering what we call a $U$-system ${\boldsymbol U}^{(r)}$ consisting only of a triangular array of non-negative 
upright rhombus gradients $\{U_{ij}\}_{1\leq i<j\leq r}$, 
representing all $r$-hives having the same upright rhombus gradients $\{U_{ij}\}_{1\leq i<j\leq r}$ with all possible boundary edge labels $\mu$, $\nu$ and $\lambda$, 
on which the action of the operator $\phi_r$ is well defined, yielding a new $U$-system 
${\boldsymbol U'}^{(r)}=\{U'_{ij}\}_{1\leq i<j\leq r}$ and a terminating row number $k$.

Before looking at this action, given any triangular array $\{U_{ij}\}_{1\leq i<j\leq n}$ of non-negative integers, 
it is of interest to know whether a corresponding $U$-system ${\boldsymbol U}^{(n)}$ exists,
that is to say if there exist boundary edge labels $\mu$, $\nu$ and $\lambda$ that satisfy the
criteria laid down in (\ref{Eq-aUb-inequalities}).
These constraints take the form
\begin{multline}\label{Eq-boundary-region-in-U}
\mu_l-\mu_{l+1}\geq\max\limits_{1\leq i\leq l}\Big(-\textstyle\sum\limits_{k=1}^{i-1}U_{kl}+\textstyle\sum\limits_{k=1}^iU_{k,l+1}\Big),\quad\nu_l-\nu_{l+1}\geq\max\limits_{l<j\leq n}\Big(\textstyle\sum\limits_{k=j}^n U_{lk}-\textstyle\sum\limits_{k=j+1}^nU_{l+1,k}\Big) 
\end{multline}
for $1\leq  l<n$, with
\begin{equation}\label{Eq-boundary-lambda}
  \lambda_l=\mu_l+\nu_l +\sum_{j=1}^{l-1} U_{jl} - \sum_{k=l+1}^n U_{lk} \qquad\hbox{for $1\leq l\leq n$.}
\end{equation}

One such (canonical, involving neither $\max$ nor $\min$,) solution to determining partitions $\mu$, $\nu$ and $\lambda$ 
satisfying these relations is provided by
\begin{align}
   \mu_i&= \begin{cases}\ds \sum_{1\leq i,j<k\leq n} U_{jk}&\hbox{if $1\leq i<n$};\cr
	                           ~~~~~0&\hbox{if $i=n$}, \end{cases}
	 ~~~~~~~~~~~\nu_i= \begin{cases}\ds \sum_{1\leq i\leq j<k\leq n} U_{jk}&\hbox{if $1\leq i<n$};\cr
	                           ~~~~~0&\hbox{if $i=n$}, \end{cases}\\
	\lambda_i&=  \begin{cases}\ds \sum_{1\leq j\leq i\leq k\leq n:j<k} U_{jk} +2\!\!\!\!\sum_{1\leq i<j<k\leq n} U_{jk}&\hbox{if $1\leq i<n$};\cr
	                           \ds ~~~~~\sum_{j=1}^{n-1} U_{jn}&\hbox{if $i=n$}, \end{cases} 
\end{align}
in which case 
\begin{align}
  &\mu_i-\mu_{i+1}= \sum_{1\leq j\leq i} U_{j,i+1} \qquad \nu_i-\nu_{i+1}= \sum_{i<k\leq n} U_{ik};\\
	&\lambda_i-\lambda_{i+1} = \sum_{j=1}^{i-1} U_{ji}  + \sum_{k=i+2}^{n}  U_{i+1,k}. 
\end{align}
Along with the fact that $U_{jk}>0$ for $1\leq j<k\leq n$ this confirms that in this case all 
three sets of hive rhombus conditions (\ref{Eq-aUb-inequalities}) are satisfied.

It is straightforward to map the corresponding hive $H\in{\cal H}^{(n)}(\lambda,\mu,\nu)$ to
an LR-tableau $T\in{\cal LR}(\lambda/\mu,\nu)$ under the bijection described in Section~\ref{Subsec-hives-tableaux}.
In the case $n=5$ this yields schematically:
\begin{equation}\label{Eq-U-H}
\vcenter{\hbox{

}}  
\end{equation}
It is clear that the semistandardness and lattice permutation requirements of Definition~\ref{Def-ss-lp} are 
rather trivially satisfied by virtue of the distribution of the various blocks of identical entries.
In particular the widths of the {\color{cyan}\textbf{cyan}} blocks are just what are required to ensure the
satisfaction of the lattice permutation property without introducing any max or min operators.

The action of $\theta_r$ on ${\boldsymbol U}^{(r)}$ can be defined to be that of $\phi_r^{U_{r-1,r}+\cdots+U_{1r}}$
which reduces all $U_{ir}$ with $1\leq i<r$ to zero.
Setting ${\boldsymbol U''}^{(r)}=\{U''_{ij}\}_{1\leq i<j\leq r}=\theta_r{\boldsymbol U}^{(r)}$, and further defining the action of $\kappa_r$ 
on ${\boldsymbol U''}^{(r)}$ to be to shave off the components $U''_{ir}$ with $1\leq i<r$ which are already zero, 
yields a smaller $U$-system consisting of the triangular array ${\boldsymbol U''}^{(r-1)}=\{U''_{ij}\}_{1\leq i<j\leq r-1}$.

Then applying $\kappa_2\theta_2\kappa_3\theta_3\cdots\kappa_n\theta_n$ to ${\boldsymbol U}^{(n)}=\{U_{ij}\}_{1\leq i<j\leq n}$
and, while applying each $\theta_r$, recording the numbers $V_{kr}$ for $1\leq k<r\leq n$, of occurrences of $k$ as terminating row numbers 
gives an involutive map $\sigma^{(n)}$ from the $U$-system ${\boldsymbol U}^{(n)}=\{U_{ij}\}_{1\leq i<j\leq n}$ 
to another $U$-system ${\boldsymbol V}^{(n)}=\{V_{kr}\}_{1\leq k<r\leq n}$.

This is illustrated by eliminating the boundary edge labels throughout our Example~\ref{Ex-hive} and restricting the 
operations to be those of $\zeta_r$, $\phi_r$ and $\kappa_r$ alone, for $r=n,n-1,\ldots,1$. This gives
\begin{equation}\label{Eq-4U-HtoK}
\end{equation}

The two $U$-systems appearing here may be dressed with boundary edge labels in
many ways to yield pairs of bijectively related LR-hives.
For example, (\ref{Eq-U-H}) in which all $U_{i5}$ are set equal to $0$ 
would yield the example shown
on the left below. More generally, the only requirements are those following from (\ref{Eq-boundary-region-in-U}).
Here these amoumt to the conditions $\mu_1-\mu_2\geq1$, $\mu_2-\mu_3\geq 2$, $\mu_3-\mu_4\geq 2$, $\mu_4\geq0$ and 
$\nu_1-\nu_2\geq1$, $\nu_2-\nu_3\geq 3$, $\nu_3-\nu_4\geq 1$, $\nu_4\geq0$. The hive triangle conditions then imply that
$\lambda_1=\mu_1+\nu_1-3$, $\lambda_2=\mu_2+\nu_2-3$, $\lambda_3=\mu_3+\nu_3+2$ and $\lambda_4=\mu_4+\nu_4+4$.
A more or less random example satisfying these constraints appears below on the right.
\begin{equation*}
  
\end{equation*}
In this case the result on the right follows immediately from that on the left, since
quite generally, we can add $p$, $q$ and $r$ to all $\alpha$-, $\beta$- and $\gamma$-edge
labels of $H\in{\cal H}^{(n)}(\lambda,\mu,\nu)$ without altering the values of $U_{ij}$ for any $i,j$, provided that $r=p+q$, and 
without violating any LR-hive conditions, provided in addition that $p,q,r\in\Z_{\geq0}$.
This amounts to replacing the boundary edge labels $\lambda$, $\mu$ and $\nu$
by $\lambda+(r^n)$, $\mu+(p^n)$ and $\nu+(q^n)$, respectively.
Thereafter, it is only necessary to insert in the appropriate place in examples of the type~\ref{Ex-hive} 
the action of additional operations $\chi_k^q$ and $\omega_k^p$ for $k=1,2,\ldots,n$. This only has the effect
of adding $q$, $p$ and $r=p+q$ to all $\alpha$-, $\beta$- and $\gamma$-edge
labels of $K\in{\cal H}^{(n)}(\lambda,\nu,\mu)$, including of course the boundary edge labels,
without altering the values of $V_{ij}$ for any $i,j$.

In the above $n=4$ example we simply took $p=2$ and $q=4$ so that $r=6$. 
The bijective nature of $\sigma^{(4)}$ then contributes to the proof that $c_{652,541}^{8654}=c_{541,652}^{8654}$
and $c_{8742,9854}^{14\,12\,11\,10}=c_{9854,8742}^{14\,12\,11\,10}$. 
Four other pairs of similar maps involving the same boundary edge labels but different 
pairs of $U$-systems, ${\boldsymbol U}^{(4)}$ and ${\boldsymbol V}^{(4)}$, complete the proof, since
$c_{652,541}^{8654}=c_{8742,9854}^{14\,12\,11\,10}=5$. 

Interestingly, setting $p=-2$ and $q=-4$ leads to
\begin{equation}\label{Eq-UHtoK-pqneg}

\end{equation}
where, as usual $\ov{m}=-m$ for any integer $m$.

Here our hives are no longer Littlewood--Richardson hives since their boundary edge labels, and indeed their interior edge labels as well,
are no longer non-negative.
However, they have a character theoretic significance since the enumeration of hives of this type with given boundary edge labels
serves to give the multiplicities of irreducible constituents in the decomposition of a product of irreducible 
rational characters of $GL_n(\C)$. These irreducible rational characters, $s_{(\lambda;\ov{\rho})}(x_1,\ldots,x_n)$, 
are specified by pairs of partitions~\cite{AK,Ki,St,Ko}
\begin{equation}
(\lambda;\ov{\rho})=(\lambda_1,\lambda_2,\ldots,\lambda_{\ell(\lambda)},0,\ldots,0,\ov{\rho_{\ell(\rho)}}\ldots,\ov{\rho_2},\ov{\rho_1})
\quad\hbox{with}\quad
\ell(\lambda)+\ell(\rho)\leq n.
\end{equation} 
The product rule can then be written in the form
\begin{equation}
   s_{(\mu;\ov{\sigma})}(x_1,\ldots,x_n)\ s_{(\nu;\ov{\tau})}(x_1,\ldots,x_n) 
	       = \sum_{\lambda,\rho}\ c_{(\mu;\ov{\sigma}),(\nu;\ov{\tau})}^{(\lambda;\ov{\rho})} \ s_{(\lambda;\ov{\rho})}(x_1,\ldots,x_n)\,.
\end{equation}

In the case of our $n=4$ example, we have 
\begin{equation}
   c_{(43;\ov{2}),(1;\ov{3}\ov{4})}^{(2;\ov1\ov2)}=c_{(1;\ov{3}\ov{4}),(43;\ov{2})}^{(2;\ov1\ov2)}=5\,,
\end{equation}
where redundant parts equal to $0$ have been dropped, 
and the existence of the bijectve map $\sigma^{(4)}$ on $U$-systems is responsible for the first equality,
and the second follows from the fact that for all $n\in\N$
\begin{equation}
   c_{(\mu;\ov{\sigma})+(p^n),(\nu;\ov{\tau})+(q^n)}^{(\lambda;\ov{\rho})+((p+q)^n)}=c_{(\mu;\ov{\sigma}),(\nu;\ov{\tau})}^{(\lambda;\ov{\rho})}
\end{equation}
for all $p,q\in\Z$. This identity is a consequence of the fact that 
\begin{equation}
 s_{(\mu;\ov{\sigma})+(p^n)}(x_1,\ldots,x_n)=(x_1 \cdots x_n)^p\, s_{(\mu;\ov{\sigma})}(x_1,\ldots,x_n)
\end{equation}
for all $\mu$, $\sigma$ and $p$.

It follows from this that we have a combinatorial hive-based interpretation of the coefficients
\begin{equation}
   c_{(\mu;\ov{\sigma}),(\nu;\ov{\tau})}^{(\lambda;\ov{\rho})} 
\end{equation}
and that the existence of our bijective, involutive  map $\sigma^{(n)}$ provides a combinatorial proof of the
symmetry property
\begin{equation}
   c_{(\mu;\ov{\sigma}),(\nu;\ov{\tau})}^{(\lambda;\ov{\rho})} =c_{(\nu;\ov{\tau}),(\mu;\ov{\sigma})}^{(\lambda;\ov{\rho})} \,.
\end{equation}

It might finally be noted that the above result on multiplicities of irreducible rational characters of $GL_n(\C)$ 
may be extended to the case of irrational characters of certain covering groups of $GL_n(\C)$
by extending the argument to the case of all $p,q\in\R$ or indeed all $p,q\in\C$.

\section{Discussion of the coincidence of commutativity bijections}\label{Sec-coincidence}

We review here the coincidence of the fundamental symmetry maps $\rho_1$, $\rho_2$, $\rho'_2$ in~\cite{PV2}, 
the Henriques--Kamnitzer crystal commutor~\cite{HK2} and 
hive commutor~\cite{HenKam}, as well as the commutativity bijection of Danilov and Koshevoy~\cite{DK05,DK08} on arrays.
Our own commutativity bijections on tableaux and hives both correspond to the
fundamental symmetry map referred to as $\rho_3$ in~\cite{PV2}.
As we have pointed out in Section~\ref{Sec-notation}, 
Littlewood--Richardson coefficients may be evaluated by means of Gelfand--Tsetlin patterns as in~\cite{GZpolyed} and~\cite{BZphy}. 
The first method amounts to exploiting a  bijection, $\tau$, between LR tableaux of shape $\lambda/\mu$ and 
weight $\nu$, and GT patterns of type $\nu$ and weight $\lambda-\mu$ satisfying a $\mu$-dominance condition, as in (\ref{Eq-LRcoeff-GZ}), 
which is equivalent to the non-negativity of the right leaning rhombus gradients $R_{ij}$, as on the left hand side of \eqref{Eq-aUb-inequalities}, 
and the second method amounts to exploiting another
bijection, $\gamma$, between LR tableaux of shape $\lambda/\mu$ and  weight $\nu$, 
and  GT patterns of type $\mu$ and weight the reverse of  $\lambda-\nu$ satisfying a condition, as in (\ref{Eq-LRcoeff-BZ}), 
which is equivalent to the non-negativity of the left leaning rhombus gradients $L_{ij}$, 
as on the right hand side of \eqref{Eq-aUb-inequalities}. 
See also \cite{KiBe,HenKam,PV2}. In~\cite{PV2}, Section~4.5,
the definitions of  bijections, $\tau$ and $\gamma$ are expressed in terms of 
semistandard Young tableaux rather than Gelfand--Tsetlin patterns.
However, these different formulations can be readily translated into the language of hives since the
relevant GT patterns are precisely those exemplified by $G^{(\alpha)}_\mu$ and $G^{(\beta)}_\nu$ in~(\ref{Eq-GTx3}) augmented by appropriate 
missing right and left hand boundary edge labels, as will be illustrated below.

\subsection{Littlewood-Richardson rule, $\mathfrak{gl}_n$-crystal bases and crystal commutor}\label{2subSec-intro} 

Following  \cite{KTWoct} and using $\mathfrak{gl}_n$-crystal bases, Henriques and Kamnitzer studied, in \cite{HK2}, 
the hive model for the general linear Lie algebra, $\mathfrak{gl}_n$, tensor products, and established relations between hives, 
tableaux, Gelfand--Tsetlin patterns, and crystal bases. 

Crystal bases of integrable representations of the quantum group $U_q(\mathfrak{gl}_n)$~\cite{kashiwara} enjoy nice properties. 
In the remainder of this subsection, Greek letters $\lambda$, $\mu$ or $\nu$ will be used to represent partitions of length $\le n$.
For each such $\lambda$, let $B_\lambda$ denote the crystal basis of the irreducible representation $V_\lambda$ of $U_q(\mathfrak{gl}_n)$.
The crystal tensor product $B_\mu\otimes B_\nu$ gives a crystal basis of the tensor product of the representations $V_\mu\otimes V_\nu$ 
of $U_q(\mathfrak{gl}_n)$, and its decomposition into connected components comes from a direct sum decomposition of $V_\mu\otimes V_\nu$ 
into irreducible $U_q(\mathfrak{gl}_n)$-submodules.
Each connected component of $B_\mu\otimes B_\nu$ is isomorphic to some $B_\lambda$ as a crystal, and each $B_\lambda$ has
a unique highest weight element of weight $\lambda$ and a unique lowest weight element of weight $\rev(\lambda)=w_0\lambda$, 
where $w_0$ is the longest element of the Weyl group. 
Thus we may evaluate $c_{\mu\nu}^{\lambda}$ by counting the connected components in $B_\mu\otimes B_\nu$ isomorphic to $B_\lambda$, 
or equivalently, the highest (resp.\ lowest) weight elements of weight $\lambda$ (resp.\ $\rev(\lambda)$) in $B_\mu\otimes B_\nu$.

There is a \qq{reversing} map $\xi$ \cite{HK2} acting on any disjoint union of the crystals of the form $B_\lambda$, including $B_\mu\otimes B_\nu$, 
preserving each connected component but exchanging its highest and lowest weight elements.
In~\cite{HK2} it was shown that it is an involution using a certain uniqueness theorem on crystals.

In the case of $\mathfrak{gl}_n$, we can take $B_\lambda$ to be the set of all semistandard tableaux of shape $\lambda$, in the alphabet $\{1,\dots,n\}$,
suitably equipped with crystal operators~\cite{Nak}.
Then the highest weight element is the Yamanouchi tableau $Y_\lambda$, exemplified on the left below, 
and the lowest weight element $\xi(Y_\lambda)$, exemplified on the right below.
The map $\xi$ coincides with the Sch\"{u}tzenberger involution acting on semistandard tableaux.
\begin{equation}
Y_{753}\ =\  
\YT{0.15in}{}{
 {{1},{1},{1},{1},{1},{1},{1}},
 {{2},{2},{2},{2},{2}},
 {{3},{3},{3}},
} 
\qquad\qquad\qquad
\xi(Y_{753}) \ = \
\YT{0.15in}{}{
 {{1},{1},{1},{2},{2},{3},{3}},
 {{2},{2},{2},{3},{3}},
 {{3},{3},{3}},
} 
\end{equation}

The crystal version of the Littlewood--Richardson rule \cite{T2,Nak,hoon} 
clarifies the equality $c_{\mu\nu}\sp\lambda=|\mathcal{LR}(\lambda/\mu,\nu)|$ by identifying all highest weight elements 
of weight $\lambda$ in $B_\mu\otimes B_\nu$ to be exactly the ones of the form $Y_\mu\otimes T^{(\beta)}_{\nu}$, where $T^{(\beta)}_{\nu}$, 
as exemplified in (\ref{Eq-GTx3}), is the semistandard tableau corresponding to the GT pattern $G^{(\beta)}_\nu$ obtained from $T\in\mathcal{LR}(\lambda/\mu,\nu)$ in the manner described at the end of Section~2.3.
Let $B_T$, for $T\in\mathcal{LR}(\lambda/\mu,\nu)$, 
denote the connected component of $B_\mu\otimes B_\nu$, isomorphic to $B_\lambda$ with 
highest weight element $Y_\mu\otimes T^{(\beta)}_{\nu}$.
In~\cite{HenKam} it was also shown that the lowest weight element of $B_T$ 
is $T^{(\alpha)}_{\mu}\otimes\xi(Y_\nu)$, where $T^{(\alpha)}_{\mu}$, again
as exemplified in (\ref{Eq-GTx3}), is the semistandard tableau corresponding to the GT pattern $G^{(\alpha)}_{\mu}$ obtained from the same $T$.

For the sake of comparison, we note that in \cite{HenKam} the order of tensor factors is reverse to, say, the one used in~\cite{kashiwara,Nak} which we adopt 
here; compare~\cite[p.\ 15, 1st paragraph of Section 5.2]{HenKam} and~\cite[Theorem 1.1.5]{Nak}.
Note also that the drawings of hives in~\cite{HenKam} employ an orientation different from that adopted here.

The tensor product of crystals  is not symmetric in the sense that the map 
from $A\otimes B$ to $B\otimes A$ sending each $a\otimes b$ to $b\otimes a$ does not commute with the crystal operators.
Instead, Henriques and Kamnitzer \cite{HK2} show that the map $\sigma_{A,B}\colon a\otimes b\mapsto\xi(\xi(b)\otimes\xi(a))$ 
does give a crystal isomorphism from $A\otimes B$ to $B\otimes A$,
and that it has the involutive nature expressed by $\sigma_{B,A}\sigma_{A,B}=1$.
Through this isomorphism applied to $B_\mu\otimes B_\nu$, they define an involution $Com_{HK}\colon\mathcal{LR}(\lambda/\mu,\nu)\to\mathcal{LR}(\lambda/\nu,\mu)$, 
sending each $T$ to $T\sp*$ say, in such a way that 
$\sigma_{B_\mu,B_\nu}(B_T)=B_{T\sp*}$ (note that $B_T\subset B_\mu\otimes B_\nu$ while $B_{T\sp*}\subset\linebreak[2] B_\nu\otimes B_\mu$).
Since the map $a\otimes b\mapsto\xi(b)\otimes\xi(a)$ sends the highest weight element $Y_\mu\otimes T^{(\beta)}_{\nu}\in B_T$ to the 
lowest weight element $(T\sp*)^{(\alpha)}_{\nu}\otimes\xi(Y_\mu)\in B_{T\sp*}$, we have $\xi(T^{(\beta)}_{\nu})=(T\sp*)^{(\alpha)}_{\nu}$.
Similarly we have $\xi(T^{(\alpha)}_{\mu})=(T\sp*)^{(\beta)}_{\mu}$.
In terms of the definitions of $\tau$ and $\gamma$ given in~\cite{PV2}, these two formulae take the form $\xi(\tau(T))=\gamma(T\sp*)$ and 
$\xi(\gamma(T))=\tau(T\sp*)$. This means that both $\rho_2=\tau^{-1}\xi\gamma$ and $\rho_2'=\gamma^{-1}\xi\tau$ defined in \cite{PV2} 
coincide with $Com_{HK}$ and are involutions.

More generally, for a map $\rho\colon\mathcal{LR}(\lambda/\mu,\nu)\to\mathcal{LR}(\lambda/\nu,\mu)$, such that $\rho\colon T\mapsto \rho(T)$, 
showing that $(\rho(T))^{(\alpha)}_\nu=\xi(T^{(\beta)}_\nu)$ and $(\rho(T))^{(\beta)}_\mu=\xi(T^{(\alpha)}_\mu)$ would be one way to conclude that 
$\rho$ coincides with $\rho_2$, $\rho'_2$ and $Com_{HK}$, and hence that it is an involution.

 \subsection{LR commutativity bijections and their coincidence}\label{3subSec-intro}

In \cite{PV2}, it is conjectured that $\rho_1$, $\rho_2$, and $\rho_2'$, the inverse of $\rho_2$, and the fundamental symmetry map $\rho_3$, 
described in \cite{PV2}, defined originally in \cite{Az1,Az2} and fully detailed here as $\rho^{(n)}$, coincide. 
From the $\mathfrak{gl}_n$-crystal commutor arguments above we know that $\rho_2=\rho'_2=\rho_2^{-1}=Com_{HK}$.
Using a special case of octahedron recurrence in \cite{KTWoct}, Henriques and Kamnitzer construct a bijective commutor, $Com^h_{HK}$, from 
hives of boundary $(\lambda,\mu,\nu)$ to those of boundary $(\lambda,\nu,\mu)$~\cite{HK2}. The bijection starts with a hive $H$ and consists 
of a series of intermediate stages whose intermediate objects are not hives. It becomes a hive $H^*$ again in the last step. The involution property 
is not proved directly. It is shown that in the equivalent language of GT patterns or semistandard tableaux, intermediate stages correspond to a 
sequence of Bender--Knuth moves \cite{BE} applied to $T^{(\alpha)}_\mu$ whose result is the same as applying the Sch\"utzenberger involution. 
This implies that $Com_{HK}^h=Com_{HK}$ (via the identification of LR tableaux and LR hives by way of $f^{(n)}$), 
from which it follows that the Henriques--Kamnitzer hive commutor $Com^h_{HK}$ is also an involution. 
Therefore $Com_{HK}^h=Com_{HK}=\rho_2=\rho_2'=\rho_2^{-1}$.

In \cite{DK05} Danilov and Koshevoy  introduce a theory of arrays suited to discrete concave functions (hives) on the lattice of integers 
as well as to Young tableaux. The set of arrays is then equipped with the crystal structure  and the tensor product of crystal arrays is 
considered. Using  the Sch\"utzenberger involution, a commutor for the tensor product of arrays is  introduced. Then it is shown that it 
coincides with Henriques--Kamnitzer commutor as well as with the bijections $\rho_1$, $\rho_2$ and $\rho'_2$ defined in terms of Young tableaux. 
This proves the Pak--Vallejo aforementioned conjecture leaving $\rho_3 $ out. The main philosophy in \cite{fpsac08} is to claim  that $\rho_3$ 
can be seen as providing a short cut of tableau-switching when this procedure is applied to LR tableaux, and thereby it coincides with $\rho_1$.

\subsection{Illustration of the maps $\gamma$, $\tau$ and $\xi$}\label{Subsec-maps}

Consider the following example, based on the parameters of (\ref{Ex-tab}) and (\ref{Ex-hive}).

\begin{Example}\label{Ex-tab-hive}
In the case $n=4$ and $\lambda=(8,6,5,4)$, $\mu=(6,5,2,0)$ and $\nu=(5,4,1,0)$, let a typical LR tableau $T\in\mathcal{LR}(\lambda/\mu,\nu)$
and the corresponding LR hive $H\in\mathcal{H}^{(n)}(\lambda/\mu,\nu)$ be as shown below.
\begin{equation}\label{Eq-tab-hive}
T\ = \ \vYTd{0.2in}{}{
 {{},{},{},{},{},{},1,1},
 {{},{},{},{},{},1},
 {{},{},1,2,2},
 {1,2,2,3}}
\qquad\qquad
H\ = \ 
\vcenter{\hbox{
}}
\end{equation}
It might be noted that $G^{(\alpha)}_\mu$ is of type $\mu=(6,5,2,0)$ and weight $\lambda-\nu$ reversed $=(4,4,2,3)$, and $G^{(\beta)}_\nu$ 
is of type $\nu=(5,4,1,0)$ and weight $\lambda-\mu=(2,1,3,4)$. The corresponding semistandard tableaux $T^{(\alpha)}_\mu$ and $T^{(\beta)}_\nu$
are obtained in the usual way from $G^{(\alpha)}_\mu$ and $G^{(\beta)}_\nu$, respectively, whereby successive rows of each GT pattern determine the shape of 
successive subtableaux restricted to alphabets $\{1,2,\ldots,r\}$ for $r=4,3,2,1$. This yields
\begin{equation}\label{Eq-Tabx2}
\gamma(T)=T^{(\alpha)}_\mu\ = \ \YT{0.2in}{}{
 {1,1,1,1,2,3},
 {2,2,2,4,4},
 {3,4}}
\qquad\qquad
\tau(T)=T^{(\beta)}_\nu\ = \ \YT{0.2in}{}{
 {1,1,2,3,4},
 {3,3,4,4},
 {4}}
\end{equation}

Continuing with this example, we are now in a position to compare the output of Examples~\ref{Ex-tab} and ~\ref{Ex-hive}
with the corresponding result obtained through the action of the Henriques--Kamnitzer hives commutor $Com^h_{HK}$ or octahedral map in \cite{HenKam}. 

Applying the Sch\"utzenberger involution $\xi$ to $ T^{(\alpha)}_\mu$ in the form of Bender--Knuth moves
gives a succession of semistandard tableaux whose Gelfand--Tsetlin patterns 
correspond to the sequence of quasi-hives generated by the action of the octahedral map $Com^h_{HK}$. 
At the end a semistandard tableau $\xi(T^{(\alpha)}_\mu)$ is arrived at
whose GT pattern, denoted by $\xi(G^{(\alpha)}_\mu)$, of type $\mu$ and weight $\lambda-\nu$ corresponds 
to a hive $K\in{\cal H}^{(n)}(\lambda,\nu,\mu)$, as illustrated below:

\begin{equation}\label{Eq-xiT-K}
\xi(T^{(\alpha)}_\mu)\ =\ \YT{0.2in}{}{
 {1,1,1,2,3,4},
 {2,3,3,3,4},
 {4,4}}
\qquad\qquad
\xi(G^{(\alpha)}_\mu)\ = \ 
\vcenter{\hbox{

}}
\qquad\qquad
S\ =\ \ \YT{0.2in}{}{
 {{},{},{},{},{},1,1,1},
 {{},{},{},{},1,2},
 {{},1,2,2,2},
 {1,2,3,3}}
\end{equation}

This coincidence with the output of our previous Examples~\ref{Ex-hive} and~\ref{Ex-tab}
supports the case for the coincidence of $\rho_2$ and $\rho_3$, that was exemplified by means
of a refinement of the path deletion operations in~\cite{fpsac08}.
\end{Example}

\section{Concluding remarks}\label{Sec-conclusion}

We have given direct combinatorial proofs, using two distinct models, of the bijective and involutive nature of
a procedure first introduced by Azenhas~\cite{Az1,Az2} as a means of establishing combinatorially the symmetry of Littlewood--Richardson coefficients.
The first model was based on the use of Littlewood--Richardson tableaux and followed Azenhas's procedure in
providing a commutativity operator, denoted here by $\rho^{(n)}$, mapping a given LR tableau $T\in\mathcal{LR}(\lambda/\mu,\nu)$ to  
a new LR tableau $S\in\mathcal{LR}(\lambda/\nu,\mu)$, thereby preserving the outer shape while interchanging the inner shape and the weight.
The transformation from $T$ to $S$ consisted of a sequence of \textit{deletions} from $T$,
starting from right to left in each row taken in turn from bottom to top, with the new tableau $S$ 
constructed by arranging the \textit{terminating row numbers} obtained from the deletions in the form of a tableau, 
starting from its bottom row and proceeding upwards, as exemplified in Example~\ref{Ex-tab}.
The second model was based on the use of Littlewood--Richardson \textit{hives}, on which we defined a corresponding commutativity operator
which we denoted by $\sigma^{(n)}$. It transforms a given LR hive $H\in{\cal H}^{(n)}(\lambda,\mu,\nu)$
to a new LR hive $K\in{\cal H}^{(n)}(\lambda,\nu,\mu)$ by the application of what we called \textit{path removals}
from $H$, working again from right to left, with each path starting from the base of the hive, and recording within $K$ 
the level reached by each path, as exemplifed this time in Example~\ref{Ex-hive}.

Although it was not stated explicitly in the text, it should be understood that the two procedures, one in the realm of tableaux 
and the other in that of hives, are intimately related at every stage by the bijective correspondence $f^{(n)}$ between 
LR tableaux and LR hives, as described in Section~\ref{Subsec-hives-tableaux}.
Nevertheless, we have been at pains to keep the two procedures self-contained, without interrupting them in
Sections~\ref{Sec-tab-del-ops}-\ref{Sec-prf-delaying} and Sections~\ref{Sec-hive-path-removal}-\ref{Sec-hive-inv}
with repetitious accounts of the connection between them. In fact each of the two formulations has its own particular 
merits and the development of the argument in the tableaux setting has benefitted from insights
obtained in the hive setting and {\it vice versa}. 
\bigskip

\noindent {\bf Acknowledgements} 
This collaboration has been characterised by frequent gatherings of two or more of the authors not only at the three Universities of Coimbra, Southampton and Tokyo,
but also at Witney in the UK as the guests of Dr Elizabeth King. All the authors wish to acknowledge the hospitality extended to each of them during these visits, as well 
as the generous financial support from the Universities of Coimbra and Tokyo that made each visit possible.
In particular, this work was partially supported by the Centre for Mathematics of the University of Coimbra -- UID/MAT/00324/2013, funded by the Portuguese
Government through FCT/MEC and co-funded by the European Regional Development Fund through the Partnership Agreement PT2020, and by the FCT sabbatical 
grant SFRH/BSAB/113584/2015. The first author (OA) also wishes to acknowledge the hospitality of the University of Vienna where her sabbatical leaving took place.
The third author (IT) was partially supported by JSPS KAKENHI Grant Number 23540008.

\label{Sec-biblio}


\begin{thebibliography}{9999}

\bibitem[AK70]{AK} Y.J. Abramsky and R.C. King,
\newblock Formation and decay of negative parity baryon resonances in a broken
  $U(6,6)$ model,
\newblock {\em Nuovo Cimento} {\bf A67} (1970) 153--216.

\bibitem[Aze99]{Az1} O. Azenhas,
Littlewood--Richardson fillings and their symmetries,
in {\em Matrices and Group Representations}, Textos Mat. S\'er. B {\bf 19},
Universidade de Coimbra, Coimbra, (1999) 81--92.

\bibitem[Aze00]{Az2} O. Azenhas,
On an involution on the set of Littlewood--Richardson tab\-leaux and the hidden commutativity,
preprint 00-27, De\-par\-ta\-men\-to de Matem\'atica, Universidade de Coimbra, Coimbra, (2000);
available from \\ http://www.mat.uc.pt/preprints/2000.html; or \\ http://www.mat.uc.pt/$\sim$oazenhas

\bibitem[Aze08]{fpsac08} O. Azenhas,
 A variation on the tableau switching and a {Pak--Vallejo}'s
  conjecture,
 {\em DMTCS proc. AJ} (2008) 529--542.

\bibitem[BK72]{BE}
E.~Bender and D.~Knuth,
 Enumeration of plane partitions,
 {\em J.  Comb. Theory. Series A}, {\bf 13} (1972) 40--54.

\bibitem[BSS96]{bss} G. Benkart, F. Sottile, and J. Stroomer,
 Tableau switching: algorithms and applications,
 {\em J. Combin. Theory Ser. A} {\bf 76(1)} (1996) 11--43.

\bibitem[BZ89]{BZphy} A.D. Berenstein and A.V. Zelevinsky,
 Tensor product multiplicities and convex polytopes in partition
  space,
 {\em J. Geom. Phys.} {\bf 5(3)} (1989) 453--472.


\bibitem[Buc00]{Bu} A.S. Buch,
 The saturation conjecture (after {A}. {K}nutson and {T}. {T}ao), with
  an appendix by { W}. {F}ulton,
 {\em L'Enseignement Math\'ematique} {\bf 46} (2000) 43--60.


\bibitem[CJM11]{CJM} S. Cho, E-K. Jung and D. Moon,
A hive-model proof of the second reduction formula of Littlewood--Richardson coefficients,
\emph{Ann. Comb.} {\bf 15} (2011) 223--231.


 \bibitem[DK05]{DK05} V.I. Danilov and G.A. Koshevoi, 
Arrays and the combinatorics of {Y}oung tableaux,
{\em Russian Math. Surveys} {\bf 60(2)} (2005).

\bibitem[DK08]{DK08} V.I. Danilov and G.A. Koshevoi,
 Robinson-{S}chensted-{K}nuth correspondence and the bijections of
  commutativity and associativity,
{\em Izvestiya: Math.} {\bf 72(4)} (2008) 689--716.

\bibitem[GT50]{GT1} I.M. Gelfand and M.L. Tsetlin,
 Finite-dimensional representations of the group of unimodular
  matrices,
{\em Dokl. Akad. Nauk SSSR (in Russian), English transl. in: I.M.
  Gelfand, Collected papers. Vol II, Berlin: Springer-Verlag 1988, pp.
  653--656}, {\bf 71} (1950) 825--828.
	

\bibitem[GZ86]{GZpolyed} I.M. Gelfand and A.V. Zelevinsky, Multiplicities and proper bases for ${gl}_n $,
in {\em Group Theoretical Methods in Physics, Vol II, Proc. 3rd Yurmala Seminar 1985}, VNU Sci. Press, Utrecht, (1986), 147--159.
 

\bibitem[HK06a]{HK2} A. Henriques and J. Kamnitzer,
 Crystals and coboundary categories,
{\em Duke Math. J.} {\bf 132(2)} (2006) 191--216.

\bibitem[HK06b]{HenKam} A. Henriques and J. Kamnitzer,
 The octahedron recurrence and $gl_n$-crystals,
 {\em Adv. Math.}, {\bf 206(1)} (2006) 211--249.

\bibitem[Kas91]{kashiwara} M. Kashiwara,
 On crystal bases of the q-analogue of universal enveloping algebras,
 {\em Duke Math. J.} {\bf 63(2)} (1991) 465--516.

\bibitem[Ki70]{Ki} R.C. King,
Generalized Young tableaux and the general linear group,
{\em J. Math. Phys.} {\bf 11} (1970) 280--293.

 \bibitem[KTT06]{KTT1} R.C. King, C. Tollu and F. Toumazet,
The hive model and the polynomial nature of stretched Littlewood--Richardson coefficients,
{\em Seminaire Lotharingien Comb.} {\bf 54A} (2006) B54Ad.

\bibitem[KTT09]{KTT2} R.C. King, C. Tollu and F. Toumazet,
Factorisation of Littlewood--Richardson coefficients,
{\em J. Combin. Theory A} {\bf 116} (2009) 314--333.

 \bibitem[KB96]{KiBe} A.N. Kirillov and A.D. Berenstein,
{G}roups generated by involutions, {G}elfand-{T}setlin patterns, and
  combinatorics of {Y}oung tableaux,
{\em St. Petersburg Math. J.} {\bf 7} (1996) 77--127.

\bibitem[Kn70]{Kn} D.E. Knuth,
Permutations, matrices, and generalized Young tableaux,
{\em Pacific J. Math.} {\bf 34} (1970) 709--727.

\bibitem[KT99]{KT1} A.~Knutson and T.~Tao,
 The honeycomb model of ${GL}_n(\mathbb{C})$ tensor products {I}:
  Proof of the saturation conjecture,
 {\em J. Amer. Math. Soc.} {\bf 12} (1999) 1055--1090.

 \bibitem[KTW04b]{KTWoct} A. Knutson, T. Tao, and C.T. Woodward,
 A positive proof of the {L}ittlewood-{R}ichardson rule using the
  octahedron recurrence,
 {\em Elec. J. Combin.} {\bf 11(1)} (2004) R61, 18pp.

\bibitem[Koi89]{Ko} K. Koike,
\newblock On the decomposition of tensor products of the representations of the
  classical groups: by means of the universal characters,
\newblock {\em Adv. in Math.} {\bf 74} (1989) 57--86.

\bibitem[Kwo09]{hoon} J.H. Kwon,
  {\em Crystal Graphs and the Combinatorics of Young Tableaux}, Handbook
  of Algebra, Vol~6, North-Holland, New York, 2009.

\bibitem[LR34]{LR} D.E. Littlewood and A. Richardson,
 Group characters and algebra,
 {\em Phil. Trans. Roy. Soc. London Ser. A} {\bf 233} (1934) 99--141.

\bibitem[Lou08]{Louck} J.D. Louck,
{\em Unitary Symmetry and Combinatorics},
World Scientific Pub. Co., Singapore, 2008.

\bibitem[Nak93]{Nak} T. Nakashima,
Crystal base and a generalization of {L}ittlewood-{R}ichardson rule
  for classical { L}ie algebras,
 {\em Comm. Math. Phys.} {\bf 154} (1993) 215--243.

\bibitem[PV05]{PV1} I. Pak and E. Vallejo,
 Combinatorics and geometry of {L}ittlewood--{R}ichardson cones,
 {\em Eur. J. Combin.} {\bf 26} (2005) 995--1008.

\bibitem[PV10]{PV2} I. Pak and E. Vallejo,
Reductions of {Y}oung tableau bijections,
 {\em Siam J. Discrete Math.} {\bf 24} (2010) 113--145.

\bibitem[SS90]{SS} B.~E. Sagan and R.P. Stanley,
{R}obinson-{S}chensted algorithms for skew tableaux,
{\em J. Combin. Theory Series A} {\bf 55} (1990) 161--193.

\bibitem[Sch63]{Schu} M.-P. Sch\"{u}tzenberger,
Quelques remarques sur une construction de Schensted,
{\em Math. Scand.} {\bf 12} (1963) 117--128.

\bibitem[St87]{St} J.R. Stembridge,
Rational tableaux and the tensor algebra of $gl_n$,
{\em J. Comb. Theory Series A} {\bf 46} (1987) 79--120.

\bibitem[Tho74]{T1} G.P. Thomas,
 {\em Baxter algebras and {S}chur functions},
 Ph.D. Thesis, University College of Swansea, 1974.

\bibitem[Tho78]{T2} G.P. Thomas,
 On {S}chensted's construction and the multiplication of {S}chur
  functions,
 {\em Adv. Math.} {\bf 30} (1978) 8--32.

\end{thebibliography}
\end{document}